\documentclass[a4paper,11pt]{article}
\pdfoutput=1
\usepackage[english]{babel}
\usepackage{amsfonts}
\usepackage{amsthm}
\usepackage{amsmath}
\usepackage{amssymb}
\usepackage{enumerate}
\usepackage{mathtools}
\usepackage{tikz-cd}
\usepackage[all]{xy}
\usepackage{graphicx}
\usepackage{graphicx}
\usepackage{adjustbox}
\usepackage{extpfeil}
\usepackage{comment}
\usepackage{float}
\usepackage[labelformat=empty]{caption}
\usepackage[toc]{appendix}
\usepackage[left=3cm,top=2.5cm,right=3cm,bottom=2.5cm]{geometry}
\usepackage{hyperref}
\usepackage{resizegather}
\usepackage[activate={true, nocompatibility}, final, tracking=true, kerning=true, spacing=true, factor=1100, stretch=10, shrink=10]{microtype}
\usepackage{authblk}


\theoremstyle{plain}
\newtheorem{thm}{Theorem}[section]
\newtheorem{coroll}[thm]{Corollary}
\newtheorem{defn}[thm]{Definition}
\newtheorem{lemma}[thm]{Lemma}

\newtheorem{example}[thm]{Example}
\newtheorem{prop}[thm]{Proposition}

\newtheorem{remark}[thm]{Remark}

\newtheorem*{theorem}{Main Theorem}
\newtheorem*{lemma*}{Lemma}
\newtheorem*{prop*}{Proposition}


\newcommand{\bseries}[1]{ [\hspace{-0,5mm}[ {#1} ]\hspace{-0,5mm}] }

\newcommand\dhxrightarrow[2][]{%
  \mathrel{\ooalign{$\xrightarrow[#1\mkern4mu]{#2\mkern4mu}$\cr%
  \hidewidth$\rightarrow\mkern4mu$}}
}


\tikzset{
  symbol/.style={
    draw=none,
    every to/.append style={
      edge node={node [sloped, allow upside down, auto=false]{$#1$}}}
  }
}

\microtypecontext{spacing=nonfrench}

\setlength{\parskip}{4pt}


\begin{document}
\thispagestyle{empty}

\title{\textbf{Harder-Narasimhan stratification for the moduli stack of parabolic vector bundles}}
\author{Andres Fernandez Herrero}

\date{}
\maketitle

\begin{abstract}
For every set of parabolic weights, we construct a Harder-Narasimhan stratification for the moduli stack of parabolic vector bundles on a curve. It is based on the notion of parabolic slope, introduced by Mehta and Seshadri. We also prove that the stratification is schematic, that each stratum is complete, and establish an analogue of Behrend's conjecture for parabolic vector bundles. A comparison with recent $\Theta$-stratification approaches is discussed.
\end{abstract}


\tableofcontents

\begin{section}{Introduction}
Let $C$ be a curve over a field $k$. Fix a finite set of $k$-points $x_i$ in $C$. In this paper we investigate the geometry of moduli stacks $\text{Bun}_{\mathcal{V}}$ of vector bundles over $C$ with the additional data of a flag of vector spaces in the fiber at each $x_i$. The objects parametrized by $\text{Bun}_{\mathcal{V}}$ are parabolic vector bundles, as in \cite{mehta-seshadri}.

The purpose of this article is to describe a stratification of $\text{Bun}_{\mathcal{V}}$ analogous to the classical Harder-Narasimhan stratification for the moduli of vector bundles \cite{harder-narasimhan}. It is the parabolic analogue for $G=\text{GL}_n$ of the results of Gurjar and Nitsure \cite{gurjar-nitsure} when $\text{char}(k) = 0$, and recently generalized in \cite{nitsuregurjar2, gurjar2020hardernarasimhan} to higher dimensional varieties and general $k$. Our version of the Harder-Narasimhan stratification is based on the notion of slope for a parabolic vector bundle. This notion of slope depends on the choice of some stability parameters $\overline{\lambda}$ known as parabolic weights. We use the slope to define a parabolic version of the Harder-Narasimhan filtration. This filtration provides us with an invariant of parabolic vector bundles, which we call the HN datum. For each HN datum $P$, we define a stratum $\text{Bun}_{\mathcal{V}}^{\leq P}$ consisting of parabolic vector bundles with HN datum bounded by $P$. We prove the following properties of the strata in Theorems \ref{thm: openess of strata}, \ref{thm: quasicompactness of strata}, \ref{thm: completeness of strata} and \ref{thm: representability of moduli of vb with fixed P}.
\begin{theorem} \label{thm: main theorem introduction}
For every HN datum $P$, the following properties hold. \quad
\begin{enumerate}[(A)]
    \item The stratum $\text{Bun}_{\mathcal{V}}^{\leq P}$ is a quasicompact open substack of $\text{Bun}_{\mathcal{V}}$.
    \item The stratum $\text{Bun}_{\mathcal{V}}^{\leq P}$ is complete.
    \item The locus $\text{Bun}_{\mathcal{V}}^{=P}$ of parabolic vector bundles with HN datum $P$ can be naturally equipped with the structure of a locally closed substack of $\text{Bun}_{\mathcal{V}}$. In fact, $\text{Bun}_{\mathcal{V}}^{= P} \hookrightarrow \text{Bun}_{\mathcal{V}}^{\leq P}$ is a closed immersion.
\end{enumerate}
\end{theorem}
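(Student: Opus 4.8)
The whole theorem rests on the canonical parabolic Harder--Narasimhan filtration and its semicontinuous behaviour in families, so the plan is to establish this semicontinuity first and then read off (A), (B) and (C) from it, together with one boundedness input and one valuative-criterion input. Concretely, I would prove the parabolic analogue of Shatz's theorem: in a flat family of parabolic bundles the parabolic HN polygon of the fibres can only go up under specialisation. Part (A) then has two halves. Openness of $\text{Bun}_{\mathcal{V}}^{\leq P}$ is immediate, since ``polygon dominated by $P$'' is exactly the open condition in Shatz semicontinuity. For quasicompactness I would prove boundedness by reducing to the underlying vector bundles: with the parabolic weights $\overline{\lambda}$ and the flag dimensions fixed, a bound on the parabolic HN polygon forces a bound on the ordinary HN polygon of the underlying bundle $E$, and bundles on $C$ with bounded HN polygon form a bounded family by the classical theory. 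The remaining flag data at the finitely many points $x_i$ vary in a product of (partial) flag varieties $\prod_i \mathrm{Fl}(E_{x_i})$, which is proper, so the total family is bounded and $\text{Bun}_{\mathcal{V}}^{\leq P}$ is quasicompact.

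For the completeness in part (B) I would verify the valuative criterion by a Langton-type semistable-reduction argument. Given a DVR $R$ with fraction field $K$ and a parabolic bundle over $C_K$ with datum $\leq P$, first extend it to a parabolic bundle over $C_R$, using that $\text{Bun}_{\mathcal{V}}$ itself admits limits (vector bundles extend across a divisor, and the flags extend by properness of the flag varieties). The special fibre of this naive extension may be strictly more unstable, so the heart of the argument is to correct it by a sequence of elementary modifications along the special fibre, guided by the parabolic HN filtration, until the datum of the special fibre is again $\leq P$. The termination of this process, controlled by a discrete invariant that strictly decreases at each step, is the step I expect to be the main obstacle: one must adapt Langton's bookkeeping to track the flags at the $x_i$ and confirm that the parabolic slope transforms correctly under each modification.

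Part (C) is then formal. By semicontinuity the locus $\text{Bun}_{\mathcal{V}}^{\geq P}$ where the polygon dominates $P$ is closed, while $\text{Bun}_{\mathcal{V}}^{\leq P}$ is open by (A), so the exact-type locus is the intersection
\[
\text{Bun}_{\mathcal{V}}^{=P} \;=\; \text{Bun}_{\mathcal{V}}^{\leq P}\cap \text{Bun}_{\mathcal{V}}^{\geq P},
\]
which is locally closed in $\text{Bun}_{\mathcal{V}}$ and, as the trace of a closed substack, is closed inside the open substack $\text{Bun}_{\mathcal{V}}^{\leq P}$; this yields the asserted closed immersion. To equip $\text{Bun}_{\mathcal{V}}^{=P}$ with its natural (non-reduced) stack structure rather than merely a reduced one, I would appeal to the representability of the moduli of parabolic bundles of fixed HN type, realising $\text{Bun}_{\mathcal{V}}^{=P}$ as an iterated fibration built from the semistable pieces of the filtration and thereby confirming that it is an algebraic substack with the expected structure.
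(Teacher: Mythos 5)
Your plans for (A) and (B) track the paper's own proofs closely: openness via constructibility plus closure under generalization (the parabolic Shatz semicontinuity, Propositions \ref{prop: constructibility of strata} and \ref{prop: strata closed under generalization} — note that semicontinuity alone is not enough, you also need constructibility, which the paper gets from parabolic Quot schemes and Chevalley's theorem); quasicompactness via the forgetful map to $\text{Bun}_{\text{GL}_n}(C)$, whose fibers are products of flag varieties, together with the fact that the image meets only finitely many classical HN strata (Propositions \ref{prop: properness of forgetful map} and \ref{coroll: factoring of forgetful map}); and completeness via a Langton/Mehta--Seshadri elementary-modification argument whose termination is controlled by the finiteness of HN data below $P$ (Lemmas \ref{lemma: finiteness of HN datum less than} and \ref{lemma: HN datum decreases ses}).

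The genuine gap is part (C), which you declare ``formal.'' It is not, and your intersection argument does not prove the stated result. The set-theoretic intersection $\text{Bun}_{\mathcal{V}}^{\leq P}\cap \text{Bun}_{\mathcal{V}}^{\geq P}$ at best produces a \emph{reduced} locally closed subset, whereas the theorem asserts that the natural moduli problem $\text{Bun}_{\mathcal{V}}^{=P}$ — families equipped with a \emph{relative} Harder--Narasimhan filtration (Definition \ref{defn: relative HN filtration}) — is itself a locally closed substack, i.e.\ that the forgetful morphism $\text{Bun}_{\mathcal{V}}^{=P}\to\text{Bun}_{\mathcal{V}}^{\leq P}$ is a closed immersion. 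The paper proves this by showing that morphism is proper (using representability of the iterated parabolic Quot schemes $\text{Fil}^{\alpha}_{\mathcal{V}}$, Proposition \ref{prop: representability of filtration moduli}, which your sketch never constructs), radicial (uniqueness of HN filtrations under field extension, Lemma \ref{lemma: HN filtrations and scalar change}), and \emph{unramified}. Unramifiedness is exactly the parabolic analogue of Behrend's conjecture (Proposition \ref{prop: rigidity of HN filtration}): the HN filtration admits no nontrivial first-order deformations, proved via the vanishing $\text{Hom}(\mathcal{U},\mathcal{V}/\mathcal{U})=0$ for the maximal destabilizing subbundle. Without this, proper $+$ universally injective does not yield a closed immersion (the monomorphism from your ``iterated fibration'' could be ramified onto a thickening of the stratum). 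That this step cannot be formal is shown by the paper's own remark after Theorem \ref{thm: representability of moduli of vb with fixed P}: for general parahoric Bruhat--Tits group schemes in small positive characteristic, (A), (B) and semicontinuity-type statements still hold, yet the schematic statement (C) \emph{fails}, precisely because Behrend's conjecture fails there (Heinloth). Any argument deducing (C) formally from (A), (B) and semicontinuity would therefore prove too much; the missing ingredient in your proposal is the rigidity Proposition \ref{prop: rigidity of HN filtration} together with the filtration-scheme machinery of Section \ref{section: parabolic Quot schemes}.
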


The article by Mehta and Seshadri \cite{mehta-seshadri} contains a definition of parabolic degree similar to the one we give. They use the notion of parabolic degree to build a moduli space of semistable parabolic vector bundles on a curve of genus $g \geq 2$ over an algebraically closed field.  They also prove a version of our Theorem \ref{thm: openess of strata} in the special case of a semistable stratum by embedding the semistable locus into a GIT problem. In contrast we deal with the whole moduli stack, including the unstable locus. Our proofs work intrinsically with the stack, without appealing to GIT.

Parabolic vector bundles can be alternatively seen as torsors for a parahoric Bruhat-Tits group scheme, as in \cite{pappas-rapoport-moduli}. See Proposition \ref{prop: parabolic vb as parahoric torsors} below for a proof of this. It follows from \cite{heinloth-uniformization}[Prop. 1] that $\text{Bun}_{\mathcal{V}}$ is a smooth algebraic stack. There has been recent interest in developing stratifications for moduli stacks of torsors for such group schemes. Gaitsgory and Lurie \cite{gaitsgory-lurie} give an ad hoc construction of stratifications in the semisimple case in order to derive a Grothendieck-Lefschetz trace formula for the moduli stack of torsors. Balaji and Seshadri \cite{balaji-seshadri} described Harder-Narasimhan stratifications for parahoric Bruhat-Tits group schemes under the assumption that the generic fiber is split. They focus on the construction of a moduli space for semistable torsors. They obtain results valid when $C$ is a curve of genus $g \geq 2$ over a ground field of characteristic $0$. 

Heinloth \cite{heinloth-hilbertmumford} and Alper, Halpern-Leistner, Heinloth \cite{alper-existence}[\S 8] develop a theory of $\Theta$-stratifications for the moduli of torsors of a parahoric Bruhat-Tits group scheme in arbitrary characteristic, under some mild tameness assumptions on the generic fiber. They use a modern variation of ideas from GIT for stacks as in \cite{halpernleistner2014structure} in order to obtain a stratification associated to a line bundle on the stack. In Appendix \ref{appendix: comparison} we explain how to build certain $\mathbb{R}$-line bundles on $\text{Bun}_{\mathcal{V}}$ associated to a parabolic weight $\overline{\lambda}$. We explicitly describe the subset of parabolic weights $\overline{\lambda}$ such that the corresponding line bundle is admissible (cf. \cite{heinloth-hilbertmumford}). Whenever this is the case, the line bundle induces a $\Theta$-stratification that we show coincides with our parabolic slope stratification. It should be remarked that this admissibility condition on parabolic weights is restrictive and we do not impose it in this text.

Our techniques to establish the Main Theorem make contact with the classical theory for vector bundles already present in \cite{mehta-seshadri}. In order to deal with subbundles of parabolic vector bundles in families, we define an analogue of the Quot scheme for parabolic vector bundles (Definition \ref{defn: second quot moduli}). We also construct an iterated Quot scheme $Fil^{\alpha}_{\mathcal{W}}$ parametrizing filtrations of a parabolic vector bundle with some given fixed set of invariants (Definition \ref{defn: parabolic quot filtration scheme}).

We also prove certain properties of the stratification that do not hold in the generality of \cite{heinloth-hilbertmumford} when the ground field has positive characteristic. The obstruction in the more general case is the failure of Behrend's conjecture for certain reductive groups in bad characteristics (established by Heinloth \cite{heinloth-behrends-conjecture}). Recall that Behrend's conjecture \cite{behrend-canonical-bundles} states that the canonical parabolic reduction of a principal $G$-bundle over a curve with $G$-reductive does not admit nontrivial deformations. We prove that an analogue of Behrend's conjecture holds in our context of parabolic vector bundles:
\begin{prop*}[= Proposition \ref{prop: rigidity of HN filtration}]
Let $k[\epsilon] \vcentcolon = k[T] \, / \, (T^2)$ be the ring of dual numbers. We denote by $\sigma: \text{Spec}(k) \rightarrow \text{Spec} \;k[\epsilon]$ the unique section of the structure map $p: \text{Spec} \;k[\epsilon] \rightarrow \text{Spec}(k)$. Let $\mathcal{V}$ be a parabolic vector bundle over $C$. Suppose that we are given a filtration $0 = \mathcal{W}_0 \subset \mathcal{W}_1 \subset \cdots \, \subset \mathcal{W}_{l-1} \subset \mathcal{W}_l = p^{*} \mathcal{V}$ of $p^{*} \mathcal{V}$ by parabolic subbundles.
Assume that the pulled-back filtration 
$0 = \sigma^{*}\mathcal{W}_0 \subset \sigma^{*}\mathcal{W}_1 \subset \cdots \, \subset \sigma^{*}\mathcal{W}_{l-1} \subset \sigma^{*}\mathcal{W}_l = \mathcal{V}$ over the closed point
is the Harder-Narasimhan filtration of $\mathcal{V}$. Then $\mathcal{W}_j = p^{*} \sigma^{*} \mathcal{W}_j$ for all $1 \leq j \leq l-1$.
\end{prop*}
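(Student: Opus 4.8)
The plan is to reinterpret the given filtration as a $k[\epsilon]$-point of the iterated Quot scheme $Fil^{\alpha}_{\mathcal{W}}$ and to prove that the associated tangent vector at the Harder--Narasimhan point vanishes. Let $\alpha$ be the tuple of numerical invariants of the filtration $0 = \sigma^{*}\mathcal{W}_0 \subset \cdots \subset \sigma^{*}\mathcal{W}_l = \mathcal{V}$. Because each $\mathcal{W}_j$ is flat over $k[\epsilon]$ and restricts to $\sigma^{*}\mathcal{W}_j$ over the closed point, these invariants are unchanged, so $\mathcal{W}_{\bullet}$ defines a morphism $\xi \colon \text{Spec}\,k[\epsilon] \to Fil^{\alpha}_{\mathcal{W}}$ with $\xi \circ \sigma$ equal to the $k$-point $\xi_0$ classifying the pulled-back Harder--Narasimhan filtration. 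The desired conclusion $\mathcal{W}_j = p^{*}\sigma^{*}\mathcal{W}_j$ is precisely the assertion that $\xi = \xi_0 \circ p$, i.e. that $\xi$ is the zero tangent vector at $\xi_0$.

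The next step is to compute the Zariski tangent space of $Fil^{\alpha}_{\mathcal{W}}$ at $\xi_0$. Writing $\text{gr}_a \vcentcolon= \sigma^{*}\mathcal{W}_a / \sigma^{*}\mathcal{W}_{a-1}$ for the Harder--Narasimhan graded pieces of $\mathcal{V}$, the iterated parabolic Quot structure should yield a natural isomorphism
\[
T_{\xi_0} Fil^{\alpha}_{\mathcal{W}} \;\cong\; \bigoplus_{a < b} \text{Hom}_{\text{par}}(\text{gr}_a, \text{gr}_b),
\]
where $\text{Hom}_{\text{par}}$ denotes homomorphisms of parabolic bundles over $C$. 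This mirrors the elementary fact that the stabilizer of a flag is block upper triangular, so that the tangent space to the flag variety is the strictly block lower triangular part $\bigoplus_{a<b}\text{Hom}(\text{gr}_a,\text{gr}_b)$; concretely, each summand records the first-order motion of the subbundle $\sigma^{*}\mathcal{W}_b$ coming from the piece $\text{gr}_a$ lying below it. I would derive this identification from the deformation theory of the parabolic Quot scheme of Definition \ref{defn: second quot moduli}, applied successively along the tower defining $Fil^{\alpha}_{\mathcal{W}}$.

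Finally I would invoke the two defining features of the Harder--Narasimhan filtration: each $\text{gr}_a$ is semistable, and the parabolic slopes strictly decrease, $\mu_{\text{par}}(\text{gr}_1) > \cdots > \mu_{\text{par}}(\text{gr}_l)$. Hence for every pair $a < b$ we have $\mu_{\text{par}}(\text{gr}_a) > \mu_{\text{par}}(\text{gr}_b)$, and the parabolic analogue of the vanishing lemma for maps between semistable bundles forces $\text{Hom}_{\text{par}}(\text{gr}_a, \text{gr}_b) = 0$: a nonzero parabolic map would have an image that is simultaneously a quotient of $\text{gr}_a$ (so of parabolic slope $\geq \mu_{\text{par}}(\text{gr}_a)$) and a parabolic subsheaf of $\text{gr}_b$ (so of parabolic slope $\leq \mu_{\text{par}}(\text{gr}_b)$), which is absurd. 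Every summand therefore vanishes, $T_{\xi_0}Fil^{\alpha}_{\mathcal{W}} = 0$, and so $\xi = \xi_0 \circ p$, giving $\mathcal{W}_j = p^{*}\sigma^{*}\mathcal{W}_j$. I expect the only real work to lie in the tangent-space identification and, within the Hom-vanishing step, in correctly comparing the parabolic structures induced on the image from source and from target; the slope bookkeeping itself is routine once the parabolic semistability theory developed earlier is in place. An alternative, equivalent route avoids the full flag computation: deform $\mathcal{W}_1$ first, using $\text{Hom}_{\text{par}}(\text{gr}_1, \mathcal{V}/\sigma^{*}\mathcal{W}_1)=0$ to conclude $\mathcal{W}_1 = p^{*}\sigma^{*}\mathcal{W}_1$, and then induct on the length of the induced filtration of the quotient $p^{*}(\mathcal{V}/\sigma^{*}\mathcal{W}_1)$.
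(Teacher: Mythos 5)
Your proposal is sound in substance, and the ``alternative route'' you sketch in your last sentence is precisely the paper's own proof: one inducts on the length $l$, reducing to the claim $\mathcal{W}_1 = p^{*}\sigma^{*}\mathcal{W}_1$; the composite $f \colon \mathcal{W}_1 \hookrightarrow p^{*}\mathcal{V} \twoheadrightarrow p^{*}\mathcal{V}/p^{*}\sigma^{*}\mathcal{W}_1$ vanishes on the closed fiber, hence $f = \epsilon\,\psi$ for some $\psi \in \text{Hom}\left(\sigma^{*}\mathcal{W}_1,\, \mathcal{V}/\sigma^{*}\mathcal{W}_1\right)$, and this Hom group vanishes by part (c) of Lemma \ref{lemma: existence maximal destibilizing subbundle}. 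Note that the paper therefore only ever needs the single vanishing $\text{Hom}(\mathrm{gr}_1, \mathcal{V}/\sigma^{*}\mathcal{W}_1)=0$ at each stage of the induction, not all pairwise groups $\text{Hom}(\mathrm{gr}_a,\mathrm{gr}_b)$, and it needs no deformation theory of the scheme $\text{Fil}^{\alpha}_{\mathcal{V}}$ at all. Your main route runs the logic of the paper in the opposite direction: in the paper, this proposition is an \emph{input} to the statement that $\text{Fil}^{\alpha}_{\mathcal{W}} \to S$ is unramified (Proposition \ref{prop: schematic HN filtrations}), whereas you propose to compute the tangent space of $\text{Fil}^{\alpha}_{\mathcal{V}}$ independently and deduce rigidity from it. That is legitimate, but it makes you responsible for the full deformation theory of the iterated parabolic Quot scheme, which is the more laborious path.

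The one genuine inaccuracy is the asserted natural isomorphism $T_{\xi_0}\text{Fil}^{\alpha}_{\mathcal{V}} \cong \bigoplus_{a<b}\text{Hom}_{\text{par}}(\mathrm{gr}_a,\mathrm{gr}_b)$. The flag-variety analogy is misleading here because the filtration $\sigma^{*}\mathcal{W}_{\bullet}$ of $\mathcal{V}$ does not split: already for $l=3$, a first-order deformation is a compatible pair $(\phi_1,\phi_2)$ with $\phi_j \in \text{Hom}(\sigma^{*}\mathcal{W}_j, \mathcal{V}/\sigma^{*}\mathcal{W}_j)$, and what one obtains from the tower of parabolic Quot schemes is only a finite filtration of $T_{\xi_0}$ whose successive subquotients \emph{inject} into the groups $\text{Hom}_{\text{par}}(\mathrm{gr}_a,\mathrm{gr}_b)$, $a<b$ (the failure of surjectivity is governed by $\text{Ext}^1$ between graded pieces, which obstructs lifting). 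This weaker statement is, however, all your argument requires: since each $\mathrm{gr}_a$ is semistable and $\mu(\mathrm{gr}_a) > \mu(\mathrm{gr}_b)$ for $a<b$, part (c) of Lemma \ref{lemma: semistable bundles and ses} kills every $\text{Hom}_{\text{par}}(\mathrm{gr}_a,\mathrm{gr}_b)$ --- and that lemma already handles your worry about the parabolic structure on the image, via saturation (Lemma \ref{lemma: saturating subsheaves}) --- whence $T_{\xi_0}=0$ and $\xi = \xi_0 \circ p$. So either weaken the tangent-space claim from an isomorphism to a filtration with embedded subquotients, or simply run your inductive alternative, which is the paper's proof.
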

We use this result along with the construction of the filtration scheme $Fil^{\alpha}_{\mathcal{W}}$ in order to develop a theory of schematic Harder-Narasimhan filtrations. This is assertion (C) of the Main Theorem (see also Theorem \ref{thm: representability of moduli of vb with fixed P}). It is the parabolic analogue of the work done by Gurjar and Nitsure in \cite{gurjar-nitsure}, which deals with principal $G$-bundles over a curve when the characteristic of the ground field is $0$.

Finally, we also prove in Theorem \ref{thm: completeness of strata} that for every $P$ the stratum $\text{Bun}^{\leq P}_{\mathcal{V}}$ satisfies a strong lifting criterion for families over a discrete valuation ring. This is assertion (B) of the Main Theorem. Heinloth \cite[Remark 3.20]{heinloth-hilbertmumford} noted that the weaker valuative criterion \cite[\href{https://stacks.math.columbia.edu/tag/0CLK}{Tag 0CLK}]{stacks-project} holds for the semistable locus for more general parahoric Bruhat-Tits group schemes. See Section \ref{section: completeness of strata} for more details on this.

We conclude this introduction with an outline of the paper.

In Section \ref{section: parabolic vector bundles curve} we recall the notions of parabolic vector bundles (Definition \ref{defn: parabolic vector bundle on curve}), parabolic weights (Definition \ref{defn: parabolic weights}), degree and slope (Definition \ref{defn: deg of vector bundle}) and the existence and uniqueness of the Harder-Narasimhan filtration (Proposition \ref{prop: existence and uniqueness of HN filtrations}). The section concludes with a discussion of parabolic vector bundles in families (Subsection \ref{subsection: parabolic vb families}). This includes a proof of the compatibility of the Harder-Narasimhan filtration with extensions of the ground field (Lemma \ref{lemma: HN filtrations and scalar change}) and a proof of the analogue of Behrend's conjecture for parabolic vector bundles (Proposition \ref{prop: rigidity of HN filtration}). In Section \ref{section: stacks of parabolic vector bundles} we define the stack $\text{Bun}_{\mathcal{V}}$ of parabolic vector bundles of a given type (Definition \ref{defn: stack of parabolic vector bundles}) and show that it is isomorphic to the stack of torsors for a parahoric Bruhat-Tits group scheme over $C$ (Proposition \ref{prop: parabolic vb as parahoric torsors}). We define the HN datum of a vector bundle (Definition \ref{defn: hn datum}) and the strata $\text{Bun}_{\mathcal{V}}^{\leq P}$ (Definition \ref{defn: HN strata}).

In Section \ref{section: parabolic Quot schemes} we construct a parabolic version $\text{Quot}_{\mathcal{V}}^{P, \, b_i^{(m)}}$ of the Quot scheme (Definition \ref{defn: second quot moduli}). More generally, we consider nested versions $\text{Fil}^{\alpha}_{\mathcal{V}}$ of parabolic Quot schemes (Definition \ref{defn: parabolic quot filtration scheme}). The main purpose of this section is proving that $\text{Fil}^{\alpha}_{\mathcal{V}}$ is represented by a separated scheme of finite type over the base (Proposition \ref{prop: representability of filtration moduli}). This result is used in Sections \ref{section: constructibility of strata} and \ref{section: harder-narasimhan filtrations in families}. We recommend the reader to skip the proofs in Section \ref{section: parabolic Quot schemes} upon their first read.

The purpose of Section \ref{big section: openness of strata} is to prove that each stratum $\text{Bun}_{\mathcal{V}}^{\leq P}$ is represented by an open substack of $\text{Bun}_{\mathcal{V}}$ (Theorem \ref{thm: openess of strata}). This is achieved by first showing that $\text{Bun}_{\mathcal{V}}^{\leq P}$ is constructible (Proposition \ref{prop: constructibility of strata}) and then proving that $\text{Bun}_{\mathcal{V}}^{\leq P}$ is closed under generalization (Proposition \ref{prop: strata closed under generalization}). In Section \ref{section: quasicompactness of strata} we prove that each stratum $\text{Bun}_{\mathcal{V}}^{\leq P}$ is quasicompact (Theorem \ref{thm: quasicompactness of strata}). For this purpose, we use the Harder-Narasimhan stratification of the moduli stack of classical vector bundles, which we recall in Subsection \ref{subsection: classical hn stratication}. Our strategy is to show that the forgetful morphism $\text{Bun}_{\mathcal{V}}^{\leq P} \to \text{Bun}_{\text{GL}_n}(C)$ is quasicompact (Proposition \ref{prop: properness of forgetful map}), and that the image lies in finitely many classical Harder-Narasimhan strata (Corollary \ref{coroll: factoring of forgetful map}). In Section \ref{section: completeness of strata} we prove that each stratum is complete (see Definition \ref{defn: completeness of stacks} and Theorem \ref{thm: completeness of strata}). We adapt the parabolic analogue of Langton's algorithm in Mehta-Seshadri \cite{mehta-seshadri} so that it applies to a general unstable stratum $\text{Bun}_{\mathcal{V}}^{\leq P}$.

In Section \ref{section: harder-narasimhan filtrations in families} we define a notion of relative Harder-Narasimhan filtration in families (Definition \ref{defn: relative HN filtration}). We use this to equip each locally closed stratum $\text{Bun}_{\mathcal{V}}^{= P}$ with the structure of a locally closed substack of $\text{Bun}_{\mathcal{V}}$ (Theorem \ref{thm: representability of moduli of vb with fixed P}). The strategy of proof is to show that the natural morphism $\text{Bun}_{\mathcal{V}}^{=P} \to \text{Bun}_{\mathcal{V}}^{\leq P}$ is proper (using the representability of $\text{Fil}^{\alpha}_{\mathcal{V}}$), radicial (by the uniqueness of Harder-Narasimhan filtrations) and unramified (by the parabolic analogue of Behrend's conjecture in Proposition \ref{prop: rigidity of HN filtration}).

In Appendix \ref{appendix: descent} we provide an exposition of descent for finite morphisms, which we adapt from \cite{grothendieck-descente}[B.3]. In Appendix \ref{appendix: comparison} we discuss the relation of our stratification with the $\Theta$-stratification for the moduli stack of parahoric torsors constructed in \cite{heinloth-hilbertmumford, alper-existence}.

\textbf{Acknowledgements} The notion of slope employed here is a reinterpretation of the definition suggested by J. Lurie in the outline of his Arizona Winter School project \cite{lurie-arizona}, and which we futher relate to \cite{mehta-seshadri}. Some of the arguments greatly benefited from discussion with other members of the project during the evening sessions. I would particularly like to thank Aaron Landesman, David Yang and Bogdam Zavyalov for helpful discussions. I am happy to thank my advisor Nicolas Templier for encouraging me to write down these results and providing very valuable input for the redaction of the manuscript. I thank Nitin Nitsure for helpful comments and David Mehrle for helping out with \LaTeX. I acknowledge support by NSF grants DMS-1454893 and DMS-2001071.

\begin{subsection}{Notation}
We work over a fixed ground field $k$. All schemes are understood to be schemes over $k$. An undecorated product of $k$-schemes (e.g. $A\times B$) should always be interpreted as a fiber product over $k$. If $R$ is a $k$ algebra and $S$ is a $k$-scheme, we may sometimes use the notation $S_R$ to denote the fiber product $S \times \text{Spec}(R)$. 

We write $s \in S$ to mean that $s$ is a (set theoretic) point of $S$. Equivalently, $s$ is the $\text{Spec}$ of the residue field of a topological point of $S$. For such $s \in S$, we will write $\kappa(s)$ for the residue field.

We will often deal with pullbacks of quasicoherent sheaves on schemes. Let $X$ and $Y$ be schemes and let $\mathcal{Q}$ be quasicoherent sheaf on $Y$. It will usually be the case that there is a clear choice of a morphism from $f: X \rightarrow Y$. In such a situation we will not mention the choice of $f$ and write $\mathcal{Q}|_{X}$ to denote the pullback $f^{*}\mathcal{Q}$ of the quasicoherent sheaf $\mathcal{Q}$ by $f$.

We fix once and for all a curve $\pi: C \rightarrow \text{Spec}(k)$ that is smooth, projective and geometrically irreducible over $k$. We choose a finite set $\{x_i\}_{i \in I}$ of $k$-points $x_i$ of $C$. We will often refer to these $x_i$ as points of degeneration. For all $i \in I$, let us denote by $q_i: x_i \rightarrow C$ the closed immersion of $x_i$ into $C$. We also fix the choice of an ample line bundle $\mathcal{O}(1)$ of degree one on $C$.
\end{subsection}
\end{section}

\begin{section}{Parabolic Vector Bundles} \label{section: parabolic vector bundles curve}
\begin{subsection}{Parabolic vector bundles on a curve}

\begin{defn} \label{defn: parabolic vector bundle on curve}
A parabolic vector bundle $\mathcal{V}$ over $C$ with parabolic structure at $\{x_i\}_{i \in I}$ consists of the data of:
\begin{enumerate}[(a)]
    \item A vector bundle $\mathcal{E}^{(0)}$ on $C$.
    \item For each $i \in I$, a chain of vector bundles $\mathcal{E}^{(0)} \,\subset \, \mathcal{E}_i^{(1)} \subset \cdots\; \subset \, \mathcal{E}_i^{(N_i)} = \mathcal{E}^{(0)}(x_i)$.
\end{enumerate}
We define the rank of $\mathcal{V}$ to be $\text{rank} \, \mathcal{V} \vcentcolon = \, \text{rank} \, \mathcal{E}^{(0)}$. For each $i \in I$, we call $N_i$ the chain length at $i$.
\end{defn}

For a parabolic vector bundle $\mathcal{V}$ as above, we will use the convention $\mathcal{E}_i^{(0)} \vcentcolon = \mathcal{E}^{(0)}$ for all $i \in I$. Note that for each $i \in I$ and $1 \leq m \leq N_i$, the quotient $\mathcal{E}_i^{(m)} \, / \, \mathcal{E}_i^{(m-1)}$ is the pushforward $(q_i)_*(V)$ of a finite dimensional $k$-vector space $V$. We will want to keep track of the dimension of these vector spaces. For a parabolic vector bundle $\mathcal{V}$, we write
\[\mathcal{V} = \left[ \; \mathcal{E}^{(0)} \,\overset{a_i^{(1)}}{\subset} \, \mathcal{E}_i^{(1)} \overset{a_i^{(2)}}{\subset} \cdots\; \overset{a_{i}^{(N_i)}}{\subset} \, \mathcal{E}_i^{(N_i)}= \mathcal{E}^{(0)}(x_i) \;\right]_{i \in I} \]
to indicate that the $\mathcal{E}_i^{(m)}$s are the chains of vector bundles appearing in Definition \ref{defn: parabolic vector bundle on curve} and $\text{dim}_{k} \left(\mathcal{E}_i^{(m)} \, / \, \mathcal{E}_i^{(m-1)}\right) \, = \, a_i^{(m)}$.

\begin{defn} \label{defn: type of parabolic bundles} Let $\mathcal{V} = \left[ \; \mathcal{E}^{(0)} \,\overset{a_i^{(1)}}{\subset} \, \mathcal{E}_i^{(1)} \overset{a_i^{(2)}}{\subset} \cdots\; \overset{a_{i}^{(N_i)}}{\subset} \, \mathcal{E}_i^{(N_i)}= \mathcal{E}^{(0)}(x_i) \;\right]_{i \in I}$ and \\$\mathcal{W} = \left[ \;\mathcal{F}^{(0)} \,\overset{b_i^{(1)}}{\subset} \, \mathcal{F}_i^{(1)} \overset{b_i^{(2)}}{\subset} \cdots\; \overset{b_{i}^{(M_i)}}{\subset} \,\mathcal{F}_i^{(M_i)}= \, \mathcal{F}^{(0)}(x_i) \;\right]_{i \in I}$ be two parabolic vector bundles. We say that $\mathcal{W}$ has type $\mathcal{V}$ (or equivalently $\mathcal{V}$ has type $\mathcal{W}$) if 
 \begin{enumerate}[(a)]
     \item $\text{rank} \, \mathcal{W} = \text{rank} \, \mathcal{V}$.
     \item For all $i \in I$, $M_i = N_i$.
     \item For all $i \in I$ and $1 \leq m \leq N_i$, we have $a_i^{(m)} = b_i^{(m)}$.
 \end{enumerate}
\end{defn}
The class of parabolic vector bundles with a given fixed set of chain lengths forms a category. For the next two definitions, we let $\mathcal{V} = \left[ \; \mathcal{E}^{(0)} \,\overset{a_i^{(1)}}{\subset} \, \mathcal{E}_i^{(1)} \overset{a_i^{(2)}}{\subset} \cdots\; \overset{a_{i}^{(N_i)}}{\subset} \, \mathcal{E}_i^{(N_i)}= \mathcal{E}^{(0)}(x_i) \;\right]_{i \in I}$ and $\mathcal{W} = \left[ \;\mathcal{F}^{(0)} \,\overset{b_i^{(1)}}{\subset} \, \mathcal{F}_i^{(1)} \overset{b_i^{(2)}}{\subset} \cdots\; \overset{b_{i}^{(N_i)}}{\subset} \,\mathcal{F}_i^{(N_i)}= \, \mathcal{F}^{(0)}(x_i) \;\right]_{i \in I}$ be two parabolic vector bundles with the same chain length $N_i$ at each $i \in I$.
\begin{defn}
A morphism of parabolic vector bundles $f : \mathcal{W} \rightarrow \mathcal{V}$ consists of the data of a morphism of sheaves $f_i^{(m)}: \mathcal{F}_i^{(m)} \rightarrow \mathcal{E}_i^{(m)}$ for each $i \in I$ and $0 \leq m \leq N_i$  satisfying the following two conditions
\begin{enumerate}[(a)]
\item $f_i^{(0)} = f_j^{(0)}$ for all $i, j \in I$.
\item For all $i \in I$ and $1 \leq m \leq N_i$, the following diagram commutes
\begin{figure}[H]
    \centering
\begin{tikzcd}
 \mathcal{E}_i^{(m-1)} \ar[r,  symbol= \subset] & \mathcal{E}_i^{(m)} \\
 \mathcal{F}_i^{(m-1)} \ar[r, symbol= \subset] \ar[u, "f_i^{(m-1)}"] & \mathcal{F}_i^{(m)} \ar[u, "f_i^{(m)}"]
\end{tikzcd}
\end{figure}
\end{enumerate}
\end{defn}

Recall that for a given vector bundle $\mathcal{E}$ on $C$, we say that a subsheaf $\mathcal{F} \subset \mathcal{E}$ is a subbudle if $\mathcal{F}$ and $\mathcal{E} \, / \, \mathcal{F}$ are themselves vector bundles. In this case $\mathcal{E} \, / \, \mathcal{F}$ is called a quotient bundle.
\begin{defn}
Let $f: \mathcal{W} \rightarrow \mathcal{V}$ be a morphism.
\begin{enumerate}[(1)]
\item We say that $\mathcal{V}$ is a parabolic quotient bundle of $\mathcal{W}$ (equivalently $f$ is an admissible epimorphism) if for each $i \in I$ and $0 \leq m \leq N_i$, the morphism $f_i^{(m)}$ witnesses $\mathcal{E}_i^{(m)}$ as a quotient vector bundle of $\mathcal{F}_i^{(m)}$.
\item We say that $\mathcal{W}$ is a subbundle of $\mathcal{V}$ (equivalently $f$ is an admissible monomorphism) if it satisfies the following two conditions
    \begin{enumerate}[(a)]
        \item For each $i \in I$ and $0 \leq m \leq N_i$, the morphism $f_i^{(m)}$ witnesses $\mathcal{F}_i^{(m)}$ as a vector subbundle of $\mathcal{E}_i^{(m)}$.
        \item For each $i \in I$ and $0 \leq m \leq N_i$, we have $\mathcal{F}_i^{(m)} = \mathcal{E}_i^{(m)} \cap \mathcal{F}^{(0)}(x_i)$.
    \end{enumerate}
\end{enumerate}
\end{defn}

We can use these definitions to define the structure of an exact category on the category of parabolic vector bundles over $C$ with fixed chain lengths $N_i$. All morphisms in this category admit a kernel and an image. This follows from the fact that subsheaves of vector bundles on $C$ are always vector bundles (because $C$ is a regular curve).
\end{subsection}

\begin{subsection}{Parabolic weights and degree}
In order to stratify moduli stacks of parabolic bundles, we will define a notion of slope. First we need the following definition.
\begin{defn} \label{defn: parabolic weights}
A set of parabolic weights $\overline{\lambda}$ consists of the data of a tuple of real numbers $\left(\lambda_i^{(1)}, \lambda_i^{(2)}, \, \cdots, \lambda_i^{(N_i)} \right)$ for each $i \in I$ satisfying:
\begin{enumerate}[(a)]
    \item $0 < \lambda_i^{(m)} < 1$ for all $i \in I$ and $1 \leq m \leq N_i$.
    \item $\lambda_i^{(m-1)} < \lambda_i^{(m)}$ for all $i \in I$ and $2 \leq m \leq N_i$.
\end{enumerate}
$N_i$ is called the chain length of $\overline{\lambda}$ at $i$.
\end{defn}

\begin{defn}
Let $\overline{\lambda}$ be a set of parabolic weights with chain length $N_i$ at $i \in I$. We denote by $\text{Vect}_{\overline{\lambda}}$ the exact category of parabolic vector bundles with chain length $N_i$ at each $i \in I$.
\end{defn}

For the rest of this paper, we fix a choice of parabolic weights $\overline{\lambda}$ with chain length $N_i$ at $i \in I$. Unless otherwise stated, all parabolic vector bundles will be in $\text{Vect}_{\overline{\lambda}}$.

\begin{defn} \label{defn: deg of vector bundle}
Let $\mathcal{V} = \left[ \; \mathcal{E}^{(0)} \,\overset{a_i^{(1)}}{\subset} \, \mathcal{E}_i^{(1)} \overset{a_i^{(2)}}{\subset} \cdots\; \overset{a_{i}^{(N_i)}}{\subset} \, \mathcal{E}_i^{(N_i)}= \mathcal{E}^{(0)}(x_i) \;\right]_{i \in I}$
be a parabolic bundle. The degree of $\mathcal{V}$ is defined to be
\[\text{deg}\; \mathcal{V} = \text{deg} \; \mathcal{E}^{(0)} + \sum_{i \in I} \left(n - \sum_{j=1}^{N_i} \lambda_i^{(j)} \, a_i^{(j)} \right) \]
The slope of $\mathcal{V}$ is defined by $\mu\left(\mathcal{V}\right) \vcentcolon = \frac{\text{deg} \, \mathcal{V}}{\text{rank} \, \mathcal{V}} $.
\end{defn}
\begin{remark}
In \cite{mehta-seshadri}, Mehta and Seshadri consider a slightly different notion of parabolic vector bundle. We consider flags of vector spaces inside $\mathcal{E}(x_i)/\mathcal{E}$. \cite{mehta-seshadri} deals with flags in the fiber $\mathcal{E}/\mathcal{E}(-x_i)$. The inclusion $\mathcal{O}_{C}(-x_i) \hookrightarrow \mathcal{O}_{C}$ of the ideal sheaf of $x_i$ induces an identification $\mathcal{E}(x_i)/\mathcal{E} \cong \mathcal{E}/\mathcal{E}(-x_i)$. Therefore our notion of parabolic vector bundle is in correspondence with the one in Mehta-Seshadri. Our altered definition of degree reflects this change of convention. 
\end{remark}
Let us state a couple lemmas that will come in handy in later sections. They are immediate consequences of the definitions given above.
\begin{lemma} \label{lemma: deg of parabolic vs regular vector bundles}
Let $\mathcal{V} = \left[ \; \mathcal{E}^{(0)} \,\overset{a_i^{(1)}}{\subset} \, \mathcal{E}_i^{(1)} \overset{a_i^{(2)}}{\subset} \cdots\; \overset{a_{i}^{(N_i)}}{\subset} \, \mathcal{E}_i^{(N_i)}= \mathcal{E}^{(0)}(x_i) \;\right]_{i \in I}$ be a parabolic vector bundle of rank $n$. Then, we have 
\[ deg \left(\mathcal{E}^{(0)} \right) \leq deg \left(\mathcal{V} \right) \leq deg\left(\mathcal{E}^{(0)} \right) + n|I| \] \qed
\end{lemma}

\begin{lemma} \label{lemma: deg in ses}
Let $0 \rightarrow \mathcal{W} \rightarrow \mathcal{V} \rightarrow \mathcal{Q} \rightarrow 0$ be a short exact sequence in $\text{Vect}_{\overline{\lambda}}$. Then, we have $\text{deg} \, \mathcal{V} \, = \, \text{deg} \, \mathcal{W} \, + \, \text{deg} \, \mathcal{Q}$. \qed
\end{lemma}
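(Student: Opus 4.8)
The plan is to reduce the additivity of the parabolic degree to two independent additivity statements: one for the classical degree of the underlying vector bundle on $C$, and one for the dimensions of the skyscraper graded pieces at each point of degeneration. Write $\mathcal{W} = \left[\mathcal{F}^{(0)} \subset \mathcal{F}_i^{(m)}\right]_{i \in I}$, $\mathcal{V} = \left[\mathcal{E}^{(0)} \subset \mathcal{E}_i^{(m)}\right]_{i \in I}$, $\mathcal{Q} = \left[\mathcal{G}^{(0)} \subset \mathcal{G}_i^{(m)}\right]_{i \in I}$, with chain dimensions $b_i^{(m)}$, $a_i^{(m)}$, $c_i^{(m)}$ respectively, and let $n_{\mathcal{W}}$, $n$, $n_{\mathcal{Q}}$ denote the ranks.

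First I would unwind what a short exact sequence in the exact category $\text{Vect}_{\overline{\lambda}}$ produces level by level. Since $\mathcal{W} \hookrightarrow \mathcal{V}$ is an admissible monomorphism and $\mathcal{Q}$ is its cokernel, for each $i \in I$ and each $0 \leq m \leq N_i$ the three bundles fit into a short exact sequence of sheaves on $C$:
\[ 0 \to \mathcal{F}_i^{(m)} \to \mathcal{E}_i^{(m)} \to \mathcal{G}_i^{(m)} \to 0. \]
Taking $m = 0$ recovers the short exact sequence $0 \to \mathcal{F}^{(0)} \to \mathcal{E}^{(0)} \to \mathcal{G}^{(0)} \to 0$ of vector bundles on $C$. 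Additivity of rank and of the classical degree in short exact sequences of vector bundles then gives $n = n_{\mathcal{W}} + n_{\mathcal{Q}}$ and $\text{deg}\,\mathcal{E}^{(0)} = \text{deg}\,\mathcal{F}^{(0)} + \text{deg}\,\mathcal{G}^{(0)}$.

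The step I expect to carry the real content is the additivity of the chain dimensions, $a_i^{(m)} = b_i^{(m)} + c_i^{(m)}$. For fixed $i$ and $m$, I would place the level-$(m-1)$ and level-$m$ short exact sequences above into a commutative ladder whose vertical maps are the chain inclusions $\mathcal{F}_i^{(m-1)} \hookrightarrow \mathcal{F}_i^{(m)}$, $\mathcal{E}_i^{(m-1)} \hookrightarrow \mathcal{E}_i^{(m)}$ and $\mathcal{G}_i^{(m-1)} \hookrightarrow \mathcal{G}_i^{(m)}$. As all three vertical maps are injective, the snake lemma collapses to a short exact sequence of cokernels
\[ 0 \to \mathcal{F}_i^{(m)}/\mathcal{F}_i^{(m-1)} \to \mathcal{E}_i^{(m)}/\mathcal{E}_i^{(m-1)} \to \mathcal{G}_i^{(m)}/\mathcal{G}_i^{(m-1)} \to 0. \]
These three terms are the pushforwards $(q_i)_*$ of finite-dimensional $k$-vector spaces of dimensions $b_i^{(m)}$, $a_i^{(m)}$ and $c_i^{(m)}$, so comparing $k$-dimensions yields $a_i^{(m)} = b_i^{(m)} + c_i^{(m)}$.

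Finally I would assemble the pieces. Substituting $n = n_{\mathcal{W}} + n_{\mathcal{Q}}$, $\text{deg}\,\mathcal{E}^{(0)} = \text{deg}\,\mathcal{F}^{(0)} + \text{deg}\,\mathcal{G}^{(0)}$, and $a_i^{(m)} = b_i^{(m)} + c_i^{(m)}$ into the degree formula of Definition \ref{defn: deg of vector bundle} and regrouping the $\mathcal{W}$-contributions and $\mathcal{Q}$-contributions separately gives $\text{deg}\,\mathcal{V} = \text{deg}\,\mathcal{W} + \text{deg}\,\mathcal{Q}$. The only genuine subtlety is confirming that a short exact sequence in the exact category really does induce level-wise short exact sequences of sheaves, so that the snake lemma applies cleanly at each level; once that is secured, the remainder is bookkeeping of dimensions.
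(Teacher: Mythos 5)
Your proof is correct: the level-wise short exact sequences come directly from the definitions of admissible monomorphism and epimorphism, the vertical maps on the quotient chains are injective because $\mathcal{Q}$ is itself a parabolic bundle, and the snake-lemma argument for $a_i^{(m)} = b_i^{(m)} + c_i^{(m)}$ together with classical additivity of rank and degree yields the formula. The paper offers no written proof --- it declares the lemma an immediate consequence of the definitions --- and your argument is exactly the definitional bookkeeping being left implicit, so it matches the intended approach.
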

We end this section with a useful technical lemma.

\begin{lemma} \label{lemma: saturating subsheaves}
Let $\mathcal{V} = \left[ \; \mathcal{E}^{(0)} \,\overset{a_i^{(1)}}{\subset} \, \mathcal{E}_i^{(1)} \overset{a_i^{(2)}}{\subset} \cdots\; \overset{a_{i}^{(N_i)}}{\subset} \, \mathcal{E}_i^{(N_i)}= \mathcal{E}^{(0)}(x_i) \;\right]_{i \in I} $ and \\$\mathcal{W} = \left[ \;\mathcal{F}^{(0)} \,\overset{b_i^{(1)}}{\subset} \, \mathcal{F}_i^{(1)} \overset{b_i^{(2)}}{\subset} \cdots\; \overset{b_{i}^{(N_i)}}{\subset} \,\mathcal{F}_i^{(N_i)}= \, \mathcal{F}^{(0)}(x_i) \;\right]_{i \in I}$ be two parabolic vector bundles. Suppose that $\mathcal{W}$ is a suboject of $\mathcal{V}$ (not necessarily a parabolic subbundle). Then,
\begin{enumerate}[(a)]
    \item There exists a parabolic subbundle $\mathcal{W}^{sat}$ of $\mathcal{V}$ containing $\mathcal{W}$ as a subobject and satisfying $\mu(\mathcal{W}^{sat}) \geq \mu(\mathcal{W})$.
    \item  If we have $\mu(\mathcal{W}^{sat}) = \mu(\mathcal{W})$ in the construction for part (a), then in fact $\mathcal{W}^{sat} = \mathcal{W}$.
    \item Given two parabolic subobjects $\mathcal{W}_1 \hookrightarrow \mathcal{W}_2 \hookrightarrow \mathcal{V}$, we have $\mathcal{W}_1^{sat} \subset \mathcal{W}_2^{sat}$.
\end{enumerate}
\end{lemma}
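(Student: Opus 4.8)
The plan is to build $\mathcal{W}^{sat}$ from the classical saturation of the underlying sheaf together with the flag it induces by intersection, and then to control the parabolic degree by a local computation at each $x_i$ in which the constraint $0<\lambda_i^{(m)}<1$ is the decisive ingredient.

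\emph{Construction and existence (part a).} After identifying $\mathcal{W}$ with its image I regard $\mathcal{F}^{(0)}\subset\mathcal{E}^{(0)}$ and $\mathcal{F}_i^{(m)}\subset\mathcal{E}_i^{(m)}\cap\mathcal{F}^{(0)}(x_i)$ inside $\mathcal{E}^{(0)}(x_i)$. Let $\mathcal{G}^{(0)}$ be the saturation of $\mathcal{F}^{(0)}$ in $\mathcal{E}^{(0)}$, that is, the preimage of the torsion subsheaf of $\mathcal{E}^{(0)}/\mathcal{F}^{(0)}$; since $C$ is a smooth curve, $\mathcal{G}^{(0)}$ is a subbundle of $\mathcal{E}^{(0)}$ of the same rank $r$ as $\mathcal{F}^{(0)}$. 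I then set $\mathcal{G}_i^{(m)} := \mathcal{E}_i^{(m)}\cap\mathcal{G}^{(0)}(x_i)$ and $\mathcal{W}^{sat} := \big[\,\mathcal{G}^{(0)}\subset\mathcal{G}_i^{(1)}\subset\cdots\subset\mathcal{G}^{(0)}(x_i)\,\big]_{i\in I}$. That each $\mathcal{G}_i^{(m)}$ is a subbundle of $\mathcal{E}_i^{(m)}$ follows because an intersection of two subbundles $A,B$ of a bundle $E$ on a smooth curve is again a subbundle: $E/(A\cap B)$ embeds into $E/A\oplus E/B$ and is therefore torsion-free. Condition (b) in the definition of subbundle holds by construction, so $\mathcal{W}^{sat}$ is a parabolic subbundle of $\mathcal{V}$, and $\mathcal{W}\subset\mathcal{W}^{sat}$ since $\mathcal{F}^{(0)}\subset\mathcal{G}^{(0)}$ and $\mathcal{F}_i^{(m)}\subset\mathcal{E}_i^{(m)}\cap\mathcal{F}^{(0)}(x_i)\subseteq\mathcal{G}_i^{(m)}$.

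\emph{The degree inequality (the main obstacle).} The heart of the matter is $\deg\mathcal{W}^{sat}\geq\deg\mathcal{W}$, which I would split into two separately non-negative steps. First, keeping $\mathcal{F}^{(0)}$ fixed and only enlarging the flag to the induced one $\hat{\mathcal{W}} := \big[\mathcal{F}^{(0)}\subset\mathcal{E}_i^{(m)}\cap\mathcal{F}^{(0)}(x_i)\subset\cdots\big]$: using the Abel-summed identity $\sum_m\lambda_i^{(m)}b_i^{(m)} = \lambda_i^{(N_i)}r - \sum_{m=1}^{N_i-1}(\lambda_i^{(m+1)}-\lambda_i^{(m)})\dim(\mathcal{F}_i^{(m)}/\mathcal{F}^{(0)})$, enlarging the flag pieces increases the subtracted dimensions and hence, because $\lambda_i^{(m+1)}-\lambda_i^{(m)}>0$, increases the degree, so $\deg\hat{\mathcal{W}}\geq\deg\mathcal{W}$. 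Second, passing from $\hat{\mathcal{W}}$ to $\mathcal{W}^{sat}$ replaces $\mathcal{F}^{(0)}$ by $\mathcal{G}^{(0)}$ with induced flags. Working locally at $x_i$ with $R=\mathcal{O}_{C,x_i}$, uniformizer $t$, $F\subset G\subset E$ and flag $E^{(m)}$, and writing $\ell_i=\operatorname{length}(G/F)$, a short length computation gives $\operatorname{length}\!\big((E^{(m)}\cap t^{-1}G)/G\big)-\operatorname{length}\!\big((E^{(m)}\cap t^{-1}F)/F\big)=\operatorname{length}\!\big((E^{(m)}\cap t^{-1}G)/(E^{(m)}\cap t^{-1}F)\big)-\ell_i$, so the contribution of $x_i$ to $\deg\mathcal{W}^{sat}-\deg\hat{\mathcal{W}}$ is
\[
\ell_i\Big(1-(\lambda_i^{(N_i)}-\lambda_i^{(1)})\Big)\;+\;\sum_{m=1}^{N_i-1}(\lambda_i^{(m+1)}-\lambda_i^{(m)})\,\operatorname{length}\!\big((E^{(m)}\cap t^{-1}G)/(E^{(m)}\cap t^{-1}F)\big).
\]
The coefficient $1-(\lambda_i^{(N_i)}-\lambda_i^{(1)})$ is strictly positive \emph{precisely because} $0<\lambda_i^{(1)}$ and $\lambda_i^{(N_i)}<1$; adding the length of the torsion of $\mathcal{E}^{(0)}/\mathcal{F}^{(0)}$ supported away from the $x_i$ (which is $\geq0$), every term is non-negative, so $\deg\mathcal{W}^{sat}\geq\deg\hat{\mathcal{W}}\geq\deg\mathcal{W}$. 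Isolating this positive coefficient so that the weight hypothesis can be applied is the only genuinely delicate point; the remainder is formal.

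\emph{Parts (b) and (c).} For (b), equality $\mu(\mathcal{W}^{sat})=\mu(\mathcal{W})$ (same rank, so equal degrees) forces every non-negative term above to vanish: the first step gives $\mathcal{F}_i^{(m)}=\mathcal{E}_i^{(m)}\cap\mathcal{F}^{(0)}(x_i)$ for all $i,m$, while strict positivity of $1-(\lambda_i^{(N_i)}-\lambda_i^{(1)})$ forces $\ell_i=0$, and the off-$x_i$ torsion must also vanish, so $\mathcal{G}^{(0)}=\mathcal{F}^{(0)}$; hence $\mathcal{W}^{sat}=\hat{\mathcal{W}}=\mathcal{W}$. For (c), classical saturation is monotone: $\mathcal{F}_1^{(0)}\subseteq\mathcal{F}_2^{(0)}$ implies $\mathcal{G}_1^{(0)}\subseteq\mathcal{G}_2^{(0)}$, whence $\mathcal{G}_{1,i}^{(m)}=\mathcal{E}_i^{(m)}\cap\mathcal{G}_1^{(0)}(x_i)\subseteq\mathcal{E}_i^{(m)}\cap\mathcal{G}_2^{(0)}(x_i)=\mathcal{G}_{2,i}^{(m)}$, i.e.\ $\mathcal{W}_1^{sat}\subseteq\mathcal{W}_2^{sat}$.
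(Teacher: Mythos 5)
Your proof is correct, and your $\mathcal{W}^{sat}$ is in fact the same subbundle as the paper's: both constructions yield the unique saturated parabolic subobject with the same generic fibre as $\mathcal{W}$ (the paper first enlarges the flag and then saturates each $\mathcal{F}_i^{(m)}$ inside $\mathcal{E}_i^{(m)}$; you first saturate $\mathcal{F}^{(0)}$ inside $\mathcal{E}^{(0)}$ and then intersect with the flag). Your first step --- fixing $\mathcal{F}^{(0)}$, enlarging the flag, and comparing degrees by Abel summation using only the strict monotonicity $\lambda_i^{(m)} < \lambda_i^{(m+1)}$ --- is exactly the paper's first step, as is your treatment of part (c). The genuine difference is in the saturation step. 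The paper applies the snake lemma to the quotient diagrams $\mathcal{Q}_i^{(m)} = \mathcal{E}_i^{(m)}/\mathcal{F}_i^{(m)}$ and their torsion-free quotients to prove that saturation leaves the graded multiplicities unchanged, $c_i^{(m)} = b_i^{(m)}$; hence the parabolic correction terms cancel identically and $\deg \mathcal{W}^{sat} - \deg \hat{\mathcal{W}} = \operatorname{length}\bigl(\mathcal{F}^{(0),\,sat}/\mathcal{F}^{(0)}\bigr) \geq 0$, with no use at all of the constraint $0 < \lambda_i^{(m)} < 1$. You instead perform a local length computation at each $x_i$ and obtain a lower bound whose positivity hinges on $1 - (\lambda_i^{(N_i)} - \lambda_i^{(1)}) > 0$, i.e.\ on the weights lying in $(0,1)$. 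Both arguments are valid (your length identity and the resulting formula check out), but they buy different things: yours is more elementary and makes visible exactly where the normalization of the weights could enter; the paper's is sharper --- it shows that each of your length terms actually equals $\ell_i$, so the degree comparison in the saturation step is an exact identity rather than an estimate, the lemma holds for any strictly increasing real weights, and the precise bookkeeping $c_i^{(m)} = b_i^{(m)}$ is reused elsewhere (e.g.\ in Remark \ref{remark: equality degree properness proof}, the equality case of part (b) is what guarantees that the quotients $\mathcal{Q}_i^{(m)}$ are vector bundles).
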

\begin{proof}
 Set $n =\text{rank} \, \mathcal{W}$. For all $i \in I$ and $0 \leq m \leq N_i$, define $\mathcal{P}_i^{(m)} \vcentcolon = \mathcal{F}^{(0)}(x_i) \, \cap \, \mathcal{E}^{(m)}_i$. Observe that $P_i^{(m)} \subset \mathcal{F}^{(0)}(x_i)$ is a vector bundle, because subsheaves of locally free sheaves in $C$ are locally free. Hence the diagram of sheaves $\mathcal{U} = \left[ \;\mathcal{P}^{(0)} \,\overset{c_i^{(1)}}{\subset} \, \mathcal{P}_i^{(1)} \overset{c_i^{(2)}}{\subset} \cdots\; \overset{c_{i}^{(N_i)}}{\subset} \,\mathcal{P}_i^{(N_i)}= \, \mathcal{P}^{(0)}(x_i) \;\right]_{i \in I}$ is a parabolic vector bundle. We have that $\mathcal{F}^{(m)}_i \subset \mathcal{P}^{(m)}_i$ and $\mathcal{P}^{(0)} = \mathcal{F}^{(0)}$. Therefore $\mathcal{U}$ contains $\mathcal{W}$ as a suboject. We now claim that $\mu(\mathcal{U}) \geq \mu(\mathcal{W})$. Since $\text{rank} \, \mathcal{U} = \text{rank} \, \mathcal{W} = n$, the claim is equivalent to $\text{deg} \, \mathcal{U} \geq \text{deg} \, \mathcal{W}$. Since $\mathcal{F}^{(0)} = \mathcal{P}^{(0)}$, the inequality of degrees amounts to showing that
 \[ \sum_{i \in I} \sum_{j=1}^{N_i} \lambda_i^{(j)} \, c_i^{(j)} \, \leq \sum_{i \in I} \sum_{j=1}^{N_i} \lambda_i^{(j)} \, b_i^{(j)} \]
 We will actually show that for all $i \in I$ we have $\sum_{j=1}^{N_i} \lambda_i^{(j)} \, c_i^{(j)} \, \leq \sum_{j=1}^{N_i} \lambda_i^{(j)} \, b_i^{(j)}$. We can use summation by parts to rewrite both sides of the inequality.  It suffices to show
 \[ n \lambda^{(N_i)} \, + \,\sum_{j=1}^{N_i-1} (\lambda_i^{(j)} - \lambda^{(j+1)}_i)  \, \sum_{m=1}^{j} c_i^{(j)} \, \leq \,  n \lambda^{(N_i)}\, + \, \sum_{j=1}^{N_i-1} (\lambda_i^{(j)} - \lambda^{(j+1)}_i)  \, \sum_{m=1}^{j} b_i^{(j)}  \]
Since we have $\lambda_i^{(j)} < \lambda_i^{(j+1)}$ for all $1 \leq j \leq N_i-1$, it is sufficient to show that $\sum_{m=1}^{j} c_i^{(m)} \, \geq \sum_{m=1}^{j} b_i^{(m)}$. By definition, we have that $\sum_{m=1}^{j} c_i^{(m)} = \text{dim}_{k} \, \mathcal{P}_i^{(j)} \, / \, \mathcal{F}^{(0)}$. Similarly, we have that $\sum_{m=1}^{j} b_i^{(m)} = \text{dim}_{k} \, \mathcal{F}_i^{(j)} \, / \, \mathcal{F}^{(0)}$. Hence the inequality above is equivalent to
 \[\text{dim}_{k} \, \mathcal{P}_i^{(j)} \, / \, \mathcal{F}^{(0)} \geq \text{dim}_{k} \, \mathcal{F}_i^{(j)} \, / \, \mathcal{F}^{(0)}\]
 This is now obvious, because $\mathcal{P}_i^{(m)} \supset \mathcal{F}_i^{(m)}$.
 Suppose that $\mu(\mathcal{U}) = \mu(\mathcal{W})$. Then all of the inequalities above must be equalities. In particular we have $\text{dim}_{k} \, \mathcal{P}_i^{(j)} \, / \, \mathcal{F}^{(0)} = \text{dim}_{k} \, \mathcal{F}_i^{(j)} \, / \, \mathcal{F}^{(0)}$ for all $i,j$. This implies that $\mathcal{U} = \mathcal{W}$. It is also clear that inclusions of subobjects as in part (c) are preserved by this construction.
 
Replacing $\mathcal{W}$ by $\mathcal{U}$ in the proposition, we can assume from the beginning that we have $\mathcal{F}_i^{(m)} = \mathcal{F}^{(0)}(x_i) \, \cap \, \mathcal{E}^{(m)}_i$. Let us define $\mathcal{Q}_i^{(m)} \vcentcolon = \mathcal{E}_i^{(m)} \, / \, \mathcal{F}_i^{(m)}$. This means that we have the following commutative diagram with exact columns:
\begin{figure}[H]
\centering
\begin{tikzcd}
    0  & 0 & \cdots & 0 & 0 & 0\\
    \mathcal{Q}^{(0)} \ar [u] \ar[r, symbol= \subset] &  \mathcal{Q}_i^{(1)} \ar [u] \ar[r, symbol= \subset] & \cdots & \mathcal{Q}_i^{(Ni-2)} \ar [u] \ar[r, symbol= \subset] & \mathcal{Q}_i^{(N_i-1)} \ar [u] \ar[r, symbol= \subset] & \mathcal{Q}^{(0)}(x_i) \ar [u]\\
    \mathcal{E}^{(0)} \ar [u] \ar[r, symbol= \subset] &  \mathcal{E}_i^{(1)} \ar [u] \ar[r, symbol= \subset] & \cdots & \mathcal{E}_i^{(Ni-2)} \ar [u] \ar[r, symbol= \subset] & \mathcal{E}_i^{(N_i-1)} \ar [u] \ar[r, symbol= \subset] & \mathcal{E}^{(0)}(x_i) \ar [u]\\
    \mathcal{F}^{(0)} \ar [u] \ar[r, symbol= \subset] &  \mathcal{F}_i^{(1)} \ar [u] \ar[r, symbol= \subset] & \cdots & \mathcal{F}_i^{(Ni-2)} \ar [u] \ar[r, symbol= \subset] & \mathcal{F}_i^{(N_i-1)} \ar [u] \ar[r, symbol= \subset] & \mathcal{F}^{(0)}(x_i) \ar [u]\\
    0 \ar[u]  & 0 \ar[u] & \cdots & 0 \ar[u] & 0 \ar[u] & 0 \ar[u]
\end{tikzcd}
\caption{Diagram 1}
\label{diagram: 1}
\end{figure}
	The problem is that the $\mathcal{Q}_i^{(m)}$s do not necessarily form a parabolic bundle, since they could have nontrivial torsion. In order to solve this, we kill the $C$-torsion. Let $\xi$ denote the generic point of $C$. Let $\iota : \xi \longrightarrow C$ be the corresponding inclusion. Define
	\[K_i^{(m)} \vcentcolon = \text{Ker} \left(  \mathcal{Q}_i^{(m)} \, \xrightarrow{unit} \, i_* i^* \, \mathcal{Q}_i^{(m)} \right)\]
	\[\mathcal{Q}_i^{(m), \, sat} \vcentcolon = \text{Im} \left( \mathcal{Q}_i^{(m)} \, \xrightarrow{unit} \, i_* i^* \, \mathcal{Q}_i^{(m)} \right)\]
	\[\mathcal{F}_i^{(m), \, sat} \vcentcolon = \text{Ker} \left( \mathcal{E}_i^{(m)} \, \twoheadrightarrow \, \mathcal{Q}_i^{(m)} \rightarrow \,  \mathcal{Q}_i^{(m), \, sat} \right)\]
	There is a similar commutative diagram with exact columns:
\begin{figure}[H]
\centering
\begin{adjustbox}{width=\textwidth}
\begin{tikzcd}
    0  & 0 & \cdots & 0 & 0 & 0\\
    \mathcal{Q}^{(0), \, sat} \ar [u] \ar[r, symbol= \subset] &  \mathcal{Q}_i^{(1), \, sat} \ar [u] \ar[r, symbol= \subset] & \cdots & \mathcal{Q}_i^{(Ni-2), \, sat} \ar [u] \ar[r, symbol= \subset] & \mathcal{Q}_i^{(N_i-1), \, sat} \ar [u] \ar[r, symbol= \subset] & \mathcal{Q}^{(0), \, sat}(x_i) \ar [u]\\
    \mathcal{E}^{(0)} \ar [u] \ar[r, symbol= \subset] &  \mathcal{E}_i^{(1)} \ar [u] \ar[r, symbol= \subset] & \cdots & \mathcal{E}_i^{(Ni-2)} \ar [u] \ar[r, symbol= \subset] & \mathcal{E}_i^{(N_i-1)} \ar [u] \ar[r, symbol= \subset] & \mathcal{E}^{(0)}(x_i) \ar [u]\\
    \mathcal{F}^{(0), \, sat} \ar [u] \ar[r, symbol= \subset] &  \mathcal{F}_i^{(1), \, sat} \ar [u] \ar[r, symbol= \subset] & \cdots & ^s\mathcal{F}_i^{(Ni-2), \, sat} \ar [u] \ar[r, symbol= \subset] & \mathcal{F}_i^{(N_i-1), \, sat} \ar [u] \ar[r, symbol= \subset] & \mathcal{F}^{(0), \, sat}(x_i) \ar [u]\\
    0 \ar[u]  & 0 \ar[u] & \cdots & 0 \ar[u] & 0 \ar[u] & 0 \ar[u]
\end{tikzcd}
\end{adjustbox}
\caption{Diagram 2}
\label{diagram: 2}
\end{figure}
We set $\mathcal{W}^{sat} \vcentcolon = \left[ \;\mathcal{F}^{(0), \, sat} \;\overset{c_i^{(1)}}{\subset} \; \mathcal{F}_i^{(1), \, sat} \; \overset{c_i^{(2)}}{\subset} \cdots\; \overset{c_{i}^{(N_i)}}{\subset} \;\mathcal{F}_i^{(N_i),\, sat}= \, \mathcal{F}^{(0),\, sat}(x_i) \;\right]_{i \in I}$. By construction, $\mathcal{W}^{sat}$ is a parabolic subbundle of $\mathcal{V}$ containing $\mathcal{W}$. It is clear that this construction preserves inclusions of subobject as described in part (c). We are left to show the claims about the slopes. 
By definition, we have the following commutative diagram with exact rows:
\begin{figure}[H]
\centering
\begin{tikzcd}
0 \ar[r] &  K_i^{(m-1)} \ar[r] \ar[d] & \mathcal{Q}_i^{(m-1)} \ar[r] \ar[d] & \mathcal{Q}_i^{(m-1), \, sat} \ar[r] \ar[d] & 0\\
0 \ar[r]&  K_i^{(m)} \ar[r] & \mathcal{Q}_i^{(m)} \ar[r] & \mathcal{Q}_i^{(m), \, sat} \ar[r] & 0
\end{tikzcd}
\end{figure}
An application of the Snake Lemma yields a surjection $ \mathcal{Q}_i^{(m)} \, / \, \mathcal{Q}_i^{(m-1)} \, \twoheadrightarrow \mathcal{Q}_i^{(m), \, sat} \, / \, \mathcal{Q}_i^{(m-1), \, sat}$. Therefore, we have 
\[\text{dim}_{k} \,\left( \mathcal{Q}_i^{(m), \, sat}\, / \,  \mathcal{Q}_i^{(m-1), \, sat}\right) \; \leq \; \text{dim}_{k} \, \left(\mathcal{Q}_i^{(m)}\, / \,  \mathcal{Q}_i^{(m-1)}\right)\]
By applying the Snake Lemma to consecutive columns of Diagram \ref{diagram: 1} and Diagram \ref{diagram: 2} respectively, we obtain the two short exact sequences
\[0 \longrightarrow \mathcal{F}_i^{(m)} \, / \,\mathcal{F}_i^{(m-1)} \longrightarrow \mathcal{E}_i^{(m)} \, / \, \mathcal{E}_i^{(m-1)} \longrightarrow \mathcal{Q}_i^{(m)} \, / \, \mathcal{Q}_i^{(m-1)}  \longrightarrow 0 \]
\[0 \longrightarrow \mathcal{F}_i^{(m), \, sat} \, / \,\mathcal{F}_i^{(m-1), \, sat} \longrightarrow \mathcal{E}_i^{(m),\, sat} \, / \, \mathcal{E}_i^{(m-1),\, sat} \longrightarrow \mathcal{Q}_i^{(m),\, sat} \, / \, \mathcal{Q}_i^{(m-1), \, sat}  \longrightarrow 0 \]
We conclude that $c_i^{(m)} \geq b_i^{(m)}$. But note that for all $i \in I$ we have 
\[\sum_{m=1}^{N_i} c_i^{(m)} = \text{rank} \left(\mathcal{W}^{sat}\right) = \text{rank} \left(\mathcal{W}\right) = \sum_{m=1}^{N_i} b_i^{(m)}\]
Therefore, we must have $c_i^{(m)} = b_i^{(m)}$ for all $i$ and $m$. By construction we have $\mathcal{F}^{(0)} \subset \, \mathcal{F}^{(0), \, sat}$. In particular, $deg \,\mathcal{F}^{(0)} \leq deg\, \mathcal{F}^{(0), \, sat}$ as vector bundles. We therefore get a chain of (in)equalities:
\begin{equation*} 
\begin{aligned}
deg \, \mathcal{W}^{sat} &  = \; deg \; \mathcal{F}^{(0), \,  sat} + \sum_{i \in I} \left(\text{rank} \, \mathcal{W}^{sat} - \sum_{m=1}^{N_i} \lambda_i^{(m)} \, c_i^{(m)} \right) \\
&  = \; deg \; \mathcal{F}^{(0), \,  sat} + \sum_{i \in I} \left(\text{rank} \, \mathcal{W}^{sat} - \sum_{m=1}^{N_i} \lambda_i^{(m)} \, b_i^{(m)} \right) \\
& \geq \; deg \; \mathcal{F}^{(0)} \, + \; \sum_{i \in I} \left(\text{rank} \, \mathcal{W} - \sum_{m=1}^{N_i} \lambda_i^{(m)} \, b_i^{(m)} \right) \\
& = \; deg \, \mathcal{W}
\end{aligned}
\end{equation*}
Part (a) of the lemma follows. 

Suppose that $deg \, \mathcal{W} = deg \, \mathcal{W}^{sat}$. From the chain of (in)equalities above we can conclude that $deg \,\mathcal{F}^{(0)} = deg \, \mathcal{F}^{(0), \, sat}$. Hence we must have $\mathcal{F}^{(0)} = \,\mathcal{F}^{(0), \, sat}$. We can use the fact that $\mathcal{F}_i^{(m)} \subset \, \mathcal{F}_i^{(m), \, sat}$ and that
\[  \text{rank} \, \left(\mathcal{F}_i^{(m)} \, / \, \mathcal{F}^{(0)}\right) \, = \, \sum_{j=1}^{m} b_i^{(j)} \, = \, \text{rank} \, \left(\mathcal{F}_i^{(m), \, sat} \, / \, \mathcal{F}^{(0)}\right) \]
in order to conclude that $\mathcal{F}_i^{(m)} = \, \mathcal{F}_i^{(m), \, sat}$ for all $i$ and $m$. Therefore, we have that $ \mathcal{W} = \, \mathcal{W}^{ sat}$, as stipulated in part (b).
\end{proof}
\end{subsection}

\begin{subsection}{Harder-Narasimhan filtrations}
\begin{defn}
A parabolic vector bundle $\mathcal{V}$ in $\text{Vect}_{\overline{\lambda}}$ is called semistable (with respect to $\overline{\lambda}$) if for all parabolic subbundles $\mathcal{W} \subset \mathcal{V}$, we have $\mu\left(\mathcal{W}\right) \, \leq \, \mu\left(\mathcal{V}\right)$.
\end{defn}

\begin{defn} \label{defn: HN filtration}
Let $\mathcal{V}$ be a parabolic vector bundle in $\text{Vect}_{\overline{\lambda}}$. A Harder-Narasimhan filtration of $\mathcal{V}$ is a filtration $0 = \mathcal{W}_0 \subset \mathcal{W}_1 \subset \cdots \, \subset \mathcal{W}_{l-1} \subset \mathcal{W}_l = \mathcal{V}$ by parabolic subbundles $\mathcal{W}_j$ satisfying the following two conditions:
\begin{enumerate}[(a)]
    \item For all $1 \leq j \leq l$, the parabolic bundle $\mathcal{W}_j \, / \, \mathcal{W}_{j-1}$ is semistable.
    \item For all $2 \leq j \leq l$, we have $\mu\left( \mathcal{W}_{j-1} \, / \, \mathcal{W}_{j-2} \right) \, > \, \mu\left( \mathcal{W}_{j} \, / \, \mathcal{W}_{j-1} \right) $.
\end{enumerate}
\end{defn}
In order to prove the existence of such filtrations, we will need a few lemmas along the way.

\begin{lemma} \label{lemma: filtration by parabolic line bundles}
Let $\mathcal{V}$ be a parabolic bundle. There exists a filtration $0 = \mathcal{W}_0 \subset \mathcal{W}_1 \subset \cdots \, \subset \mathcal{W}_{l-1} \subset \mathcal{W}_l = \mathcal{V}$
by subbundles $\mathcal{W}_j$ such that $\mathcal{W}_j \, / \, \mathcal{W}_{j-1}$ has rank 1 for all $1 \leq j \leq l$.
\end{lemma}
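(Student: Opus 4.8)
The plan is to reduce the statement to the classical fact that the underlying vector bundle admits a full flag, together with the observation that every subbundle of the underlying bundle carries a canonical induced parabolic structure. First I would record the induced-structure construction: given any subbundle $\mathcal{F}^{(0)} \subset \mathcal{E}^{(0)}$ (so that $\mathcal{E}^{(0)} / \mathcal{F}^{(0)}$ is locally free), set $\mathcal{F}_i^{(m)} \vcentcolon = \mathcal{E}_i^{(m)} \cap \mathcal{F}^{(0)}(x_i)$, the intersection being taken inside $\mathcal{E}^{(0)}(x_i)$. Since $C$ is a regular curve, each $\mathcal{F}_i^{(m)}$ is automatically a vector bundle, and the quotient $\mathcal{E}_i^{(m)} / \mathcal{F}_i^{(m)}$ is torsion-free---hence locally free---because it embeds into the locally free sheaf $(\mathcal{E}^{(0)} / \mathcal{F}^{(0)})(x_i)$. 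Thus $\mathcal{F}_i^{(m)}$ is a subbundle of $\mathcal{E}_i^{(m)}$, and a local computation at each $x_i$ shows $\mathcal{F}_i^{(N_i)} = \mathcal{F}^{(0)}(x_i)$ and $\mathcal{F}_i^{(0)} = \mathcal{F}^{(0)}$, so that $\mathcal{W} \vcentcolon = \left[\,\mathcal{F}^{(0)} \subset \cdots \subset \mathcal{F}_i^{(N_i)}\,\right]_{i \in I}$ is a parabolic vector bundle which is a parabolic subbundle of $\mathcal{V}$. This is precisely the construction of the sheaves $\mathcal{P}_i^{(m)}$ carried out in the first half of the proof of Lemma \ref{lemma: saturating subsheaves}.

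Next I would record that the construction is monotone: if $\mathcal{F}^{(0)} \subset \mathcal{G}^{(0)} \subset \mathcal{E}^{(0)}$ are nested subbundles, then the associated parabolic subbundles satisfy $\mathcal{W}_{\mathcal{F}} \subset \mathcal{W}_{\mathcal{G}}$, and moreover $\mathcal{W}_{\mathcal{F}}$ is a parabolic subbundle of $\mathcal{W}_{\mathcal{G}}$. Both assertions follow from the identity $\left(\mathcal{E}_i^{(m)} \cap \mathcal{G}^{(0)}(x_i)\right) \cap \mathcal{F}^{(0)}(x_i) = \mathcal{E}_i^{(m)} \cap \mathcal{F}^{(0)}(x_i)$, which is exactly condition (b) of the definition of subbundle applied to the inclusion $\mathcal{W}_{\mathcal{F}} \hookrightarrow \mathcal{W}_{\mathcal{G}}$.

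With this in hand the proof is immediate. Every vector bundle on the smooth projective curve $C$ admits a filtration $0 = \mathcal{F}_0^{(0)} \subset \mathcal{F}_1^{(0)} \subset \cdots \subset \mathcal{F}_n^{(0)} = \mathcal{E}^{(0)}$ by subbundles with line-bundle quotients, where $n = \text{rank} \, \mathcal{V}$: it suffices to produce a single line subbundle of any bundle of positive rank and then induct on the rank, and a line subbundle is obtained by saturating the image of a nonzero map $\mathcal{O}(-m) \to \mathcal{E}^{(0)}$ for $m \gg 0$ (such a map exists since $H^0(\mathcal{E}^{(0)}(m)) \neq 0$ for large $m$). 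Applying the induced-structure construction to each term yields parabolic subbundles $\mathcal{W}_j$ fitting into $0 = \mathcal{W}_0 \subset \mathcal{W}_1 \subset \cdots \subset \mathcal{W}_n = \mathcal{V}$, and the successive quotient $\mathcal{W}_j / \mathcal{W}_{j-1}$ (formed in the exact category of $\text{Vect}_{\overline{\lambda}}$) is a parabolic bundle whose underlying vector bundle is the line bundle $\mathcal{F}_j^{(0)} / \mathcal{F}_{j-1}^{(0)}$, hence has rank $1$, as required with $l = n$.

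I expect no serious obstacle here: the only genuine inputs are the existence of a full flag of the underlying bundle (standard for bundles on a curve) and the well-definedness of the induced parabolic structure. The point deserving the most care is the verification that the intersections $\mathcal{E}_i^{(m)} \cap \mathcal{F}^{(0)}(x_i)$ are genuine subbundles and assemble into a parabolic subbundle; this is where regularity of $C$ is used, and it has essentially already been performed in the proof of Lemma \ref{lemma: saturating subsheaves}. An equivalent formulation of the whole argument is a direct induction on $\text{rank} \, \mathcal{V}$: choose a line subbundle of $\mathcal{E}^{(0)}$, equip it with the induced parabolic structure to obtain a rank $1$ parabolic subbundle $\mathcal{W}_1 \subset \mathcal{V}$, and apply the inductive hypothesis to the rank $(n-1)$ parabolic quotient $\mathcal{V} / \mathcal{W}_1$, pulling the resulting filtration back along $\mathcal{V} \twoheadrightarrow \mathcal{V} / \mathcal{W}_1$.
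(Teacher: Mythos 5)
Your proposal is correct and rests on exactly the same two ingredients as the paper's proof: a saturated line subbundle of $\mathcal{E}^{(0)}$ obtained from a section of a twist, and the induced parabolic structure $\mathcal{F}_i^{(m)} = \mathcal{E}_i^{(m)} \cap \mathcal{F}^{(0)}(x_i)$. The only difference is bookkeeping—you build the full flag of the underlying bundle first and then induce structures (with a monotonicity check), whereas the paper inducts on rank using a rank-$1$ parabolic subbundle and the parabolic quotient, which is precisely the "equivalent formulation" you give at the end.
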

\begin{proof}
By induction on the rank of $\mathcal{V}$, it suffices to show that $\mathcal{V}$ has a parabolic subbundle $\mathcal{W}$ of rank 1. Suppose that $\mathcal{V} = \left[ \; \mathcal{E}^{(0)} \,\overset{a_i^{(1)}}{\subset} \, \mathcal{E}_i^{(1)} \overset{a_i^{(2)}}{\subset} \cdots\; \overset{a_{i}^{(N_i)}}{\subset} \, \mathcal{E}_i^{(N_i)}= \mathcal{E}^{(0)}(x_i) \;\right]_{i \in I}$. It is well known that $\mathcal{E}^{(0)}$ admits a vector subbundle $\mathcal{F}^{(0)}$ of rank 1. Indeed, by Riemann-Roch there exists some $N>0$ such that $\mathcal{E}^{(0)}(N)$ has a nonzero global section $\mathcal{O}_C \rightarrow \mathcal{E}^{(0)}(N)$. 
Tensoring by $\mathcal{O}(-N)$ gives us a subsheaf $\mathcal{O}(-N) \rightarrow \mathcal{E}^{(0)}$ that is a line bundle. Let $j: \xi \rightarrow C$ denote the inclusion of the generic point $\xi$ into $C$.  Set $\mathcal{F}^{(0)} \vcentcolon = \text{Ker}\left( \mathcal{E}^{(0)} \rightarrow \mathcal{E}^{(0)}\, / \, \mathcal{O}(-N) \xrightarrow{unit} j_* j^{*} \;  \mathcal{E}^{(0)}\, / \, \mathcal{O}(-N) \right)$. Then the quotient $\mathcal{E}^{(0)}\, / \, \mathcal{F}^{(0)}$ is torsion free, because it is a subsheaf of $j_* j^{*} \;  \mathcal{E}^{(0)}\, / \, \mathcal{O}(-N)$. So $\mathcal{F}^{(0)}$ is a subbundle of $\mathcal{E}^{(0)}$. On the other hand, we have that $\mathcal{F}^{(0)}\, / \, \mathcal{O}(-N)$ is torsion. Therefore $\text{rank} \, \mathcal{F}^{(0)} = \text{rank} \, \mathcal{O}(-N) = 1$.

We can now set $\mathcal{F}_i^{(m)} \vcentcolon = \mathcal{E}_i^{(m)} \, \cap \, \mathcal{F}^{(0)}(x_i)$ and define $\mathcal{W}$ to be the diagram of sheaves $\mathcal{W} = \left[ \;\mathcal{F}^{(0)} \,\overset{b_i^{(1)}}{\subset} \, \mathcal{F}_i^{(1)} \overset{b_i^{(2)}}{\subset} \cdots\; \overset{b_{i}^{(N_i)}}{\subset} \,\mathcal{F}_i^{(N_i)}= \, \mathcal{F}^{(0)}(x_i) \;\right]_{i \in I}$. It is clear that $\mathcal{W}$ is a parabolic subbundle of rank 1, because all $\mathcal{F}_i^{(m)} \subset \mathcal{F}^{(0)}(x_i)$ are line bundles.
\end{proof}

\begin{coroll} \label{coroll: boundedness degrees subbundles}
Let $\mathcal{V}$ be a parabolic vector bundle of rank $n$. Then,
\begin{enumerate}[(a)]
\item There is a constant $M$ such that any subbundle $\mathcal{U} \subset \mathcal{V}$ satisfies $\text{deg} \, \mathcal{U} \leq M$. 
\item The set of all slopes of parabolic subbundles of $\mathcal{V}$ has a maximal element.
\end{enumerate}
\end{coroll}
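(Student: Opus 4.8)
The plan is to reduce the boundedness of the \emph{parabolic} degree of a subbundle to the classical boundedness of the degree of its underlying vector bundle on $C$, and then to upgrade the resulting upper bound on slopes to an attained maximum by exhibiting the possible slopes inside a discrete subset of $\mathbb{R}$.

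For part (a), I would take a parabolic subbundle $\mathcal{U}\subset\mathcal{V}$ with underlying subbundle $\mathcal{F}^{(0)}\subset\mathcal{E}^{(0)}$ and rank $r=\text{rank}\,\mathcal{U}\leq n$. Applying Lemma \ref{lemma: deg of parabolic vs regular vector bundles} to $\mathcal{U}$ gives $\text{deg}\,\mathcal{U}\leq \text{deg}\,\mathcal{F}^{(0)}+r|I|\leq \text{deg}\,\mathcal{F}^{(0)}+n|I|$, so it suffices to bound $\text{deg}\,\mathcal{F}^{(0)}$ from above over all subbundles of the fixed bundle $\mathcal{E}^{(0)}$. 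This is the classical fact that degrees of subbundles of a vector bundle on a smooth projective curve are bounded above: writing $\mu_{\max}(\mathcal{E}^{(0)})$ for the top slope of its ordinary Harder--Narasimhan filtration, every subbundle satisfies $\mu(\mathcal{F}^{(0)})\leq \mu_{\max}(\mathcal{E}^{(0)})$, whence $\text{deg}\,\mathcal{F}^{(0)}=r\,\mu(\mathcal{F}^{(0)})\leq n\cdot\max\bigl(0,\mu_{\max}(\mathcal{E}^{(0)})\bigr)=:M'$. Setting $M=M'+n|I|$ settles (a). (By Lemma \ref{lemma: saturating subsheaves} the same bound then holds for arbitrary subobjects, since saturation does not decrease the degree; this is what will be needed later for the existence of Harder--Narasimhan filtrations.)

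For part (b), the point I would stress is that the possible slopes do not merely accumulate below a supremum but lie in a discrete set. Any parabolic subbundle has rank $r\in\{1,\dots,n\}$ and is classified at each $i\in I$ by the jumps $b_i^{(m)}=\dim_k\bigl(\mathcal{F}_i^{(m)}/\mathcal{F}_i^{(m-1)}\bigr)$, which are nonnegative integers with $\sum_{m=1}^{N_i}b_i^{(m)}=r$ (using that each $x_i$ is a $k$-point). For bounded rank there are only finitely many such types $\beta=(b_i^{(m)})_{i,m}$. For a fixed $r$ and $\beta$, Definition \ref{defn: deg of vector bundle} reads $\text{deg}\,\mathcal{U}=\text{deg}\,\mathcal{F}^{(0)}+c_{r,\beta}$ with $c_{r,\beta}=\sum_{i\in I}\bigl(r-\sum_{m=1}^{N_i}\lambda_i^{(m)}b_i^{(m)}\bigr)$ a fixed real constant, while $\text{deg}\,\mathcal{F}^{(0)}\in\mathbb{Z}$ and $\text{deg}\,\mathcal{F}^{(0)}\leq M'$ by part (a). Hence every subbundle slope lies in
\[ D \vcentcolon= \bigcup_{r,\beta}\Bigl\{\tfrac{d+c_{r,\beta}}{r} : d\in\mathbb{Z},\ d\leq M'\Bigr\}, \]
a finite union of arithmetic progressions that are bounded above, so $D\cap[x,\infty)$ is finite for every $x\in\mathbb{R}$. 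Since the set of slopes of parabolic subbundles of $\mathcal{V}$ is a nonempty subset of $D$ (it contains $\mu(\mathcal{V})$, as $\mathcal{V}$ is a subbundle of itself), it meets some $[x,\infty)$ in a nonempty finite set, whose largest element is the required maximum.

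The routine inputs are Lemma \ref{lemma: deg of parabolic vs regular vector bundles} together with the classical boundedness of subbundle degrees. The only step demanding genuine care is the passage from "bounded above" to "attains a maximum" in (b): the finiteness of the set of types and the integrality of $\text{deg}\,\mathcal{F}^{(0)}$ are precisely what rule out accumulation of slopes at their supremum, and I expect that to be the crux of the argument.
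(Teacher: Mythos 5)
Your proof is correct, but it takes a genuinely different route from the paper's. The paper stays entirely inside the parabolic category: it first constructs a filtration $0=\mathcal{W}_0\subset\mathcal{W}_1\subset\cdots\subset\mathcal{W}_l=\mathcal{V}$ with rank-one parabolic quotients (Lemma \ref{lemma: filtration by parabolic line bundles}), and for an arbitrary subbundle $\mathcal{U}$ it intersects $\mathcal{U}$ with this filtration, claiming each graded piece $(\mathcal{W}_j\cap\mathcal{U})/(\mathcal{W}_{j-1}\cap\mathcal{U})$ is either $0$ or all of $\mathcal{W}_j/\mathcal{W}_{j-1}$, so that $\deg\mathcal{U}$ is a subset-sum of the fixed numbers $\deg(\mathcal{W}_j/\mathcal{W}_{j-1})$; this gives the bound in (a) and, since there are finitely many subsets, attainment in (b). You instead deduce (a) from Lemma \ref{lemma: deg of parabolic vs regular vector bundles} plus the classical boundedness of degrees of subbundles of the fixed bundle $\mathcal{E}^{(0)}$, and you get attainment in (b) from discreteness: finitely many flag types, integrality of $\deg\mathcal{F}^{(0)}$, and the bound from (a). The paper's route buys self-containedness (no appeal to classical Harder--Narasimhan theory; its rank-one filtration lemma comes straight from Riemann--Roch), while your route is shorter given the classical input and, notably, is more careful at exactly the step you identify as the crux: the graded pieces of the intersected filtration are in general only subobjects, not parabolic subbundles, of the rank-one quotients (already for ordinary bundles: embed $\mathcal{O}(-1)\hookrightarrow\mathcal{O}\oplus\mathcal{O}$ on $\mathbb{P}^1$ by two sections of $\mathcal{O}(1)$ with disjoint zeros; it meets the first factor in $0$, and its image in the second graded quotient is $\mathcal{O}(-1)\subsetneq\mathcal{O}$). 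So the paper's subset-sum identity should really be an inequality (obtained via Lemma \ref{lemma: saturating subsheaves}), which still proves (a) but leaves attainment in (b) needing precisely the discreteness mechanism you supply --- a mechanism the paper itself uses later in a similar form (compare Lemma \ref{lemma: finiteness of HN datum less than}). In short: your argument is valid, complementary to the paper's, and arguably the more airtight of the two on point (b).
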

\begin{proof}\
\begin{enumerate}[(a)]
\item Let $0 = \mathcal{W}_0 \subset \mathcal{W}_1 \subset \cdots \, \subset \mathcal{W}_{l-1} \subset \mathcal{W}_l = \mathcal{V}$ be a filtration as in Lemma \ref{lemma: filtration by parabolic line bundles}. We set $M = \underset{A \subset \{1, 2, \cdots, l\}}{\text{max}} \, \left\{\sum_{j \in A} \text{deg} \,\mathcal{W}_j \, / \, \mathcal{W}_{j-1} \right\}$. Let $\mathcal{U}$ be a parabolic subbundle of $\mathcal{V}$. We can take intersections in the category of parabolic vector bundles, since $\text{Vect}_{\overline{\lambda}}$ admits kernels. Consider the induced filtration of $\mathcal{U}$ by parabolic subbundles
\[ 0 = \mathcal{W}_0 \, \cap \mathcal{U} \, \subset \mathcal{W}_1\, \cap \mathcal{U} \, \subset \cdots \, \subset \mathcal{W}_{l-1}\, \cap \mathcal{U} \, \subset \mathcal{W}_l\, \cap \mathcal{U} \, = \mathcal{U} \]
For all $1 \leq j \leq l$ we have that $\left(\mathcal{W}_j \, \cap \, \mathcal{U}\right) \, / \, \left(\mathcal{W}_{j-1} \, \cap \, \mathcal{U} \right)$ is isomorphic to a parabolic subbundle of  $\mathcal{W}_j \, / \, \mathcal{W}_{j-1}$. Since $\mathcal{W}_j \, / \, \mathcal{W}_{j-1}$ has rank 1, we must have either
$\left(\mathcal{W}_j \, \cap \, \mathcal{U}\right) \, / \, \left(\mathcal{W}_{j-1}\, \cap \, \mathcal{U} \right) \, = \, 0$ or $\left(\mathcal{W}_j \, \cap \, \mathcal{U}\right) \, / \, \left(\mathcal{W}_{j-1}\, \cap \, \mathcal{U} \right) \; \cong \; \mathcal{W}_j \, / \, \mathcal{W}_{j-1}$. Let
\[ A_{\mathcal{U}} \vcentcolon = \left\{ \; j \in \{1, 2, \cdots, l\} \; \mid \; \left(\mathcal{W}_j \, \cap \, \mathcal{U}\right) \, / \, \left(\mathcal{W}_{j-1}\, \cap \, \mathcal{U} \right) \; \cong \; \mathcal{W}_j \, / \, \mathcal{W}_{j-1}  \; \right\} \]
A repeated application of Lemma \ref{lemma: deg in ses} yields $\text{deg} \, \mathcal{U}  =  \sum_{j \in A_{\mathcal{U}}} \text{deg} \,\mathcal{W}_j \, / \, \mathcal{W}_{j-1} \leq M$.
\item Fix a rank $k$ with $1 \leq k \leq n$. We claim that the set $\left\{ \; \mu(\mathcal{U}) \; \mid \; \mathcal{U} \; \text{is a subbundle of $\mathcal{V}$ of rank $k$} \; \right\}$ has a maximal element. Once this claim is proven, we can take the maximum among all possible ranks $1 \leq k \leq n$ to produce the maximal element as in part (b) of the corollary. We are left to prove the claim. 

Let $\mathcal{U}$ be any subbundle of rank $k$. By definition, we have $\mu(\mathcal{U}) = \frac{1}{k} \text{deg} \, \mathcal{U}$. So it suffices to produce a subbundle $\mathcal{U}$ of maximal degree among those of rank $k$. Consider the finite set $S \vcentcolon = \left\{ A_{\mathcal{U}} \; \mid \; \mathcal{U} \, \text{is a subbundle of $\mathcal{V}$ of rank $k$} \; \right\}$, where $A_{\mathcal{U}}$ is defined as in part (a). Let $A_{\mathcal{U}}$ be an element of $S$ maximizing the quantity $\sum_{j \in A_{\mathcal{U}}} \text{deg} \,\mathcal{W}_j$. The same argument as in part (a) shows that $\mathcal{U}$ is a subbundle of rank $k$ with maximal degree among subbundles of the same rank.
\end{enumerate}
\end{proof}

\begin{lemma} \label{lemma: semistable bundles and ses}
Let $\mathcal{V}$, $\mathcal{W}$ be two semistable parabolic vector bundles. Then,
\begin{enumerate}[(a)]
\item For any subobject $\mathcal{P} \hookrightarrow \mathcal{W}$ (not necessarily a parabolic subbundle), we have $\mu(\mathcal{P}) \leq \mu(\mathcal{W})$.
\item If $\mathcal{Q}$ is a parabolic quotient bundle of $\mathcal{W}$, then $\mu\left(\mathcal{Q}\right) \, \geq \, \mu\left(\mathcal{W}\right)$.
\item If $\mu\left(\mathcal{W}\right) \, > \, \mu\left(\mathcal{V}\right)$, then $\text{Hom}_{\text{Vect}_{\overline{\lambda}}} \left(\mathcal{W}, \, \mathcal{V}\right) = 0$.
\item Let $\mathcal{P}$ fit into an exact sequence $0 \rightarrow \mathcal{V} \rightarrow \mathcal{P} \rightarrow \mathcal{W} \rightarrow 0$. If $\mathcal{U}$ is a parabolic subbundle of $\mathcal{P}$, then $\mu(U) \leq \text{max} \left\{ \mu(\mathcal{V}) , \, \mu(\mathcal{W}) \right\}$
\end{enumerate}
\end{lemma}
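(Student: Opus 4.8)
The plan is to treat these as the four standard structural properties of semistable objects in the parabolic slope formalism, and to prove them in the order (a), (b), (c), (d), with each part feeding into the next. The two workhorses will be the saturation construction of Lemma~\ref{lemma: saturating subsheaves} and the additivity of degree in short exact sequences (Lemma~\ref{lemma: deg in ses}); everything else is an arithmetic ``seesaw'' manipulation of slopes.

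For part (a), given a subobject $\mathcal{P} \hookrightarrow \mathcal{W}$ I would pass to its saturation $\mathcal{P}^{sat}$, which by Lemma~\ref{lemma: saturating subsheaves} is a genuine parabolic subbundle of $\mathcal{W}$ with $\mu(\mathcal{P}^{sat}) \geq \mu(\mathcal{P})$. Semistability of $\mathcal{W}$ applied to the subbundle $\mathcal{P}^{sat}$ gives $\mu(\mathcal{P}^{sat}) \leq \mu(\mathcal{W})$, and chaining the two inequalities yields the claim. For part (b), I would realize the parabolic quotient bundle $\mathcal{Q}$ as an admissible epimorphism and write the short exact sequence $0 \to \mathcal{K} \to \mathcal{W} \to \mathcal{Q} \to 0$, where the kernel $\mathcal{K}$ is a parabolic subbundle. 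Since ranks add and $\text{deg}\,\mathcal{W} = \text{deg}\,\mathcal{K} + \text{deg}\,\mathcal{Q}$ by Lemma~\ref{lemma: deg in ses}, the slope of $\mathcal{Q}$ is determined by those of $\mathcal{W}$ and $\mathcal{K}$; feeding in the bound $\mu(\mathcal{K}) \le \mu(\mathcal{W})$ from part (a) forces $\mu(\mathcal{Q}) \geq \mu(\mathcal{W})$.

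For part (c), suppose $f \colon \mathcal{W} \to \mathcal{V}$ is nonzero and form its image $\mathcal{I} = \text{Im}(f)$. The key observation is that $\mathcal{I}$ is simultaneously a parabolic \emph{quotient} bundle of $\mathcal{W}$ (via $\mathcal{I} \cong \mathcal{W}/\text{Ker}(f)$, the kernel being a subbundle because quotients of bundles on the curve $C$ are torsion-free) and a \emph{subobject} of the semistable $\mathcal{V}$. Part (b) then gives $\mu(\mathcal{I}) \geq \mu(\mathcal{W})$ and part (a) gives $\mu(\mathcal{I}) \leq \mu(\mathcal{V})$; since $f \neq 0$ forces $\mathcal{I} \neq 0$, these combine to $\mu(\mathcal{W}) \leq \mu(\mathcal{V})$, contradicting $\mu(\mathcal{W}) > \mu(\mathcal{V})$. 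For part (d), given the extension $0 \to \mathcal{V} \to \mathcal{P} \to \mathcal{W} \to 0$ and a subbundle $\mathcal{U} \subseteq \mathcal{P}$, I would intersect with $\mathcal{V}$ to produce $\mathcal{U}_{\mathcal{V}} \vcentcolon= \mathcal{U} \cap \mathcal{V}$ and the image $\mathcal{U}_{\mathcal{W}}$ of $\mathcal{U}$ in $\mathcal{W}$, fitting into $0 \to \mathcal{U}_{\mathcal{V}} \to \mathcal{U} \to \mathcal{U}_{\mathcal{W}} \to 0$. Part (a) bounds $\mu(\mathcal{U}_{\mathcal{V}}) \le \mu(\mathcal{V})$ and $\mu(\mathcal{U}_{\mathcal{W}}) \le \mu(\mathcal{W})$, and since
\[
\text{deg}\,\mathcal{U} \;=\; \text{deg}\,\mathcal{U}_{\mathcal{V}} + \text{deg}\,\mathcal{U}_{\mathcal{W}} \;\le\; \text{rank}(\mathcal{U}_{\mathcal{V}})\,\mu(\mathcal{V}) + \text{rank}(\mathcal{U}_{\mathcal{W}})\,\mu(\mathcal{W}),
\]
with the ranks summing to $\text{rank}\,\mathcal{U}$, the slope $\mu(\mathcal{U})$ is a rank-weighted average of the two and hence bounded by the maximum; the degenerate cases $\mathcal{U}_{\mathcal{V}} = 0$ or $\mathcal{U}_{\mathcal{W}} = 0$ reduce directly to part (a).

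The hard part will not be any single inequality but the bookkeeping that makes them legitimate: namely, verifying that each sequence I write down is an \emph{admissible} short exact sequence in $\text{Vect}_{\overline{\lambda}}$ so that Lemma~\ref{lemma: deg in ses} applies, and that the images, kernels, and intersections I form (e.g. $\mathcal{I}$, $\mathcal{K}$, $\mathcal{U}\cap\mathcal{V}$) are honest parabolic (quotient/sub) bundles rather than mere subobjects. This rests on the structural fact, noted just before the definition of semistability, that every morphism in this exact category admits a kernel and image and that subsheaves of bundles on $C$ are again bundles; once that is in hand, parts (c) and (d) are purely formal consequences of (a) and (b).
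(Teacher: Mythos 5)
Your proposal is correct and follows essentially the same route as the paper's proof: part (a) via saturation (Lemma \ref{lemma: saturating subsheaves}), part (b) via the kernel sequence and degree additivity (Lemma \ref{lemma: deg in ses}), part (c) via the image $\mathcal{I}$ being simultaneously a parabolic quotient bundle of $\mathcal{W}$ and a subobject of $\mathcal{V}$, and part (d) via the sequence $0 \to \mathcal{U} \cap \mathcal{V} \to \mathcal{U} \to \mathrm{Im}(\mathcal{U} \to \mathcal{W}) \to 0$ together with the rank-weighted average bound. The bookkeeping issues you flag (admissibility of the sequences, existence of kernels and images) are resolved exactly as you anticipate, by the exact-category structure on $\text{Vect}_{\overline{\lambda}}$ established earlier in the paper.
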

\begin{proof}
\begin{enumerate}[(a)]
    \item By Lemma \ref{lemma: saturating subsheaves}, we have a subbundle $\mathcal{P}^{sat}$ of $\mathcal{W}$ with $\mu(\mathcal{P}^{sat}) \geq \mu(\mathcal{P})$. By semistability, $\mu(\mathcal{W}) \geq \mu(\mathcal{P}^{sat})$.
    
    \item Let $m = \text{rank} \, \mathcal{Q}$ and $n = \text{rank}\, \mathcal{W}$. Define $\mathcal{K}$ to be the kernel of the admissible epimorphism $\mathcal{W} \twoheadrightarrow \mathcal{Q}$. By Lemma \ref{lemma: deg in ses}, we have $\text{deg} \, \mathcal{W} = \text{deg} \, \mathcal{K} \, + \, \text{deg}\, \mathcal{Q}$. We can rewrite this as $\mu(\mathcal{W}) = \frac{n-m}{n} \mu(\mathcal{K}) + \frac{m}{n} \mu(\mathcal{Q})$. Since $\mathcal{W}$ is semistable, we have $\mu(\mathcal{K}) \leq \mu(\mathcal{W})$. Therefore
    \[ \mu(\mathcal{Q}) = \frac{n}{m} \left( \mu(\mathcal{W}) - \frac{n-m}{n} \mu(\mathcal{K}) \right) \geq  \frac{n}{m} \left( \mu(\mathcal{W}) - \frac{n-m}{n} \mu(\mathcal{W}) \right) = \mu(\mathcal{W}) \]
    \item Suppose that there exists a nontrivial morphism $f: \mathcal{W} \rightarrow \mathcal{V}$. Consider the nonzero image $\mathcal{I} = \text{Im}\left( \,\mathcal{W} \xrightarrow{f} \mathcal{V} \, \right)$ in $\text{Vect}_{\overline{\lambda}}$. Then $\mathcal{I}$ is a parabolic quotient bundle of $\mathcal{W}$. By part (b), we know that $\mu(\mathcal{I}) \geq \mu(\mathcal{W})$. Therefore $\mu(\mathcal{I}) > \mu(\mathcal{V})$. But $\mathcal{I}$ is a suboject of $\mathcal{V}$. This contradicts semistability of $\mathcal{V}$ by part (a).
    \item Let $M \vcentcolon =\text{max} \left\{ \mu(\mathcal{V}) , \, \mu(\mathcal{W}) \right\}$. Define $\mathcal{Q}$ to be the image of the composition $\mathcal{U}\,  \hookrightarrow \, \mathcal{P} \, \twoheadrightarrow \, \mathcal{W}$ in $\text{Vect}_{\overline{\lambda}}$. Then $\mathcal{Q}$ is a quotient bundle of $\mathcal{U}$. We get a short exact sequence $0 \longrightarrow \mathcal{K} \longrightarrow \mathcal{U} \longrightarrow \mathcal{Q} \longrightarrow 0$, where $\mathcal{K} \hookrightarrow \mathcal{V}$ and $\mathcal{Q} \hookrightarrow \mathcal{W}$. By part (a), we see that $\mu(\mathcal{K}) \leq \mu(\mathcal{V}) \leq M$. Similarly, $\mu(\mathcal{Q}) \leq \mu(\mathcal{W}) \leq M$. Therefore, we have
    \[ \mu(\mathcal{U}) = \frac{\text{rank}\, \mathcal{K}}{\text{rank} \, \mathcal{U}} \, \mu(\mathcal{K}) \, + \, \frac{\text{rank}\, \mathcal{Q}}{\text{rank} \, \mathcal{U}} \, \mu(\mathcal{Q}) \,  \leq \, \frac{\text{rank}\, \mathcal{K}}{\text{rank} \, \mathcal{U}} \, M \, + \,\frac{\text{rank}\, \mathcal{Q}}{\text{rank} \, \mathcal{U}} \, M \, = \, M\]
\end{enumerate}
\end{proof}

\begin{lemma} \label{lemma: existence maximal destibilizing subbundle}
Let $\mathcal{V}$ be a parabolic vector bundle in $\text{Vect}_{\overline{\lambda}}$. Let $\mu$ denote the maximal element of the set of all slopes of subbundles of $\mathcal{V}$, as in part (b) of Corollary \ref{coroll: boundedness degrees subbundles}. Then,
\begin{enumerate}[(a)]
\item There exists a unique maximal semistable subbundle $\mathcal{U}$ of $\mathcal{V}$ such that $\mu(\mathcal{U}) = \mu$ and $\mathcal{U}$ contains all other subbundles of slope $\mu$. We will call $\mathcal{U}$ the maximal destabilizing subbundle of $\mathcal{V}$.
\item If $\mathcal{U}$ is as in part (a), then all subobjects $\overline{\mathcal{W}}\,  \hookrightarrow \, \mathcal{V} \, / \, \mathcal{U}$ satisfy $\mu(\overline{\mathcal{W}}) < \mu$.
\item For $\mathcal{U}$ as in part (a), we have $\text{Hom}_{\text{Vect}_{\overline{\lambda}}}\left( \mathcal{U} , \; \mathcal{V} \, / \, \mathcal{U} \, \right)=0$.
\end{enumerate}
\end{lemma}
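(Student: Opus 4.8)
The plan is to reduce everything to one key closure property: the parabolic subbundles of $\mathcal{V}$ attaining the maximal slope $\mu$ are stable under sums. Concretely, I would first prove that if $\mathcal{W}_1, \mathcal{W}_2 \subset \mathcal{V}$ are parabolic subbundles with $\mu(\mathcal{W}_1) = \mu(\mathcal{W}_2) = \mu$, then the parabolic subobject $\mathcal{W}_1 + \mathcal{W}_2$ (the image of $\mathcal{W}_1 \oplus \mathcal{W}_2 \to \mathcal{V}$ in $\text{Vect}_{\overline{\lambda}}$) is again a parabolic \emph{subbundle}, of slope exactly $\mu$. The input is the short exact sequence $0 \to \mathcal{W}_1 \cap \mathcal{W}_2 \to \mathcal{W}_1 \oplus \mathcal{W}_2 \to \mathcal{W}_1 + \mathcal{W}_2 \to 0$, which by Lemma \ref{lemma: deg in ses} makes both rank and degree additive. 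Note any subbundle of slope $\mu$ is automatically semistable, since its own subbundles are subbundles of $\mathcal{V}$ and hence of slope $\leq \mu$; thus Lemma \ref{lemma: semistable bundles and ses}(a) bounds $\mu(\mathcal{W}_1 \cap \mathcal{W}_2) \leq \mu$, while Lemma \ref{lemma: saturating subsheaves}(a) together with the maximality of $\mu$ bounds $\mu(\mathcal{W}_1 + \mathcal{W}_2) \leq \mu$. Comparing the additive degree identity with these two bounds forces both to be equalities, so $\mu(\mathcal{W}_1+\mathcal{W}_2) = \mu$; Lemma \ref{lemma: saturating subsheaves}(b) then shows $\mathcal{W}_1 + \mathcal{W}_2$ equals its saturation, i.e. is a subbundle.

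To prove (a), I would take $\mathcal{U}$ to be a parabolic subbundle of slope $\mu$ of maximal possible rank, which exists because $\mu$ is attained by Corollary \ref{coroll: boundedness degrees subbundles}(b) and ranks are bounded by $n$. For any other subbundle $\mathcal{W}$ of slope $\mu$, the closure property makes $\mathcal{U} + \mathcal{W}$ a subbundle of slope $\mu$ containing $\mathcal{U}$; since $\mathcal{U}$ is saturated in $\mathcal{V}$, the quotient $(\mathcal{U}+\mathcal{W})/\mathcal{U}$ is a subsheaf of the torsion-free $\mathcal{V}/\mathcal{U}$ and thus torsion-free, so the maximality of $\text{rank}\,\mathcal{U}$ forces $\mathcal{U} + \mathcal{W} = \mathcal{U}$, whence $\mathcal{W} \subseteq \mathcal{U}$. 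This gives the containment property and, in particular, uniqueness. Semistability of $\mathcal{U}$ is then immediate: every parabolic subbundle of $\mathcal{U}$ is a parabolic subbundle of $\mathcal{V}$, hence has slope $\leq \mu = \mu(\mathcal{U})$.

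For (b), given a nonzero subobject $\overline{\mathcal{W}} \hookrightarrow \mathcal{V}/\mathcal{U}$, I would replace it by its saturation in $\mathcal{V}/\mathcal{U}$ (which only increases the slope, by Lemma \ref{lemma: saturating subsheaves}(a)) and let $\mathcal{W} \subset \mathcal{V}$ be the preimage, a subbundle fitting into $0 \to \mathcal{U} \to \mathcal{W} \to \overline{\mathcal{W}} \to 0$. If $\mu(\overline{\mathcal{W}}) \geq \mu$, then since $\mu(\mathcal{W})$ is the rank-weighted average of $\mu(\mathcal{U}) = \mu$ and $\mu(\overline{\mathcal{W}})$, we would get $\mu(\mathcal{W}) \geq \mu$, hence $\mu(\mathcal{W}) = \mu$ by maximality of $\mu$; by part (a) this forces $\mathcal{W} \subseteq \mathcal{U}$, contradicting $\mathcal{W} \supsetneq \mathcal{U}$. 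Therefore $\mu(\overline{\mathcal{W}}) < \mu$. Finally, (c) follows formally in the style of Lemma \ref{lemma: semistable bundles and ses}(c): a nonzero $f \in \text{Hom}_{\text{Vect}_{\overline{\lambda}}}(\mathcal{U}, \mathcal{V}/\mathcal{U})$ has image $\mathcal{I}$ that is a parabolic quotient of the semistable $\mathcal{U}$, so $\mu(\mathcal{I}) \geq \mu$ by Lemma \ref{lemma: semistable bundles and ses}(b), while $\mathcal{I}$ is a subobject of $\mathcal{V}/\mathcal{U}$, so $\mu(\mathcal{I}) < \mu$ by (b) — a contradiction, forcing $f = 0$.

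The main obstacle is the closure-under-sums statement: one must set up the degree bookkeeping carefully and check that the intersection and sum genuinely sit in the asserted short exact sequence of parabolic bundles in $\text{Vect}_{\overline{\lambda}}$. Once that is in place, parts (a)--(c) are essentially formal consequences obtained by combining it with the saturation lemma (Lemma \ref{lemma: saturating subsheaves}) and the semistability inequalities (Lemma \ref{lemma: semistable bundles and ses}).
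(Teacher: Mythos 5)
Your proposal is correct, but it reaches the containment statement in (a) by a genuinely different route than the paper. The paper also takes $\mathcal{U}$ to be a slope-$\mu$ subbundle of maximal rank, but it never forms sums or intersections of subbundles: it proves (b) first — a subobject of $\mathcal{V} / \mathcal{U}$ of slope $\geq \mu$ pulls back, via the exact-category axioms, to a subobject of $\mathcal{V}$ containing $\mathcal{U}$ whose slope is a rank-weighted average $\geq \mu$, contradicting rank-maximality (after saturating, as in your version, where you saturate upstairs in the quotient instead) — and then deduces that every slope-$\mu$ subbundle $\mathcal{P}$ lies in $\mathcal{U}$ homologically: $\mathcal{P}$ is automatically semistable, so a nonzero morphism $\mathcal{P} \rightarrow \mathcal{V} / \mathcal{U}$ would have image of slope $\geq \mu$ by part (b) of Lemma \ref{lemma: semistable bundles and ses}, contradicting (b); applied to the composite $\mathcal{P} \hookrightarrow \mathcal{V} \twoheadrightarrow \mathcal{V} / \mathcal{U}$ this gives $\mathcal{P} \subset \mathcal{U}$, and the special case $\mathcal{P} = \mathcal{U}$ is exactly (c). So the paper's logical order is (b) $\Rightarrow$ (containment and (c) together), whereas yours is sum-closure $\Rightarrow$ (a) $\Rightarrow$ (b) $\Rightarrow$ (c), with your (b) and (c) essentially identical to the paper's arguments. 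The cost of your route is precisely the point you flagged: one must check that $0 \rightarrow \mathcal{W}_1 \cap \mathcal{W}_2 \rightarrow \mathcal{W}_1 \oplus \mathcal{W}_2 \rightarrow \mathcal{W}_1 + \mathcal{W}_2 \rightarrow 0$, with the induced flags $\mathcal{W}_{1,i}^{(m)} \cap \mathcal{W}_{2,i}^{(m)}$ and $\mathcal{W}_{1,i}^{(m)} + \mathcal{W}_{2,i}^{(m)}$, really is a short exact sequence in $\text{Vect}_{\overline{\lambda}}$ so that Lemma \ref{lemma: deg in ses} applies. This does go through: the successive flag quotients of the sum and the intersection are killed by the maximal ideal at $x_i$ because those of $\mathcal{W}_1$ and $\mathcal{W}_2$ are; the identities $\mathcal{W}_{j,i}^{(m)} = \mathcal{W}_j^{(0)}(x_i) \cap \mathcal{E}_i^{(m)}$ (valid since each $\mathcal{W}_j$ is a parabolic subbundle of $\mathcal{V}$) give the admissibility conditions; and a levelwise snake-lemma argument shows each flag dimension $a_i^{(m)}$, hence the parabolic degree, is additive. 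But this is genuine extra bookkeeping that the paper's argument avoids by staying entirely within Lemmas \ref{lemma: saturating subsheaves} and \ref{lemma: semistable bundles and ses}. What your route buys in exchange is a stronger structural fact of independent use — the slope-$\mu$ subbundles of $\mathcal{V}$ are closed under sum — which is the classical mechanism for constructing the maximal destabilizing subsheaf of an ordinary vector bundle.
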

\begin{proof}
Uniqueness of such subbundle $\mathcal{U}$ is clear. It suffices to prove existence. Let $\mathcal{U}$ be a parabolic subbundle of $\mathcal{V}$ with $\mu(\mathcal{U}) = \mu$ that has maximal rank among the subbundles with slope $\mu$. By the definition of $\mu$, it is clear that $\mathcal{U}$ is semistable.

We claim that $\mathcal{U}$ satisfies the statement in part (b) of the proposition. Suppose that $\overline{\mathcal{W}}$ is a subobject of $\mathcal{V} \, / \, \mathcal{U}$ with slope $\mu(\overline{\mathcal{W}}) \geq \mu$. By the axioms of exact categories, there is a subobject $\mathcal{W}$ of $\mathcal{V}$ with $\mathcal{U} \subset \mathcal{W}$ and a short exact sequence $0 \longrightarrow \mathcal{U} \longrightarrow \mathcal{W} \longrightarrow \overline{\mathcal{W}} \rightarrow 0$.

By additivity of degree in short exact sequences (Lemma \ref{lemma: deg in ses}), we have 
  \[ \mu(\mathcal{W}) = \frac{\text{rank}\, \mathcal{U}}{\text{rank} \, \mathcal{W}} \, \mu(\mathcal{U}) \, + \, \frac{\text{rank}\, \overline{\mathcal{W}}}{\text{rank} \, \mathcal{W}} \, \mu(\overline{\mathcal{W}}) \,  \geq \, \mu\]
  contradicting maximality of the rank of $\mathcal{U}$.
  
  Now let $\mathcal{P}$ be any parabolic subbundle of $\mathcal{V}$ with slope $\mu(\mathcal{P}) = \mu$. By the definition of $\mu$, $\mathcal{P}$ is semistable. We claim that $\mathcal{P} \subset \mathcal{U}$. In order to see this, it suffices to show that the composition $\mathcal{P} \hookrightarrow \mathcal{V} \twoheadrightarrow \mathcal{V} \, / \, \mathcal{U}$ is 0. Let us show that $\text{Hom}_{\text{Vect}_{\overline{\lambda}}}\left( \mathcal{P} , \; \mathcal{V} \, / \, \mathcal{U} \, \right)=0$. This also implies part (c) as a special case.
  
  For the sake of contradiction, suppose that $\psi \in \text{Hom}_{\text{Vect}_{\overline{\lambda}}}\left( \mathcal{P} , \; \mathcal{V} \, / \, \mathcal{U} \, \right)$ is nonzero. Let $\mathcal{I}$ be the nontrivial image $\text{Im}(\psi)$ of $\mathcal{P}$ in $\mathcal{V} \, / \, \mathcal{U}$. $\mathcal{I}$ is a parabolic quotient bundle of $\mathcal{P}$. By part (b) of Lemma \ref{lemma: semistable bundles and ses} we have $\mu(\mathcal{I}) \geq \mu(\mathcal{P})$. Therefore $\mathcal{I}$ is a suboject of $\mathcal{V} \, / \, \mathcal{U}$ with slope $\mu(\mathcal{I}) \geq \mu$. We get a contradiction, since we have proven in the paragraph above that this is not possible.
\end{proof}

Now we are ready to prove the main proposition of this section.
\begin{prop} \label{prop: existence and uniqueness of HN filtrations}
Let $\mathcal{V}$ be a parabolic vector bundle in $\text{Vec}_{\overline{\lambda}}$. Then $\mathcal{V}$ admits a unique Harder-Narasimhan filtration.
\end{prop}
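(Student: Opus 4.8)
The plan is to prove existence and uniqueness simultaneously by induction on $\text{rank}\,\mathcal{V}$. The base case $\text{rank}\,\mathcal{V} = 1$ is immediate, since a rank-$1$ parabolic bundle is semistable and its only filtration by parabolic subbundles is $0 \subset \mathcal{V}$.

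For existence, I would take $\mathcal{W}_1$ to be the maximal destabilizing subbundle $\mathcal{U}$ furnished by Lemma \ref{lemma: existence maximal destibilizing subbundle}, whose slope is the maximal slope $\mu$ among all subbundles (Corollary \ref{coroll: boundedness degrees subbundles}). The quotient $\mathcal{V}/\mathcal{U}$ has strictly smaller rank, so by the inductive hypothesis it admits an HN filtration $0 = \overline{\mathcal{W}}_0 \subset \overline{\mathcal{W}}_1 \subset \cdots \subset \overline{\mathcal{W}}_{l-1} = \mathcal{V}/\mathcal{U}$. Pulling this filtration back along the admissible epimorphism $\mathcal{V} \twoheadrightarrow \mathcal{V}/\mathcal{U}$ (the preimage of a parabolic subbundle under an admissible epimorphism is again a parabolic subbundle) produces a filtration $0 = \mathcal{W}_0 \subset \mathcal{W}_1 = \mathcal{U} \subset \mathcal{W}_2 \subset \cdots \subset \mathcal{W}_l = \mathcal{V}$ with $\mathcal{W}_{j+1}/\mathcal{U} = \overline{\mathcal{W}}_j$. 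Its successive quotients are $\mathcal{U}$ for $j=1$ and $\overline{\mathcal{W}}_{j-1}/\overline{\mathcal{W}}_{j-2}$ for $j \geq 2$, all semistable, so condition (a) of Definition \ref{defn: HN filtration} holds. For condition (b) the only new inequality to verify is $\mu(\mathcal{W}_1) > \mu(\mathcal{W}_2/\mathcal{W}_1) = \mu(\overline{\mathcal{W}}_1)$, and this holds because $\overline{\mathcal{W}}_1$ is a subobject of $\mathcal{V}/\mathcal{U}$, all of whose subobjects have slope strictly below $\mu = \mu(\mathcal{W}_1)$ by part (b) of Lemma \ref{lemma: existence maximal destibilizing subbundle}.

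For uniqueness I would strengthen the inductive hypothesis to also record that, for any parabolic bundle admitting an HN filtration, every subobject has slope at most $\mu(\mathcal{W}_1)$. Given an arbitrary HN filtration of $\mathcal{V}$, the crucial step is to identify $\mathcal{W}_1$ with the maximal destabilizing subbundle $\mathcal{U}$. For a subobject $\mathcal{P} \hookrightarrow \mathcal{V}$, I would form the short exact sequence $0 \to \mathcal{K} \to \mathcal{P} \to \mathcal{I} \to 0$ in which $\mathcal{K} \hookrightarrow \mathcal{W}_1$ and $\mathcal{I} \hookrightarrow \mathcal{V}/\mathcal{W}_1$ are the kernel and image of $\mathcal{P} \to \mathcal{V}/\mathcal{W}_1$; then $\mu(\mathcal{K}) \leq \mu(\mathcal{W}_1)$ by semistability of $\mathcal{W}_1$ (Lemma \ref{lemma: semistable bundles and ses}(a)) and $\mu(\mathcal{I}) < \mu(\mathcal{W}_1)$ by the strengthened hypothesis applied to the lower-rank $\mathcal{V}/\mathcal{W}_1$, whence the convex combination $\mu(\mathcal{P})$ satisfies $\mu(\mathcal{P}) \leq \mu(\mathcal{W}_1)$. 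Taking $\mathcal{P} = \mathcal{U}$ gives $\mu(\mathcal{W}_1) \geq \mu$, while $\mathcal{W}_1$ being a subbundle forces $\mu(\mathcal{W}_1) \leq \mu$; hence $\mu(\mathcal{W}_1) = \mu$, so $\mathcal{W}_1$ is a semistable subbundle of maximal slope and therefore $\mathcal{W}_1 \subseteq \mathcal{U}$ by Lemma \ref{lemma: existence maximal destibilizing subbundle}(a). Conversely, the image of $\mathcal{U} \to \mathcal{V}/\mathcal{W}_1$ is a quotient of the semistable $\mathcal{U}$, so has slope $\geq \mu$ by Lemma \ref{lemma: semistable bundles and ses}(b), yet is a subobject of $\mathcal{V}/\mathcal{W}_1$ of slope $< \mu$; this forces that image to vanish, giving $\mathcal{U} \subseteq \mathcal{W}_1$ and hence $\mathcal{W}_1 = \mathcal{U}$. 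The induced filtration $\{\mathcal{W}_j/\mathcal{U}\}$ is then an HN filtration of $\mathcal{V}/\mathcal{U}$, determined by the inductive uniqueness, and with it the $\mathcal{W}_j$.

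I expect the main obstacle to be precisely the uniqueness step, namely the identification $\mathcal{W}_1 = \mathcal{U}$: it is what forces the strengthened induction hypothesis bounding the slopes of \emph{all} subobjects (not merely subbundles) of the quotient, together with careful tracking of kernels and images inside the exact category $\text{Vect}_{\overline{\lambda}}$. By contrast, the existence half is comparatively routine once Lemma \ref{lemma: existence maximal destibilizing subbundle} is available.
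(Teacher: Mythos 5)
Your proof is correct and takes essentially the same route as the paper: existence by iterating the maximal destabilizing subbundle of Lemma \ref{lemma: existence maximal destibilizing subbundle} on successive quotients, and uniqueness by identifying the first step of an arbitrary HN filtration with that maximal destabilizing subbundle through slope comparisons, exactly as in the paper. Your strengthened induction hypothesis (bounding the slopes of all subobjects of a bundle admitting an HN filtration) is simply a cleaner packaging of the paper's repeated applications of part (d) of Lemma \ref{lemma: semistable bundles and ses}, so there is no substantive difference.
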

\begin{proof}
Let $\mathcal{U}_1$ be the maximal destabilizing subbundle of $\mathcal{V}$. Set $\mu_1 = \mu(\mathcal{U}_1)$. We can iterate this process by recursion, setting $\overline{\mathcal{U}_j}$ to be the maximal destabilizing subbundle of $\mathcal{V} \, / \, \mathcal{U}_{j-1}$ and letting $\mathcal{U}_j$ be the unique subbundle of $\mathcal{V}$ containing $\mathcal{U}_{j-1}$ such that $\overline{\mathcal{U}_j} = \mathcal{U}_j \, / \, \mathcal{U}_{j-1}$. Define $\mu_j \vcentcolon = \mu(\overline{\mathcal{U}_j})$. By part (b) of Lemma \ref{lemma: saturating subsheaves}, we see that $\mu_j > \mu_{j-1}$. Since the rank of $\mathcal{V}$ is finite, this process must terminate. Hence $\mathcal{U}_l = \mathcal{V}$ for some $l$. We get a filtration
\[ 0  \subset \mathcal{U}_1 \subset \cdots \, \subset \mathcal{U}_{l-1} \subset \mathcal{U}_l = \mathcal{V} \]
which by construction is a Harder-Narasimhan filtration.

For uniqueness, we can induct on the length $l$ of the filtration above. We are reduced to showing that for any Harder-Narasimhan filtration $0 = \mathcal{W}_0 \subset \mathcal{W}_1 \subset \cdots \, \subset \mathcal{W}_{m-1} \subset \mathcal{W}_m = \mathcal{V}$
we have that $\mathcal{W}_1$ is the maximal destabilizing subsheaf $\mathcal{U}_1$ of $\mathcal{V}$. By applying part (d) of Lemma \ref{lemma: semistable bundles and ses} several times, we conclude that we must have $\mu(\mathcal{W}_1) = \mu(\mathcal{U}_1) = \mu_1$. Therefore, by maximality of $\mathcal{U}_1$, we must have $\mathcal{U}_1 \supset \mathcal{W}_1$. Consider $\mathcal{U}_1 \, / \, \mathcal{W}_1$. This is a subbundle of $\mathcal{V} \, / \, \mathcal{W}_1$. Another sequence of applications of part (d) of Lemma \ref{lemma: semistable bundles and ses} shows that if $\mathcal{U}_1 \, / \, \mathcal{W}_1$ is nontrivial, then we must have $\mu\left( \mathcal{U}_1 \, / \, \mathcal{W}_1\right) < \mu_1$. This contradicts part (b) of Lemma \ref{lemma: semistable bundles and ses} applied to the semistable bundle $\mathcal{U}_1$. Hence $\mathcal{U}_1 \, / \, \mathcal{W}_1$ is trivial, and therefore $\mathcal{U}_1 = \mathcal{W}_1$.
\end{proof}
\end{subsection}

\begin{subsection}{Parabolic vector bundles in families} \label{subsection: parabolic vb families}
In this subsection we extend the theory of parabolic vector bundles to the relative setting.
\begin{defn} \label{defn: parabolic vb in family}
Let $S$ be a $k$-scheme. A parabolic vector bundle $\mathcal{V}$ of rank $n$ over $C \times S$ (with parabolic structure at $\{x_i\}$) consists of the data of:
\begin{enumerate}[(a)]
    \item A vector bundle $\mathcal{E}^{(0)}$ of rank $n$ on $C\times S$.
    \item For each $i \in I$, a chain of vector bundles $\mathcal{E}^{(0)} \,\subset \, \mathcal{E}_i^{(1)} \subset \cdots\; \subset \, \mathcal{E}_i^{(N_i)} = \mathcal{E}^{(0)}(x_i)$.
\end{enumerate}
such that for all $i \in I$ and all $1 \leq m \leq N_i$, the quotient $\mathcal{E}^{(m)}_i \, / \, \mathcal{E}_i^{(m-1)}$ is $S$-flat and the restriction $\mathcal{E}^{(m)}_i \, / \, \mathcal{E}_i^{(m-1)}|_{x_i \times S}$ is a vector bundle of constant rank on $x_i \times S$.

For each $i \in I$, the number $N_i$ will be called the chain length at $i$.
\end{defn}

\begin{remark}
Since $\mathcal{E}^{(m)}_i \, / \, \mathcal{E}_i^{(m-1)}|_{x_i \times S}$ is finitely presented and $S$-flat, it must be a vector bundle. The reason why we require $\mathcal{E}^{(m)}_i \, / \, \mathcal{E}_i^{(m-1)}|_{x_i \times S}$ to have constant rank is to simplify notation. Note that this always true locally on $S$.
\end{remark}
For a parabolic vector bundle $\mathcal{V}$ over $C \times S$, we will write
\[\mathcal{V} = \left[ \; \mathcal{E}^{(0)} \,\overset{a_i^{(1)}}{\subset} \, \mathcal{E}_i^{(1)} \overset{a_i^{(2)}}{\subset} \cdots\; \overset{a_{i}^{(N_i)}}{\subset} \, \mathcal{E}_i^{(N_i)}= \mathcal{E}^{(0)}(x_i) \;\right]_{i \in I} \]
to convey the information that $a_i^{(m)} = \text{rank} \, \mathcal{E}^{(m)}_i \, / \, \mathcal{E}_i^{(m-1)}|_{x_i \times S}$.
\begin{defn} \label{defn: pullback of parabolic vector bundles}
Let $S$ be a $k$-scheme and let $\mathcal{V} = \left[ \; \mathcal{E}^{(0)} \,\overset{a_i^{(1)}}{\subset} \, \mathcal{E}_i^{(1)} \overset{a_i^{(2)}}{\subset} \cdots\; \overset{a_{i}^{(N_i)}}{\subset} \, \mathcal{E}_i^{(N_i)}= \mathcal{E}^{(0)}(x_i) \;\right]_{i \in I}$ be a parabolic vector bundle over $C \times S$. For any $S$-scheme $f: T \rightarrow S$, the pullback of $\mathcal{V}$ is defined to be the parabolic vector bundle $f^{*}\mathcal{V}$ on $C \times T$ given by
\begin{gather*}f^{*}\mathcal{V} = \left[ \;(id_C \times f)^{*} \mathcal{E}^{(0)} \,\overset{a_i^{(1)}}{\subset} \, (id_C \times f)^{*}\mathcal{E}_i^{(1)} \overset{a_i^{(2)}}{\subset} \cdots\; \overset{a_{i}^{(N_i)}}{\subset} \, (id_C \times f)^{*}\mathcal{E}_i^{(N_i)}= (id_C \times f)^{*}\mathcal{E}^{(0)}(x_i) \;\right]_{i \in I}
\end{gather*}
It will often be the case that the choice of a map $f$ is understood and unambiguous, and then we will write $\mathcal{V}|_{C\times T}$ instead of $f^{*} \mathcal{V}$.
\end{defn}

\begin{remark}
It is necessary to know that $\mathcal{E}_i^{(m)} \, / \, \mathcal{E}_i^{(m-1)}$ is $S$-flat in order to be able to conclude that the inclusion $\mathcal{E}_i^{(m-1)} \subset \mathcal{E}_i^{(m)}$ remains a monomorphism after pulling back by $f$. Note that the ranks $a_i^{(m)}$ are preserved by pullbacks.
\end{remark}

\begin{defn} \label{defn: equivalence of types of vb in families}
Let $\mathcal{V} = \left[ \; \mathcal{E}^{(0)} \,\overset{a_i^{(1)}}{\subset} \, \mathcal{E}_i^{(1)} \overset{a_i^{(2)}}{\subset} \cdots\; \overset{a_{i}^{(N_i)}}{\subset} \, \mathcal{E}_i^{(N_i)}= \mathcal{E}^{(0)}(x_i) \;\right]_{i \in I}$
be a parabolic bundle over $C$. Let $S$ be a $k$-scheme and let $\mathcal{W} = \left[ \;\mathcal{F}^{(0)} \,\overset{b_i^{(1)}}{\subset} \, \mathcal{F}_i^{(1)} \overset{b_i^{(2)}}{\subset} \cdots\; \overset{b_{i}^{(M_i)}}{\subset} \,\mathcal{F}_i^{(M_i)}= \, \mathcal{F}^{(0)}(x_i) \;\right]_{i \in I}$ be a parabolic vector bundle over $C \times S$. We say that $\mathcal{W}$ is of type $\mathcal{V}$ if all of the following conditions are satisfied
\begin{enumerate}[(a)]
    \item $\text{rank}\, \mathcal{W} = \text{rank} \, \mathcal{V}$.
    \item For all $i \in I$, $N_i = M_i$ (they have the same chain lengths).
    \item For all $i \in I$ and $1 \leq m \leq N_i$, we have $a_i^{(m)} = b_i^{(m)}$.
\end{enumerate}
\end{defn}
\begin{remark}
The property of being equivalent to a given $\mathcal{V}$ is preserved by pullbacks.
\end{remark}
For the next two definitions, we fix a $k$-scheme $S$ and we let\\ \[\mathcal{V} = \left[ \; \mathcal{E}^{(0)} \,\overset{a_i^{(1)}}{\subset} \, \mathcal{E}_i^{(1)} \overset{a_i^{(2)}}{\subset} \cdots\; \overset{a_{i}^{(N_i)}}{\subset} \, \mathcal{E}_i^{(N_i)}= \mathcal{E}^{(0)}(x_i) \;\right]_{i \in I}\] 
\[\mathcal{W} = \left[ \;\mathcal{F}^{(0)} \,\overset{b_i^{(1)}}{\subset} \, \mathcal{F}_i^{(1)} \overset{b_i^{(2)}}{\subset} \cdots\; \overset{b_{i}^{(N_i)}}{\subset} \,\mathcal{F}_i^{(N_i)}= \, \mathcal{F}^{(0)}(x_i) \;\right]_{i \in I}\]
be two parabolic vector bundles over $C\times S$ with the same chain length $N_i$ at each $i \in I$.
\begin{defn} \label{defn: morphism of vb in family}
A morphism of parabolic Vector bundles $f : \mathcal{W} \rightarrow \mathcal{V}$ consists of of the data of a morphism of sheaves $f_i^{(m)}: \mathcal{F}_i^{(m)} \rightarrow \mathcal{E}_i^{(m)}$ for each $i \in I$ and $0 \leq m \leq N_i$ satisfying the following two conditions:
\begin{enumerate}[(a)]
\item $f_i^{(0)} = f_j^{(0)}$ for all $i, j \in I$.
\item For all $i \in I$ and $1 \leq m \leq N_i$, the following diagram commutes
\begin{figure}[H]
    \centering
\begin{tikzcd}
 \mathcal{E}_i^{(m-1)} \ar[r,  symbol= \subset] & \mathcal{E}_i^{(m)} \\
 \mathcal{F}_i^{(m-1)} \ar[r, symbol= \subset] \ar[u, "f_i^{(m-1)}"] & \mathcal{F}_i^{(m)} \ar[u, "f_i^{(m)}"]
\end{tikzcd}
\end{figure}
\end{enumerate}
\end{defn}

\begin{defn} \label{defn: parabolic subbundle in family}
Let $f: \mathcal{W} \rightarrow \mathcal{V}$ be a morphism of parabolic bundles.
\begin{enumerate}
\item We say that $\mathcal{V}$ is a parabolic quotient bundle of $\mathcal{W}$ (equivalently $f$ is an admissible epimorphism) if the morphism $f_i^{(m)}$ witnesses $\mathcal{E}_i^{(m)}$ as a quotient vector bundle of $\mathcal{F}_i^{(m)}$ for all $i \in I$ and $0 \leq m \leq N_i$.
\item We say that $\mathcal{W}$ is a parabolic subbundle of $\mathcal{V}$ (equivalently $f$ is an admissible monomorphism) if it satisfies the following two conditions:
    \begin{enumerate}[(a)]
        \item For each $i \in I$ and $0 \leq m \leq N_i$, we have $\mathcal{F}_i^{(m)} = \mathcal{E}_i^{(m)} \cap \mathcal{F}^{(0)}(x_i)$.
        \item The naive quotient (as a diagrams of quasicoherent sheaves)
        \[ \mathcal{Q} \vcentcolon = \left[ \mathcal{E}^{(0)} \, / \, \mathcal{F}^{(0)} \,\subset \, \mathcal{E}_i^{(1)} \, / \, \mathcal{F}_i^{(1)} \, \subset \cdots\; \subset \, \mathcal{E}_i^{(N_i)} \, / \, \mathcal{F}_i^{(N_i)} \,  = \, \left(\mathcal{E}^{(0)} \, / \, \mathcal{F}^{(0)}\right)(x_i)  \right]_{i \in I}\]
        is a parabolic vector bundle over $C \times S$.
    \end{enumerate}
\end{enumerate}
\end{defn}
We can use this definitions to impose the structure of an exact category on the class of parabolic vector bundles over $C\times S$ with a fixed set of chain lengths the structure. In this case we don't have kernels and images in general, because it is no longer true that a subsheaf of a locally free sheaf is locally free.

Let us fix a set of parabolic weights $\overline{\lambda}$ with chain length $N_i$ at $i \in I$. 
\begin{defn}
Let $S$ be a $k$-scheme. We write $\text{Vect}_{\overline{\lambda}}(S)$ to denote the exact category of parabolic vector bundles over $C \times S$ with chain length $N_i$ for each $i \in I$.
\end{defn}

\begin{remark}
Since the pullbacks of parabolic vector bundles preserve exact sequences, $\text{Vect}_{\overline{\lambda}}$ is a pseudofunctor from $k$-schemes into the 2-category of exact categories (with exact functors as 1-morphisms).
\end{remark}

\begin{lemma} \label{lemma: local constancy of deg}
Let $S$ be a scheme over $k$ and let $\mathcal{W}$ be a parabolic vector bundle in $\text{Vect}_{\overline{\lambda}}(S)$. Then, the function $f: S \longrightarrow \mathbb{R}$ given by $f(s) = \text{deg} \; \mathcal{W}\,|_{C_s}$ is locally constant in the Zariski topology.
\end{lemma}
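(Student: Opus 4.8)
The plan is to reduce the statement to the classical fact that the ordinary degree of a flat family of vector bundles on $C$ is locally constant, after isolating the purely parabolic correction term. First I would unwind the degree formula of Definition~\ref{defn: deg of vector bundle} fiberwise. For $s \in S$, writing $n = \text{rank}\,\mathcal{W}$ and $b_i^{(j)}(s) = \text{rank}\,\bigl(\mathcal{F}_i^{(j)}/\mathcal{F}_i^{(j-1)}\bigr)|_{x_i \times s}$, one has
\[ f(s) = \text{deg}\,\bigl(\mathcal{F}^{(0)}|_{C_s}\bigr) + \sum_{i \in I}\left( n - \sum_{j=1}^{N_i} \lambda_i^{(j)}\, b_i^{(j)}(s)\right). \]
By Definition~\ref{defn: parabolic vb in family}, each quotient $\mathcal{F}_i^{(j)}/\mathcal{F}_i^{(j-1)}$ is $S$-flat and its restriction to $x_i \times S$ is a vector bundle of \emph{constant} rank; hence $b_i^{(j)}(s) = b_i^{(j)}$ is in fact independent of $s$. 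Consequently the entire parabolic correction term is a genuine constant, and the problem collapses to showing that $s \mapsto \text{deg}\,(\mathcal{F}^{(0)}|_{C_s})$ is locally constant.

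Next I would treat the underlying vector bundle. Since $\mathcal{F}^{(0)}$ is a vector bundle on $C \times S$, it is flat over $S$, so it furnishes a flat family of coherent sheaves on the fibers $C_s = C \times_k \kappa(s)$. Each $C_s$ is a smooth projective geometrically irreducible curve over $\kappa(s)$ of the same genus $g$ as $C$, the genus being preserved because the formation of $H^{\bullet}(C, \mathcal{O}_C)$ commutes with the flat base change $k \to \kappa(s)$. I would then invoke the standard result that, for a proper flat morphism, the Euler characteristic of the fibers of a flat coherent sheaf is locally constant on the base, so that $s \mapsto \chi\bigl(C_s, \mathcal{F}^{(0)}|_{C_s}\bigr)$ is locally constant on $S$.

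Finally, applying Riemann--Roch on each fiber gives
\[ \chi\bigl(C_s, \mathcal{F}^{(0)}|_{C_s}\bigr) = \text{deg}\,\bigl(\mathcal{F}^{(0)}|_{C_s}\bigr) + \text{rank}\,\mathcal{F}^{(0)}\,(1-g). \]
As $\text{rank}\,\mathcal{F}^{(0)}$ is locally constant and $g$ is constant, local constancy of the Euler characteristic is equivalent to local constancy of the ordinary degree; combined with the first step this yields local constancy of $f$. The only substantive input is the local constancy of the fiberwise Euler characteristic in a flat proper family, everything else being bookkeeping with the degree formula. I expect the only point requiring care to be confirming that the parabolic multiplicities $b_i^{(j)}$ are honestly constant rather than merely locally constant, but this is exactly the constant-rank hypothesis built into Definition~\ref{defn: parabolic vb in family}, so no real obstacle arises.
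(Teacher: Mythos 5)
Your proof is correct and follows essentially the same route as the paper: unwind the degree formula, observe that the parabolic correction term is constant because the ranks $a_i^{(j)}$ are fixed by the definition of a parabolic vector bundle in families, and reduce to local constancy of $\text{deg}\,(\mathcal{F}^{(0)}|_{C_s})$. The only difference is that the paper simply cites this last fact as known, whereas you supply its standard proof via local constancy of the fiberwise Euler characteristic and Riemann--Roch.
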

\begin{proof}
Suppose that $\mathcal{W} = \left[ \; \mathcal{E}^{(0)} \,\overset{a_i^{(1)}}{\subset} \, \mathcal{E}_i^{(1)} \overset{a_i^{(2)}}{\subset} \cdots\; \overset{a_{i}^{(N_i)}}{\subset} \, \mathcal{E}_i^{(N_i)}= \mathcal{E}^{(0)}(x_i) \;\right]_{i \in I}$.
By the formula for the degree in Definition \ref{defn: deg of vector bundle}, we have
\[ f(s) = \text{deg} \; \mathcal{E}^{(0)}\, |_{C_s} + \sum_{i \in I} \left(n - \sum_{j=1}^{N_i} \lambda_i^{(j)} \, a_i^{(j)} \right) \]
The corollary follows, because the degree of the fibers of $\mathcal{E}^{(0)}$ is know to be a locally constant function.
\end{proof}

The following lemma tells us that the Harder-Narasimhan filtration behaves well under extension of scalars. The analogous result for classical vector bundles was proven in \cite{langton}. See also \cite{huybrechts.lehn}[Thm 1.3.7]. Our proof is a modification of the argument found there. 
\begin{lemma} \label{lemma: HN filtrations and scalar change}
Let $\mathcal{V}$ be a parabolic vector bundle over $C$ that belongs to $\text{Vect}_{\overline{\lambda}}$. Let $K$ be a field extension of $k$. If $\mathcal{V}$ is semistable, then $\mathcal{V}|_{C_K}$ is semistable.

In particular, the pullback of the Harder-Narasimhan filtration of $\mathcal{V}$ to $C_K$ is the Harder-Narasimhan filtration of $\mathcal{V}|_{C_K}$.
\end{lemma}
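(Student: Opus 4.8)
I would first record that the final assertion is a formal consequence of the semistability statement together with uniqueness of Harder--Narasimhan filtrations (Proposition \ref{prop: existence and uniqueness of HN filtrations}). Indeed, if $0 = \mathcal{W}_0 \subset \cdots \subset \mathcal{W}_l = \mathcal{V}$ is the Harder--Narasimhan filtration over $k$, its pullback to $C_K$ has successive quotients $(\mathcal{W}_j / \mathcal{W}_{j-1})|_{C_K}$, which are semistable by the main statement and have unchanged slopes (ranks and degrees are unchanged under base change, cf. Lemma \ref{lemma: local constancy of deg}); hence the pullback satisfies both conditions of Definition \ref{defn: HN filtration} and must be \emph{the} Harder--Narasimhan filtration of $\mathcal{V}|_{C_K}$. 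So I concentrate on showing that semistability is preserved. Following \cite{huybrechts.lehn}[Thm 1.3.7], I would reduce the field extension step by step. Any parabolic subbundle of $\mathcal{V}|_{C_K}$ is defined over a finitely generated subextension and keeps its slope there, so it suffices to treat finitely generated $K/k$; choosing a transcendence basis factors this as a purely transcendental extension followed by a finite one, and since the property ``base change preserves semistability'' is stable under composition of extensions, I am reduced to these two atomic cases.

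For finite $K/k$ I argue by contradiction: if $\mathcal{V}|_{C_K}$ is not semistable, let $\mathcal{U} \subset \mathcal{V}|_{C_K}$ be the maximal destabilizing subbundle of Lemma \ref{lemma: existence maximal destibilizing subbundle}, so $\mu(\mathcal{U}) > \mu(\mathcal{V})$; the aim is to descend $\mathcal{U}$ to a parabolic subbundle of $\mathcal{V}$ of the same slope, contradicting semistability. If $K/k$ is separable I would enlarge it to a finite Galois extension $L/k$ (allowed, since semistability over $L$ implies it over $K$) and use that each Galois translate $\sigma^{*}\mathcal{U}$ is again a maximal destabilizing subbundle, hence equals $\mathcal{U}$ by uniqueness; Galois descent then yields the descended subbundle. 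The purely inseparable case is the crux, since $A := K \otimes_k K$ is nonreduced and subbundles cannot be compared fibrewise. Writing $p_1, p_2$ for the two projections of $\mathrm{Spec}(A)$, both $p_1^{*}\mathcal{U}$ and $p_2^{*}\mathcal{U}$ are subbundles of $\mathcal{V}|_{C_A}$ restricting to $\mathcal{U}$ on the unique closed fibre (via the multiplication $A \to K$). I would prove $p_1^{*}\mathcal{U} = p_2^{*}\mathcal{U}$ by showing that the composite $p_1^{*}\mathcal{U} \hookrightarrow \mathcal{V}|_{C_A} \twoheadrightarrow \mathcal{V}|_{C_A}/p_2^{*}\mathcal{U}$ is zero, via induction on the powers of the nilpotent maximal ideal $\mathfrak{m} \subset A$: flatness identifies each graded piece with a sum of copies of the special-fibre map $\mathcal{U} \to \mathcal{V}|_{C_K}/\mathcal{U}$, which vanishes by part (c) of Lemma \ref{lemma: existence maximal destibilizing subbundle}. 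Symmetry gives the reverse inclusion, so the two pullbacks coincide; this equality is a descent datum with automatic cocycle condition, and effectivity of faithfully flat descent produces the required subbundle of $\mathcal{V}$.

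It remains to treat the purely transcendental case, which I handle after the finite case and by induction on the transcendence degree, so it suffices to consider $K = k(t)$. Supposing $\mathcal{V}|_{C_K}$ is not semistable, a destabilizing subbundle spreads out to a parabolic subbundle over $C \times U$, flat over $U$ after shrinking the nonempty open $U \subseteq \mathbb{A}^1_k$; restricting to a closed point $u \in U$ gives a destabilizing subbundle of $\mathcal{V}|_{C_{\kappa(u)}}$ of the same slope (by local constancy of degree, Lemma \ref{lemma: local constancy of deg}), with $\kappa(u)/k$ finite. This contradicts the finite case established above. I expect the purely inseparable descent to be the only genuinely delicate step; everything else is bookkeeping with the parabolic analogues of the classical lemmas.
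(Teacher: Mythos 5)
Your proof is correct, and it follows the paper's overall skeleton — reduction to a finitely generated subextension via the Quot scheme, factorization into a purely transcendental extension followed by a finite one, Galois descent via uniqueness of the maximal destabilizing subbundle, and spreading out plus specialization at a closed point for $k(t)$ — but it diverges genuinely in the purely inseparable case. The paper reduces to simple exponent-one extensions $K = k\left(a^{1/p}\right)$ and uses the explicit derivation-theoretic (Jacobson) description of descent data developed in Appendix \ref{appendix: descent}: for each $k$-derivation $D$ of $K$, the induced endomorphism $D_{\mathcal{V}}$ composed with the projection $\mathcal{V}|_{C_K} \twoheadrightarrow \mathcal{V}|_{C_K} \, / \, \mathcal{W}_K$ is $\mathcal{O}_{C_K}$-linear by the Leibniz rule, hence vanishes by part (c) of Lemma \ref{lemma: existence maximal destibilizing subbundle}. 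You instead work directly with the Artinian local ring $A = K \otimes_k K$, prove $p_1^{*}\mathcal{U} = p_2^{*}\mathcal{U}$ by successive approximation along the nilpotent filtration $\mathfrak{m}^{\bullet}$ — each step being exactly the same Hom-vanishing — and then invoke plain fpqc descent. The two arguments are essentially linearizations of one another (the derivation picture is the tangent-space version of your thickening picture), and both ultimately rest on $\text{Hom}(\mathcal{U}, \, \mathcal{V}/\mathcal{U}) = 0$; but your route buys two things: it needs nothing from Appendix \ref{appendix: descent} beyond standard descent for quasicoherent sheaves, and it treats an arbitrary finite purely inseparable extension in one stroke rather than as a tower of exponent-one extensions. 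It is also structurally identical to the paper's proof of the Behrend-type rigidity statement (Proposition \ref{prop: rigidity of HN filtration}), which makes the parallel between the two results transparent. Two small points of bookkeeping you should make explicit: a general finite extension must first be factored as separable followed by purely inseparable (your text only treats the two pure cases), and the induction in the nilpotent step should be phrased as $f \equiv 0 \bmod \mathfrak{m}^{n}$ for every $n$ — at each stage the induced map kills $\mathfrak{m}\, p_1^{*}\mathcal{U}$, hence factors through $p_1^{*}\mathcal{U} / \mathfrak{m}\, p_1^{*}\mathcal{U} \cong \mathcal{U}$ and lands in $(\mathcal{V}|_{C_K}/\mathcal{U}) \otimes_K \mathfrak{m}^{n}/\mathfrak{m}^{n+1}$, where it vanishes by the Hom-vanishing — rather than merely observing once that each graded piece of $f$ vanishes.
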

\begin{proof}
Let $\mathcal{V} = \left[ \; \mathcal{E}^{(0)} \,\overset{a_i^{(1)}}{\subset} \, \mathcal{E}_i^{(1)} \overset{a_i^{(2)}}{\subset} \cdots\; \overset{a_{i}^{(N_i)}}{\subset} \, \mathcal{E}_i^{(N_i)}= \mathcal{E}^{(0)}(x_i) \;\right]_{i \in I}$ be a semistable parabolic vector bundle over $C$ as in the statement.
Suppose that $\mathcal{V}|_{C_K}$ is not semistable. Let $\mathcal{W}_{K}$ be the maximal destabilizing parabolic subbundle as in Lemma \ref{lemma: existence maximal destibilizing subbundle}. It is clear by the formula in Definition \ref{defn: deg of vector bundle} that the slope of a parabolic vector bundle is preserved under extension of ground fields. We will try to descend $\mathcal{W}_K$ to a parabolic subbundle of $\mathcal{W}$ of $\mathcal{V}$ in order to contradict semistability of $\mathcal{V}$.

Suppose that $\mathcal{W}_K$ is given by $\mathcal{W}_{K} = \left[ \;\mathcal{F}^{(0)} \,\overset{b_i^{(1)}}{\subset} \, \mathcal{F}_i^{(1)} \overset{b_i^{(2)}}{\subset} \cdots\; \overset{b_{i}^{(N_i)}}{\subset} \,\mathcal{F}_i^{(N_i)}= \, \mathcal{F}^{(0)}(x_i) \;\right]_{i \in I}$. The definition of parabolic subbundle implies $\mathcal{F}_i^{(m)} = \mathcal{E}_i^{(m)}|_{C_K} \, \cap \, \mathcal{F}^{(0)}$. Hence $\mathcal{W}_K$ is completely determined by the vector subbundle $\mathcal{F}^{(0)} \subset \, \mathcal{E}^{(0)}|_{C_K}$. This vector subbundle gives us a $K$-point in $\text{Quot}_{\mathcal{E}^{(0)} / C / k}$ (see \cite{nitsure-quot} or Section \ref{section: parabolic Quot schemes} below). Since $\text{Quot}_{\mathcal{E}^{(0)} / C / k}$ is locally of finite type, this actually comes from a $K'$-point $\mathcal{F}^{(0)}_{K'} \subset \, \mathcal{E}^{(0)}|_{C_{K'}}$, where $K'$ is a subextension of $K \supset k$ that is finitely generated over $k$. Define $\mathcal{F}_{i, \, K'}^{(m)} \vcentcolon= \mathcal{F}^{(0)}_{K'} \, \cap \, \mathcal{E}_i^{(m)}|_{C_{K'}}$. 

Set $\mathcal{W}_{K'} = \left[ \;\mathcal{F}_{K'}^{(0)} \,\overset{b_i^{(1)}}{\subset} \, \mathcal{F}_{i, \, K'}^{(1)} \overset{b_i^{(2)}}{\subset} \cdots\; \overset{b_{i}^{(N_i)}}{\subset} \,\mathcal{F}_{i, \, K'}^{(N_i)}= \, \mathcal{F}_{K'}^{(0)}(x_i) \;\right]_{i \in I}$. This is a parabolic subbundle of $\mathcal{V}|_{C_{K'}}$, because $C_{K'}$-flatness of the subsheaves and the quotients can be checked fpqc-locally. $\mathcal{W}_{K'} \subset \mathcal{V}|_{C_{K'}}$ is the maximal parabolic subbundle with maximal slope. This is because there are more subbundles to check over $C_K$ than over $C_{K'}$. Hence we can assume without loss of generality that $K \supset k$ is a finitely generated field extension.

By enlarging the field if necessary, we can express $K \supset k$ as the composition of a finitely generated purely transcendental extension, followed by a finite Galois extension and a finite purely inseparable extension. We are thus reduced to analyze each of these three cases.
\begin{enumerate}[C 1]
    \item $K \supset k$ is a finite Galois extension
    Let $G = \text{Gal}(K\, / \, k)$. By the discussion in Appendix \ref{appendix: descent}, it suffices to show that for all $\sigma \in G$ we have $\sigma^*(\mathcal{W}_K) = \mathcal{W}_K$ as parabolic subbundles of $\mathcal{V}|_{C_K}$. From the formula for degree in Definition \ref{defn: deg of vector bundle}, it is easy to see that parabolic degree is preserved by elements of the Galois group. Since the rank is also preserved, $\sigma^*(\mathcal{W}_K)$ is still the unique maximal subbundle with maximal slope. Therefore $\mathcal{W}_K$ descends.
    \item $K = k\left(a^{\frac{1}{p}}\right)$ for some $a \in k$
    Let $\text{Der}$ be the set of $k$-derivations of $K$. For each $D \in \text{Der}$, we can extend $D$ to a $\mathcal{O}_{C}$-linear derivation $D_C$ of $\mathcal{O}_{C_K}$. This uniquely defines an $\mathcal{O}_{C}$-linear endomorphism of each $\mathcal{E}_i^{(m)}|_{C_K} = \mathcal{E}_i^{(m)} \otimes \mathcal{O}_{C_K}$ by acting on the second component of the tensor product. This endomorphism preserves the inclusion relations among the $\mathcal{E}_i^{(m)}$s, so it gives a $\mathcal{O}_C$-linear endomorphism $D_{\mathcal{V}}$ of $\mathcal{V}$.  This describes the canonical descent data of $\mathcal{V}|_{C_K}$ viewed as a diagram of quasicoherent sheaves, as explained in Appendix \ref{appendix: descent}. By the discussion in Appendix \ref{appendix: descent}, it follows that $\mathcal{W}_K \subset \mathcal{V}|_{C_K}$ descends to a parabolic subbundle of $\mathcal{V}$ if and only if $D_{\mathcal{V}}(\mathcal{W}_K) \subset \mathcal{W}_K$ for all $D \in \text{Der}$. This is equivalent to the composition $\phi: \mathcal{W}_K \xrightarrow{D_{\mathcal{V}}} \mathcal{V}|_{C_K} \twoheadrightarrow \mathcal{V}|_{C_K} \, / \, \mathcal{W}_K$ being $0$. But, by the Leibniz rule, this composition is actually $\mathcal{O}_{C_K}$-linear (because we are killing elements of $\mathcal{W}_K$). Hence $\phi$ is a genuine morphism of parabolic vector bundles. Since $\mathcal{W}_K$ is the maximal destabilizing sheaf, part (c) of Lemma \ref{lemma: existence maximal destibilizing subbundle} implies that $\text{Hom}\left(\mathcal{W}_K, \, \mathcal{V}|_{C_K} \, / \; \mathcal{W}_K\right) = 0$. Therefore we must have $\phi = 0$, as desired.
    \item $K = k(t)$
    The same trick with the $\text{Quot}$ scheme allows us to spread $\mathcal{W}_K$ to a parabolic subbundle $\mathcal{W}_U$ of $\mathcal{V}|_{C\times U}$, where $U$ is an open subset of $\mathbb{A}^1_{k}$. We can first extend as a set of chains of vector bundles. Then using opennes of the flat locus \cite[\href{https://stacks.math.columbia.edu/tag/0399}{Tag 0399}]{stacks-project}  we can decrease $U$ to ensure that all quotients in the chain are $U$-flat. Let $t \in U$ be a closed point of $U$. Consider the parabolic subbundle $\mathcal{W}_U|_{C_t}$ of $\mathcal{V}|_{C_t}$. By local constancy of degree (Lemma \ref{lemma: local constancy of deg}), $\mathcal{W}_U|_{C_t} \subset \mathcal{V}|_{C_t}$ implies that $\mathcal{V}|_{C_t}$ is not semistable.  Since the residue field $\kappa(t)$ is finite over $k$, we are reduced to the case of finite field extensions as in Cases 1,2 above.
\end{enumerate}
For the second part of the proposition, suppose that $0 = \mathcal{W}_0 \subset \mathcal{W}_1 \subset \cdots \, \subset \mathcal{W}_{l-1} \subset \mathcal{W}_l = \mathcal{V}$ is the Harder-Narasimhan filtration of $\mathcal{V}$. The first part of the proposition implies that $\mathcal{W}_j|_{C_K} \, / \, \mathcal{W}_{j-1}|_{C_K}$ is semistable for all $1 \leq j \leq l$. Since slopes are preserved by field extensions, we conclude that $0 = \mathcal{W}_0|_{C_K} \subset \mathcal{W}_1|_{C_K} \subset \cdots \, \subset \mathcal{W}_{l-1}|_{C_K} \subset \mathcal{W}_l|_{C_K} = \mathcal{V}|_{C_K}$ is a Harder-Narasimhan filtration of $\mathcal{V}|_{C_K}$.
\end{proof}

Next, we show that Harder-Narasimhan filtrations have no nontrivial first order deformations. We denote by $k[\epsilon] \vcentcolon = k[T] \, / \, (T^2)$ the ring of dual numbers. We denote by $\sigma: \text{Spec}(k) \rightarrow \text{Spec} \;k[\epsilon]$ the unique section of the structure map $p: \text{Spec} \;k[\epsilon] \rightarrow \text{Spec}(k)$.
\begin{prop} \label{prop: rigidity of HN filtration}
Let $\mathcal{V}$ be a parabolic vector bundle over $C$. Suppose that we are given a filtration $0 = \mathcal{W}_0 \subset \mathcal{W}_1 \subset \cdots \, \subset \mathcal{W}_{l-1} \subset \mathcal{W}_l = p^{*} \mathcal{V}$ of $p^{*} \mathcal{V}$ by parabolic subbundles.
Assume that the pulled-back filtration 
$0 = \sigma^{*}\mathcal{W}_0 \subset \sigma^{*}\mathcal{W}_1 \subset \cdots \, \subset \sigma^{*}\mathcal{W}_{l-1} \subset \sigma^{*}\mathcal{W}_l = \mathcal{V}$ over the closed point
is the Harder-Narasimhan filtration of $\mathcal{V}$. Then $\mathcal{W}_j = p^{*} \sigma^{*} \mathcal{W}_j$ for all $1 \leq j \leq l-1$.
\end{prop}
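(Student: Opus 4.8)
The plan is to prove the statement one step at a time: for each fixed $1 \le j \le l-1$ I will show directly that the parabolic subbundle $\mathcal{W}_j \subset p^{*}\mathcal{V}$ coincides with the trivial first-order deformation $p^{*}\sigma^{*}\mathcal{W}_j$. Write $\mathcal{U}_j \vcentcolon= \sigma^{*}\mathcal{W}_j$ for the $j$-th step of the Harder-Narasimhan filtration of $\mathcal{V}$ and $\mathcal{Q}_j \vcentcolon= \mathcal{V}/\mathcal{U}_j$ for the corresponding quotient in $\text{Vect}_{\overline{\lambda}}$. The key point is that the obstruction to $\mathcal{W}_j$ being the constant deformation is measured by a single parabolic homomorphism $\mathcal{U}_j \to \mathcal{Q}_j$, and that this Hom-group vanishes because of the slope inequalities built into the Harder-Narasimhan filtration. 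The nesting of the $\mathcal{W}_j$ then plays no role once each step is pinned down individually.

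First I would construct the difference class. Consider the composite $\psi \colon \mathcal{W}_j \hookrightarrow p^{*}\mathcal{V} \twoheadrightarrow p^{*}\mathcal{V}/p^{*}\mathcal{U}_j = p^{*}\mathcal{Q}_j$ of parabolic morphisms over $k[\epsilon]$. Since $\sigma^{*}\mathcal{W}_j = \mathcal{U}_j$ maps to zero in $\mathcal{Q}_j$, the map $\psi$ vanishes modulo $\epsilon$, so its image lies in $\epsilon\, p^{*}\mathcal{Q}_j \cong \mathcal{Q}_j$ and it annihilates $\epsilon\mathcal{W}_j$. Dividing by $\epsilon$ therefore yields an $\mathcal{O}_C$-linear map $\overline{\psi} \colon \mathcal{U}_j \to \mathcal{Q}_j$; the verification that $\overline{\psi}$ is $\mathcal{O}_C$-linear (rather than merely $\mathcal{O}_{C_{k[\epsilon]}}$-linear) is exactly the Leibniz-rule computation already used in Case C2 of the proof of Lemma \ref{lemma: HN filtrations and scalar change}, and the same bookkeeping shows $\overline{\psi}$ respects the flags at each $x_i$, i.e. that it is a genuine morphism in $\text{Vect}_{\overline{\lambda}}$. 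If $\overline{\psi} = 0$ then $\psi = 0$, so $\mathcal{W}_j \subset \ker(p^{*}\mathcal{V} \to p^{*}\mathcal{Q}_j) = p^{*}\mathcal{U}_j$; as both are $k[\epsilon]$-flat of the same type with equal restriction $\mathcal{U}_j$ over the closed point, flatness over the Artinian base upgrades this inclusion to the desired equality $\mathcal{W}_j = p^{*}\mathcal{U}_j$.

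It remains to prove $\text{Hom}_{\text{Vect}_{\overline{\lambda}}}(\mathcal{U}_j, \mathcal{Q}_j) = 0$, which I would deduce from the slope structure of the Harder-Narasimhan filtration by dévissage. By Definition \ref{defn: HN filtration}, the induced filtrations exhibit $\mathcal{U}_j$ as a successive extension of semistable parabolic bundles of slopes $\mu_1 > \cdots > \mu_j$ and $\mathcal{Q}_j$ as a successive extension of semistable parabolic bundles of slopes $\mu_{j+1} > \cdots > \mu_l$, with $\mu_j > \mu_{j+1}$. A dévissage along these two filtrations reduces the vanishing of a homomorphism $\mathcal{U}_j \to \mathcal{Q}_j$ to the vanishing of each induced homomorphism between a semistable subquotient of $\mathcal{U}_j$ (slope $\mu_a \ge \mu_j$) and a semistable subquotient of $\mathcal{Q}_j$ (slope $\mu_b \le \mu_{j+1} < \mu_a$); each of these is zero by part (c) of Lemma \ref{lemma: semistable bundles and ses}. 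Hence $\overline{\psi} = 0$ and the step is proved.

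I expect the only genuinely delicate point to be the deformation-theoretic bookkeeping of the second paragraph: checking that passage to the quotient by $\epsilon$ yields a \emph{well-defined $\mathcal{O}_C$-linear parabolic} morphism $\overline{\psi}$, and that equality of type together with $k[\epsilon]$-flatness promotes the inclusion $\mathcal{W}_j \subset p^{*}\mathcal{U}_j$ to an equality. The slope and Hom-vanishing input, by contrast, is a formal consequence of the results already established in this section.
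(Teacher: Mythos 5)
Your proof is correct, and its deformation-theoretic core --- encoding the failure of $\mathcal{W}_j$ to be the constant deformation in a single parabolic homomorphism $\overline{\psi}\colon \mathcal{U}_j \to \mathcal{Q}_j$ (with $\mathcal{U}_j = \sigma^{*}\mathcal{W}_j$, $\mathcal{Q}_j = \mathcal{V}/\mathcal{U}_j$) and killing it by slope considerations --- is the same as in the paper. The organization, however, genuinely differs in two ways. First, the paper inducts on the length $l$ of the filtration, which reduces everything to the step $j=1$; there the required vanishing $\text{Hom}(\sigma^{*}\mathcal{W}_1, \mathcal{V}/\sigma^{*}\mathcal{W}_1)=0$ is precisely part (c) of Lemma \ref{lemma: existence maximal destibilizing subbundle}, so no new Hom computation is needed. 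You instead treat each $j$ directly, which forces you to prove the stronger statement $\text{Hom}(\mathcal{U}_j,\mathcal{Q}_j)=0$ for \emph{every} step of the Harder-Narasimhan filtration; your d\'evissage from part (c) of Lemma \ref{lemma: semistable bundles and ses} does this correctly (left-exactness of $\text{Hom}$ against admissible short exact sequences holds in $\text{Vect}_{\overline{\lambda}}$), and this intermediate vanishing is a standard, reusable fact that the paper never isolates. Second, the paper deduces equality from two symmetric inclusions, while you prove one inclusion $\mathcal{W}_j \subset p^{*}\mathcal{U}_j$ and upgrade it: this finish also works, and in fact more cheaply than you suggest, since the cokernel $\mathcal{K}$ of $\mathcal{W}_j \hookrightarrow p^{*}\mathcal{U}_j$ satisfies $\sigma^{*}\mathcal{K}=0$, hence $\mathcal{K}=\epsilon\mathcal{K}=\epsilon^{2}\mathcal{K}=0$ by nilpotence of $\epsilon$ alone, without invoking flatness. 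One small correction: the Leibniz-rule computation of Case C2 of Lemma \ref{lemma: HN filtrations and scalar change} is not the right reference for the linearity of $\overline{\psi}$. In that case the map being analyzed was a derivation; here $\psi$ is already $\mathcal{O}_{C_{k[\epsilon]}}$-linear, so the induced map on $\mathcal{W}_j/\epsilon\mathcal{W}_j$ is automatically linear over $\mathcal{O}_{C_{k[\epsilon]}}/(\epsilon)=\mathcal{O}_C$, and compatibility with the flags at the $x_i$ is inherited componentwise from $\psi$ being a morphism of diagrams of sheaves. Net effect: your route trades the paper's induction for the d\'evissage lemma, giving a per-step argument in which, as you note, the nesting of the $\mathcal{W}_j$ plays no role.
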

\begin{proof}
By induction on the length $l$ of the filtration, we are reduced to showing that $\mathcal{W}_1 = p^{*} \sigma^{*}\, \mathcal{W}_1$. Let us show that $\mathcal{W}_1 \subset p^{*} \sigma^{*}\, \mathcal{W}_1$. Observe that $p^{*} \sigma^{*}\, \mathcal{W}_1$ is a parabolic subbundle of $p^*\mathcal{V}$. It suffices to prove that the composition $f: \, \mathcal{W}_1 \, \hookrightarrow \, p^{*} \mathcal{V} \, \twoheadrightarrow \, p^{*}\mathcal{V} \, / \, p^{*}\sigma^{*} \, \mathcal{W}_1$
is $0$. Restricting to the closed point gives us the map $\sigma^{*}f: \, \sigma^{*}\mathcal{W}_1 \, \hookrightarrow \, \mathcal{V} \, \twoheadrightarrow \, \mathcal{V} \, / \, \sigma^{*} \, \mathcal{W}_1$, which is clearly $0$. Therefore, basic deformation theory tells us that $f = \epsilon \, \psi$ for some homomorphism of parabolic bundles $\psi \in \text{Hom}\left(\,\sigma^{*}\mathcal{W}_1, \; \mathcal{V} \, / \, \sigma^{*}\mathcal{W}_1 \,\right)$. But note that, by assumption, $\sigma^{*}\mathcal{W}_1$ is the maximal destabilizing parabolic subbundle of $\mathcal{V}$. Hence part (c) of Lemma \ref{lemma: existence maximal destibilizing subbundle} implies that $\text{Hom}\left(\,\sigma^{*}\mathcal{W}_1, \; \mathcal{V} \, / \, \sigma^{*}\mathcal{W}_1 \,\right) = 0$. We conclude that $f = 0$.

The same reasoning shows that $p^{*} \sigma^{*}\, \mathcal{W}_1 \subset \mathcal{W}_1$, thus concluding the proof.
\end{proof}
\end{subsection}
\end{section}

\begin{section}{Stacks of parabolic vector bundles} \label{section: stacks of parabolic vector bundles}
\begin{subsection}{The group scheme $\text{GL}\left(\mathcal{V}\right)$}
Let $J$ be a small category and let $X$ be a scheme. A diagram of sheaves on $X$ of shape $J$ is a functor $F$ from $J$ into the abelian category of quasicoherent sheaves $\text{QCoh}(X)$. Diagrams of sheaves on $X$ of shape $J$ form an abelian category, where the morphisms are natural transformations.  Given a morphism of schemes $f: Y \rightarrow X$ and a diagram of sheaves $F: J \rightarrow \text{QCoh}(X)$, the pullback $f^{*}F$ is defined to be the diagram of sheaves on $Y$ obtained by postcomposing $F$ with the functor $f^{*}: \text{QCoh}(X) \rightarrow \text{QCoh}(Y)$. As usual, we will write $F|_{Y}$ instead of $f^{*}F$ when the morphism $f$ is implicitly understood.

Let $\mathcal{V} = \left[ \; \mathcal{E}^{(0)} \,\overset{a_i^{(1)}}{\subset} \, \mathcal{E}_i^{(1)} \overset{a_i^{(2)}}{\subset} \cdots\; \overset{a_{i}^{(N_i)}}{\subset} \, \mathcal{E}_i^{(N_i)}= \mathcal{E}^{(0)}(x_i) \;\right]_{i \in I}$ be a parabolic vector bundle over $C$. We can interpret $\mathcal{V}$ as a diagram of sheaves on $C$. The source category $J_{\mathcal{V}}$ can be described as follows. $J_{\mathcal{V}}$ has $1+ \sum_{i \in I} N_i$ objects. There is a initial object $X^{(0)}$, and the rest of the objects can be labeled $X_i^{(m)}$ for $i \in I$ and $1 \leq m \leq N_i$. For each $i \in I$ and $1 \leq m_1 \leq m_2 \leq N_i$, there is a unique morphism $X_i^{(m_1)} \rightarrow X_i^{(m_2)}$. These, along with the initial morphims with source $X^{(0)}$, account for all morphisms in the category. Here is a picture of $J_{\mathcal{V}}$ (we fix an enumeration $I =\{i_1, i_2, \cdots, i_k\}$ for convenience):
\begin{figure}[H]
\centering
\begin{tikzcd}
      & X_{i_1}^{(1)}\ar[r] & X_{i_1}^{(2)}\ar[r] & \cdots\ar[r] & X_{i_1}^{(N_{i_1}-1)} \ar[r]& X_{i_1}^{(N_{i_1})}\\
     X^{(0)} \ar[r] \ar[ur] \ar[dr] \ar[ddr] & X_{i_2}^{(1)}\ar[r] & X_{i_2}^{(2)}\ar[r] & \cdots\ar[r] & X_{i_1}^{(N_{i_2}-1)}\ar[r] & X_{i_2}^{(N_{i_2})}\\
     & \cdots & \cdots & \cdots & \cdots & \cdots\\
   & X_{i_k}^{(1)} \ar[r] & X_{i_k}^{(2)}\ar[r] & \cdots\ar[r] & X_{i_k}^{(N_{i_k}-1)}\ar[r] & X_{i_k}^{(N_{i_k})}
\end{tikzcd}
\end{figure}
This description is compatible with some of the definitions given in Section \ref{section: parabolic vector bundles curve}. For example, morphisms of parabolic vector bundles are the same as morphisms of diagrams of sheaves. Fixing the chain lengths amounts to fixing the shape $J$ of the diagram.

\begin{defn} \label{defn: group schemes of automorphisms}
Let $\mathcal{V}$ a parabolic vector bundle as above. Denote by $\text{GL}(\mathcal{V})$ the contravariant functor from the category of flat $C$-schemes into Set given as follows. For $f: T \rightarrow C$ a flat $C$-scheme, we set $\text{GL}(\mathcal{V})\, (T) \vcentcolon =  \text{Aut}( f^{*} \, \mathcal{V})$. Here the pullbacks and automorphisms are to be interpreted in the category of diagrams of sheaves of shape $J_{\mathcal{V}}$, as described in the paragraphs above.
\end{defn}

\begin{prop} \label{prop: groups schemes of automorphisms is smooth}
$\text{GL}(\mathcal{V})$ is represented by a smooth group scheme over $C$.
\end{prop}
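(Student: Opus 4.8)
The strategy is to realise $\text{GL}(\mathcal{V})$ as the open subscheme of invertible elements inside the total space of a bundle of associative algebras on $C$; representability and smoothness then both follow from the fact that such a total space is, Zariski-locally on $C$, an affine space bundle.

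The first step is to translate the diagram-theoretic definition into a condition on $\mathcal{E}^{(0)}$ alone. I claim that an endomorphism of the diagram $\mathcal{V}$ of shape $J_{\mathcal{V}}$ is the same datum as an endomorphism $\phi$ of $\mathcal{E}^{(0)}$ whose twist $\phi(x_i) = \phi \otimes \mathrm{id}_{\mathcal{O}(x_i)}$ on $\mathcal{E}^{(0)}(x_i)$ preserves each $\mathcal{E}_i^{(m)}$. Indeed, any compatible family $\{\phi^{(0)}, \phi_i^{(m)}\}$ is forced by the inclusions $\mathcal{E}^{(0)} \subset \mathcal{E}_i^{(m)} \subset \mathcal{E}^{(0)}(x_i)$ to satisfy $\phi_i^{(m)} = \phi^{(0)}(x_i)|_{\mathcal{E}_i^{(m)}}$: the two maps $\mathcal{E}_i^{(m)} \to \mathcal{E}^{(0)}(x_i)$ agree on $\mathcal{E}^{(0)}$, so their difference factors through the torsion sheaf $\mathcal{E}_i^{(m)}/\mathcal{E}^{(0)}$ while landing in the torsion-free sheaf $\mathcal{E}^{(0)}(x_i)$, and hence vanishes. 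Passing to the quotient $\mathcal{E}^{(0)}(x_i)/\mathcal{E}^{(0)} \cong \mathcal{E}^{(0)}|_{x_i}$ shows moreover that ``$\phi(x_i)$ preserves $\mathcal{E}_i^{(m)}$'' is equivalent to ``$\phi|_{x_i}$ preserves the flag $\mathcal{E}_i^{(m)}/\mathcal{E}^{(0)}$ in the fibre $\mathcal{E}^{(0)}|_{x_i}$''. I would then define $\mathcal{A} \subset \mathcal{E}nd(\mathcal{E}^{(0)})$ to be the subsheaf of such parabolic endomorphisms, a sheaf of associative $\mathcal{O}_C$-algebras under composition. Writing $\mathfrak{p}_i$ for the parabolic subalgebra stabilising the flag at $x_i$, it sits in an exact sequence
\[ 0 \longrightarrow \mathcal{A} \longrightarrow \mathcal{E}nd(\mathcal{E}^{(0)}) \longrightarrow \bigoplus_{i \in I} (q_i)_*\left( \mathcal{E}nd\big(\mathcal{E}^{(0)}|_{x_i}\big) \big/ \mathfrak{p}_i \right) \longrightarrow 0, \]
whose last map is restriction to the fibre followed by projection, and which is surjective. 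Since the cokernel is a skyscraper sheaf and $\mathcal{E}nd(\mathcal{E}^{(0)})$ is locally free on the smooth curve $C$, the subsheaf $\mathcal{A}$ is torsion-free, hence locally free, of rank $n^2$.

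Next I would let $M := \underline{\text{Spec}}_C\, \text{Sym}(\mathcal{A}^\vee)$ be the total space of $\mathcal{A}$, whose $T$-points for $f : T \to C$ are $\Gamma(T, f^*\mathcal{A})$, and verify that $M$ represents the endomorphism functor $T \mapsto \text{End}(f^*\mathcal{V})$. Here the flatness hypothesis of Definition \ref{defn: group schemes of automorphisms} is essential: because $C$ is a regular integral curve, a flat $T \to C$ has associated points only over the generic point of $C$, so $f^{-1}(x_i)$ is nowhere dense and avoids $\mathrm{Ass}(\mathcal{O}_T)$; the torsion-free argument of the previous step then runs verbatim over $T$, and the displayed sequence remains exact after the flat pullback $f^*$, giving $\text{End}(f^*\mathcal{V}) = \Gamma(T, f^*\mathcal{A}) = M(T)$. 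The scheme $M$ is affine and smooth over $C$, being Zariski-locally on $C$ the bundle $\mathbb{A}^{n^2}$.

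Finally, $\text{GL}(\mathcal{V})$ is the subfunctor of units of the algebra bundle $\mathcal{A}$ inside $M$. This is open: sending $a$ to the determinant of left multiplication $L_a \in \mathcal{E}nd(\mathcal{A})$ defines a function $M \to \mathbb{A}^1_C$ whose non-vanishing locus is exactly the invertible elements, since in a finite-dimensional algebra $L_a$ bijective forces a two-sided inverse. As an open subscheme of the smooth $C$-scheme $M$, the functor $\text{GL}(\mathcal{V})$ is representable and smooth over $C$, and composition and inversion (the latter regular by Cramer's rule on the open unit locus) make it a group scheme. The step I expect to be the main obstacle is precisely the family identification in the third paragraph: one must check that the reformulation ``diagram endomorphism $=$ flag-preserving endomorphism of $\mathcal{E}^{(0)}$'' survives an arbitrary flat base change $T \to C$, which is exactly what flatness is imposed to guarantee.
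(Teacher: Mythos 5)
Your proof is correct, and it takes a genuinely different route from the paper's. The paper works Zariski-locally on $C$: after trivializing the bundles and isolating one point of degeneration, it identifies the automorphism functor of the resulting lattice chain $t\,F \subset F_1 \subset \cdots \subset F$ with the scheme-theoretic closure of $\text{GL}(F)|_{U_j \setminus x_i}$ diagonally embedded in $\text{GL}(t\,F) \times \prod_l \text{GL}(F_l) \times \text{GL}(F)$ (flatness of that closure gives representability by an affine flat scheme of finite type), and then proves smoothness by passing to the completed local ring $k\bseries{t}$ via Cohen's structure theorem and quoting smoothness of parahoric group schemes from Bruhat--Tits theory \cite{bruhat-tits}. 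You instead work globally: you encode the whole functor in the sheaf of unital algebras $\mathcal{A} \subset \mathcal{E}nd(\mathcal{E}^{(0)})$ of flag-preserving endomorphisms, which is locally free of rank $n^2$ as a full-rank subsheaf of a locally free sheaf on a smooth curve; you check that the total space of $\mathcal{A}$ represents the endomorphism functor on flat $C$-schemes, and you cut out $\text{GL}(\mathcal{V})$ as the open unit locus $\det L_a \neq 0$, so smoothness and affineness over $C$ come for free. The hinge is the same in both arguments, and you identify it correctly: flatness of $T \to C$ keeps the local equation of $x_i$ a nonzerodivisor on $\mathcal{O}_T$, which forces every diagram endomorphism to be the restriction of $\phi^{(0)}(x_i)$ and makes the defining sequence of $\mathcal{A}$ pull back correctly, just as the paper needs flatness so that its scheme-theoretic closure computes the right functor of points. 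One caution: your aside about $\mathrm{Ass}(\mathcal{O}_T)$ implicitly assumes noetherian hypotheses that an arbitrary flat $C$-scheme need not satisfy, so the torsion/nonzerodivisor formulation you also give is the one to keep. What your route buys is self-containedness --- no Bruhat--Tits input, no completion, no flattening via closures --- and it realizes $\text{GL}(\mathcal{V})$ as the unit group of a sheaf of orders in $\mathcal{E}nd(\mathcal{E}^{(0)})$, which is the classical description of parahorics of $\text{GL}_n$ by hereditary orders. What the paper's route buys is the explicit identification, used in the remark immediately following the proposition, of $\text{GL}(\mathcal{V})|_{\widehat{\mathcal{O}}_{C,\,x_i}}$ with a Bruhat--Tits parahoric group scheme; with your construction that identification still holds, but it requires the additional (standard) observation that the unit group scheme of a lattice-chain stabilizer order is the parahoric.
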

\begin{proof}
By descent for morphisms of quasicoherent sheaves, we know that $\text{GL}(\mathcal{V})$ is a sheaf in the Zariski topology. In particular, it suffices to check the claim after passing to a Zariski cover of $C$. Let $\bigsqcup_{j \in J} U_j \rightarrow C$ be an open cover of $C$ such that for all $j \in J$ we have that $\mathcal{E}^{(0)}|_{U_j} \cong \mathcal{O}_{U_j}^{\oplus n}$. It suffices to show that the automorphism functor $\text{GL}(\mathcal{V}|_{U_j})$ of the diagram of sheaves $\mathcal{V}|_{U_j}$ is a smooth group scheme over $U_j$ for any $j \in J$. If $U_j$ does not contain any of the points of degeneration $\{x_i\}_{i \in I}$, then we have
\[ \text{GL}(\mathcal{V}|_{U_j}) \, \cong \, \text{GL}(\mathcal{E}^{(0)}|_{U_j}) \, \cong\, \text{GL}(\mathcal{O}_{U_j}^{\oplus n}) \, \cong \, \text{GL}_n \times U_j  ,\]
where $n = \text{rank} \, \mathcal{V}$. (Here we are abusing notation and writing $\mathcal{E}^{(0)}$ to denote the constant diagram with identities as morphisms). Hence we are done in this case.

Suppose that $U_j$ contains some of the points of degeneration $\{x_i\}_{i \in I}$. By passing to a finer Zariski cover, we can assume that $U_j$ contains only one such point of degeneration. Fix $i \in I$ such that $x_i \in U_j$. For all indexes $s \neq i$, the inclusions in the $s$th chain of vector bundles $\mathcal{E}^{(0)}|_{U_j} \,\overset{a_s^{(1)}}{\subset} \, \mathcal{E}_s^{(1)}|_{U_j} \overset{a_s^{(2)}}{\subset} \cdots\; \overset{a_{s}^{(N_s)}}{\subset} \, \mathcal{E}_s^{(N_s)}|_{U_j}= \mathcal{E}^{(0)}(x_s)|_{U_j}$ become isomorphisms. We conclude that the functor of automorphisms of the diagram $\mathcal{V}|_{U_j}$ is the same of the functor of automorphisms preserving the single chain of vector bundles \[\mathcal{E}^{(0)}|_{U_j} \,\overset{a_i^{(1)}}{\subset} \, \mathcal{E}_i^{(1)}|_{U_j} \overset{a_i^{(2)}}{\subset} \cdots\; \overset{a_{i}^{(N_i)}}{\subset} \, \mathcal{E}_i^{(N_s)}|_{U_j}= \mathcal{E}^{(0)}(x_i)|_{U_j}.\]

By making $U_j$ smaller, we can assume that $U_j$ is the spectrum of a Dedekind domain $R$ over $k$ and that the Cartier divisor $x_i$ is given by the vanishing of a nonzerodivisor $t \in R$. We can also assume that $\mathcal{E}_i^{(m)}|_{U_j} \cong \mathcal{O}_{U_j}^{\oplus n}$ for all $1 \leq m \leq N_i$. Now $\mathcal{E}^{(0)}(x_i)|_{U_j}$ is a free $R$-module of rank $n$, let's denote it by $F$. Similarly, for $1 \leq m \leq N_i -1$, let's write $F_m$ for the free $R$-module $\mathcal{E}^{(m)}_i|_{U_j}$. Our chain of vector bundles is given by a set of inclusions $t \,F \, \subset \, F_1 \, \subset \, \cdots \,\subset \, F_{N_i-1} \, \subset \, F$.

Observe that all maps in this chain become isomorphisms after inverting $t$. Hence we get a collection of canonical isomorphisms $F_l|_{U_j \setminus x_i} \xrightarrow{\sim} F|_{U_j \setminus x_i}$ for each $1 \leq l \leq N_i -1$. Consider the smooth group scheme $G$ over $R$ given by $G \vcentcolon = \text{GL}(t \, F) \times \,\prod_{l=1}^{N_i -1}\text{GL}(F_l) \, \times \text{GL}(F)$. We have a diagonal morphism $\overline{\Delta}: \text{GL}(F)|_{U_j \setminus x_i} \rightarrow G|_{U_j \setminus x_i}$ obtained by using the aforementioned canonical isomorphisms $F_l|_{U_j \setminus x_i} \xrightarrow{\sim} F|_{U_j \setminus x_i}$. To be precise, $\overline{\Delta}$ is given by the composition
\begin{equation*} 
\begin{aligned}
\text{GL}(F)|_{U_j \setminus x_i} \, \xrightarrow{\Delta} \,& \; \text{GL}(F)|_{U_j \setminus x_i} \times \,\prod_{l=1}^{N_i -1}\text{GL}(F)|_{U_j \setminus x_i} \, \times \text{GL}(F)|_{U_j \setminus x_i}\\ 
\xrightarrow{\sim} &  \; \text{GL}(t \, F)|_{U_j \setminus x_i} \times \,\prod_{l=1}^{N_i -1}\text{GL}(F_l)|_{U_j \setminus x_i} \, \times \text{GL}(F)|_{U_j \setminus x_i}
\end{aligned}
\end{equation*}

Let $S$ be a flat $R$-algebra. Automorphisms of the diagram of sheaves $\mathcal{V}|_{\text{Spec}(S)}$ are the same as automorphisms the chain of free modules $t \,F \otimes S\, \subset \, F_1 \otimes S \, \subset \, \cdots \,\subset \, F_{N_i-1} \otimes S \, \subset \, F \otimes S$. By $R$-flatness of $S$, we know that $t$ is still a nonzerodivisor in $S$. So the map $S \rightarrow S[\frac{1}{t}]$ is injective. By unraveling the definition, an automorphism of the chain of modules above is an element of $G(S)$ such that its localization in $G(S[\frac{1}{t}]) = G|_{U_j \setminus x_i}(S[\frac{1}{t}])$ belongs to the image of the diagonal map $\overline{\Delta}(S[\frac{1}{t}])$. These are exactly the $S$-points of the scheme theoretic closure of the composition $D: \, \text{GL}(F)|_{U_j \setminus x_i} \, \xrightarrow{\Delta} \, G|_{U_j \setminus x_i} \, \hookrightarrow \, G$. This scheme theoretic closure $\text{Im}(D)$ must be $R$-flat, because it is the Spec of a subalgebra of the torsion-free $R$-module $\mathcal{O}_{\text{GL}(F)|_{U_j \setminus x_i}}$. We conclude that the functor $\text{GL}(\mathcal{V})|_{U_j}$ is represented by an affine flat $U_j$-scheme. We are left to show smoothness.

$\text{Im}(D)$ is of finite type, because it is a closed subscheme of $G$. Also, we have seen that $\text{GL}(\mathcal{V})|_{U_j \setminus x_i} \, \cong \, \text{GL}_n \times (U_j \setminus x_i)$. We are left to show that the fiber $\text{Im}(D)|_{x_i}$ is smooth. Since taking scheme theoretic image of quasicompact maps commmutes with flat base-change, we can pass to the completion of the local ring at $x_i$ without altering the fiber $\text{Im}(D)|_{x_i}$. Therefore we can assume that $R$ is a complete discrete valuation with residue field $k$. By the Cohen structure theorem, we have $R = k\bseries{t}$.





Now we have put ourselves in a familiar situation, since the scheme theoretic closure is by definition the parahoric group scheme of $\text{GL}_n$ defined over $k\bseries{t}$ that preserves the chain of lattices $t\, F \, \subset \, F_1 \, \subset \, \cdots \, \subset \, F_{N_i-1} \, \, \subset \, F$. This group scheme is known to be smooth over $R$, see \cite[\S 3]{bruhat-tits}.
\end{proof}

\begin{remark}
We have seen in the proof above that $\text{GL}(\mathcal{V})$ is generically isomorphic to the constant group scheme $\text{GL}_n$ (via choosing a trivialization of $\mathcal{E}^{(0)}$ at the generic point). At the finitely many points of degeneration $\{x_i\}_{i \in I}$, the base-change to the completion of the local ring $\text{GL}(\mathcal{V})|_{\widehat{\mathcal{O}}_{C, \,  x_i}}$ is a parahoric group scheme of $\text{GL}_n$ in the sense of Bruhat-Tits \cite[Def. 5.2.6]{bruhat-tits-II-big}. $\text{GL}(\mathcal{V})$ is a special case of what is sometimes called a parahoric Bruhat-Tits group scheme in the literature, see  \cite{pappas-rapoport-moduli}, \cite{balaji-seshadri}, \cite{heinloth-uniformization}.
\end{remark}

We briefly recall the definition of the moduli stack of torsors for a group scheme such as $\text{GL}(\mathcal{V})$ above. Let $X$ be a scheme. Let $\mathcal{G} \rightarrow X$ be a relatively affine group scheme that is smooth over $X$. Let $\mathcal{P}$ be a $X$-scheme. A right $\mathcal{G}$-action on $\mathcal{P}$ consists of a map of $X$- schemes $a: \, \mathcal{P} \times_X \mathcal{G} \rightarrow \mathcal{P}$ such that for all $X$-schemes $T$ the map induced on $T$-points $a(T) : \, \mathcal{P}(T) \times \mathcal{G}(T) \rightarrow \mathcal{P}(T)$ is a right action of $\mathcal{G}(T)$ on the set $\mathcal{P}(T)$. A (right) $\mathcal{G}$-torsor over $X$ is a scheme $\pi: \mathcal{P} \rightarrow X$ together with a right $\mathcal{G}$-action that makes $\mathcal{P}$ a $\mathcal{G}$-torsor in the \`etale topology \cite[\href{https://stacks.math.columbia.edu/tag/0497}{Tag 0497}]{stacks-project}. This means that there is an \`etale cover $T \rightarrow X$ such that the base-change $\pi\times id_T: \mathcal{P}\times_X T \rightarrow T$ is $\mathcal{G}$-equivariantly isomorphic to the trivial $\mathcal{G}$-torsor $\mathcal{G} \times_X T$. An isomorphism between $\mathcal{G}$-torsors is an isomorphism of $X$-schemes that intertwines the $\mathcal{G}$-actions.

Let $\mathcal{G}$ be a relatively affine smooth group scheme over $C$.
\begin{defn} \label{defn: moduli of torsors smooth group scheme}
The moduli stack of $\mathcal{G}$-torsors $\text{Bun}_{\mathcal{G}}(C)$ is defined to be the pseudofunctor from $k$-schemes into groupoids given as follows. For every $k$-scheme $T$, we let
\[ \text{Bun}_{\mathcal{G}}(C)\, (T) \vcentcolon = \; \left\{ \begin{matrix} \text{groupoid of  $\mathcal{G} \times T$-torsors over $C\times T$} \end{matrix} \right\}  \]
\end{defn}
See Proposition 1 in \cite{heinloth-uniformization} for a proof of the following using the Artin criteria.
\begin{prop} \label{prop: algebraicity torsors over smooth group schemes}
$\text{Bun}_{\mathcal{G}}$ is a smooth algebraic stack over $k$. \qed
\end{prop}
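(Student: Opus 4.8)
The plan is to verify Artin's representability criteria for the pseudofunctor $\text{Bun}_{\mathcal{G}}$, and then to read off smoothness from the obstruction theory. First I would check that $\text{Bun}_{\mathcal{G}}$ is a stack for the fppf topology: since $\mathcal{G}$ is smooth and affine over $C$, every $\mathcal{G}$-torsor is affine over $C$, so both descent of objects and the sheaf property of isomorphisms follow from fppf descent for affine schemes. The next input is representability of the diagonal. Given two $\mathcal{G} \times T$-torsors $\mathcal{P}_1, \mathcal{P}_2$ over $C \times T$, I would show that the functor $\underline{\text{Isom}}(\mathcal{P}_1, \mathcal{P}_2)$ on $T$-schemes is representable by a scheme affine and of finite presentation over $T$. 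This uses that the $\mathcal{P}_j$ are affine of finite presentation over the proper flat morphism $C \times T \to T$, so the space of sections of the affine $C \times T$-scheme $\underline{\text{Isom}}_{C \times T}(\mathcal{P}_1, \mathcal{P}_2)$ is representable by a Weil restriction along $C \times T \to T$. In particular the diagonal is affine and the automorphism sheaves are affine group schemes.

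The heart of the matter is the deformation theory, and this is where smoothness of $\mathcal{G}$ enters. For a $\mathcal{G}$-torsor $\mathcal{P}$ the infinitesimal theory is controlled by the adjoint bundle $\text{ad}(\mathcal{P}) \vcentcolon= \mathcal{P} \times^{\mathcal{G}} \mathfrak{g}$, where $\mathfrak{g} = \text{Lie}(\mathcal{G})$; smoothness of $\mathcal{G}$ guarantees $\text{ad}(\mathcal{P})$ is a genuine vector bundle on $C$. The tangent space to $\text{Bun}_{\mathcal{G}}$ at $[\mathcal{P}]$ is $H^1(C, \text{ad}(\mathcal{P}))$, infinitesimal automorphisms are $H^0(C, \text{ad}(\mathcal{P}))$, and obstructions to lifting square-zero deformations lie in $H^2(C, \text{ad}(\mathcal{P}))$. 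Because $C$ is a curve, the cohomology of any quasi-coherent sheaf vanishes in degrees $\geq 2$, so the obstruction space is zero. This simultaneously supplies the bounded, homogeneous deformation-obstruction theory demanded by Artin's criteria and shows the stack is smooth, since every square-zero deformation then lifts.

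To finish the verification I would check the two remaining conditions. First, $\text{Bun}_{\mathcal{G}}$ is limit preserving (locally of finite presentation): as $\mathcal{G}$ is of finite presentation over the finite-type curve $C$, torsors under it and their isomorphisms commute with filtered colimits of rings. Second, formal deformations are effective: a compatible system of torsors over the thickenings $C \times \text{Spec}(A/\mathfrak{m}^n A)$ of a complete local Noetherian $A$ algebraizes by Grothendieck's existence theorem (formal GAGA), applied in the affine-over-$C$, proper-over-$A$ situation. Assembling the stack property, affine diagonal, finite presentation, effectivity, and the $H^1$/$H^2$ deformation theory, Artin's criteria yield that $\text{Bun}_{\mathcal{G}}$ is an algebraic stack, and the vanishing $H^2(C, \text{ad}(\mathcal{P})) = 0$ upgrades this to smoothness.

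The hard part will not be the deformation theory, which is clean precisely because $C$ is one-dimensional, but the two representability inputs: establishing that $\underline{\text{Isom}}(\mathcal{P}_1, \mathcal{P}_2)$ (hence the diagonal) is an honest scheme affine over the base via Weil restriction along the proper morphism $C \times T \to T$, and verifying effectivity through Grothendieck existence in this relative affine setting. Both are routine for $\text{Bun}_G$ with $G$ a constant reductive group, and they go through with only mild extra care here because $\mathcal{G}$ is merely a smooth affine group scheme over $C$ rather than a constant one; since $\mathcal{G}$ is affine over $C$ this introduces no essential difficulty. This is exactly the content of \cite[Prop. 1]{heinloth-uniformization}.
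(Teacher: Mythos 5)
Your overall strategy is in fact the same as the proof this paper invokes: the paper gives no argument of its own but cites \cite{heinloth-uniformization}[Prop. 1], which verifies Artin's criteria exactly along the lines you describe. Your treatment of the stack condition, of the diagonal via the scheme of sections of the affine $C\times T$-scheme $\underline{\text{Isom}}_{C\times T}(\mathcal{P}_1,\mathcal{P}_2)$, and of the deformation theory (tangent space $H^1(C,\text{ad}(\mathcal{P}))$, automorphisms $H^0$, obstructions in $H^2(C,\text{ad}(\mathcal{P}))=0$ because $C$ is a curve, whence smoothness) is correct as stated.

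The one step that does not work as written is effectivity. Grothendieck's existence theorem algebraizes \emph{coherent} sheaves on a scheme proper over $A$; a $\mathcal{G}$-torsor $\mathcal{P}_n \to C_n$ is affine, so the object you would need to algebraize is the quasi-coherent algebra $\pi_*\mathcal{O}_{\mathcal{P}_n}$, which is of finite type but not coherent, and $\mathcal{P}_n$ itself is not proper over $A/\mathfrak{m}^n$; there is no ``affine-over-$C$, proper-over-$A$'' situation to which formal GAGA applies directly. The standard repair --- needed already for constant reductive $G$, so it is not ``mild extra care'' coming from $\mathcal{G}$ being non-constant --- is to choose a faithful representation $\mathcal{G}\hookrightarrow GL(V)$ over $C$ whose fppf quotient $GL(V)/\mathcal{G}$ is quasi-affine over $C$ (the existence of such a representation over a Dedekind base is itself a nontrivial theorem), algebraize the associated vector bundles $\mathcal{P}_n\times^{\mathcal{G}}V$ by honest Grothendieck existence, and then algebraize the compatible system of sections of the associated $GL(V)/\mathcal{G}$-fibration encoding the reduction of structure group, using full faithfulness of algebraization on morphisms of coherent sheaves together with properness to check that the limiting section factors through the quasi-affine locus. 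Your closing claim that effectivity is routine in the relative affine setting conceals precisely this, which is the only genuinely delicate point in the whole verification.
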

\end{subsection}

\begin{subsection}{Moduli stacks of parabolic vector bundles}
Fix a parabolic vector bundle $\mathcal{V}$ over $C$. Let us define the stack of parabolic vector bundles of type $\mathcal{V}$.
\begin{defn} \label{defn: stack of parabolic vector bundles}
We define the pseudofunctor $\text{Bun}_{\mathcal{V}}$ from $k$-schemes into groupoids as follows. For any $k$-scheme $S$, we set
\[ \text{Bun}_{\mathcal{V}}(S) \; \vcentcolon = \; \left\{ \begin{matrix} \text{groupoid of  parabolic vector bundles $\mathcal{W}$}  \\   \text{of type $\mathcal{V}$ over $C \times S$} \end{matrix} \right\}  \]
For any morphism $f: S \rightarrow T$ of $k$-schemes, we set $\text{Bun}_{\mathcal{V}} (f)$ to be the pullback $f^{*}$ of parabolic vector bundles.
\end{defn}

For the next proposition, we write $\text{Bun}_{\text{GL}(\mathcal{V})}(C)$ to denote the moduli stack of torsors of $\text{GL}(\mathcal{V})$ over $C$. See Definition \ref{defn: moduli of torsors smooth group scheme} and \cite{heinloth-uniformization} Example (2) pg. 500.
\begin{prop} \label{prop: parabolic vb as parahoric torsors}
There is an equivalence of pseudofunctors $\text{Bun}_{\text{GL}(\mathcal{V})}(C) \xrightarrow{\sim} \text{Bun}_{\mathcal{V}}$.
\end{prop}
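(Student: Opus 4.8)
The plan is to exhibit a pair of quasi-inverse functors compatible with pullback, instantiating the classical dictionary ``forms of a fixed object that are locally trivial $\leftrightarrow$ torsors under its automorphism group'', applied to $\mathcal{V}$ regarded as a diagram of sheaves of shape $J_{\mathcal{V}}$ with automorphism group scheme $\text{GL}(\mathcal{V})$.

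First I would build the functor $\text{Bun}_{\mathcal{V}} \to \text{Bun}_{\text{GL}(\mathcal{V})}(C)$. Given a parabolic vector bundle $\mathcal{W}$ of type $\mathcal{V}$ over $C \times S$, I attach the sheaf $\underline{\mathrm{Isom}}(\mathcal{V}|_{C\times S}, \mathcal{W})$ on flat $C\times S$-schemes sending $T$ to the set of isomorphisms $\mathcal{V}|_T \xrightarrow{\sim}\mathcal{W}|_T$ of diagrams of sheaves. Precomposition equips it with a right action of $\text{GL}(\mathcal{V})\times S = \text{GL}(\mathcal{V}|_{C\times S})$, which is simply transitive wherever the $\mathrm{Isom}$ set is nonempty. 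Representability by a scheme follows by realizing $\underline{\mathrm{Isom}}$ as the open locus (invertibility, an open condition) inside the closed subscheme (chain-compatibility, a system of linear conditions) of the product of the affine Hom-bundles $\underline{\mathrm{Hom}}(\mathcal{E}_i^{(m)}, \mathcal{F}_i^{(m)})$ of the node sheaves, all of which are representable since the node sheaves are locally free. Thus, once local nonemptiness is established, $\underline{\mathrm{Isom}}(\mathcal{V},\mathcal{W})$ is a $\text{GL}(\mathcal{V})$-torsor.

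In the other direction, given a $\text{GL}(\mathcal{V})$-torsor $\mathcal{P}$ over $C \times S$, I would form the associated diagram of sheaves by twisting: $\text{GL}(\mathcal{V})$ acts $\mathcal{O}_C$-linearly on each node sheaf of $\mathcal{V}$ compatibly with the chain inclusions, and I twist these along an \'etale cover trivializing $\mathcal{P}$, descending the result to a diagram $\mathcal{P}\times^{\text{GL}(\mathcal{V})}\mathcal{V}$ on $C\times S$. Because this construction is \'etale-locally identified with $\mathcal{V}|_{C\times S}$, the flatness and local-freeness requirements of Definition \ref{defn: parabolic vb in family}, together with the equalities of ranks and chain lengths, all descend, so the output is a parabolic vector bundle of type $\mathcal{V}$. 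The tautological evaluation $(\phi,v)\mapsto\phi(v)$ and the coevaluation $p\mapsto(v\mapsto(p,v))$ furnish natural isomorphisms $\underline{\mathrm{Isom}}(\mathcal{V},\mathcal{W})\times^{\text{GL}(\mathcal{V})}\mathcal{V}\cong\mathcal{W}$ and $\underline{\mathrm{Isom}}(\mathcal{V},\, \mathcal{P}\times^{\text{GL}(\mathcal{V})}\mathcal{V})\cong \mathcal{P}$, compatibly with pullback, so the two functors are quasi-inverse.

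The crux, and the only genuinely geometric input, is local nonemptiness: every $\mathcal{W}$ of type $\mathcal{V}$ is \'etale-locally on $C\times S$ isomorphic to $\mathcal{V}|_{C\times S}$. Away from the points of degeneration the diagram has all maps invertible, so this is just Zariski-local triviality of vector bundles. Near a point $x_i$, after trivializing the underlying bundle $\mathcal{E}^{(0)}$ and the local equation of $x_i$, the chain is recorded by a flag of type $(a_i^{(1)},\dots,a_i^{(N_i)})$ of subbundles in the rank-$n$ fiber over $x_i\times S$, i.e.\ a section of the partial flag bundle $\text{GL}_n/P$. Since $P$ is a \emph{special} group, such a section lifts to a frame \'etale- (indeed Zariski-) locally, and this lift renders the flag, hence the whole chain, standard, giving the desired local isomorphism with $\mathcal{V}$. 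I expect this flag-lifting step at the degeneration points to be the only part demanding care; the remainder is the formal torsor-versus-form dictionary together with the representability bookkeeping.
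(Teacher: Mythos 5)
Your proposal is correct and follows essentially the same route as the paper's proof: one direction via the $\mathrm{Isom}$ sheaf of diagram isomorphisms made into a $\text{GL}(\mathcal{V})$-torsor, the other via the associated-bundle twist, with the only geometric content being \'etale-local (in your case even Zariski-local) triviality of a parabolic bundle of type $\mathcal{V}$, reduced at the points of degeneration to comparing two flags in the fiber of $\mathcal{E}^{(0)}(x_i)/\mathcal{E}^{(0)}$. The sole variation is that you invoke specialness of the parabolic $P \subset \text{GL}_n$ to lift flags to frames Zariski-locally, where the paper instead uses that $\text{GL}_n/P$ is the \'etale sheafification of the coset functor and passes to an \'etale cover; both are valid and the difference is cosmetic.
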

\begin{proof}
Suppose that $\mathcal{V} = \left[ \; \mathcal{E}^{(0)} \,\overset{a_i^{(1)}}{\subset} \, \mathcal{E}_i^{(1)} \overset{a_i^{(2)}}{\subset} \cdots\; \overset{a_{i}^{(N_i)}}{\subset} \, \mathcal{E}_i^{(N_i)}= \mathcal{E}^{(0)}(x_i) \;\right]_{i \in I}$. Recall that the group scheme $\text{GL}(\mathcal{V})$ acts on $\mathcal{V}$ by automorphisms of diagrams of sheaves. Let $T$ be a $k$-scheme. Let 
$\mathcal{P}$ be a $\text{GL}(\mathcal{V})\times T$-torsor over $C\times T$. We can use the natural action of $\text{GL}(\mathcal{V})$ on $\mathcal{V}$ in order to define an associated parabolic vector bundle $\mathcal{P} \times_{\text{GL}(\mathcal{V}) \times T} \, \mathcal{V}|_{C \times T}$ on $C \times T$. Let us describe how this is defined. Let $f: S \rightarrow C\times T$ be an \`etale cover of $C$ such that $f^{*} \mathcal{P}$ is isomorphic to the trivial torsor. Choosing such an isomorphims, the descent datum for $\mathcal{P}$ is given by an element $h \in \text{GL}(\mathcal{V})(S \times_{C\times T} S)$. Here $h$ is viewed as an automorphism of the trivial torsor $\text{GL}(\mathcal{V})|_S$ via left multiplication. Now $h$ gives us an automorphims of $\mathcal{V}|_{S \times_{C \times T} S}$, which we can interpret as a descent datum for the diagram of sheaves  $\mathcal{V}|_{S}$ relative to the cover $f: S \rightarrow C \times T$. We can use effectivity of \`etale descent for quasicoherent sheaves in order to obtain a diagram of sheaves $\mathcal{P} \times_{\text{GL}(\mathcal{V}) \times T} \, \mathcal{V}|_{C \times T}$ such that its canonical descent datum is given by $h$. Recall that being a vector bundle of a specified degree is a property of quasicoherent sheaves that can be checked after passing to a flat cover. We can use this fact to conclude that $\mathcal{P} \times_{\text{GL}(\mathcal{V}) \times T} \, \mathcal{V}|_{C \times T}$ is a parabolic vector bundle of type $\mathcal{V}$ over $C \times T$.

Let us now define a functor $F: \text{Bun}_{\text{GL}(\mathcal{V})}(C) \rightarrow \text{Bun}_{\mathcal{V}}$ as follows. For every $k$ scheme $T$ and for every $\text{GL}(\mathcal{V}) \times T$-torsor $\mathcal{P}$ on $C\times T$, we set $F(T) \, (\mathcal{P}) \vcentcolon = \mathcal{P} \times_{\text{GL}(\mathcal{V}) \times T} \mathcal{V}|_{C\times T}$.

We want to define a quasi-inverse functor $G$. Let $T$ be a $k$-scheme. Suppose that we are given a parabolic vector bundle $\mathcal{W}$ of type $\mathcal{V}$ over $C \times T$. We can define a presheaf of sets $\text{Iso}(\mathcal{V}, \, \mathcal{W})$ on the category of flat $C \times T$-schemes as follows. For every scheme $f: S \rightarrow C \times T$ that is flat over $C \times T$, we define $\text{Iso}(\mathcal{V}, \,\mathcal{W}) \, (S) \vcentcolon = \text{Iso} (f^{*} (\mathcal{V}|_{C \times T}), \, f^{*} \mathcal{W}) $. Here the isomorphisms are taken in the category of diagrams of sheaves on $S$. It is clear by definition that $\text{Iso}(\mathcal{V}, \, \mathcal{W})$ admits a right action of $\text{GL}(\mathcal{V})$ given by precomposition.

We claim that $\text{Iso}(\mathcal{V}, \, \mathcal{W})$ is in fact a $\text{GL}(\mathcal{V})\times T$-torsor over $C \times T$. By \`etale descent for morphisms of quasicoherent sheaves, we know that $\text{Iso}(\mathcal{V}, \, \mathcal{W})$ is a sheaf in the \`etale topology. Therefore, in order to prove the claim it suffices to show that there is an \`etale cover $f: S \rightarrow C \times T$ such that $f^{*} \mathcal{W} \cong f^{*} (\mathcal{V}|_{C \times T})$ as diagrams of sheaves.

Say $\mathcal{W} = \left[ \;\mathcal{F}^{(0)} \,\overset{a_i^{(1)}}{\subset} \, \mathcal{F}_i^{(1)} \overset{a_i^{(2)}}{\subset} \cdots\; \overset{a_{i}^{(N_i)}}{\subset} \,\mathcal{F}_i^{(N_i)}= \, \mathcal{F}^{(0)}(x_i) \;\right]_{i \in I}$. Let $\bigsqcup_{j \in J} U_j \rightarrow C\times T$ be an open cover of $C\times T$ such that for all $j \in J$ we have $\mathcal{E}^{(0)}|_{U_j} \cong \mathcal{O}_U^{\oplus n} \cong \mathcal{F}^{(0)}|_{U_j}$. By refining the cover, we can furthermore assume that each $U_j$ intersects at most one of the divisors $x_i \times T$ of $C \times T$.

Let's focus our attention on a single open subset $U_j$. If $U_j$ does not intersect any of the divisors $x_i \times T$, then we have a chain of isomorphisms 
\[\mathcal{V}|_{U_j} \cong \mathcal{E}^{(0)}|_{U_j} \cong \mathcal{O}_{U_j}^{\oplus n} \cong \mathcal{F}_i^{(0)}|_{U_j} \cong \mathcal{W}|_{U_j}\]
(Here we are abusing notation for the intermediate terms; they are constant diagrams of vector bundles with identities as morphisms). Hence we are done in this case.

Suppose now that $U_j$ intersects one of the divisors $x_i\times T$. By construction, it only intersects one such divisor. Fix $i \in I$ such that $U_j \, \cap \, (x_i \times T) \neq \emptyset$. Let us denote by $q_i: x_i \rightarrow C$ the closed immersion of the point $x_i$ into $C$. Let $\pi: U_j \rightarrow T$ be the structure morphism. Let $W$ denote the open subset of $T$ given by the inverse image of $U_j$ under the section $q_i \times Id_T: \, T \rightarrow C \times T$. Define $V \vcentcolon = \pi^{-1}(W)$. We can cover $U_j = V \, \bigcup \, (U_j \setminus x_i)$. Since $U_j \setminus x_i$ does not intersect any of the divisors of degeneration, we have seen above that 
\[\mathcal{V}|_{U_j \setminus x_i} \cong \mathcal{E}^{(0)}|_{U_j \setminus x_i} \cong \mathcal{O}_{U_j \setminus x_i}^{\oplus n} \cong \mathcal{F}_i^{(0)}|_{U_j \setminus x_i} \cong \mathcal{W}|_{U_j \setminus x_i}\]
It remains to check the claim on $V$. Let $D \hookrightarrow V$ be the divisor given by the intersection $V \, \cap \, (x_i \times T)$. By construction, we have a structure morphims $\pi: V \rightarrow W$ and $D$ is given by a section $s: W \rightarrow V$ of $\pi$. Since $V$ does not intersect any of the other divisors of degeneration, for all $u \neq i$ the chain of inclusions with index $u$ gives us canonical identifications $\mathcal{E}_u^{(m)}|_{V} \cong \mathcal{E}^{(0)}|_{V}$ for all $m$.  Similarly, there are canonical identifications $\mathcal{F}^{(m)}_u|_{V} \cong \mathcal{F}^{(0)}|_{V}$ for $u \neq i$. By choosing isomorphisms $\mathcal{E}^{(0)}|_{V} \cong \mathcal{F}^{(0)}|_{V} \cong \mathcal{O}_{V}^{\oplus n}$ we get compatible isomorphisms $\mathcal{E}_u^{(m)}|_{V} \cong \mathcal{F}_u^{(m)}|_{V} \cong \mathcal{O}_{V}^{\oplus n}$ for all indexes $u \neq i$ and $m$ via the aforementioned canonical identifications. Under these identifications, we see that $\mathcal{V}|_{V}$ and $\mathcal{W}|_{V}$ are determined by the following two chains of vector bundles on $V$ respectively:
\[  \mathcal{O}_{V}^{\oplus n}\,  \overset{a_i^{(1)}}{\subset} \, \mathcal{E}_i^{(1)}|_{V} \overset{a_i^{(2)}}{\subset} \cdots\; \overset{a_{i}^{(N_i)}}{\subset} \mathcal{O}_{V}(D)^{\oplus n} \]
\[ \mathcal{O}_{V}^{\oplus n}\,  \overset{a_i^{(1)}}{\subset} \, \mathcal{F}_i^{(1)}|_{V} \overset{a_i^{(2)}}{\subset} \cdots\; \overset{a_{i}^{(N_i)}}{\subset} \mathcal{O}_{V}(D)^{\oplus n} \]
The claim is reduced to the problem of finding (maybe after passing to an \`etale cover of $V$) an automorphism $\text{GL}_n(V)$ of $\mathcal{O}_{V}^{\oplus n}$ that sends one chain to the other. Consider the corresponding flags of vector bundles on $W$
\[ 0\,  \overset{a_i^{(1)}}{\subset} \; s^{*}\,\left(\mathcal{E}_i^{(1)}|_{V} \, / \,  \mathcal{O}_{V}^{\oplus n}\right) \; \overset{a_i^{(2)}}{\subset} \cdots\; \overset{a_{i}^{(N_i)}}{\subset} \; s^{*} \, \left(\mathcal{O}_{V}(D)^{\oplus n} \, / \,  \mathcal{O}_{V}^{\oplus n}\right) \]
\[ 0\,  \overset{a_i^{(1)}}{\subset} \; s^{*}\,\left(\mathcal{F}_i^{(1)}|_{V} \, / \,  \mathcal{O}_{M_l}^{\oplus n}\right) \; \overset{a_i^{(2)}}{\subset} \cdots\; \overset{a_{i}^{(N_i)}}{\subset} \; s^{*} \, \left(\mathcal{O}_{V}(D)^{\oplus n} \, / \,  \mathcal{O}_{V}^{\oplus n}\right) \]
These two flags represent two $W$-points of the partial flag variety $\text{GL}_{n} \, / \, P$, where $P$ is the standard parabolic corresponding to the partition $(a_i^{(m)})_{m}$ of $n$. Let us denote these points by $p_1, p_2 \in (\text{GL}_n \, / \, P) (W)$. We know that the quotient $\text{GL}_n \, / \, P$ represents the \`etale sheafification of the naive coset functor. So there is an \`etale cover $w: Y \rightarrow W$ and an element $g \in \text{GL}_n(Y)$ such that $g \cdot w^{*}p_1 = w^{*}p_2$. In other words, the element $g$ relates the following two flags:
\[ 0\,  \overset{a_i^{(1)}}{\subset} \; w^{*} \, s^{*}\,\left(\mathcal{E}_i^{(1)}|_{V} \, / \,  \mathcal{O}_{V}^{\oplus n}\right) \; \overset{a_i^{(2)}}{\subset} \cdots\; \overset{a_{i}^{(N_i)}}{\subset} \; w^{*} \, s^{*} \, \left(\mathcal{O}_{V}(D)^{\oplus n} \, / \,  \mathcal{O}_{V}^{\oplus n}\right) \]
\[ 0\,  \overset{a_i^{(1)}}{\subset} \; w^{*} \, s^{*}\,\left(\mathcal{F}_i^{(1)}|_{V} \, / \,  \mathcal{O}_{V}^{\oplus n}\right) \; \overset{a_i^{(2)}}{\subset} \cdots\; \overset{a_{i}^{(N_i)}}{\subset} \; w^{*} \, s^{*} \, \left(\mathcal{O}_{V}(D)^{\oplus n} \, / \,  \mathcal{O}_{V}^{\oplus n}\right) \]
The base-change $pr_2:Y \times_{W} V \rightarrow V$ is an \`etale cover. We have a structure morphism $pr_1: Y \times_{W} V \rightarrow Y$. By construction, the automorphism $pr_1^{*} \,g$ of $\mathcal{O}_{Y \times_{W} V}^{\oplus n}$ relates the two chains of vector bundles over $Y \times_{W} V$
\[ \mathcal{O}_{Y \times_{W} V}^{\oplus n}\,  \overset{a_i^{(1)}}{\subset} \, \mathcal{E}_i^{(1)}|_{Y \times_{W} V} \overset{a_i^{(2)}}{\subset} \cdots\; \overset{a_{i}^{(N_i)}}{\subset} \mathcal{O}_{Y \times_{W} V}(D)^{\oplus n}  \]
\[ \mathcal{O}_{Y \times_{W} V}^{\oplus n}\,  \overset{a_i^{(1)}}{\subset} \, \mathcal{F}_i^{(1)}|_{Y \times_{W} V} \overset{a_i^{(2)}}{\subset} \cdots\; \overset{a_{i}^{(N_i)}}{\subset} \mathcal{O}_{Y \times_{W} V}(D)^{\oplus n} \]
We conclude that $\mathcal{W}|_{Y \times_{W} V} \cong \mathcal{V}|_{Y \times_{W} V}$, and the claim is proven.

We can now define a quasi-inverse to $F$. We define $G : \text{Bun}_{\mathcal{V}} \rightarrow  \text{Bun}_{\text{GL}(\mathcal{V})}(C)$ as follows. For every $k$-scheme $T$ and every parabolic vector bundle $\mathcal{W}$ of type $\mathcal{V}$ over $C \times T$, we let $G(T) \, (\mathcal{W}) \vcentcolon =  \text{Iso}(\mathcal{V}, \,\mathcal{W})$.

Let us prove that $F$ and $G$ are quasi-inverse functors. Fix a $k$-scheme $T$. By construction, we have a canonical identification $FG(T) \, (\mathcal{V}|_{C \times T}) \cong \mathcal{V}|_{C \times T}$. It is also clear by construction that under this canonical identification we have $FG(T) \, (g) = g$ for any automorphism $g \in \text{Aut}(\mathcal{V}|_{C \times T})$. This remains true for $\mathcal{V}|_{S}$, where $S$ is any \`etale cover of $C \times T$. Technically our functor $G$ is not defined for such $S$, but the construction using the scheme $\text{Iso}(\mathcal{V}|_{S}, \, \mathcal{V}|_S)$ still makes sense. So we abuse notation and write $G$ for such construction.

Let $\mathcal{W}$ be any parabolic vector bundle of type $\mathcal{V}$ over $C \times T$. We know that there exists an \`etale cover $f: S \rightarrow C \times T$ such that $f^{*} \mathcal{W} \cong f^{*} ( \mathcal{V}|_{C \times T})$. Therefore, $\mathcal{W}$ can be described by some \`etale descent datum. In this case this is just an element of $\text{Aut}\left(\mathcal{V}|_{S \times_{C \times T} S}\right)$. We have seen that we have a canonical isomorphism $FG \, (\mathcal{V}|_{S \times_{C \times T} S}) \cong \mathcal{V}|_{S \times_{C \times T} S}$. Under this canonical identification, $FG(T)\, (\mathcal{W})$ is described by the same \`etale descent datum with respect to the  cover $S \rightarrow C\times T$ (since we said that $FG$ induces the identity on morphisms). So we can canonically identify  $\mathcal{W}$ and $FG(T)\, (\mathcal{W})$. The fact that $FG$ induces the identity on automorphisms of $\mathcal{V}|_{C \times T}$ implies the same for automorphims of $\mathcal{W}$ under the identification $FG(T) \, (\mathcal{W}) \, \cong \, \mathcal{W}$ we just constructed.
The argument for $GF(T) \cong id_{\text{Bun}_{\text{GL}(\mathcal{V})}(C)\, (T)}$ follows in a similar manner.
\end{proof}

\begin{coroll} \label{coroll: moduli of parabolic vb is algebraic}
$\text{Bun}_{\mathcal{V}}$ is a smooth algebraic stack over $k$.
\end{coroll}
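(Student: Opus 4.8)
The plan is to combine the three preceding results, since the corollary is essentially immediate once the equivalence of Proposition \ref{prop: parabolic vb as parahoric torsors} is in place. First I would record that $\text{GL}(\mathcal{V})$ is a relatively affine smooth group scheme over $C$. Smoothness is exactly the content of Proposition \ref{prop: groups schemes of automorphisms is smooth}. Relative affineness was also established in the course of that proof: on each chart $U_j$ of the Zariski cover of $C$, the functor $\text{GL}(\mathcal{V})|_{U_j}$ was shown to be represented by an affine flat $U_j$-scheme (either $\text{GL}_n \times U_j$ away from the points of degeneration, or the scheme-theoretic image $\text{Im}(D)$, a closed subscheme of the affine group scheme $G$, near a degeneration point). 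Since being relatively affine is a property local on the base, $\text{GL}(\mathcal{V}) \to C$ is relatively affine.

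With these two properties in hand, I would apply Proposition \ref{prop: algebraicity torsors over smooth group schemes} to the group scheme $\mathcal{G} = \text{GL}(\mathcal{V})$. This yields that $\text{Bun}_{\text{GL}(\mathcal{V})}(C)$ is a smooth algebraic stack over $k$. Finally, Proposition \ref{prop: parabolic vb as parahoric torsors} furnishes an equivalence of pseudofunctors $\text{Bun}_{\text{GL}(\mathcal{V})}(C) \xrightarrow{\sim} \text{Bun}_{\mathcal{V}}$. Because both the property of being an algebraic stack and the property of being smooth over $k$ are invariant under equivalence of stacks, these properties transfer across the equivalence, and the corollary follows at once.

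I do not expect any genuine obstacle here, as all the substantive work has already been carried out in the earlier propositions; the argument is a formal bookkeeping step. The only mild point worth making explicit is the relative affineness hypothesis required to invoke Proposition \ref{prop: algebraicity torsors over smooth group schemes}, which is why I would extract it from the proof of Proposition \ref{prop: groups schemes of automorphisms is smooth} rather than citing smoothness of $\text{GL}(\mathcal{V})$ in isolation.
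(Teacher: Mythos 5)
Your proof is correct and follows exactly the paper's route: the paper derives the corollary as an immediate consequence of Proposition \ref{prop: parabolic vb as parahoric torsors} and Proposition \ref{prop: algebraicity torsors over smooth group schemes}. Your explicit verification that $\text{GL}(\mathcal{V})$ is relatively affine (extracted from the proof of Proposition \ref{prop: groups schemes of automorphisms is smooth}) is a careful touch the paper leaves implicit, but it does not change the argument.
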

\begin{proof}
This is an immediate consequence of Proposition \ref{prop: parabolic vb as parahoric torsors} and Proposition \ref{prop: algebraicity torsors over smooth group schemes}.
\end{proof}
\begin{remark}
It should be remarked that Corollary \ref{coroll: moduli of parabolic vb is algebraic} can be done directly by hand. One can describe a concrete atlas in terms of Quot schemes, just as is usually done for the moduli stack of vector bundles. We wanted to prove the isomorphism in Proposition \ref{prop: parabolic vb as parahoric torsors} in order to show explicitly how the moduli of parabolic vector bundles fits into the framework of moduli of torsors for Bruhat-Tits group schemes over $C$.
\end{remark}
\end{subsection}

\begin{subsection}{Harder-Narasimhan strata}
Fix a parabolic vector bundle $\mathcal{V}$ of rank $n$ over $C$. We will use the existence of the Harder-Narasimhan filtration in order to attach invariants to parabolic bundles of type $\mathcal{V}$. Let us first define the kind of invariants we will be working with.
 \begin{defn}
 A parabolic HN datum of rank $n$ is an $n$-tuple $(\mu_l)_{l=1}^{n}$ of real numbers $\mu_l$ such that $\mu_{l+1} \geq \mu_l$ for all $l$.
 \end{defn}
 We can define a partial order on the set of Harder-Narasimhan data of a given rank.
\begin{defn}
Let $P_1 = (\mu_l)_{l = 1}^n$ and $P_2 = (\nu_l)_{l=1}^n$ be two parabolic HN data of rank $n$. We say $P_1 \leq P_2$ if
\begin{enumerate}[(1)]
    \item $\sum_{l=1}^{n} \mu_l = \sum_{l=1}^{n} \nu_l$.
    \item For all $1 \leq k < n$, we have $\sum_{l=1}^{k} \mu_l \leq \sum_{l=1}^{k} \nu_l$.
\end{enumerate}
\end{defn}
For any parabolic bundle of type $\mathcal{V}$, we can associate a Harder-Narasimhan datum.
 \begin{defn} \label{defn: hn datum}
 Let $\mathcal{W}$ be a parabolic vector bundle in $\text{Vect}_{\overline{\lambda}}$. Suppose that it has Harder-Narasimhan filtration given by $0= \mathcal{W}_0 \subset \mathcal{W}_1 \subset \cdots \, \subset \mathcal{W}_{l} = \mathcal{W}$. Then, we define the Harder-Narasimhan datum $HN(\mathcal{W})$ associated to $\mathcal{W}$ to be the parabolic HN datum given by
 \begin{gather*} HN(\mathcal{W}) = ( \; \underbrace{\mu(\mathcal{W}_1) , \,\cdots, \,\mu(\mathcal{W}_1)}_{ \text{rank} \; \mathcal{W}_1 / 0 \, \text{times}}, \; \underbrace{\mu(\mathcal{W}_2/ \, \mathcal{W}_1) , \,\cdots, \,\mu(\mathcal{W}_2 / \, \mathcal{W}_1)}_{ \text{rank} \; \mathcal{W}_2 / \, \mathcal{W}_1 \, \text{times}}, \cdots, \, \underbrace{\mu(\mathcal{W}_l / \, \mathcal{W}_{l-1}) , \;\cdots, \;\mu(\mathcal{W}_l  / \, \mathcal{W}_{l-1})}_{ \text{rank} \; \mathcal{W}_l / \mathcal{W}_{l-1} \, \text{times}} \; )
 \end{gather*}
\end{defn}
We are now ready to define some relevant subfunctors of $\text{Bun}_{\mathcal{V}}$.
\begin{defn} \label{defn: HN strata}
Let $P$ be a HN-datum. We define $\text{Bun}_{\mathcal{V}}^{\leq P}$ to be the pseudofunctor from $k$-schemes into groupoids given as follows. For every $k$-scheme $S$,
\[ \text{Bun}_{\mathcal{V}}^{\leq P}(S) \; \vcentcolon = \; \left\{ \begin{matrix} \text{groupoid of } \mathcal{W} \in \text{Bun}_{\mathcal{V}}(S) \text{ such that} \\ HN\left( \mathcal{W} \, |_{C_s} \right) \leq P \; \; \text{for all } s\in S \end{matrix} \right\}  \]
\end{defn}

\begin{remark}
The locus of semistable bundles of a given degree $d$ is $\text{Bun}_{\mathcal{V}}^{\leq P}$, where $P$ is the the rank $n$ parabolic HN datum given by $P= \left( \frac{d}{n}, \frac{d}{n}, \cdots , \frac{d}{n}\right)$.
\end{remark}
Our main goal is to analyze the geometry of these subfunctors. It will turn out that all $\text{Bun}_{\mathcal{V}}^{\leq P}$s are quasicompact open substacks of $\text{Bun}_{\mathcal{V}}$. As we vary $P$, they form a stratification of $\text{Bun}_{\mathcal{V}}$. We will refer to the subfunctors $\text{Bun}_{\mathcal{V}}^{\leq P}$ collectively as Harder-Narasimhan strata.
\end{subsection}
\end{section}

\begin{section}{Parabolic Quot Schemes} \label{section: parabolic Quot schemes}
The main goal of this section is to prove the representability of the two functors described in Definition \ref{defn: second quot moduli} and Definition \ref{defn: parabolic quot filtration scheme}. These can be seen as parabolic analogues of the Quot scheme and the iterated Quot scheme respectively. The reader who is only interested in the main theorems described in the introduction can safely skip this section and refer back to it when needed.

\begin{defn} Let $\mathcal{Q}$ be a coherent sheaf on $C$. The Hilbert polynomial $HP(\mathcal{Q})$ of $\mathcal{Q}$ with respect to $\mathcal{O}(1)$ is the polynomial with rational coefficients whose value at an integer $n \in \mathbb{Z}$ is given by $HP(\mathcal{Q})(n) = \text{dim}_{k} \, H^0\left(\mathcal{Q}(n)\right) \, - \, \text{dim}_{k} \, H^1\left(\mathcal{Q}(n)\right)$.
\end{defn}

\begin{remark} \label{remark: riemann roch}
If $\mathcal{Q}$ is a vector bundle of rank $r$ and degree $d$, Riemann-Roch tells us that $HP(\mathcal{Q})(n) = (r \cdot d) \, n + r \cdot \chi(\mathcal{O_C})$. So the Hilbert polynomial contains precisely the information of the degree and the rank of the vector bundle.
\end{remark}
Let $S$ be a scheme of finite type over $k$. Let $\mathcal{E}$ be a coherent sheaf on $C\times S$ that is flat over $S$. Choose a polynomial $P \in \mathbb{Q}[T]$ with rational coefficients.
\begin{defn} We define $\text{Quot}_{\mathcal{E} / C\times S / S}^P$ to be the contravariant functor from $S$-schemes to sets given as follows. For any $S$-scheme $T$, $\text{Quot}_{\mathcal{E} / C\times S / S}^P (T)$ is the set of isomorphism classes of quotients $\mathcal{Q}$ of $\mathcal{E}|_{C \times T}$ such that
\begin{enumerate}[(a)]
    \item $\mathcal{Q}$ is flat over $T$
    \item Let $\mathcal{F}$ denote the kernel of $\mathcal{E}|_{C \times T} \twoheadrightarrow \mathcal{Q}$.  For all points $t \in T$, the Hilbert polynomial of the fiber $\mathcal{F}|_{C_t}$ is $P$.
\end{enumerate}
\end{defn}

In this context, we have the following classical theorem of Grothendieck. See \cite{nitsure-quot} for a proof.
\begin{thm} \label{thm: repr of classical quot scheme}
$\text{Quot}_{\mathcal{E} / C\times S / S}^P$ is represented by a projective $S$-scheme. \qed
\end{thm}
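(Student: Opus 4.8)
The plan is to follow Grothendieck's classical strategy: realize $\text{Quot}_{\mathcal{E}/C\times S/S}^P$ as a closed subscheme of a Grassmannian bundle over $S$, and then invoke that a closed subscheme of a projective $S$-scheme is again projective over $S$. There are two essential ingredients, a boundedness statement producing a uniform twist $m$, and cohomology-and-base-change, which together allow one to trade a quotient \emph{sheaf} on $C\times T$ for a quotient \emph{vector bundle} on $T$ after pushing forward.

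First I would establish boundedness. Since $C$ is a smooth projective curve, the Hilbert polynomial of a coherent sheaf records only its rank and degree (Remark \ref{remark: riemann roch}). A $T$-point of the functor is a short exact sequence $0 \to \mathcal{F} \to \mathcal{E}|_{C\times T} \to \mathcal{Q} \to 0$ with $\mathcal{Q}$ flat over $T$ and $HP(\mathcal{F}|_{C_t}) = P$ for every $t \in T$; hence $HP(\mathcal{Q}|_{C_t}) = HP(\mathcal{E}|_{C_t}) - P$ is also pinned down, and locally constant on $S$ since $\mathcal{E}$ is $S$-flat. The kernels $\mathcal{F}|_{C_t}$ form a bounded family: on a curve, subsheaves of a fixed coherent sheaf with prescribed rank and degree are bounded, by an elementary argument in the spirit of Corollary \ref{coroll: boundedness degrees subbundles}. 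Boundedness, together with Serre vanishing applied to the single sheaf $\mathcal{E}$, yields an integer $m_0$ so that for every $m \geq m_0$ and every such fibre the twists $\mathcal{E}|_{C_t}(m)$, $\mathcal{F}|_{C_t}(m)$ and $\mathcal{Q}|_{C_t}(m)$ are globally generated with vanishing $H^1$.

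Now fix such an $m$. By cohomology and base change, for any $T$-point the pushforwards $\pi_*(\mathcal{E}(m))$, $\pi_*(\mathcal{Q}(m))$ and $\pi_*(\mathcal{F}(m))$ are locally free on $T$ of ranks $HP(\mathcal{E})(m)$, $(HP(\mathcal{E})-P)(m)$ and $P(m)$, and the surjection $\mathcal{E}(m)\twoheadrightarrow\mathcal{Q}(m)$ induces a surjection of vector bundles $\pi_*(\mathcal{E}(m))|_T \twoheadrightarrow \pi_*(\mathcal{Q}(m))$, the latter being surjective because $R^1\pi_*\mathcal{F}(m)=0$. This assigns to each $T$-point a $T$-point of the Grassmannian bundle $G \vcentcolon= \text{Grass}_S\big(\pi_*(\mathcal{E}(m)),\, (HP(\mathcal{E})-P)(m)\big)$, which is projective over $S$. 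Conversely, from such a quotient bundle one recovers a candidate quotient sheaf by twisting back by $\mathcal{O}_C(-m)$ and applying the evaluation map; the choice $m \geq m_0$ ensures that these two passages are mutually inverse on genuine Quot-points.

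It remains to check that the resulting morphism $\text{Quot}_{\mathcal{E}/C\times S/S}^P \to G$ is a closed immersion. On $C\times G$ one has the universal comparison map out of $\mathcal{E}|_{C\times G}(m)$; applying the flattening stratification to its image and imposing that the fibrewise Hilbert polynomial of the kernel equals $P$ (equivalently, that the reconstructed quotient is $G$-flat with the prescribed Hilbert polynomial) carves out a closed subscheme of $G$, by upper semicontinuity of Hilbert polynomials. This closed subscheme represents the functor, and being closed in the projective $S$-scheme $G$ it is itself projective over $S$. I expect boundedness to be the conceptually central step in general, although on a curve it is comparatively mild; the remaining delicate point is the careful verification that the Grassmannian locus is genuinely closed and carries the correct scheme structure. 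The full details are given in \cite{nitsure-quot}.
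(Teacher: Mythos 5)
Your overall strategy is the classical one that the paper itself invokes: Theorem \ref{thm: repr of classical quot scheme} is stated without proof and attributed to Grothendieck via \cite{nitsure-quot}, and your boundedness--pushforward--Grassmannian outline matches that reference, as do the first three steps of your sketch (boundedness of the kernels on a curve, a uniform twist $m$ via regularity, and cohomology and base change to produce the monomorphism into the Grassmannian bundle $G$). However, your final step contains a genuine gap. The flattening stratification together with semicontinuity of fibrewise Hilbert polynomials does \emph{not} carve out a closed subscheme of $G$; it carves out a \emph{locally closed} one. Semicontinuity says that for each fixed large twist $n$ the function $g \mapsto h^0\bigl(\mathcal{Q}'(n)|_{C_g}\bigr)$ of the reconstructed quotient $\mathcal{Q}'$ is upper semicontinuous, so under specialization the fibrewise polynomial can only increase; hence the locus where it equals the prescribed polynomial is the intersection of a closed condition (polynomial $\geq$ the prescribed one) with an open condition (polynomial $\leq$ it). Nothing in this argument prevents the closure of the Quot locus from containing points of $G$ whose reconstructed quotient has strictly larger polynomial, which is exactly the degeneration one must rule out.

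The missing idea is properness of the Quot functor itself, proved by the valuative criterion: given a discrete valuation ring $R$ with fraction field $K$ and a flat quotient $\mathcal{E}|_{C_K} \twoheadrightarrow \mathcal{Q}_K$ with kernel polynomial $P$, one extends it to an $R$-flat quotient over $C_R$ by saturating, i.e.\ by taking the kernel of the composite $\mathcal{E}|_{C_R} \to j_* \mathcal{Q}_K$, so that the resulting cokernel is $R$-torsion-free, hence $R$-flat, with the same fibrewise Hilbert polynomial on both fibres. (This is the same flat-limit manipulation the paper performs over a DVR in the proofs of Proposition \ref{prop: strata closed under generalization} and Lemma \ref{lemma: extending parabolic discrete valuation}.) Properness of the functor, combined with your monomorphism into the separated projective $S$-scheme $G$, is what upgrades the locally closed immersion to a closed immersion; this is also how \cite{nitsure-quot} concludes. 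Without that step your argument only establishes that $\text{Quot}_{\mathcal{E}/C\times S/S}^{P}$ is quasi-projective over $S$.
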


We want to define a variation of the Quot scheme that will serve our needs in latter sections. The set-up is as follows. We fix a finite set $\{x_i\}_{i \in I}$ of $k$-points in $C$. Let $S$ be a scheme of finite type over $k$. Let $\mathcal{V} = \left[ \; \mathcal{E}^{(0)} \,\overset{a_i^{(1)}}{\subset} \, \mathcal{E}_i^{(1)} \overset{a_i^{(2)}}{\subset} \cdots\; \overset{a_{i}^{(N_i)}}{\subset} \, \mathcal{E}_i^{(N_i)}= \mathcal{E}^{(0)}(x_i) \;\right]_{i \in I}$ be a parabolic vector bundle over $C\times S$ with parabolic structure at $\{x_i\}_{i \in I}$. We want to describe a functor that classifies subbundles of $\mathcal{V}$ with some given invariants. Let us first describe the invariants we will be looking at.
\begin{defn} \label{defn: parabolic quot datum}
A parabolic Quot datum of rank $k$ is a tuple $\left(P, \, b_i^{(m)}\right)$ where
\begin{enumerate}[(a)]
    \item $P \in \mathbb{Q}[T]$ is a polynomial with rational coefficients.
    \item For each $i \in I$ and $1 \leq m \leq N_i$, $b_i^{(m)}$ is a nonnegative integer. These are required to satisfy $\sum_{m =1}^{N_i} b_i^{(m)} = k$ for all $i \in I$.
\end{enumerate}
\end{defn}
We build our parabolic Quot scheme in stages. We start with the following moduli problem:
\begin{defn} \label{defn: first moduli quot}
Let $S$ and $\mathcal{V}$ as above. Fix $P \in \mathbb{Q}[T]$. We denote by $\text{Quot}_{\mathcal{V}}^P$ the subfunctor of $\text{Quot}_{\mathcal{E}^{(0)} / C\times S / S}^P$ given as follows. For a $S$-scheme $T$, define $\text{Quot}_{\mathcal{V}}^P(T)$ to be the set of isomorphism classes of quasicoherent quotients $\mathcal{Q}^{(0)}$ in $\text{Quot}_{\mathcal{E}^{(0)} / C\times S / S}^P(T)$ such that for all $i \in I$ and $0\leq m \leq N_i$, the sheaf 
$\mathcal{L}_i^{(m)}$ given as the pushout of the following diagram is $T$-flat.
\[ \xymatrix{
 \mathcal{Q}^{(0)}(x_i)  \ar[r] & \mathcal{L}_i^{(k)} \\
\mathcal{E}^{(0)}(x_i) \ar[u] \ar[r] & \mathcal{E}^{(0)}(x_i) \,/ \,\mathcal{E}_i^{(m)} \ar[u]} \]
\end{defn}

\begin{remark}
Since $\mathcal{L}_i^{(m)}$ is given as a pushout, its formation commutes with base-change. This is therefore a well defined functor.
\end{remark}

\begin{prop} \label{prop: representability of first quot moduli}
$\text{Quot}_{\mathcal{V}}^{P}$ is a finite disjoint union of locally closed subschemes of $\text{Quot}_{\mathcal{E}^{(0)} / C\times S / S}^P$.
\end{prop}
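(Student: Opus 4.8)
The plan is to realize $\text{Quot}_{\mathcal{V}}^{P}$ as the locus inside the projective $S$-scheme $Q \vcentcolon= \text{Quot}^{P}_{\mathcal{E}^{(0)}/C\times S/S}$ (which represents the classical functor by Theorem \ref{thm: repr of classical quot scheme}) over which a finite collection of universal sheaves becomes flat, and then to invoke the flattening stratification. Let $\mathcal{Q}^{(0)}$ denote the universal quotient of $\mathcal{E}^{(0)}|_{C\times Q}$ on $C \times Q$. For each $i \in I$ and $0 \le m \le N_i$, form the sheaf $\mathcal{L}_i^{(m)}$ on $C\times Q$ as the pushout of the diagram $\mathcal{Q}^{(0)}(x_i) \leftarrow \mathcal{E}^{(0)}(x_i) \to \mathcal{E}^{(0)}(x_i)/\mathcal{E}_i^{(m)}$. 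Since a pushout is a cokernel and cokernels commute with pullback, the formation of $\mathcal{L}_i^{(m)}$ commutes with base change, as already observed in the Remark following Definition \ref{defn: first moduli quot}. Consequently, for a morphism $g \colon T \to Q$ classifying the quotient $\mathcal{Q}^{(0)}|_{C\times T}$, the pullback $g^{*}\mathcal{L}_i^{(m)}$ coincides with the sheaf whose $T$-flatness is imposed in Definition \ref{defn: first moduli quot}. Thus $g$ factors through $\text{Quot}_{\mathcal{V}}^{P}$ precisely when $g^{*}\mathcal{L}_i^{(m)}$ is $T$-flat for every $i \in I$ and every $0 \le m \le N_i$.

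Next I would apply the flattening stratification theorem (see, e.g., \cite{nitsure-quot}). It applies because $C\times Q \to Q$ is projective and $Q$ is noetherian, being projective over the finite-type $k$-scheme $S$. For each fixed pair $(i,m)$ it produces a finite set of Hilbert polynomials together with a decomposition of $Q$ into pairwise disjoint locally closed subschemes $Q^{(i,m)}_{\alpha}$, characterized by the universal property that $g^{*}\mathcal{L}_i^{(m)}$ is $T$-flat if and only if $g$ factors through $\bigsqcup_{\alpha} Q^{(i,m)}_{\alpha}$. Write $Z_{i,m} \vcentcolon= \bigsqcup_{\alpha} Q^{(i,m)}_{\alpha} \hookrightarrow Q$ for this flattening locus.

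Finally, $\text{Quot}_{\mathcal{V}}^{P}$ is the simultaneous flattening locus, i.e. the scheme-theoretic intersection $\bigcap_{i,m} Z_{i,m}$, equivalently the iterated fiber product of the $Z_{i,m}$ over $Q$. Distributing the fiber product over the finite disjoint unions expresses this as a finite disjoint union, indexed by tuples $(\alpha_{i,m})_{i,m}$, of the schemes $\bigcap_{i,m} Q^{(i,m)}_{\alpha_{i,m}}$. Each such term is a finite intersection of locally closed subschemes of $Q$, hence locally closed; and two distinct tuples differ in some coordinate $(i_0,m_0)$, where the corresponding strata are disjoint, so the terms are pairwise disjoint. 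Because $I$ and each $N_i$ are finite, only finitely many pairs $(i,m)$ and finitely many indices $\alpha$ occur, so the union is finite. This exhibits $\text{Quot}_{\mathcal{V}}^{P}$ as a finite disjoint union of locally closed subschemes of $Q$, as claimed.

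I do not expect a serious obstacle here: the only point requiring care is matching the fiberwise flatness condition of Definition \ref{defn: first moduli quot} with the representing flattening stratification, and this is exactly what the base-change compatibility of the pushout $\mathcal{L}_i^{(m)}$ guarantees. Everything else is the formal bookkeeping of intersecting finitely many flattening strata.
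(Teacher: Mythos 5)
Your proposal is correct and follows essentially the same route as the paper: identify $\text{Quot}_{\mathcal{V}}^{P}$ inside $\text{Quot}_{\mathcal{E}^{(0)}/C\times S/S}^{P}$ as the simultaneous flatness locus of the universal pushout sheaves $\mathcal{L}_{i,\,univ}^{(m)}$ (whose formation commutes with base change), and then invoke the flattening stratification for the projective morphism $C\times Q \to Q$. The only cosmetic difference is that the paper applies the flattening stratification to the whole collection of sheaves at once, while you stratify for each $(i,m)$ separately and then intersect the strata, distributing over the disjoint unions --- which is precisely how the multi-sheaf statement is deduced from the single-sheaf one, so the two arguments coincide.
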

\begin{proof}
Let $\mathcal{Q}^{(0)}_{univ} \in \text{Coh}\left(C \times\text{Quot}_{\mathcal{E}^{(0)} / C\times S / S}^P\right)$ be the universal quotient of $\mathcal{E}^{(0)}$ on $C \times \text{Quot}_{\mathcal{E}^{(0)} / C\times S / S}^P$. Define $\mathcal{L}_{i, \, univ}^{(k)}$ to be the pushout diagram:
\[ \xymatrix{
 \mathcal{Q}^{(0)}_{univ}(x_i)  \ar[r] & \mathcal{L}_{i,\, univ}^{(m)} \\
\mathcal{E}^{(0)}(x_i)|_{C \times \text{Quot}_{\mathcal{E}^{(0)} / C\times S / S}^P} \ar[u] \ar[r] & \left( \mathcal{E}^{(0)}(x_i) \,/ \,\mathcal{E}_i^{(m)}\right) |_{C \times \text{Quot}_{\mathcal{E}^{(0)} / C\times S / S}^P}\ar[u]}  \]
By definition, $\text{Quot}_{\mathcal{V}}^{P} \longrightarrow \text{Quot}_{\mathcal{E}^{(0)} / C\times S / S}^P$ is the functor from $S$-schemes into set given as follows. For a $S$-scheme $T$, we have
\[ \text{Quot}_{\mathcal{V}}^{P} (T) \; = \; \left\{ \begin{matrix} \text{morphisms of $S$-schemes} \; f: T \rightarrow \text{Quot}_{\mathcal{E}^{(0)} / C\times S / S}^P \text{ such that} \\ (id_C \times f)^{*} \mathcal{L}_{i, \, univ}^{(m)}\; \text{is $T$-flat for all $i$ and $m$} \end{matrix} \right\}  \]
Now we can apply the existence of flattening stratifications (see \cite{nitsure-quot}) to the projective map $pr_2: \, C \times \text{Quot}_{\mathcal{E}^{(0)} / C\times S / S}^P \; \longrightarrow \; \text{Quot}_{\mathcal{E}^{(0)} / C\times S / S}^P$ and the coherent sheaves $\mathcal{L}_{i, \, univ}^{(m)}$ on $C \times \text{Quot}_{\mathcal{E}^{(0)} / C\times S / S}^P$. We conclude that this functor is represented by a finite disjoint union of locally closed subschemes of $\text{Quot}_{\mathcal{E}^{(0)} / C\times S / S}^P$.
\end{proof}

Suppose that we have $S$, $\mathcal{V}$, $T$ and $\mathcal{Q}^{(0)}$ as in Definition \ref{defn: first moduli quot}. We denote by $\mathcal{F}^{(0)}$ the kernel of the quotient $\mathcal{E}^{(0)} \longrightarrow \mathcal{Q}^{(0)}$. Furthermore, define $\mathcal{F}_i^{(m)} \vcentcolon= \mathcal{E}_i^{(m)} \cap \mathcal{F}^{(0)}(x_i)$ and $\mathcal{Q}_i^{(m)} \vcentcolon= \mathcal{E}_i^{(m)} \, / \, \mathcal{F}_i^{(m)}$. This means that we have the following commutative diagram with exact columns:
\begin{figure}[H]
\centering
\begin{tikzcd}
    0  & 0 & \cdots & 0 & 0 & 0\\
    \mathcal{Q}^{(0)} \ar [u] \ar[r, symbol= \subset] &  \mathcal{Q}_i^{(1)} \ar [u] \ar[r, symbol= \subset] & \cdots & \mathcal{Q}_i^{(Ni-2)} \ar [u] \ar[r, symbol= \subset] & \mathcal{Q}_i^{(N_i-1)} \ar [u] \ar[r, symbol= \subset] & \mathcal{Q}^{(0)}(x_i) \ar [u]\\
    \mathcal{E}^{(0)} \ar [u] \ar[r, symbol= \subset] &  \mathcal{E}_i^{(1)} \ar [u] \ar[r, symbol= \subset] & \cdots & \mathcal{E}_i^{(Ni-2)} \ar [u] \ar[r, symbol= \subset] & \mathcal{E}_i^{(N_i-1)} \ar [u] \ar[r, symbol= \subset] & \mathcal{E}^{(0)}(x_i) \ar [u]\\
    \mathcal{F}^{(0)} \ar [u] \ar[r, symbol= \subset] &  \mathcal{F}_i^{(1)} \ar [u] \ar[r, symbol= \subset] & \cdots & \mathcal{F}_i^{(Ni-2)} \ar [u] \ar[r, symbol= \subset] & \mathcal{F}_i^{(N_i-1)} \ar [u] \ar[r, symbol= \subset] & \mathcal{F}^{(0)}(x_i) \ar [u]\\
    0 \ar[u]  & 0 \ar[u] & \cdots & 0 \ar[u] & 0 \ar[u] & 0 \ar[u]
\end{tikzcd}
\end{figure}
By construction, we have a short exact sequence $0 \longrightarrow \, \mathcal{Q}_i^{(m)} \, \longrightarrow \, \mathcal{Q}^{(0)}(x_i) \, \longrightarrow \, \mathcal{L}_i^{(m)} \, \longrightarrow 0$. By the long exact sequence of Tors at each stalk, we conclude that $\mathcal{Q}_i^{(m)}$ is $T$-flat. The same reasoning applied to the short exact sequence $0 \longrightarrow \, \mathcal{F}_i^{(m)} \, \longrightarrow \, \mathcal{E}_i^{(m)} \, \longrightarrow \, \mathcal{Q}_i^{(m)} \, \longrightarrow 0$ shows that $\mathcal{F}_i^{(m)}$ is $T$-flat. In addition, the snake lemma gives us a short exact sequence
\[0 \longrightarrow \mathcal{F}^{(0)}(x_i) \, / \, \mathcal{F}_i^{(m)} \longrightarrow \mathcal{E}^{(0)}(x_i) \, / \,\mathcal{E}_i^{(m)} \longrightarrow \mathcal{L}_i^{(m)} \longrightarrow 0 \]
By $T$-flatness of $\mathcal{E}^{(0)}(x_i) \, / \, \mathcal{E}_i^{(m)}$ we conclude that $ \mathcal{F}^{(0)}(x_i) \, / \, \mathcal{F}_i^{(m)}$ is $T$-flat. Since all of these sheaves are $T$-flat, the formation of all of $\mathcal{F}_i^{(m)}$, $\mathcal{Q}_i^{(m)}$, $\mathcal{F}_i^{(m+1)} \, / \, \mathcal{F}_i^{(m)}$ and $\mathcal{Q}_i^{(m+1)} \, / \, \mathcal{Q}_i^{(m)}$ commutes with base-change on $T$. Hence the diagram above pulls back under base-change. This in turn shows that the following is a well-defined subfunctor.

\begin{defn} \label{defn: second quot moduli}
Fix $S$ and $\mathcal{V}$ as above. Let $\left(P, \, b_i^{(m)} \right)$ a parabolic Quot datum. We let $\text{Quot}_{\mathcal{V}}^{P, \, b_i^{(m)}}$ be the subfunctor of $\text{Quot}_{\mathcal{V}}^P$ defined as follows. For any $S$-scheme $T$, $\text{Quot}_{\mathcal{V}}^{P, \, b_i^{(m)}}(T)$ is the set of iso. classes of quotients $\mathcal{Q}^{(0)} \in \text{Quot}_{\mathcal{V}}^P$ satisfying
\begin{enumerate}[(a)]
    \item For all $i \in I$ and $1 \leq m \leq N_i$, $\mathcal{F}_i^{(m)} \, / \, \mathcal{F}_i^{(m-1)} |_{{x_i}\times T}$ is a vector bundle of rank $b_i^{(m)}$.
    \item $\mathcal{Q}_i^{(m)}$ and $\mathcal{F}_i^{(m)}$ are vector bundles on $C \times T$.
\end{enumerate}
Here we define $\mathcal{F}_i^{(m)} \vcentcolon= \mathcal{E}_i^{(m)}|_{C\times T} \cap \mathcal{F}^{(0)}(x_i)$ and $\mathcal{Q}_i^{(m)} \vcentcolon= \mathcal{E}_i^{(m)} \, / \, \mathcal{F}_i^{(m)}$, as explained in the paragraphs above.
\end{defn}

\begin{prop} \label{prop: representability of second quot moduli}
$\text{Quot}_{\mathcal{V}}^{P, \, b_i^{(m)}}$ is an open subscheme of $\text{Quot}_{\mathcal{V}}^P$. We call this the parabolic Quot scheme with parabolic Quot datum $\left(P, b_i^{(m)}\right)$.
\end{prop}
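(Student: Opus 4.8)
The plan is to exhibit $\text{Quot}_{\mathcal{V}}^{P, \, b_i^{(m)}}$ as the locus in $\text{Quot}_{\mathcal{V}}^{P}$ cut out by finitely many open conditions. By Proposition \ref{prop: representability of first quot moduli}, $\text{Quot}_{\mathcal{V}}^{P}$ is a scheme (a finite disjoint union of locally closed subschemes of the classical Quot scheme), and it is of finite type over $k$, hence Noetherian. Over it we have the universal kernel $\mathcal{F}^{(0)}_{univ}$ together with the associated sheaves $\mathcal{F}^{(m)}_{i,\,univ}$ and $\mathcal{Q}^{(m)}_{i,\,univ}$; by the discussion preceding Definition \ref{defn: second quot moduli}, all of these, as well as the successive quotients $\mathcal{F}_{i,\,univ}^{(m)}/\mathcal{F}_{i,\,univ}^{(m-1)}$, are flat over $\text{Quot}_{\mathcal{V}}^{P}$ and their formation commutes with base change. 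It therefore suffices to check that conditions (a) and (b) of Definition \ref{defn: second quot moduli} each define an open subset of $\text{Quot}_{\mathcal{V}}^{P}$, and then to take the finite intersection over all $i \in I$ and $1 \le m \le N_i$.

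For condition (a), I would restrict $\mathcal{F}_{i,\,univ}^{(m)}/\mathcal{F}_{i,\,univ}^{(m-1)}$ along the closed immersion $x_i \times \text{Quot}_{\mathcal{V}}^{P} \hookrightarrow C \times \text{Quot}_{\mathcal{V}}^{P}$. This sheaf is supported on $x_i \times \text{Quot}_{\mathcal{V}}^{P} \cong \text{Quot}_{\mathcal{V}}^{P}$, and as a coherent sheaf on the Noetherian base that is flat over $\text{Quot}_{\mathcal{V}}^{P}$ it is \emph{automatically} locally free. Consequently its rank is a locally constant function on $\text{Quot}_{\mathcal{V}}^{P}$, and the locus where this rank equals $b_i^{(m)}$ is open (indeed open and closed). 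Thus the requirement that it be a vector bundle of rank $b_i^{(m)}$ is an open condition; the only content beyond flatness is the pinning down of the numerical invariant $b_i^{(m)}$.

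For condition (b), fix one of the flat, finitely presented sheaves $\mathcal{G} \in \{\mathcal{F}_{i,\,univ}^{(m)}, \, \mathcal{Q}_{i,\,univ}^{(m)}\}$ on $C \times \text{Quot}_{\mathcal{V}}^{P}$. The locus $U_{\mathcal{G}} \subset C \times \text{Quot}_{\mathcal{V}}^{P}$ where the stalk of $\mathcal{G}$ is free is open. By the fiberwise criterion for local freeness of a module flat over the base, a point over $t$ lies in $U_{\mathcal{G}}$ precisely when $\mathcal{G}|_{C_t}$ is locally free there; and since $C$ is projective the projection $C \times \text{Quot}_{\mathcal{V}}^{P} \to \text{Quot}_{\mathcal{V}}^{P}$ is proper, so the image of the closed complement of $U_{\mathcal{G}}$ is closed. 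Hence the base locus $V_{\mathcal{G}} = \{t : \mathcal{G}|_{C_t} \text{ is a vector bundle on } C_t\}$ is open in $\text{Quot}_{\mathcal{V}}^{P}$. Because field extensions are faithfully flat and $\mathcal{G}$ commutes with base change, a map $f: T \to \text{Quot}_{\mathcal{V}}^{P}$ has $f^{*}\mathcal{G}$ locally free on $C \times T$ if and only if $f$ factors through $V_{\mathcal{G}}$, so $V_{\mathcal{G}}$ represents condition (b) for $\mathcal{G}$.

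Taking the intersection of the finitely many open subsets produced above identifies $\text{Quot}_{\mathcal{V}}^{P, \, b_i^{(m)}}$ with an open subscheme of $\text{Quot}_{\mathcal{V}}^{P}$. I expect the main obstacle to be condition (b): the passage from fiberwise local freeness to an open condition on the base is where the fiberwise flatness criterion and the properness of the projection $C \times \text{Quot}_{\mathcal{V}}^{P} \to \text{Quot}_{\mathcal{V}}^{P}$ are both essential, whereas condition (a) becomes essentially immediate once one observes that a flat coherent sheaf on a Noetherian base is locally free, reducing it to the manifestly open-and-closed constancy of a rank.
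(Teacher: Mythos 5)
Your proof is correct and takes essentially the same approach as the paper's: condition (a) is reduced to the locally constant rank of the already-locally-free restricted quotients, and condition (b) is handled by combining the fiberwise flatness criterion (Lemma \ref{lemma: critere de platitude}) with openness of the flat locus upstairs and closedness of the projective projection $C \times \text{Quot}_{\mathcal{V}}^P \to \text{Quot}_{\mathcal{V}}^P$, exactly the three ingredients the paper uses. The only difference is cosmetic: you realize the good locus as the complement of the image of the non-flat locus, whereas the paper produces an open neighborhood of each point of the good locus; these are equivalent reformulations of the same argument.
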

Before proving the proposition, let us state a lemma that will be used repeatedly.
\begin{lemma}[Fiberwise flatness criterion] \label{lemma: critere de platitude}
Let $B$ be a scheme and let $X$ be a scheme locally of finite presentation over $B$. Let $\mathcal{F} \in \text{QCoh}(X)$ be a sheaf that is locally of finite presentation as an $\mathcal{O}_X$-module. Suppose that $\mathcal{F}$ is flat over $B$. Let $x \in X$. Set $s = f(x)$.
If the fiber $\mathcal{F}|_{X_s}$ is $\mathcal{O}_{X_s}$-flat at $x$, then $\mathcal{F}$ is $\mathcal{O}_X$-flat at $x$.
\end{lemma}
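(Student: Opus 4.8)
The plan is to recognize this as the fiberwise flatness criterion (the \emph{critère de platitude par fibres}, EGA IV 11.3.10) and to reduce it to a statement in local commutative algebra. Writing $f\colon X \to B$ for the structure morphism and setting $A \vcentcolon= \mathcal{O}_{B,s}$, $R \vcentcolon= \mathcal{O}_{X,x}$ and $M \vcentcolon= \mathcal{F}_x$, the induced map $A \to R$ is a local homomorphism that is essentially of finite presentation, $M$ is a finitely presented $R$-module, and the hypotheses translate into: $M$ is flat over $A$, and the fiber module $M/\mathfrak{m}_A M$ is flat over the fiber ring $R/\mathfrak{m}_A R$. Writing $I \vcentcolon= \mathfrak{m}_A R$, one checks the identifications $R/I = \mathcal{O}_{X_s,x}$ (no further localization is needed since a quotient of the local ring $R$ is local) and $M/IM = (\mathcal{F}|_{X_s})_x$, so that the fiberwise hypothesis is exactly flatness of $M/IM$ over $R/I$. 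The goal is to prove that $M$ is flat over $R$.

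First I would dispose of the trivial case $M = 0$ and set up two reductions. Since $X \to B$ and $\mathcal{F}$ are of finite presentation, the standard limit arguments let me descend $A \to R$ and $M$ to a finite type (hence Noetherian) base, reducing to the case where $A$ and $R$ are Noetherian local rings; flatness and the formation of fibers are compatible with this descent. The plan is then to apply the local criterion of flatness relative to the ideal $I = \mathfrak{m}_A R$: for a finite module $M$ over a Noetherian local ring $R$ and a proper ideal $I \subseteq \mathfrak{m}_R$, one has that $M$ is $R$-flat provided $M/IM$ is flat over $R/I$ and $\mathrm{Tor}_1^R(R/I, M) = 0$. The first condition is precisely the fiberwise hypothesis, so the entire problem concentrates in the vanishing $\mathrm{Tor}_1^R(R/I, M) = 0$.

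The heart of the argument is to extract this Tor-vanishing from flatness of $M$ over $A$. Applying $A$-flatness to the sequence $0 \to \mathfrak{m}_A \to A \to \kappa(s) \to 0$ (with $\kappa(s) = A/\mathfrak{m}_A$) shows that the multiplication map $\mathfrak{m}_A \otimes_A M \to M$ is injective. On the other hand there is a natural surjection $\mathfrak{m}_A \otimes_A M \twoheadrightarrow (\mathfrak{m}_A R) \otimes_R M = I \otimes_R M$, surjective because any element $(\sum_j m_j r_j) \otimes x$ of $I \otimes_R M$ equals $\sum_j m_j \otimes r_j x$; moreover this surjection is compatible with the two multiplication maps into $M$. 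Since the composite $\mathfrak{m}_A \otimes_A M \to I \otimes_R M \to M$ is injective and its first factor is surjective, the second map $I \otimes_R M \to M$ is itself injective. As $\mathrm{Tor}_1^R(R/I, M) = \ker\bigl(I \otimes_R M \to M\bigr)$, this yields exactly $\mathrm{Tor}_1^R(R/I, M) = 0$, and feeding it into the local criterion gives the flatness of $M$ over $R$.

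I expect the main difficulty to be bookkeeping rather than conceptual: correctly identifying the fiber ring and fiber module with $R/I$ and $M/IM$, and justifying the reduction to the Noetherian setting so that the local criterion of flatness is legitimately available. (Alternatively, one can bypass the reduction altogether by citing the finite-presentation version of the fiberwise criterion in EGA IV 11.3.10.) The only genuinely substantive step is the Tor computation above, but once the surjection $\mathfrak{m}_A \otimes_A M \twoheadrightarrow I \otimes_R M$ is in hand, the vanishing drops out of flatness over $A$.
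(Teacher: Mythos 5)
Your proposal is correct, but it takes a genuinely different route from the paper: the paper's entire proof is a one-line citation of the fibrewise flatness criterion in the Stacks Project (Tag 039C), specialized to $X = Y$, $f = \mathrm{id}_X$, $B = S$ --- essentially the ``alternative'' you mention in passing at the end (citing EGA IV 11.3.10). What you do instead is unfold the proof of that criterion itself: translate everything into local algebra at $x$, reduce to Noetherian local rings by limit arguments, and then run the local criterion of flatness for the ideal $I = \mathfrak{m}_A R$, where the only substantive input is the Tor-vanishing $\mathrm{Tor}_1^R(R/I, M) = 0$, extracted correctly from $A$-flatness via the surjection $\mathfrak{m}_A \otimes_A M \twoheadrightarrow I \otimes_R M$ compatible with the multiplication maps. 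That Tor computation is right, the identification $\mathcal{O}_{X_s,x} = R/\mathfrak{m}_A R$ and $(\mathcal{F}|_{X_s})_x = M/IM$ is right, and the form of the local criterion you invoke (for a finite module over a Noetherian local ring and an ideal $I \subseteq \mathfrak{m}_R$, using that such a module is ideally separated) is the standard one. The one caveat worth flagging: your ``standard limit arguments'' conceal a nontrivial ingredient, namely that flatness over $A$ can be descended to a suitable Noetherian subring (EGA IV 11.2.6; Stacks Tag 05LW) --- this is not formal and its proof has depth comparable to the lemma itself, so the reduction is legitimate only as a citation, not as bookkeeping. In short: the paper's approach buys brevity by outsourcing everything; yours buys transparency about where the actual content lies, at the cost of either citing the limit-descent theorem or restricting to the locally Noetherian situation (which, incidentally, would suffice for every application of this lemma in the paper, since it is only ever applied with base a scheme of finite type over $k$ or a discrete valuation ring).
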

\begin{proof}
This follows from \cite[\href{https://stacks.math.columbia.edu/tag/039C}{Tag 039C}]{stacks-project} by letting $X =Y$, $f = id_{X}$ and $B=S$.
\end{proof}

\begin{proof}[Proof of Proposition \ref{prop: representability of second quot moduli}]
Let $\mathcal{Q}^{(0)}_{univ} \in \text{Coh}(C\times\text{Quot}_{\mathcal{V}}^P)$ be the universal quotient of $\mathcal{E}^{(0)}$ on $C\times\text{Quot}_{\mathcal{V}}^P$. We denote by $\mathcal{F}^{(0)}_{univ}$ the kernel of the quotient $\mathcal{E}^{(0)}|_{C \times\text{Quot}_{\mathcal{V}}^P} \longrightarrow \mathcal{Q}^{(0)}_{univ}$. Furthermore, define $\mathcal{F}_{i, \,univ}^{(m)} \vcentcolon = \mathcal{E}_i^{(m)}|_{C \times\text{Quot}_{\mathcal{V}}^P} \, \cap \, \mathcal{F}^{(0)}_{univ}(x_i)$ and $\mathcal{Q}_{i, \, univ}^{(m)} \vcentcolon = \mathcal{E}_i^{(m)}|_{C \times\text{Quot}_{\mathcal{V}}^P} \, / \, \mathcal{F}_{i, \, univ}^{(m)}$.
By definition, the functor $\text{Quot}_{\mathcal{V}}^{P, \, b_i^{(m)}} \rightarrow \text{Quot}_{\mathcal{V}}^P$ is given as follows. For every $S$-scheme $T$,
\begin{gather*} \text{Quot}_{\mathcal{V}}^{P, \, b_i^{(m)}} (T) \; = \; \left\{ \begin{matrix} \text{morphisms of $S$-schemes} \; f: T \rightarrow \text{Quot}_{\mathcal{V}}^P \text{ such that}\\
\\
(1) \quad \text{For all $i$ and $m$ we have that }\;(q_i \times f)^{*} \, \mathcal{F}_{i, \, univ}^{(m)} \, / \, \mathcal{F}_{i, \, univ}^{(m-1)} \\
\text{is a vector bundle of rank $b_i^{(m)}$}\\
\\
(2) \quad \text{For all $i$ and $m$, we have that both} \; (id_C \times f)^{*} \mathcal{F}_{i, \, univ}^{(m)}\\
  \text{and} \; (id_C \times f)^{*} \mathcal{Q}_{i, \, univ}^{(m)} \; \text{are $C \times T$-flat} \end{matrix} \right\}
  \end{gather*}
  Let us define an intermediate functor $L$ given by imposing only the first condition. In other words, we let $L$ be the functor from $S$-schemes $T$ into sets given by
  \[ L (T) \; = \; \left\{ \begin{matrix} \text{morphisms of $S$-schemes} \; f: T \rightarrow \text{Quot}_{\mathcal{V}}^P \text{ such that} \\
\\
(1) \quad \text{For all $i$, $m$ we have that}\\
f^{*} \, (q_i \times id_{\text{Quot}_{\mathcal{V}}^P})^{*} \, \mathcal{F}_{i, \, univ}^{(m)} \, / \, \mathcal{F}_{i, \, univ}^{(m-1)} \; \text{is a vector bundle of rank $b_i^{(m)}$} \end{matrix} \right\}  \]
By the discussion before Definition \ref{defn: second quot moduli}, we already know that $(q_i \times id_{\text{Quot}_{\mathcal{V}}^P})^{*} \, \mathcal{F}_{i, \, univ}^{(m)} \, / \, \mathcal{F}_{i, \, univ}^{(m-1)}$ is a vector bundle.  Hence for any morphism $f: T \rightarrow \text{Quot}_{\mathcal{V}}^P$, we have that $f^{*}(q_i \times id_{\text{Quot}_{\mathcal{V}}^P})^{*} \, \mathcal{F}_{i, \, univ}^{(m)} \, / \, \mathcal{F}_{i, \, univ}^{(m-1)}$ is a vector bundle. Note that the condition for the rank can be checked at each point $t \in T$. In other words, $L$ can be rewritten as
 \[ L (T) \; = \; \left\{ \begin{matrix} \text{morphisms of $S$-schemes} \; f: T \rightarrow \text{Quot}_{\mathcal{V}}^P \text{ such that} \\
\\
(1) \quad \text{For all $i$, $m$ and $t \in T$ we have}\\
\text{rank}_{\kappa(t)} \, \left(f^{*}(q_i \times id_{\text{Quot}_{\mathcal{V}}^P})^{*} \, \mathcal{F}_{i, \, univ}^{(m)} \, / \, \mathcal{F}_{i, \, univ}^{(m-1)} \right)|_{t} \;= \; b_i^{(m)} \end{matrix} \right\}  \]
But we know that
\begin{gather*}\text{rank}_{\kappa(t)} \, \left(f^{*}(q_i \times id_{\text{Quot}_{\mathcal{V}}^P})^{*} \, \mathcal{F}_{i, \, univ}^{(m)} \, / \, \mathcal{F}_{i, \, univ}^{(m-1)} \right)|_{t} \; = \; \text{rank}_{\kappa(f(t))}\left((q_i \times id_{\text{Quot}_{\mathcal{V}}^P})^{*} \, \mathcal{F}_{i, \, univ}^{(m)} \, / \, \mathcal{F}_{i, \, univ}^{(m-1)} \right)|_{f(t)}
\end{gather*}
Denote by $U$ the set of points $s \in \text{Quot}_{\mathcal{V}}^P$ such that $\text{rank}_{\kappa(s)}\left((q_i \times id_{\text{Quot}_{\mathcal{V}}^P})^{*} \, \mathcal{F}_{i, \, univ}^{(m)} \, / \, \mathcal{F}_{i, \, univ}^{(m-1)} \right)|_{s} \, = \, b_i^{(m)}$ for all $i$ and $m$. Then $L$ is given by the set theoretic condition
\[ L (T) \; = \; \left\{ \begin{matrix} \text{morphisms of $S$-schemes} \; f: T \rightarrow \text{Quot}_{\mathcal{V}}^P \text{ such that} \\
\\
(1) \quad \text{For all $t \in T$ we have $f(t) \in U$}\\
 \end{matrix} \right\}  \]
Since $(q_i \times id_{\text{Quot}_{\mathcal{V}}^P})^{*} \, \mathcal{F}_{i, \, univ}^{(m)} \, / \, \mathcal{F}_{i, \, univ}^{(m-1)}$ is locally free, the rank of the fibers is a locally constant function on $\text{Quot}_{\mathcal{V}}^P$. In particular, we see that $U$ is an open subset of $\text{Quot}_{\mathcal{V}}^P$. It follows that the functor $L$ is represented by $U$ viewed as an open subscheme of $\text{Quot}_{\mathcal{V}}^P$.
 
 We can now go back to our functor $\text{Quot}_{\mathcal{V}}^{P, \, b_i^{(m)}}$. We see that it is given by
 \[ \text{Quot}_{\mathcal{V}}^{P, \, b_i^{(m)}} (T) \; = \; \left\{ \begin{matrix} \text{morphisms of $S$-schemes} \; f: T \rightarrow U \text{ such that} \\
\\
(2) \quad \text{For all $i$ and $m$, we have that both}\\
(id_C \times f)^{*} \mathcal{F}_{i, \, univ}^{(m)} \;
  \text{and} \; (id_C \times f)^{*} \mathcal{Q}_{i, \, univ}^{(m)} \; \text{are $C \times T$-flat} \end{matrix} \right\}  \]
  By the discussion before Definition \ref{defn: second quot moduli}, the sheaves $\mathcal{F}_{i, \, univ}^{(m)}$ and $\mathcal{Q}_{i, \, univ}^{(m)}$ are $\text{Quot}_{\mathcal{V}}^P$-flat. So for any morphism $f: T \rightarrow U$ the sheaves $(id_C \times f)^{*} \mathcal{F}_{i, \, univ}^{(m)}$ and $(id_C \times f)^{*} \mathcal{Q}_{i, \, univ}^{(m)}$ are $T$ flat. Hence we can apply Lemma \ref{lemma: critere de platitude} with $X = C\times T$ and $B = T$ to conclude that it suffices to check flatness for the fibers $C_t$ at each $t \in T$. So our fuctor can be described as:
 \[ \text{Quot}_{\mathcal{V}}^{P, \, b_i^{(m)}} (T) \; = \; \left\{ \begin{matrix} \text{morphisms of $S$-schemes} \; f: T \rightarrow U \text{ such that} \\
\\
(2) \quad \text{For all $i$ and $m$, for every $t \in T$ we have that both} \; \\
(id_C \times f)^{*} \mathcal{F}_{i, \, univ}^{(m)}|_{C_t} \; \text{and} \; (id_C \times f)^{*} \mathcal{Q}_{i, \, univ}^{(m)}|_{C_t} \; \text{are $C_t$ flat} \end{matrix} \right\}  \]
  Since flatness can be checked after faithfully flat base-change, we see that $(id_C \times f)^{*} \mathcal{Q}_{i, \, univ}^{(m)}|_{C_t}$ is flat if and only if $\mathcal{Q}_{i, \, univ}^{(m)}|_{C_{f(t)}}$ is flat. The same reasoning applies to $(id_C \times f)^{*} \mathcal{F}_{i, \, univ}^{(m)}|_{C_t}$. Let $V$ be the set of points $s \in U$ such that $\mathcal{Q}_{i, \, univ}^{(m)}|_{C_{s}}$ and $\mathcal{F}_{i, \, univ}^{(m)}|_{C_{s}}$ are $C_s$-flat for all $i$ and $m$. Then we have just seen that the functor can described as
  \[ \text{Quot}_{\mathcal{V}}^{P, \, b_i^{(m)}} (T) \; = \; \left\{ \begin{matrix} \text{morphisms of $S$-schemes} \; f: T \rightarrow U \text{ such that} \\
\\
(2) \quad \text{For all $t \in T$, we have that $f(t)\in V$} \end{matrix} \right\}  \]
We are reduced to check that $V$ is an open subset of $U$. For a fixed choice of indexes $i$ and $m$,  let $V_{i,m}^{\mathcal{Q}}$ be the set of points $s \in U$ such that $\mathcal{Q}_{i, \, univ}^{(m)}|_{C_{s}}$ is flat. We shall show that $V_{i,m}^{\mathcal{Q}}$ is open.

Suppose that $s \in V_{i,m}^{\mathcal{Q}}$. The sheaf $\mathcal{Q}_{i, \, univ}^{(m)}$ is $U$-flat (see the discussion before Definition \ref{defn: second quot moduli}). Hence we can apply Lemma \ref{lemma: critere de platitude} to the morphism $pr_2: \, C \times U \, \rightarrow U$  (here $X = C\times U$ and $B=U$) in order to conclude that $\mathcal{Q}_{i, \, univ}^{(m)}$ is flat at all points $x \in C \times U$ lying over $s$. Since the flatness locus of $\mathcal{Q}_{i, \, univ}^{(m)}$ is open in $C \times U$ \cite[\href{https://stacks.math.columbia.edu/tag/0399}{Tag 0399}]{stacks-project}, there is an open $N \subset C \times U$ containing the fiber $C_s$ such that $\mathcal{Q}_{i, \, univ}^{(m)}|_N$ is flat. Since the morphism $pr_2: \, C \times U \, \rightarrow U$ is projective, $pr_2$ is in particular closed. So we can find an open neighborhood $W$ of $s$ in $U$ such that $(pr_2)^{-1}(W) \subset N$. We conclude that $W \subset V_{i, \, m}^{\mathcal{Q}}$, which shows that $V_{i, \, m}^{\mathcal{Q}}$ is open.

The same argument applies verbatim to the locus $V_{i \, m}^{\mathcal{F}}$ of points $s \in U$ such that $\mathcal{F}_{i, \, univ}^{(m)}|_{C_{s}}$ is flat. Since $V = \bigcap_{i, \, m} \left(V_{i \, m}^{\mathcal{F}} \, \cap \, V_{i \, m}^{\mathcal{Q}}\right)$, we conclude that $V$ is open.
\end{proof}

\begin{coroll} \label{coroll: finiteness of parabolic Quot scheme}
$\text{Quot}_{\mathcal{V}}^{P, \, b_i^{(m)}}$ is a separated scheme of finite type over $S$.
\end{coroll}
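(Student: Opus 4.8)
The plan is to simply chain together the three representability results established above, invoking only the stability of the properties ``separated'' and ``of finite type'' under the relevant geometric operations. First I would recall from Theorem \ref{thm: repr of classical quot scheme} that the classical Quot scheme $\text{Quot}_{\mathcal{E}^{(0)} / C\times S / S}^P$ is represented by a projective $S$-scheme; in particular it is separated and of finite type over $S$. Next, Proposition \ref{prop: representability of first quot moduli} exhibits $\text{Quot}_{\mathcal{V}}^P$ as a finite disjoint union of locally closed subschemes of $\text{Quot}_{\mathcal{E}^{(0)} / C\times S / S}^P$. Since a locally closed immersion is both separated and of finite type, each such piece is separated and of finite type over $S$, and a finite disjoint union of such schemes retains both properties. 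Finally, Proposition \ref{prop: representability of second quot moduli} realizes $\text{Quot}_{\mathcal{V}}^{P, \, b_i^{(m)}}$ as an open subscheme of $\text{Quot}_{\mathcal{V}}^P$, and open immersions preserve separatedness and finite type. Composing these three observations gives the corollary.

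There is no genuine obstacle here: the statement is a formal consequence of the fact that each of the three intermediate descriptions either implies or propagates separatedness and finite type. The only step that warrants a moment of care is the finite disjoint union: separatedness of $\bigsqcup_j Y_j$ holds because its diagonal is the disjoint union of the diagonals of the $Y_j$ inside $\bigsqcup_{j,j'} Y_j \times_S Y_{j'}$, which is a closed immersion precisely when each $Y_j$ is separated, while the finite-type condition is immediate since there are finitely many pieces each of finite type over $S$. I would phrase the whole argument in one or two sentences, as it is meant only to package Theorem \ref{thm: repr of classical quot scheme} together with Propositions \ref{prop: representability of first quot moduli} and \ref{prop: representability of second quot moduli} into the geometric conclusion needed in later sections.
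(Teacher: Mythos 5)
Your proposal is correct and matches the paper's proof exactly: the corollary is stated there as an immediate consequence of Theorem \ref{thm: repr of classical quot scheme}, Proposition \ref{prop: representability of first quot moduli} and Proposition \ref{prop: representability of second quot moduli}, which is precisely the chain you assemble. Your extra remarks on why locally closed immersions, open immersions, and finite disjoint unions preserve separatedness and finite type are the routine details the paper leaves implicit.
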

\begin{proof}
This is an immediate consequence of Theorem \ref{thm: repr of classical quot scheme}, Proposition \ref{prop: representability of first quot moduli} and Proposition \ref{prop: representability of second quot moduli}.
\end{proof}

\begin{defn}
Let $\mathcal{V}$ is a parabolic vector bundle over $C$. Let $\mathcal{W}$ be a parabolic subbundle of $\mathcal{V}$. There is a unique Quot datum $\theta(\mathcal{W})$ such that $\mathcal{W}$ is a $k$-point of $\text{Quot}_{\mathcal{V}}^{\theta(\mathcal{W})}$. We call $\theta(\mathcal{W})$ the parabolic Quot datum associated to $\mathcal{W}$.
More explicitly, if we have $\mathcal{W} = \left[ \;\mathcal{F}^{(0)} \,\overset{b_i^{(1)}}{\subset} \, \mathcal{F}_i^{(1)} \overset{b_i^{(2)}}{\subset} \cdots\; \overset{b_{i}^{(N_i)}}{\subset} \,\mathcal{F}_i^{(N_i)}= \, \mathcal{F}^{(0)}(x_i) \;\right]_{i \in I}$, then $\theta(\mathcal{W}) = \left( HP(\mathcal{F}^{(0)}), \, b_i^{(m)} \right)$.
\end{defn}

\begin{remark} \label{remark: parabolic Quot invariants}
\begin{enumerate}[(i)]
    \item  Let $\mathcal{W}$ be a parabolic vector bundle over $C$. Recall that the rank and degree of a vector bundle can be recovered from its Hilbert polynomial via Riemann-Roch. By the degree formula for parabolic vector bundles, it follows that $\mu(\mathcal{W})$ is determined by $\theta(\mathcal{W})$.
    \item Let $T$ be a connected $k$-scheme. Let $\mathcal{W}$ be a parabolic vector bundle over $C \times T$. Since the degree and rank of a vector bundle are locally constant on families, it follows that $\theta(\mathcal{W}|_{C_t})$ is the same for all $t \in T$.
\end{enumerate}
\end{remark}
We now want to iterate our construction in order to parametrize filtrations of a parabolic vector bundle by parabolic subbundles with some given parabolic Quot data.

\begin{defn}
Let $S$ and $\mathcal{V}$ be as before. A filtration datum of length $l$ is a pair of tuples $\alpha = \left( \left( _j b_i^{(m)} \right)_{\substack{1 \leq j \leq l-1 \\ i \in I \\ 1 \leq m \leq N_i}}, \; \left(_j P \right)_{1 \leq j \leq l-1} \right)$, where
\begin{enumerate}[(a)]
    \item Each $_j b_i^{(m)}$ is a nonnegative integer.
    \item Each $_j P \in \mathbb{Q}[T]$ is a polynomial with rational coefficients.
\end{enumerate}
\end{defn}

\begin{defn} \label{defn: parabolic quot filtration scheme} Let $S$ and $\mathcal{V}$ as usual. Fix a filtration datum $\alpha$ of length $l$. Then, we define the functor $\text{Fil}_{\mathcal{V}}^{\alpha}$ over $S$ as follows. For any $S$-scheme $T$, $\text{Fil}^{\alpha}_{\mathcal{V}}(T)$ will denote the set of iso. classes of filtrations of $\mathcal{V}|_{T\times C}$ by parabolic subbundles $0 = _0\mathcal{V} \, \subset\, _1\mathcal{V} \, \subset \, _2 \mathcal{V} \, \subset \, \cdots \, \subset \, _l \mathcal{V} = \mathcal{V}$ such that if $_j\mathcal{V} = \left[ \; _j\mathcal{F}^{(0)} \,\subset \, _j\mathcal{F}_i^{(1)} \subset \cdots\; \subset \, _j\mathcal{F}_i^{(N_i)}=  \, _j \mathcal{F}^{(0)}(x_i) \;\right]_{i \in I}$, then we have:
\begin{enumerate}[(a)]
    \item For points $t \in T$ and all $1 \leq j \leq l-1$, the Hilbert Polynomial of $_j \mathcal{F}^{(0)} \, |_{C_{t}}$ is $_j P$.
    \item For all $1 \leq j \leq l-1$, $i \in I$ and $1 \leq m \leq N_i$, we have that $_j \mathcal{F}_i^{(m)} \, / \, _j \mathcal{F}_i^{(m-1)}\, |_{ T \times {x_i}}$ is a vector bundle of rank $_j b_i^{(m)}$.
\end{enumerate}
\end{defn}

\begin{prop} \label{prop: representability of filtration moduli}
$\text{Fil}^{\alpha}_{\mathcal{V}}$ is represented by a scheme that is separated and of finite type over $S$.
\end{prop}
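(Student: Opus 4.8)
The plan is to build $\text{Fil}^{\alpha}_{\mathcal{V}}$ as an iterated tower of parabolic Quot schemes, peeling off the filtration from the top, and to argue by induction on the length $l$ of $\alpha$. For the base case take $l = 2$: here a filtration is just a single parabolic subbundle ${}_{1}\mathcal{V} \subset \mathcal{V}$ with the prescribed invariants, so $\text{Fil}^{\alpha}_{\mathcal{V}} = \text{Quot}_{\mathcal{V}}^{{}_{1}P, \, {}_{1}b_i^{(m)}}$, which is separated and of finite type over $S$ by Corollary \ref{coroll: finiteness of parabolic Quot scheme}. (For $l=1$ the only filtration is $0 \subset \mathcal{V}$ and $\text{Fil}^{\alpha}_{\mathcal{V}} = S$.)

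For the inductive step I would let $Q \vcentcolon= \text{Quot}_{\mathcal{V}}^{{}_{l-1}P, \, {}_{l-1}b_i^{(m)}}$ be the parabolic Quot scheme classifying the top step ${}_{l-1}\mathcal{V} \subset \mathcal{V}$ of the filtration. By Corollary \ref{coroll: finiteness of parabolic Quot scheme}, $Q$ is separated and of finite type over $S$, and by the flatness discussion preceding Definition \ref{defn: second quot moduli} the universal subobject ${}_{l-1}\mathcal{V}_{univ} \subset \mathcal{V}|_{C \times Q}$ is a genuine relative parabolic vector bundle over $C \times Q$ in the sense of Definition \ref{defn: parabolic vb in family}, of a type determined by $\left({}_{l-1}P, \, {}_{l-1}b_i^{(m)}\right)$. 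Writing $\alpha'$ for the filtration datum of length $l-1$ obtained from $\alpha$ by discarding ${}_{l-1}P$ and ${}_{l-1}b_i^{(m)}$, the key claim is that there is a natural isomorphism of functors over $Q$
\[
\text{Fil}^{\alpha}_{\mathcal{V}} \;\cong\; \text{Fil}^{\alpha'}_{{}_{l-1}\mathcal{V}_{univ}}.
\]
A $T$-point of the left-hand side is a flag $0 = {}_{0}\mathcal{V} \subset \cdots \subset {}_{l}\mathcal{V} = \mathcal{V}|_{C \times T}$; I would send it to the classifying map $T \to Q$ given by ${}_{l-1}\mathcal{V}$ together with the truncated flag $0 = {}_{0}\mathcal{V} \subset \cdots \subset {}_{l-1}\mathcal{V}$, now viewed as a flag of ${}_{l-1}\mathcal{V} = \left({}_{l-1}\mathcal{V}_{univ}\right)|_{C \times T}$, with reattachment of $\mathcal{V}$ at the top providing the inverse.

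The only thing that needs checking is that this assignment is well defined on both sides, i.e.\ that being a parabolic subbundle is transitive with matching invariants. If ${}_{j}\mathcal{F}^{(0)} \subset {}_{l-1}\mathcal{F}^{(0)} \subset \mathcal{E}^{(0)}$ and the intersection formula of Definition \ref{defn: parabolic subbundle in family} holds for each inclusion, then
\[
{}_{j}\mathcal{F}_i^{(m)} = {}_{l-1}\mathcal{F}_i^{(m)} \cap {}_{j}\mathcal{F}^{(0)}(x_i) = \left(\mathcal{E}_i^{(m)} \cap {}_{l-1}\mathcal{F}^{(0)}(x_i)\right) \cap {}_{j}\mathcal{F}^{(0)}(x_i) = \mathcal{E}_i^{(m)} \cap {}_{j}\mathcal{F}^{(0)}(x_i),
\]
using ${}_{j}\mathcal{F}^{(0)} \subset {}_{l-1}\mathcal{F}^{(0)}$; moreover the successive quotients of the two flags coincide, so the flatness requirements of Definition \ref{defn: parabolic subbundle in family} and the numerical invariants ${}_{j}P, {}_{j}b_i^{(m)}$ for $j \leq l-2$ translate verbatim. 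These flatness conditions are preserved under the base change $T \to Q$ by the fiberwise flatness criterion (Lemma \ref{lemma: critere de platitude}), so the identification is compatible with pullback and is an isomorphism of functors over $Q$. Applying the inductive hypothesis to ${}_{l-1}\mathcal{V}_{univ}$ over the base $Q$ represents $\text{Fil}^{\alpha'}_{{}_{l-1}\mathcal{V}_{univ}}$ by a scheme separated and of finite type over $Q$; composing with $Q \to S$ gives the result.

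The main obstacle is precisely the bookkeeping in this functorial isomorphism: one must verify that the universal object over the parabolic Quot scheme is a relative parabolic vector bundle of the expected type, and that the transitivity of the subbundle relation, together with the matching of all flatness conditions and numerical invariants, survives arbitrary base change. All of this reduces to the flatness analysis carried out before Definition \ref{defn: second quot moduli} and to Lemma \ref{lemma: critere de platitude}, so no genuinely new input is needed beyond careful tracking of the successive quotients; the separatedness and finite-type conclusions then follow formally since these properties are stable under composition and base change.
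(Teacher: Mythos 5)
Your proposal is correct and follows essentially the same route as the paper: induction on the length of the filtration datum, with the parabolic Quot scheme (Corollary \ref{coroll: finiteness of parabolic Quot scheme}) as the building block, and the conclusion obtained from stability of separatedness and finite type under composition. The only difference is the direction of the peeling: you forget all but the top step, realizing $\text{Fil}^{\alpha}_{\mathcal{V}}$ as $\text{Fil}^{\alpha'}_{{}_{l-1}\mathcal{V}_{univ}}$ over the Quot scheme $Q$, whereas the paper forgets the bottom step, realizing $\text{Fil}^{\alpha}_{\mathcal{V}}$ as the parabolic Quot scheme $\text{Quot}_{\mathcal{W}_{2,\,univ}}^{{}_1P,\,{}_1b_i^{(m)}}$ of the universal second step over $\text{Fil}^{\beta}_{\mathcal{V}}$ for the truncated datum $\beta$; these towers are mirror images requiring the same transitivity-of-parabolic-subbundles bookkeeping, which you in fact carry out more explicitly than the paper's ``tracing back the definitions.''
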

\begin{proof}
We induct on the length $l$ of the filtration datum. The case $l =1$ is just a parabolic Quot scheme, so it follows from Corollary \ref{coroll: finiteness of parabolic Quot scheme}. Suppose that the theorem has been proved for filtration data of size $l-1$. Let $\alpha$ be as in the statement of the proposition. We can define $\beta \vcentcolon = \left( \left( _j b_i^{(m)} \right)_{\substack{2 \leq j \leq l-1 \\ i \in I \\ 1 \leq m \leq N_i}}, \; \left(_j P \right)_{2 \leq j \leq l-1} \right)$, which is a filtration datum of length $l-1$. By the induction hypothesis, we know that $Fil_{\mathcal{V}}^{\beta}$ is a scheme that is separated and of finite type over $S$.

There is a universal filtration $0 \subset \, \mathcal{W}_{2, \, univ} \, \subset \, \mathcal{W}_{3, \, univ} \, \subset \cdots\, \subset \, \mathcal{W}_{l, \, univ} = \mathcal{V}|_{C \times Fil^{\beta}_{\mathcal{V}}}$ by parabolic subbundles. Here each $\mathcal{W}_{j, \, univ}$ has the required associated parabolic data $\theta(\mathcal{W}_{j, \, univ}) = \left(_jP, \, _jb_i^{(m)}\right)$ for $2 \leq j \leq l-1$. We can define a map $Fil^{\alpha}_{\mathcal{V}} \rightarrow Fil^{\beta}_{\mathcal{V}}$ by forgetting the first element in the parabolic filtration. By tracing back the definitions, the functor $Fil^{\alpha}_{\mathcal{V}}$ is represented by the scheme $\text{Quot}_{\mathcal{W}_{2, \, univ}}^{_1P, \, _1b_i^{(m)}}$. Since we know that $\text{Quot}_{\mathcal{W}_{2, \, univ}}^{_1P, \, _1b_i^{(m)}} \longrightarrow Fil^{\beta}_{\mathcal{V}}$ is a separated morphism of finite type, we conclude that the composition $Fil^{\alpha}_{\mathcal{V}} \longrightarrow Fil^{\beta}_{\mathcal{V}} \longrightarrow S$is separated and of finite type.
\end{proof}

\begin{defn} \label{defn: filtration datum associated to filtration}
Let $\mathcal{V}$ be a parabolic vector bundle over $C$. Suppose that we have a filtration $\{_j\mathcal{W}\}$ of length $l$ $0 = \, _0\mathcal{W} \subset \, _1\mathcal{W} \subset \cdots \, \subset \, _{l-1}\mathcal{W} \subset \, _{l}\mathcal{W} = \mathcal{V}$.
We know that such filtration is a $k$-point of $\text{Fil}^{ \, \psi(\mathcal{W}_j)}_{\mathcal{V}}$ for some filtration datum $\psi(\mathcal{W}_j)$ of length $l$. We call this the filtration datum associated to the filtration $\{\mathcal{W}_j\}$.

More explicitly, if we have $_j\mathcal{W} = \left[ \;_j\mathcal{F}^{(0)} \,\overset{_jb_i^{(1)}}{\subset} \, _j\mathcal{F}_i^{(1)} \overset{_jb_i^{(2)}}{\subset} \cdots\; \overset{_jb_{i}^{(N_i)}}{\subset} \,_j\mathcal{F}_i^{(N_i)}= \, _j\mathcal{F}^{(0)}(x_i) \;\right]_{i \in I}$, then $\psi(\mathcal{W}_j) = \left( _jb_i^{(m)}, \, HP(_j\mathcal{F}^{(0)}) \right)$.
\end{defn}
\end{section}
\begin{section}{Openness of strata} \label{big section: openness of strata}
The goal of this section is to prove that each stratum $\text{Bun}_{\mathcal{V}}(C)^{\leq P}$ is open (Theorem \ref{thm: openess of strata}). This is achieved by first proving that $\text{Bun}_{\mathcal{V}}^{\leq P}$ is constructible (Proposition \ref{prop: constructibility of strata}), and then proving that $\text{Bun}_{\mathcal{V}}^{\leq P}$ is closed under generalization (Proposition \ref{prop: openness of locus of strata in families}). A treatment of the analogous result in the case of principal $G$-bundles over a curve of characteristic $0$ can be found in \cite{gurjar-nitsure}[\S2]. Our general approach is different from \cite{gurjar-nitsure} in that we prove constructibility of all strata directly without treating the openness of the semistable locus first.
\begin{subsection}{Constructibility of strata} \label{section: constructibility of strata}
The goal of this subsection is to prove the following proposition.
\begin{prop} \label{prop: constructibility of strata}
Let $T$ be a scheme of finite type over $k$. Let $\mathcal{V}$ be a parabolic vector bundle of rank $n$ over $C \times T$. Let $P = (\mu_1, \mu_2, \cdots, \mu_n)$ be a parabolic HN datum. Then, the subset of $T$ given by $T^{\leq P} = \left\{ t \in T \, \mid \, HN( \mathcal{V}|_{C_t}) \leq P \right\}$ is constructible in the Zariski topology.
\end{prop}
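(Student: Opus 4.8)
The plan is to translate the partial-order condition into bounds on the degrees of subbundles, which are controlled by the parabolic Quot schemes of Section~\ref{section: parabolic Quot schemes}, and then to invoke Chevalley's theorem. For a parabolic bundle $\mathcal{U}$ over a field, write $d_k(\mathcal{U})$ for the maximal parabolic degree of a rank-$k$ parabolic subbundle; this maximum is attained by part (b) of Corollary~\ref{coroll: boundedness degrees subbundles}. The Harder--Narasimhan theory of Section~\ref{section: parabolic vector bundles curve} (Proposition~\ref{prop: existence and uniqueness of HN filtrations} together with the slope estimates of Lemma~\ref{lemma: semistable bundles and ses}) shows that the function $k \mapsto d_k(\mathcal{U})$ is exactly the Harder--Narasimhan polygon of $\mathcal{U}$: the $\mathcal{W}_j$ of the filtration realize the maxima at the breakpoints, and concavity handles intermediate $k$. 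Comparing partial sums, the relation $HN(\mathcal{U}) \le P$ is then equivalent to $\deg \mathcal{U} = p_n$ together with $d_k(\mathcal{U}) \le p_k$ for all $1 \le k < n$, where $p_k$ denotes the value at $k$ of the polygon determined by $P$. Applying this fiberwise, and using that $t \mapsto \deg \mathcal{V}|_{C_t}$ is locally constant (Lemma~\ref{lemma: local constancy of deg}), I would first pass to the open-and-closed locus of $T$ where $\deg \mathcal{V}|_{C_t} = p_n$, reducing the problem to showing that each locus $\{ t : d_k(\mathcal{V}|_{C_t}) \le p_k \}$ is constructible.

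Next I record two finiteness inputs. First, the parabolic degree of a rank-$k$ parabolic subbundle differs from the degree of its underlying subbundle of $\mathcal{E}^{(0)}$ by a bounded amount (Lemma~\ref{lemma: deg of parabolic vs regular vector bundles}), and these degrees lie in a discrete subset of $\mathbb{R}$, namely an integer shifted by a subset-sum of the finitely many weights $\lambda_i^{(m)}$. Second, by the boundedness of Corollary~\ref{coroll: boundedness degrees subbundles}, applied after a Noetherian induction on $T$ (whose bound is locally constant on the strata over which $\mathcal{E}^{(0)}$ admits a relative filtration by line bundles), there is a uniform $M$ with $d_k(\mathcal{V}|_{C_t}) \le M$ for all $t \in T$ and all $k$. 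Hence only finitely many degree values $c'$ with $p_k < c' \le M$ are relevant, and only finitely many combinatorial parabolic Quot data realize a given rank and degree. Finally, since semistability and therefore the Harder--Narasimhan polygon are preserved under field extension (Lemma~\ref{lemma: HN filtrations and scalar change}), the value $d_k(\mathcal{V}|_{C_t})$ is unchanged upon passing to an extension of $\kappa(t)$, so the existence of a rank-$k$ subbundle of prescribed degree over an extension of $\kappa(t)$ is equivalent to $d_k(\mathcal{V}|_{C_t})$ reaching that degree.

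With these reductions, I would fix $k$ and, for each relevant degree $c'$ and each parabolic Quot datum $(P', \, b_i^{(m)})$ of rank $k$ and parabolic degree $c'$, consider the parabolic Quot scheme $\text{Quot}_{\mathcal{V}}^{P', \, b_i^{(m)}}$, which is of finite type over $T$ by Corollary~\ref{coroll: finiteness of parabolic Quot scheme}. A point $t$ lies in the image of its structure morphism to $T$ precisely when $\mathcal{V}|_{C_t}$ admits a rank-$k$ parabolic subbundle of parabolic degree $c'$ over some extension of $\kappa(t)$. By Chevalley's theorem this image is constructible, and taking the finite union over the finitely many $(P', \, b_i^{(m)})$ and over the finitely many $c'$ with $c' > p_k$ shows that $\{ t : d_k(\mathcal{V}|_{C_t}) > p_k \}$ is constructible; its complement $\{ t : d_k(\mathcal{V}|_{C_t}) \le p_k \}$ is then constructible as well. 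Intersecting these finitely many constructible loci over $1 \le k < n$ with the open-and-closed degree locus from the first paragraph yields that $T^{\le P}$ is constructible.

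The routine ingredients are Chevalley's theorem and the representability of the parabolic Quot schemes from Section~\ref{section: parabolic Quot schemes}. The conceptual crux, and the step I expect to require the most care, is the identification of the Harder--Narasimhan polygon with the maximal-subbundle-degree function $d_k$, together with the translation of the order $\le$ into the pointwise inequalities $d_k \le p_k$; this is where the Harder--Narasimhan theory of Section~\ref{section: parabolic vector bundles curve} enters essentially, and where the field-extension invariance of Lemma~\ref{lemma: HN filtrations and scalar change} is needed to match the Quot-scheme image with the fiberwise condition. The one remaining technical point is the uniform boundedness of $d_k$ over the Noetherian base $T$, which guarantees that the relevant unions of Quot-scheme images are finite.
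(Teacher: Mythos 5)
Your overall strategy is the same as the paper's: reduce to the constant-degree locus, detect failure of $HN(\mathcal{V}|_{C_t}) \leq P$ through the existence of a parabolic subbundle whose degree exceeds the corresponding partial sum $p_k$ of $P$, bound the possible invariants of such subbundles uniformly over $T$, and apply Chevalley's theorem to images of finitely many parabolic Quot schemes. Your fiberwise characterization ($HN \leq P$ iff $\deg = p_n$ and $d_k \leq p_k$ for all $k$) is correct, although your justification overstates what is true: $k \mapsto d_k$ is \emph{not} the Harder--Narasimhan polygon at ranks strictly between the breakpoints; one only has $d_k \leq$ (polygon at $k$), with equality at breakpoints, and that together with concavity of $k \mapsto p_k$ is what the equivalence actually needs. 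The genuine gap is the next step: you claim that since the HN polygon is preserved under field extension (Lemma \ref{lemma: HN filtrations and scalar change}), the value $d_k(\mathcal{V}|_{C_t})$ is unchanged when passing to an extension of $\kappa(t)$. This is false in general. For $C$ of genus $\geq 1$, let $K'/K$ be a separable quadratic extension, $\pi : C_{K'} \rightarrow C_K$ the induced \'etale double cover, and $E = \pi_* L$ for a suitable degree-$d$ line bundle $L$ on $C_{K'}$ with $L \not\cong \sigma^* L$. Then $E|_{C_{K'}} \cong L \oplus \sigma^* L$ is semistable, and its only degree-$d$ line subbundles are the two summands, which are swapped by the Galois action; hence $E$ has no line subbundle of degree $d$ over $K$, so $d_1(E) < d = d_1(E|_{C_{K'}})$ even though $HN(E) = HN(E|_{C_{K'}})$. (Adding parabolic structure does not remove the phenomenon.) Consequently your identification of $\left\{ t \, : \, d_k(\mathcal{V}|_{C_t}) > p_k \right\}$ with the union of the rank-$k$ Quot-scheme images breaks down: a point $t$ of such an image only furnishes a destabilizing subbundle over an extension of $\kappa(t)$, and it need not descend.

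The gap is repairable, and the repair is exactly what the paper does: do not attempt to prove constructibility of each locus $\left\{ t \, : \, d_k(\mathcal{V}|_{C_t}) \leq p_k \right\}$ separately. Instead let $A$ be the union, over all ranks $k$ and all relevant degrees $c' > p_k$, of the images of the parabolic Quot schemes, and show directly that $A$ equals the complement $T^{\nleq P}$ of $T^{\leq P}$ inside the constant-degree locus. The inclusion $T^{\nleq P} \subseteq A$ holds because Lemma \ref{lemma: existence destabilizing subbundle}, applied over $\kappa(t)$ itself, produces a destabilizing subbundle already defined over $\kappa(t)$. The reverse inclusion needs no invariance of $d_k$: a rank-$k$ subbundle of degree exceeding $p_k$ over the residue field $\kappa(s)$ of a point of the Quot scheme forces $HN(\mathcal{V}|_{C_{\kappa(s)}}) \nleq P$ by the polygon-domination inequality over $\kappa(s)$, and then Lemma \ref{lemma: HN filtrations and scalar change} --- invariance of the HN \emph{datum}, which is true --- gives $HN(\mathcal{V}|_{C_t}) \nleq P$. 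With this reorganization (which makes your final intersection of complements compute the right set for the right reason), your argument goes through. One further remark: your uniform bound $M$ via Noetherian induction and generic filtrations by line bundles can be completed (spread out a filtration from the generic fiber of each component using openness of flatness, then induct on the closed complement), but the paper's Lemma \ref{lemma: finiteness destabilizing family} obtains the same bound more directly from Serre vanishing and coherence of $(\pi_T)_* \, \mathcal{E}^{(0)}(m)$.
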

Before proving Proposition \ref{prop: constructibility of strata}, we need a couple of lemmas.
\begin{lemma} \label{lemma: existence destabilizing subbundle}
Let $\mathcal{V}$ be a parabolic vector bundle of rank $n$ over $C$. Let $P = ( \mu_1, \mu_2, \cdots, \mu_n)$ be a parabolic HN datum with $\sum_{j =1}^n \mu_j = \text{deg} \, \mathcal{V}$. Suppose that $HN(\mathcal{V}) \nleq P$. Then, there exists a parabolic subbundle $\mathcal{W}$ of $\mathcal{V}$ such that $\mu(\mathcal{W}) > \frac{1}{r} \sum_{l=1}^r \mu_l$, where $r$ is the rank of $\mathcal{W}$.
\end{lemma}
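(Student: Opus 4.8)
The plan is to extract from the hypothesis $HN(\mathcal{V}) \nleq P$ a single rank at which the Harder--Narasimhan polygon of $\mathcal{V}$ strictly dominates that of $P$, and then to exhibit the corresponding step of the Harder--Narasimhan filtration of $\mathcal{V}$ as the desired destabilizing subbundle. Write $HN(\mathcal{V}) = (\nu_1, \ldots, \nu_n)$. First I would observe that condition (1) in the definition of the partial order is automatic here: the entries of $HN(\mathcal{V})$ sum to $\text{deg}\,\mathcal{V}$, since each slope occurs with multiplicity equal to the rank of its graded piece, so the total is $\sum_j \text{deg}(\mathcal{W}_j/\mathcal{W}_{j-1}) = \text{deg}\,\mathcal{V}$ by Lemma~\ref{lemma: deg in ses}, and by hypothesis $\sum_{l=1}^n \mu_l = \text{deg}\,\mathcal{V}$ as well. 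Consequently $HN(\mathcal{V}) \nleq P$ forces the failure of condition (2): there exists some $1 \le k < n$ with $\sum_{l=1}^k \nu_l > \sum_{l=1}^k \mu_l$.

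Next I would package the partial sums geometrically. Let $0 = \mathcal{W}_0 \subset \mathcal{W}_1 \subset \cdots \subset \mathcal{W}_l = \mathcal{V}$ be the Harder--Narasimhan filtration of Proposition~\ref{prop: existence and uniqueness of HN filtrations}, with $r_j = \text{rank}\,\mathcal{W}_j$. Define $p_{\mathcal{V}}\colon [0,n] \to \mathbb{R}$ to be the piecewise-linear function interpolating the points $(r_j, \text{deg}\,\mathcal{W}_j)$, and set $p_P(k) = \sum_{l=1}^k \mu_l$. On the integers one checks directly that $p_{\mathcal{V}}(k) = \sum_{l=1}^k \nu_l$. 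Because the successive slopes $\mu(\mathcal{W}_j/\mathcal{W}_{j-1})$ strictly decrease (condition (b) of Definition~\ref{defn: HN filtration}), the function $p_{\mathcal{V}}$ is concave and is affine on each interval $[r_{j-1}, r_j]$; likewise $p_P$ is concave, its increments $\mu_l$ being the entries of a Harder--Narasimhan datum listed in non-increasing order.

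Now I would run a convexity comparison. Set $h = p_{\mathcal{V}} - p_P$, so that $h(0) = h(n) = 0$ while, by the first step, $h(k) > 0$ for some interior integer $k$. On each interval $[r_{j-1}, r_j]$ the function $p_{\mathcal{V}}$ is affine and $-p_P$ is convex, so $h$ is convex there and attains its maximum over that interval at one of the endpoints. Applying this to the interval containing $k$ produces a breakpoint $r_j$ with $h(r_j) \ge h(k) > 0$; since $h$ vanishes at $0$ and $n$, this $r_j$ is an interior breakpoint, $0 < r_j < n$. Taking $\mathcal{W} = \mathcal{W}_j$, which is a parabolic subbundle of rank $r = r_j$, we obtain
\[ \mu(\mathcal{W}_j) = \frac{\text{deg}\,\mathcal{W}_j}{r_j} = \frac{p_{\mathcal{V}}(r_j)}{r_j} > \frac{p_P(r_j)}{r_j} = \frac{1}{r}\sum_{l=1}^{r}\mu_l, \]
which is exactly the required subbundle.

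I expect the only delicate point to be the bookkeeping: ensuring that the partial sums are read as the correct concave polygons and, above all, that the strict inequality $p_{\mathcal{V}} > p_P$ at an \emph{arbitrary} index $k$ can be transported to an \emph{honest} vertex $r_j$ of the filtration. The geometric heart of this is the elementary statement that a function which is convex on each affine piece of a concave polygon, and which is positive somewhere strictly inside while vanishing at both ends, must already be positive at a vertex; everything else is the dictionary between slopes, degrees, and partial sums supplied by the Harder--Narasimhan filtration.
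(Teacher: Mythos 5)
Your proposal is correct and follows essentially the same route as the paper: both reduce to locating a vertex of the Harder--Narasimhan polygon at which the strict partial-sum inequality persists, and then take the corresponding step of the HN filtration as $\mathcal{W}$. The only difference is packaging: the paper transports the inequality from the maximal violating index $m_0$ down to the vertex below it by termwise comparison (using $\nu_j = \nu_{m_0+1} < \mu_{m_0+1} \leq \mu_j$ on the intervening block), whereas you reach a neighboring vertex by convexity of $p_{\mathcal{V}} - p_P$ on each affine piece of the HN polygon --- the same mechanism in geometric dress.
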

\begin{proof}
Suppose that $\mathcal{V}$ has Harder-Narasimhan filtration given by $0 \subset \mathcal{V}_0 \subset \mathcal{V}_1 \subset \cdots \, \subset \mathcal{V}_{l} = \mathcal{V}$. Recall that
\[ HN(\mathcal{V}) = ( \; \underbrace{\mu(\mathcal{V}_0) , \,\cdots, \,\mu(\mathcal{V}_0)}_{ \text{rank} \; \mathcal{V}_0 / 0 \, \text{times}}, \; \underbrace{\mu(\mathcal{V}_1/ \, \mathcal{V}_0) , \,\cdots, \,\mu(\mathcal{V}_1 / \, \mathcal{V}_0)}_{ \text{rank} \; \mathcal{V}_1 / \, \mathcal{V}_0 \, \text{times}}, \cdots, \, \underbrace{\mu(\mathcal{V}_l / \, \mathcal{V}_{l-1}) , \;\cdots, \;\mu(\mathcal{V}_l  / \, \mathcal{V}_{l-1})}_{ \text{rank} \; \mathcal{V}_l / \mathcal{V}_{l-1} \, \text{times}} \; ) \]
Let us write $HN(\mathcal{V}) = (\nu_1, \nu_2, \cdots, \nu_n)$. Then we know $\sum_{j=1}^n \nu_j = \text{deg} \, \mathcal{V} = \sum_{j=1}^n \mu_j$. By definition, $HN(\mathcal{V}) \nleq P$ implies that there exists $1 \leq m < n$ such that $\sum_{j=1}^m \nu_j > \sum_{j=1}^m \mu_j$. Let $m_0$ the maximal such $m$. Let $1 \leq q < l$ be such that $\text{rank} \, \mathcal{V}_q \leq m_0 < \text{rank}\, \mathcal{V}_{q+1}$.

Define $r \vcentcolon =\text{rank} \, \mathcal{V}_q $. We claim that we have $\sum_{j=1}^{r} \nu_j >\sum_{j=1}^{r} \mu_j$. Once we prove this claim we will be done, because we can then set $\mathcal{W} = \mathcal{V}_q$. 

In order to prove the claim, note that we have
\[ 0 < \sum_{j=1}^{m_0} \nu_j  - \sum_{j=1}^{m_0} \mu_j =  \left(\sum_{j=1}^{r} \nu_j - \sum_{j=1}^{r} \mu_j \right)  + \sum_{j=r+1}^{m_0} (\nu_j - \mu_j) \]
So it suffices to show that $\nu_j \leq \mu_j$ for all $r+1 \leq j \leq m_0$. For all $r+1 \leq j \leq m_0$ we have $\nu_j = \mu(\mathcal{V}_{q+1}) = \nu_{m_0+1}$. Using $\mu_j \geq \mu_{m_0+1}$, we are reduced to showing that $\nu_{m_0+1} \leq \mu_{m_0+1}$.

By maximality of $m_0$, we have $\sum_{j=1}^{m_0} \nu_j > \sum_{j=1}^{m_0} \mu_j$ and $\sum_{j=1}^{m_0+1} \nu_j \leq \sum_{j=1}^{m_0+1} \mu_j$. This means that we must have $\nu_{m_0+1} < \mu_{m_0+1}$, as desired.
\end{proof}

\begin{defn}
If $\mathcal{V}$, $P$ and $\mathcal{W}$ are as in Lemma \ref{lemma: existence destabilizing subbundle}, we say that $\mathcal{W}$ is a $P$-destabilizing subbundle of $\mathcal{V}$.
\end{defn}
For the next lemma, we need to talk about $P$-destabilizing subbundles for the fibers of a family. 
\begin{defn} Let $T$ be a $k$-scheme of finite type and let $\mathcal{V}$ be a parabolic vector bundle over $T \times_{k} C$. Let $P$ be a HN datum. For any topological point $t \in T$, we denote by $Dest_{t}^{P} (\mathcal{V})$ the set of $P$-destabilizing subbundles of $\mathcal{V}|_{C_t}$.
\end{defn}
\begin{lemma} \label{lemma: finiteness destabilizing family}
Let $T$ , $\mathcal{V}$ and $P$ as in the definition above. The set of all Hilbert polynomials
\begin{gather*} \left\{ HP\left(\mathcal{F}^{(0)}\right) \; \mid \; \exists t \in T, \; \exists \mathcal{W} = \left[ \; \mathcal{F}^{(0)} \,\overset{a_i^{(1)}}{\subset} \, \mathcal{F}_i^{(1)} \overset{a_i^{(2)}}{\subset} \cdots\; \overset{a_{i}^{(N_i)}}{\subset} \, \mathcal{F}_i^{(N_i)}= \mathcal{F}^{(0)}(x_i) \;\right]_{i \in I} \in Dest_{t}^{P}(\mathcal{V}) \right\}
\end{gather*}
is finite.
\end{lemma}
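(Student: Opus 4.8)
The plan is to reduce the finiteness of the set of Hilbert polynomials to a uniform boundedness of two numerical invariants of the underlying bundle $\mathcal{F}^{(0)}$, namely its rank and its degree. Indeed, by Remark \ref{remark: riemann roch} the Hilbert polynomial of a vector bundle is completely determined by its rank and degree, so it suffices to bound these over all $t \in T$ and all $\mathcal{W} \in Dest_{t}^{P}(\mathcal{V})$. The rank $r = \text{rank}\,\mathcal{F}^{(0)} = \text{rank}\,\mathcal{W}$ always lies in $\{1, \dots, n\}$, so it is a finite quantity, and it only remains to bound $\text{deg}\,\mathcal{F}^{(0)}$ both above and below, uniformly.

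For the lower bound I would use the $P$-destabilizing hypothesis. By definition $\mu(\mathcal{W}) > \frac{1}{r}\sum_{l=1}^{r}\mu_l$, so $\text{deg}\,\mathcal{W} > \sum_{l=1}^{r}\mu_l \geq c_0$, where $c_0 := \min_{1 \leq r \leq n}\sum_{l=1}^{r}\mu_l$ is a finite constant depending only on $P$. Combining this with Lemma \ref{lemma: deg of parabolic vs regular vector bundles}, which gives $\text{deg}\,\mathcal{F}^{(0)} \geq \text{deg}\,\mathcal{W} - r|I| > c_0 - n|I|$, yields the desired uniform lower bound.

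The upper bound is the essential point, and here the $P$-destabilizing condition plays no role; what is used instead is the boundedness of the whole family $\mathcal{E}^{(0)}$. Since the inclusion $\mathcal{F}^{(0)} \hookrightarrow \mathcal{E}^{(0)}|_{C_t}$ induces an injection $H^0(C_t, \mathcal{F}^{(0)}) \hookrightarrow H^0(C_t, \mathcal{E}^{(0)}|_{C_t})$, and Riemann–Roch gives $h^0(\mathcal{F}^{(0)}) \geq \chi(\mathcal{F}^{(0)}) = \text{deg}\,\mathcal{F}^{(0)} + r\,\chi(\mathcal{O}_C)$, we obtain $\text{deg}\,\mathcal{F}^{(0)} \leq h^0(C_t, \mathcal{E}^{(0)}|_{C_t}) - r\,\chi(\mathcal{O}_C)$. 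The hard part will be to bound $h^0(C_t, \mathcal{E}^{(0)}|_{C_t})$ uniformly over $t \in T$, but this is a standard consequence of the semicontinuity theorem: the function $t \mapsto \dim_{\kappa(t)} H^0(C_t, \mathcal{E}^{(0)}|_{C_t})$ is upper semicontinuous, because $C \times T \to T$ is proper and $\mathcal{E}^{(0)}$ is coherent and $T$-flat. An upper semicontinuous $\mathbb{Z}_{\geq 0}$-valued function on the Noetherian space $T$ is automatically bounded above: the closed loci $\{t : h^0 \geq c\}$ form a descending chain which stabilizes, and the stable value cannot actually be attained, since that would force $h^0 = \infty$ at some point; hence the chain terminates at the empty set and $h^0$ is bounded.

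Putting these bounds together, $\text{deg}\,\mathcal{F}^{(0)}$ is confined to a fixed bounded interval and $r$ to a finite set, so only finitely many pairs $(\text{rank}\,\mathcal{F}^{(0)}, \text{deg}\,\mathcal{F}^{(0)})$ can occur; by Remark \ref{remark: riemann roch} this gives finitely many Hilbert polynomials, proving the lemma. The only genuine obstacle is the uniform control of $h^0$ across the family, and it is worth emphasizing that this is obtained purely from semicontinuity and the Noetherian property of $T$, without invoking any finer structure theory for subbundles.
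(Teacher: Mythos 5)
Your proof is correct, and its skeleton is the same as the paper's: reduce to boundedness of $(\mathrm{rank}\,\mathcal{F}^{(0)}, \deg\mathcal{F}^{(0)})$, get the lower bound on the degree from the $P$-destabilizing inequality combined with Lemma \ref{lemma: deg of parabolic vs regular vector bundles}, and get the upper bound from a uniform bound on $h^0\left(C_t, \mathcal{E}^{(0)}|_{C_t}\right)$ over $t \in T$, the destabilizing hypothesis playing no role there. Where you diverge is in how that uniform $h^0$-bound is produced. The paper argues by contradiction: if the degrees were unbounded, then $h^0\left(C_t, \mathcal{E}^{(0)}(m)|_{C_t}\right)$ would be unbounded for every twist $m$, but Serre vanishing plus cohomology and base change identify these spaces, for $m \gg 0$, with fibers of the coherent sheaf $(\pi_T)_*\,\mathcal{E}^{(0)}(m)$ on the Noetherian scheme $T$, whose fiber dimensions are bounded. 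You instead quote the semicontinuity theorem for the $T$-flat coherent sheaf $\mathcal{E}^{(0)}$ and the projective morphism $C\times T \to T$, and then convert upper semicontinuity into boundedness via the descending chain condition on closed subsets of the Noetherian space $T$ (the stabilized locus must be empty, since $h^0$ is finite at every point). Both routes are valid; yours is more direct, avoiding the contradiction and the auxiliary twist, at the price of invoking the full semicontinuity theorem, whereas the paper's version only needs base change in the easy regime where higher cohomology has already been killed by an ample twist. Your Noetherian argument is stated correctly, and the hypotheses of semicontinuity (properness of $C \times T \to T$, coherence and $T$-flatness of $\mathcal{E}^{(0)}$, $T$ Noetherian since it is of finite type over $k$) are all available in the setting of the lemma.
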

\begin{proof}
Let $\mathcal{V} = \left[ \; \mathcal{E}^{(0)} \,\overset{a_i^{(1)}}{\subset} \, \mathcal{E}_i^{(1)} \overset{a_i^{(2)}}{\subset} \cdots\; \overset{a_{i}^{(N_i)}}{\subset} \, \mathcal{E}_i^{(N_i)}= \mathcal{E}^{(0)}(x_i) \;\right]_{i \in I}$. Let $t \in T$. Recall (Remark \ref{remark: riemann roch}) that the Hilbert polynomial of a vector bundle on $C_t$ is completely determined by the degree and the rank of the bundle. For any 
\[\mathcal{W} = \left[ \; \mathcal{F}^{(0)} \,\overset{b_i^{(1)}}{\subset} \, \mathcal{F}_i^{(1)} \overset{b_i^{(2)}}{\subset} \cdots\; \overset{b_{i}^{(N_i)}}{\subset} \, \mathcal{F}_i^{(N_i)}= \mathcal{F}^{(0)}(x_i) \;\right]_{i \in I} \in Dest_t^{P}(\mathcal{V})\]
we know that $1 \leq \text{rank} \, \mathcal{F}^{(0)} \leq \text{rank} \, \mathcal{V}$. So there are only finitely many possible ranks for such $\mathcal{F}^{(0)}$. It therefore suffices to fix some rank $r$ and show that there are finitely many possible values of $\text{deg} \, \mathcal{F}^{(0)}$ for $\mathcal{W} \in \text{Dest}_t^P(\mathcal{V})$ of rank $r$ as above.

By definition of $P$-destabilizing bundle, we know that for such a $\mathcal{W}$ we have $\text{deg}\, \mathcal{W} >  \sum_{l=1}^r \mu_l$. By Lemma \ref{lemma: deg of parabolic vs regular vector bundles} we know that $deg \, \mathcal{W} \, \leq deg\left(\mathcal{F}^{(0)} \right) + r|I|$. Hence we must have $\text{deg} \, \mathcal{F}^{(0)} > \sum_{l=1}^r \mu_l - r|I|$. It therefore suffices to find a uniform upper bound for the possible values of $\text{deg} \, \mathcal{F}^{(0)}$. We argue by contradiction. Suppose that the degrees of all possible subbundles of $\mathcal{E}^{(0)}|_{C_t}$ for $t \in T$ are not uniformly bounded above. Since $H^{0}\left(C_t, \, \mathcal{E}^{(0)}|_{C_t}\right) \supset H^{0}\left(C_t, \, \mathcal{F}^{(0)}\right)$, Riemann-Roch for $\mathcal{F}^{(0)}$ implies that the dimension of $H^{0}\left(C_t, \, \mathcal{E}^{(0)}|_{C_t}\right)$ is not bounded above. This in turn implies that for each $m \geq 0$ the dimension of $H^{0}\left(C_t, \, \mathcal{E}^{(0)}(m)|_{C_t}\right)$ is not bounded. Serre vanishing and cohomology and base-change imply that there exists some $m>>0$ such that $H^{0}\left(C_t, \, \mathcal{E}^{(0)}(m)|_{C_t}\right)$ is the fiber of the pushforward $(\pi_T)_* \, \mathcal{E}^{(0)}(m)$ for all $t \in T$. Since $\pi_T$ is a projective morphism, the sheaf $(\pi_T)_* \, \mathcal{E}^{(0)}(m)$ is coherent on the Noetherian scheme $T$. This means that the dimension of the fibers of $(\pi_T)_* \, \mathcal{E}^{(0)}(m)$ are uniformly bounded, a contradition.
\end{proof}
We are now ready to prove the main proposition of this section.
\begin{proof}[Proof of Proposition \ref{prop: constructibility of strata}]
The question is local, so we can restrict to each (open) connected component of $T$. We can therefore assume that the degree of the fibers $\mathcal{V}|_{C_t}$ is the same for all $t \in T$. Let us denote this commond degree by $d$. We can assume without loss of generality that $\sum_{j=1}^n \mu_i =d$, because otherwise we would have that $T^{\leq P}$ is empty. In order to prove constructibility, we shall show that the complement $T^{\nleq P} \vcentcolon = |T| \setminus T^{\leq P}$ is constructible.

For each $t \in T$ and $\mathcal{W} \in \text{Dest}_t^P(\mathcal{V})$, we can associate a parabolic Quot datum $\theta(\mathcal{W})$ as in Definition \ref{defn: parabolic quot datum}. Define $\mathfrak{S} \vcentcolon = \left\{ \,\theta(\mathcal{W}) \; \mid \; \mathcal{W} \in \text{Dest}_t^P(\mathcal{V}) \; \text{for some $t \in T$} \, \right\}$. Lemma \ref{lemma: finiteness destabilizing family} tells us that the set of possible first coordinates of tuples $\left( HP(\mathcal{F}^{(0)}), (b_i^{(m)}) \right)$ in $\mathfrak{S}$ is finite. But there are only finitely many possible choices of nonnegative integers $b_i^{(m)}$, because $\sum_{m=1}^{N_i} b_i^{(m)} = \text{rank} \, \mathcal{W}$ for all $i \in I$ and the rank of $\mathcal{W}$ is always smaller than the rank of $\mathcal{V}$. Hence $\mathfrak{S}$ is finite.

The $T$-scheme $\underset{(P,\, b_i^{(m)}) \in \mathfrak{S}}{\sqcup} \text{Quot}_{\mathcal{V}}^{P, \, b_i^{(m)}}$ is of finite type over $T$, because it is a finite disjoint union of schemes of finite type over $T$. By Chevalley's Theorem, the set theoretic image of the structure morphism $\pi: \underset{(P,\, b_i^{(m)}) \in \mathfrak{S}}{\sqcup} \text{Quot}_{\mathcal{V}}^{P, \, b_i^{(m)}} \rightarrow T$ is constructible. We claim that this image is precisely $T^{\nleq P}$.

Let $t \in T^{\nleq P}$. By Lemma \ref{lemma: existence destabilizing subbundle}, $\mathcal{V}|_{C_t}$ admits a $P$-destabilizing subbundle $\mathcal{W}$ over $C_t$. By the definition of the parabolic Quot scheme, $\mathcal{W}$ represents a $t$-point of $\text{Bun}_{\mathcal{V}}^{\theta(\mathcal{W})}$. Therefore $t$ is in the image of the structure morphism $\pi : \underset{(P,\, b_i^{(m)}) \in \mathfrak{S}}{\sqcup} \text{Quot}_{\mathcal{V}}^{P, \, b_i^{(m)}} \rightarrow T$.

Conversely, let $s$ be a point of $\text{Bun}_{\mathcal{V}}^{P, \, b_i^{(m)}}$ for some $(P, \, b_i^{(m)}) \in \mathfrak{S}$. By definition, this means that $\mathcal{V}|_{C_s}$ has a subbundle $\mathcal{W}$ over $C_s$ of the form
\[\mathcal{W} = \left[ \; \mathcal{F}^{(0)} \,\overset{b_i^{(1)}}{\subset} \, \mathcal{F}_i^{(1)} \overset{b_i^{(2)}}{\subset} \cdots\; \overset{b_{i}^{(N_i)}}{\subset} \, \mathcal{F}_i^{(N_i)}= \mathcal{F}^{(0)}(x_i) \;\right]_{i \in I}\]
such that $HP(\mathcal{F}^{(0)}) = P$. But we know that $(P, \, b_i^{(m)}) \in \mathfrak{S}$. We can compute the slope of $\mathcal{W}$ from the parabolic Quot data $(P, \, b_i^{(m)})$, and we see that $\mathcal{W}$ must be a $P$-destabilizing subbundle of $\mathcal{V}|_{C_s}$. Therefore $HN(\mathcal{V}|_{C_s}) \nleq P$. Lemma \ref{lemma: HN filtrations and scalar change} implies that $HN(\mathcal{V}|_{C_s}) = HN(\mathcal{V}|_{C_{\pi(s)}})$, so we must also have $HN(\mathcal{V}|_{C_{\pi(s)}}) \nleq P$. We conclude that $\pi(s) \in T^{\nleq P}$.
\end{proof}
\end{subsection}

\begin{subsection}{Openness of strata} \label{section: opennness of strata}
The following proposition will be used to prove the main theorem of this section.
\begin{prop} \label{prop: strata closed under generalization}
Let $R$ be a discrete valuation ring over $k$. Let $\eta$ (resp. $s$) denote the generic (resp. special) point of $\text{Spec}(R)$. Let $\mathcal{V}$ be a parabolic vector bundle of rank $n$ over $C_R$. Let $P$ be a parabolic HN datum. If $HN \left(\mathcal{V}|_{C_s})\right) \leq P$, then $HN \left( \mathcal{V}|_{C_{\eta}}\right) \leq P$.
\end{prop}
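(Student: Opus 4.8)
The plan is to argue by contradiction: assuming $HN(\mathcal{V}|_{C_\eta}) \nleq P$, I will manufacture a $P$-destabilizing parabolic subbundle on the special fibre, contradicting the hypothesis $HN(\mathcal{V}|_{C_s}) \leq P$. Since $\mathcal{V}$ is $R$-flat, Lemma \ref{lemma: local constancy of deg} gives $\deg \mathcal{V}|_{C_s} = \deg \mathcal{V}|_{C_\eta}$, so I may assume $\sum_{l} \mu_l = \deg \mathcal{V}|_{C_\eta}$ (otherwise $HN(\mathcal{V}|_{C_s}) \leq P$ already fails). Writing $K = \kappa(\eta)$, Lemma \ref{lemma: existence destabilizing subbundle} applied over the field $K$ then furnishes a $P$-destabilizing parabolic subbundle $\mathcal{W}_\eta \subset \mathcal{V}|_{C_\eta}$, of some rank $r$, with $\mu(\mathcal{W}_\eta) > \frac{1}{r}\sum_{l=1}^{r}\mu_l$, i.e. $\deg \mathcal{W}_\eta > \sum_{l=1}^{r}\mu_l$.

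Next I would spread out and specialize the underlying vector bundle. The subsheaf $\mathcal{F}^{(0)}_\eta \subset \mathcal{E}^{(0)}|_{C_\eta}$ of $\mathcal{W}_\eta$ defines a $K$-point of the classical Quot scheme $\text{Quot}_{\mathcal{E}^{(0)}/C_R/R}$, which is projective, hence proper, over $\text{Spec}(R)$ by Theorem \ref{thm: repr of classical quot scheme}. By the valuative criterion of properness this $\eta$-point extends (uniquely) to an $R$-point, that is, to a subsheaf $\mathcal{F}^{(0)}_R \subset \mathcal{E}^{(0)}$ over $C_R$ with $R$-flat quotient restricting to $\mathcal{F}^{(0)}_\eta$ over $C_\eta$. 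As the point lies in a single component, the Hilbert polynomial is constant along $\text{Spec}(R)$, so $\mathcal{F}^{(0)}_s := \mathcal{F}^{(0)}_R|_{C_s}$ is a rank-$r$ subsheaf of $\mathcal{E}^{(0)}|_{C_s}$ with $\deg \mathcal{F}^{(0)}_s = \deg \mathcal{F}^{(0)}_\eta$; being torsion-free on the smooth curve $C_s$, it is locally free. I then endow it with the maximal parabolic structure $\mathcal{F}^{(m)}_{i,s} := \mathcal{E}_i^{(m)}|_{C_s} \cap \mathcal{F}^{(0)}_s(x_i)$, yielding a parabolic subobject $\mathcal{W}_s \hookrightarrow \mathcal{V}|_{C_s}$.

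The crux is to show $\deg \mathcal{W}_s \geq \deg \mathcal{W}_\eta$. Since the underlying degrees agree, the degree formula of Definition \ref{defn: deg of vector bundle} together with summation by parts (as in the proof of Lemma \ref{lemma: saturating subsheaves}, using that the $\lambda_i^{(m)}$ are strictly increasing in $m$) reduces this to the fibrewise inequalities $\dim_{\kappa}(\mathcal{F}^{(m)}_{i,s}/\mathcal{F}^{(0)}_s) \geq \dim_{\kappa}(\mathcal{F}^{(m)}_{i,\eta}/\mathcal{F}^{(0)}_\eta)$ for all $i,m$. Each such dimension is $r$ minus the rank of the fibrewise map $\phi^{(m)}\colon \mathcal{F}^{(0)}(x_i)/\mathcal{F}^{(0)} \to \mathcal{E}^{(0)}(x_i)/\mathcal{E}_i^{(m)}$, whose kernel is precisely $\mathcal{F}^{(m)}_i/\mathcal{F}^{(0)}$. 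I would assemble these into a map $\Phi^{(m)}$ of finite $R$-modules supported on $x_i \times \text{Spec}(R) = \text{Spec}(R)$: the target is free over $R$ because $\mathcal{V}$ is a parabolic bundle in families (Definition \ref{defn: parabolic vb in family}), so its graded pieces at $x_i$ are $R$-vector bundles, and the source $\mathcal{F}^{(0)}_R(x_i)/\mathcal{F}^{(0)}_R$ is free of rank $r$, being a finite module over the DVR $R$ whose two fibres both have dimension $r$. For a morphism of free $R$-modules the rank can only drop under specialization, so $\operatorname{rank}(\phi^{(m)}_s) \leq \operatorname{rank}(\phi^{(m)}_\eta)$, which is exactly the required inequality.

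Finally, applying the saturation construction of Lemma \ref{lemma: saturating subsheaves} to $\mathcal{W}_s$ yields a genuine parabolic subbundle $\mathcal{W}_s^{\,sat} \subset \mathcal{V}|_{C_s}$ of the same rank $r$ with $\deg \mathcal{W}_s^{\,sat} \geq \deg \mathcal{W}_s \geq \deg \mathcal{W}_\eta > \sum_{l=1}^{r}\mu_l$. Hence $\mathcal{W}_s^{\,sat}$ is $P$-destabilizing, which forces $HN(\mathcal{V}|_{C_s}) \nleq P$ (any rank-$r$ subbundle has degree at most the corresponding partial sum of the HN polygon), contradicting the hypothesis. I expect the main obstacle to be the bookkeeping of the third paragraph, namely establishing the upper semicontinuity of the flag-jump dimensions under specialization and the freeness of the relevant $R$-modules; the surrounding structure is a routine properness-plus-saturation argument.
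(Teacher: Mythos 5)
Your proof is correct, and it reaches the conclusion by a genuinely different mechanism than the paper's at the key specialization step. The paper also argues via Lemma \ref{lemma: existence destabilizing subbundle} plus saturation, but it extends the \emph{entire} parabolic subobject ${}^{\eta}\mathcal{W}$ to a family over $C_R$ by an elementary construction: it pushes the generic quotients forward along $j\colon C_\eta \hookrightarrow C_R$, takes $\mathcal{Q}_i^{(m)} = \mathrm{Im}(\mathcal{E}_i^{(m)} \to j_*\,{}^{\eta}\mathcal{Q}_i^{(m)})$ and $\mathcal{F}_i^{(m)} = \mathrm{Ker}(\mathcal{E}_i^{(m)} \twoheadrightarrow \mathcal{Q}_i^{(m)})$, and uses torsion-freeness over the DVR to get $R$-flatness of all graded pieces; the payoff is that the extended object is a parabolic family in the sense of Definition \ref{defn: parabolic vb in family}, so its parabolic degree is literally constant across fibers by Lemma \ref{lemma: local constancy of deg}, and the only inequality comes from Lemma \ref{lemma: saturating subsheaves}. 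You instead extend only the underlying subsheaf $\mathcal{F}^{(0)}_\eta$, using properness of $\text{Quot}_{\mathcal{E}^{(0)}/C_R/R}$ (Theorem \ref{thm: repr of classical quot scheme}) and the valuative criterion, then define the special-fiber parabolic structure by intersection and prove the degree comparison directly: the flag-jump dimensions $\dim(\mathcal{F}_i^{(m)}/\mathcal{F}^{(0)})$ are kernels of fiberwise maps of free $R$-modules at $x_i$, so they can only grow under specialization, and summation by parts (with the strict monotonicity of the $\lambda_i^{(m)}$) converts this into $\deg \mathcal{W}_s \geq \deg \mathcal{W}_\eta$. Both routes are sound, and the supporting facts you need (freeness of the relevant modules over $R$, commutation of the cokernels with base change, local constancy of Hilbert polynomials in a flat Quot family, and the converse of Lemma \ref{lemma: existence destabilizing subbundle}, i.e.\ that a $P$-destabilizing subbundle forces $HN \nleq P$) are all available and are used elsewhere in the paper. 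The trade-off: the paper's argument is more self-contained, needing only exactness of $j_*$ for affine maps and ``torsion-free over a DVR $=$ flat,'' and it sidesteps any semicontinuity statement by making the degree constant; yours invokes the bigger hammer of Quot projectivity for the extension, but in exchange makes explicit the geometrically meaningful fact that the intersection-with-flag dimensions are upper semicontinuous under specialization --- in effect, the degree can only jump up, and saturation only improves it further. Your approach is also closer in spirit to the techniques the paper itself uses in Lemma \ref{lemma: HN filtrations and scalar change} (spreading out via Quot) and Lemma \ref{lemma: extending parabolic discrete valuation} (extend the bundle, then build the parabolic structure by taking kernels/intersections).
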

Before proving Proposition \ref{prop: strata closed under generalization}, let's state a lemma that we will use in the proof.
\begin{lemma} \label{lemma: flatness criterion for subsheaf}
Let $R$ be a $k$-algebra. Let $\mathcal{E}$ be a vector bundle on $C_R$. Suppose that $\mathcal{F}$ is a coherent subsheaf of $\mathcal{E}$ such that the quotient $\mathcal{E} \, / \, \mathcal{F}$ is $R$-flat. Then, $\mathcal{F}$ is a vector bundle on $C_R$.
\end{lemma}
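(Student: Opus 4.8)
The plan is to show that $\mathcal{F}$ is flat over $\mathcal{O}_{C_R}$; since $\mathcal{F}$ is coherent (finitely presented) and a finitely presented flat $\mathcal{O}_{C_R}$-module is automatically locally free of finite rank, this is exactly the assertion that $\mathcal{F}$ is a vector bundle. To establish $\mathcal{O}_{C_R}$-flatness I would invoke the fiberwise flatness criterion (Lemma~\ref{lemma: critere de platitude}) with $X = C_R$ and $B = \mathrm{Spec}(R)$. This reduces the problem to two separate tasks: first, that $\mathcal{F}$ is flat over $R$; and second, that for every point $s \in \mathrm{Spec}(R)$ the restriction $\mathcal{F}|_{C_s}$ is flat over the fiber $C_s = C \times_k \kappa(s)$.

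For $R$-flatness, I would consider the short exact sequence $0 \to \mathcal{F} \to \mathcal{E} \to \mathcal{Q} \to 0$ with $\mathcal{Q} = \mathcal{E}/\mathcal{F}$. Since $C \to \mathrm{Spec}(k)$ is flat (indeed smooth), the projection $C_R \to \mathrm{Spec}(R)$ is flat, so $\mathcal{O}_{C_R}$ is $R$-flat; as $\mathcal{E}$ is locally free over $\mathcal{O}_{C_R}$, it follows that $\mathcal{E}$ is $R$-flat, and by hypothesis $\mathcal{Q}$ is $R$-flat. Working on affine opens and taking the long exact sequence of $\mathrm{Tor}^R(-,M)$ for an arbitrary $R$-module $M$, the terms $\mathrm{Tor}_2^R(\mathcal{Q},M)$ and $\mathrm{Tor}_1^R(\mathcal{E},M)$ vanish, forcing $\mathrm{Tor}_1^R(\mathcal{F},M)=0$; hence $\mathcal{F}$ is $R$-flat.

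For the fiberwise step, the key observation is that $R$-flatness of $\mathcal{Q}$ keeps the sequence left exact after restriction to a fiber: since $\mathrm{Tor}_1^R(\mathcal{Q},\kappa(s))=0$, the sequence $0 \to \mathcal{F}|_{C_s} \to \mathcal{E}|_{C_s} \to \mathcal{Q}|_{C_s} \to 0$ is exact. Thus $\mathcal{F}|_{C_s}$ is a coherent subsheaf of the locally free sheaf $\mathcal{E}|_{C_s}$. Now $C_s = C \times_k \kappa(s)$ is smooth over the field $\kappa(s)$, hence a regular Noetherian scheme of dimension $1$; on such a curve every coherent subsheaf of a locally free sheaf is torsion free and therefore locally free, since its stalks are finitely generated torsion-free modules over discrete valuation rings or fields. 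Hence $\mathcal{F}|_{C_s}$ is locally free, and in particular $\mathcal{O}_{C_s}$-flat.

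Combining these two facts, Lemma~\ref{lemma: critere de platitude} yields that $\mathcal{F}$ is flat at every point of $C_R$, and finite presentation then upgrades flatness to local freeness, completing the proof. I expect the only point requiring care to be the generality of $R$ (which need not be Noetherian): I would read ``coherent'' as ``finitely presented'' throughout, so that the hypotheses of the fiberwise criterion are met literally, and emphasize that the fibers $C_s$ are always Noetherian and regular regardless of $R$, which is precisely what makes the torsion-free-implies-locally-free argument on $C_s$ legitimate.
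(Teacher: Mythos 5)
Your proof is correct and follows essentially the same route as the paper's own argument: deduce $R$-flatness of $\mathcal{F}$ from the Tor long exact sequence applied to $0 \to \mathcal{F} \to \mathcal{E} \to \mathcal{E}/\mathcal{F} \to 0$, reduce to the fibers via Lemma \ref{lemma: critere de platitude} with $X = C_R$ and $B = \mathrm{Spec}(R)$, and use $R$-flatness of $\mathcal{E}/\mathcal{F}$ to see that $\mathcal{F}|_{C_s}$ remains a subsheaf of the locally free sheaf $\mathcal{E}|_{C_s}$ on the regular curve $C_s$, hence is torsion-free and flat. Your added care about non-Noetherian $R$ (reading coherent as finitely presented) is a sensible refinement but does not change the argument.
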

\begin{proof}
We want to show that $\mathcal{F}$ is $C_R$-flat. There is a short exact sequence  $0 \longrightarrow \mathcal{F} \longrightarrow \mathcal{E} \longrightarrow \mathcal{E} \, / \, \mathcal{F} \longrightarrow 0$. We can use the Tor long exact sequence at each stalk to conclude that $\mathcal{F}$ is $R$-flat. By Lemma \ref{lemma: critere de platitude} with $X = C_R$ and $B = \text{Spec} \, R$, it suffices to check flatness at each fiber. Choose a point $t \in \text{Spec} \, R$. Proving flatness of $\mathcal{F}|_{C_t}$ amounts to showing that $\mathcal{F}|_{C_t}$ is torsion-free, because $C_t$ is a regular curve. Since $\mathcal{E} \, / \, \mathcal{F}$ is $R$-flat, the fiber $\mathcal{F}|_{C_t}$ remains a subsheaf of $\mathcal{E}|_{C_t}$. Since $\mathcal{E}|_{C_t}$ is torsion-free, we conclude that $\mathcal{F}|_{C_t}$ is torsion-free.
\end{proof}

\begin{proof}[Proof of Proposition \ref{prop: strata closed under generalization}]
For the purpose of this proof, it will be convenient to fix the following notation. For every sheaf $\mathcal{F}$ over  $C_R$, we will write $^{\eta}\mathcal{F} \vcentcolon= \mathcal{F}|_{C_\eta}$ and $^{s}\mathcal{F} \vcentcolon= \mathcal{F}|_{C_s}$. We will use the same notation for parabolic vector bundles. 

Let $\mathcal{V} = \left[ \; \mathcal{E}^{(0)} \,\overset{a_i^{(1)}}{\subset} \, \mathcal{E}_i^{(1)} \overset{a_i^{(2)}}{\subset} \cdots\; \overset{a_{i}^{(N_i)}}{\subset} \, \mathcal{E}_i^{(N_i)}= \mathcal{E}^{(0)}(x_i) \;\right]_{i \in I}$ be a parabolic vector bundle over $C_R$. Suppose that $HN\left(^{\eta}\mathcal{V}\right) \nleq P$, where 
		\[^{\eta}\mathcal{V} = \left[ \; ^{\eta}\mathcal{E}^{(0)} \,\overset{a_i^{(1)}}{\subset} \, ^{\eta}\mathcal{E}_i^{(1)} \overset{a_i^{(2)}}{\subset} \cdots\; \overset{a_{i}^{(N_i)}}{\subset} \, ^{\eta}\mathcal{E}_i^{(N_i)}= \, ^{\eta}\mathcal{E}^{(0)}(x_i) \;\right]_{i \in I} \]
		By Lemma \ref{lemma: existence destabilizing subbundle}, there exists a $P$-destabilizing parabolic subbundle $^{\eta} \mathcal{W} \subset \, ^{\eta} \mathcal{V}$. Set
		\[^{\eta}\mathcal{W} = \left[ \; ^{\eta}\mathcal{F}^{(0)} \,\overset{b_i^{(1)}}{\subset} \, ^{\eta}\mathcal{F}_i^{(1)} \overset{b_i^{(2)}}{\subset} \cdots\; \overset{b_{i}^{(N_i)}}{\subset} \, ^{\eta}\mathcal{F}_i^{(N_i)}= \, ^{\eta}\mathcal{F}^{(0)}(x_i) \;\right]_{i \in I} \]
		Let $^{\eta} \mathcal{Q}^{(m)}_i \vcentcolon = \, ^{\eta}\mathcal{E}_i^{(m)} \, / \,^{\eta} \mathcal{F}_i^{(m)}$,
and set $^{\eta}\mathcal{L}_i^{(m)} \vcentcolon = \, ^{\eta}\mathcal{Q}^{(0)}(x_i) \, / \, ^{\eta}\mathcal{Q}^{(m)}_i$. Let $j$ denote the open immersion $j: C_{\eta} \hookrightarrow C_R$. Define $\mathcal{Q}_i^{(m)} \vcentcolon = \text{Im} \left( \;\mathcal{E}_i^{(m)} \xrightarrow{unit} j_* \, ^{\eta} \mathcal{E}_i^{(m)} \longrightarrow j_* \, ^{\eta} \mathcal{Q}_i^{(m)} \; \right)$. Also, define $\mathcal{F}_i^{(m)} \vcentcolon = \text{Ker} \left( \;\mathcal{E}_i^{(m)} \twoheadrightarrow \mathcal{Q}_i^{(m)} \; \right)$. We have the following commutative diagram with exact columns:
\begin{figure}[H]
\centering
\begin{tikzcd}
    0  & 0 & \cdots & 0 & 0 & 0\\
    \mathcal{Q}^{(0)} \ar [u] \ar[r, symbol= \subset] &  \mathcal{Q}_i^{(1)} \ar [u] \ar[r, symbol= \subset] & \cdots & \mathcal{Q}_i^{(Ni-2)} \ar [u] \ar[r, symbol= \subset] & \mathcal{Q}_i^{(N_i-1)} \ar [u] \ar[r, symbol= \subset] & \mathcal{Q}^{(0)}(x_i) \ar [u]\\
    \mathcal{E}^{(0)} \ar [u] \ar[r, symbol= \subset] &  \mathcal{E}_i^{(1)} \ar [u] \ar[r, symbol= \subset] & \cdots & \mathcal{E}_i^{(Ni-2)} \ar [u] \ar[r, symbol= \subset] & \mathcal{E}_i^{(N_i-1)} \ar [u] \ar[r, symbol= \subset] & \mathcal{E}^{(0)}(x_i) \ar [u]\\
    \mathcal{F}^{(0)} \ar [u] \ar[r, symbol= \subset] &  \mathcal{F}_i^{(1)} \ar [u] \ar[r, symbol= \subset] & \cdots & \mathcal{F}_i^{(Ni-2)} \ar [u] \ar[r, symbol= \subset] & \mathcal{F}_i^{(N_i-1)} \ar [u] \ar[r, symbol= \subset] & \mathcal{F}^{(0)}(x_i) \ar [u]\\
    0 \ar[u]  & 0 \ar[u] & \cdots & 0 \ar[u] & 0 \ar[u] & 0 \ar[u]
\end{tikzcd}
\caption{Diagram 3}
\label{diagram: 3}
\end{figure}
Define $\mathcal{L}_i^{(m)} \vcentcolon = \mathcal{Q}^{(0)}(x_i) \, / \, \mathcal{Q}_i^{(m)}$. Since $j$ is affine, the pushforward functor $j_*$ is exact on quasicoherent sheaves. This implies that $\mathcal{L}_i^{(m)} \subset j_* \, ^{\eta} \mathcal{L}_i^{(m)}$, and hence $\mathcal{L}_i^{(m)}$ is $R$-torsion free. Therefore $\mathcal{L}_i^{(m)}$ is $R$-flat, because $R$ is a discrete valuation ring.

We can use the short exact sequence,
\[ 0 \longrightarrow \mathcal{F}^{(0)}(x_i) \, / \, \mathcal{F}_i^{(m)} \longrightarrow \mathcal{E}^{(0)}(x_i) \, / \, \mathcal{E}_i^{(m)} \longrightarrow  \mathcal{L}_i^{(m)} \longrightarrow 0 \]
and the flatness of $\mathcal{E}^{(0)}(x_i) \, / \, \mathcal{E}_i^{(m)}$ to see that $\mathcal{F}^{(0)}(x_i) \, / \, \mathcal{F}_i^{(m)}$ is $R$-flat for all $i$, $m$. Now we can use the short exact sequence
\[ 0 \longrightarrow  \mathcal{F}_i^{(m)} \, / \, \mathcal{F}_i^{(m-1)} \longrightarrow \mathcal{F}^{(0)}(x_i) \, / \, \mathcal{F}_i^{(m-1)} \longrightarrow \mathcal{F}^{(0)}(x_i) \, / \, \mathcal{F}_i^{(m)} \longrightarrow 0 \]
to conclude that $\mathcal{F}_i^{(m)} \, / \, \mathcal{F}_i^{(m-1)}$ is $R$- flat for all $i$ and $m$. A similar argument shows that $\mathcal{Q}_i^{(m)} \, / \, \mathcal{Q}_i^{(m-1)}$ is $R$ flat. Therefore, Diagram \ref{diagram: 3} remains exact and with horizontal arrows given by inclusions after any base-change with respect to $R$.

Since $\mathcal{Q}_i^{(m)} \subset j_* \, ^{\eta}\mathcal{Q}_i^{(m)}$, we have that $\mathcal{Q}_i^{(m)}$ is $R$-torsion free. This implies that $\mathcal{Q}_i^{(m)}$ is $R$-flat, because $R$ is a discrete valuation ring. By construction we have the short exact sequence
\[ 0 \longrightarrow \mathcal{F}_i^{(m)} \longrightarrow \mathcal{E}_i^{(m)} \longrightarrow \mathcal{Q}_i^{(m)} \longrightarrow 0 \] 
By Lemma \ref{lemma: flatness criterion for subsheaf} we conclude that $\mathcal{F}_i^{(m)}$ is a vector bundle on $C_R$. The discussion above implies that $\mathcal{W} \vcentcolon = \left[ \;\mathcal{F}^{(0)} \,\overset{b_i^{(1)}}{\subset} \, \mathcal{F}_i^{(1)} \overset{b_i^{(2)}}{\subset} \cdots\; \overset{b_{i}^{(N_i)}}{\subset} \,\mathcal{F}_i^{(N_i)}= \, \mathcal{F}^{(0)}(x_i) \;\right]_{i \in I}$ is a parabolic vector bundle on $C_R$. We don't necessarily know that $\mathcal{W}$ is a parabolic subbundle of $\mathcal{V}$, because we have no control on the quotients $\mathcal{Q}_i^{(m)}$. We do however know that $\mathcal{Q}_i^{(m)}$ $R$-flat. Therefore, the base-change to the special fiber $^s \mathcal{W} \hookrightarrow \, ^s \mathcal{V}$ remains a monomorphism.
	
We can now appeal to Lemma \ref{lemma: saturating subsheaves} to obtain a subbundle $^s \mathcal{W}^{sat}$ of $^s\mathcal{V}$ containing the subobject $^s \mathcal{W}$ and satisfying $\mu(^s\mathcal{W}^{sat}) \geq \mu(^s\mathcal{W})$. By local constancy of degree (Lemma \ref{lemma: local constancy of deg}) we have $ \mu(^s\mathcal{W}) = \mu(^{\eta}\mathcal{W})$. Hence $\mu(^s\mathcal{W}^{sat}) \geq \, \mu(^{\eta}\mathcal{W})$. We conclude that $^s\mathcal{W}^{sat}$ is $P$-destabilizing, because $\text{rank} \, ^s\mathcal{W}^{sat} = \text{rank} \, ^{\eta}\mathcal{W}$ and $^{\eta}\mathcal{W}$ is $P$-destabilizing. This implies that $HN\left(^s\mathcal{V}\right) \nleq P$, as desired.
\end{proof}

\begin{remark} \label{remark: equality degree properness proof}
Suppose that $\mu(^s\mathcal{W}^{sat}) = \mu(^s\mathcal{W})$ in the proof above. Then part (b) of Lemma \ref{lemma: saturating subsheaves} implies that $^s\mathcal{\mathcal{W}} = \, ^s\mathcal{W}^{sat}$. In this case $^s\mathcal{Q}_i^{(m)}$ is a vector bundle. Applying Lemma \ref{lemma: critere de platitude} to the $R$-flat sheaf $\mathcal{Q}_i^{(m)}$, we see that $\mathcal{Q}_i^{(m)}$ is a vector bundle over $C_R$. Therefore $\mathcal{W} \subset \mathcal{V}$ is a parabolic subbundle over $C_R$ in this case.
\end{remark}

\begin{prop} \label{prop: openness of locus of strata in families}
Let $T$ be a scheme of finite type over $k$. Let $\mathcal{V}$ a parabolic vector bundle over $C \times S$. Let $P$ be a HN datum. The set $T^{\leq P} \vcentcolon = \left\{ t \in T \, \mid \, HN( \mathcal{V}|_{C_t}) \leq P \right\}$ is Zariski open in $T$.
\end{prop}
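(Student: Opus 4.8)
The plan is to combine the two facts already established, namely the constructibility of $T^{\leq P}$ (Proposition \ref{prop: constructibility of strata}) and its behaviour under generalization (Proposition \ref{prop: strata closed under generalization}), by means of the standard topological criterion: a constructible subset of a Noetherian scheme is open if and only if it is stable under generalization. Since $T$ is of finite type over $k$, it is Noetherian, and Proposition \ref{prop: constructibility of strata} already tells us that $T^{\leq P}$ is constructible. Thus the entire argument reduces to verifying that $T^{\leq P}$ is stable under generalization, i.e. that whenever $t' \in T^{\leq P}$ and $t$ is a generization of $t'$ (so $t' \in \overline{\{t\}}$, written $t \rightsquigarrow t'$), one has $t \in T^{\leq P}$.

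To check stability under generalization I would reduce to the discrete valuation ring setting of Proposition \ref{prop: strata closed under generalization}. Given such a generization, with the case $t = t'$ being trivial, let $Z \subseteq T$ be the closed subscheme $\overline{\{t\}}$ with its reduced induced structure; it is integral, with generic point $t$ and function field $\kappa(t)$. The local ring $\mathcal{O}_{Z,t'}$ is then a Noetherian local domain that is not a field and whose fraction field is $\kappa(t)$. Choosing a discrete valuation ring $R$ dominating $\mathcal{O}_{Z,t'}$ produces a morphism $f\colon \text{Spec}(R) \to T$ sending the generic point $\eta$ to $t$ and the closed point $s$ to $t'$; this realizes the specialization $t \rightsquigarrow t'$ by a map out of a trait, which is exactly the shape of input Proposition \ref{prop: strata closed under generalization} requires.

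Pulling back $\mathcal{V}$ along $\text{id}_C \times f$ gives a parabolic vector bundle $\mathcal{V}_R$ over $C_R$ whose special fibre $\mathcal{V}_R|_{C_s}$ is the base change of $\mathcal{V}|_{C_{t'}}$ along $\kappa(t') \hookrightarrow \kappa(s)$ and whose generic fibre $\mathcal{V}_R|_{C_\eta}$ is the base change of $\mathcal{V}|_{C_t}$ along $\kappa(t) \hookrightarrow \text{Frac}(R)$. Because the Harder--Narasimhan datum is unchanged under extension of the ground field (Lemma \ref{lemma: HN filtrations and scalar change}), we obtain $HN(\mathcal{V}_R|_{C_s}) = HN(\mathcal{V}|_{C_{t'}}) \leq P$. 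Proposition \ref{prop: strata closed under generalization} then yields $HN(\mathcal{V}_R|_{C_\eta}) \leq P$, and one further application of Lemma \ref{lemma: HN filtrations and scalar change} gives $HN(\mathcal{V}|_{C_t}) = HN(\mathcal{V}_R|_{C_\eta}) \leq P$, so $t \in T^{\leq P}$. This establishes stability under generalization and hence, via the topological criterion above, the openness of $T^{\leq P}$.

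The substantive inputs here are precisely the two preceding propositions, and the remaining ingredients are routine but deserve a word of care. The existence of the dominating discrete valuation ring $R$ uses that $T$ is Noetherian, and the comparison between the fibres over $t,t'$ and the fibres of $\mathcal{V}_R$ involves residue field extensions that are fully absorbed by Lemma \ref{lemma: HN filtrations and scalar change}. I expect the only genuine conceptual step to be the invocation of the criterion ``constructible $+$ stable under generalization $\Rightarrow$ open''; everything else is bookkeeping that has already been arranged by the earlier results, so this proposition should be short.
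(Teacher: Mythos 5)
Your proposal is correct and follows essentially the same route as the paper: constructibility from Proposition \ref{prop: constructibility of strata}, plus closure under generalization reduced to the discrete valuation ring case of Proposition \ref{prop: strata closed under generalization}, with Lemma \ref{lemma: HN filtrations and scalar change} absorbing the residue field extensions on both ends. The only cosmetic difference is that the paper invokes \cite[\href{https://stacks.math.columbia.edu/tag/054F}{Tag 054F}]{stacks-project} to produce the map from the trait, whereas you construct the dominating discrete valuation ring by hand from the local ring of $\overline{\{t\}}$ at $t'$; these are the same argument.
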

\begin{proof}
By Proposition \ref{prop: constructibility of strata}, we know that $T^{\leq P}$ is constructible. In order to prove that it is open we are left to show that it is closed under generalization. Let $p, t \in T$ be two topological points with $t \in \overline{\{p\}}$. Assume that $t \in T^{\leq P}$. By \cite[\href{https://stacks.math.columbia.edu/tag/054F}{Tag 054F}]{stacks-project}, there exists a discrete valuation $k$-algebra $R$ and a morphism $f: \text{Spec}(R) \rightarrow T$ such that $f(\eta) = p$ and $f(s) = t$. (Here we are using the same notation as in Proposition \ref{prop: strata closed under generalization}, so $s$ is the special point and $\eta$ is the generic point). By Lemma \ref{lemma: HN filtrations and scalar change}, we know that $HN(\mathcal{V}|_{C_s}) = HN(\mathcal{V}|_{C_t})$. Since $t \in T^{\leq P}$, we have $HN \left(\mathcal{V}|_{C_t}\right) \leq P$. Hence $HN \left(\mathcal{V}|_{C_s}\right) \leq P$. We can apply Proposition \ref{prop: strata closed under generalization} to deduce that $HN (\mathcal{V}|_{C_{\eta}}) \leq P$. By Lemma \ref{lemma: HN filtrations and scalar change} again, we have $HN(\mathcal{V}|_{C_p}) = (\mathcal{V}|_{C_{\eta}}) \leq P$. So $p \in T^{\leq P}$, as desired.
\end{proof}
\begin{remark}
In particular, Proposition \ref{prop: openness of locus of strata in families} implies that parabolic HN data are upper semicontinuous.
\end{remark}

\begin{thm} \label{thm: openess of strata}
Let $\mathcal{V}$ be a parabolic vector bundle over $C$. Let $P$ be a parabolic HN datum. Then $\text{Bun}_{\mathcal{V}}^{\leq P}$ is an open substack of $\text{Bun}_{\mathcal{V}}$.
\end{thm}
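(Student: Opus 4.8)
The substance of the theorem is already contained in Proposition \ref{prop: openness of locus of strata in families}, which shows that for a family over a finite-type base the locus where the parabolic HN datum is bounded by $P$ is Zariski open. The plan is therefore to upgrade this statement from families over schemes to the algebraic stack $\text{Bun}_{\mathcal{V}}$, which by Corollary \ref{coroll: moduli of parabolic vb is algebraic} is smooth and in particular locally of finite type over $k$.

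Recall that a strictly full subcategory closed under isomorphisms $\text{Bun}_{\mathcal{V}}^{\leq P} \subseteq \text{Bun}_{\mathcal{V}}$ is an open substack precisely when, for every $k$-scheme $T$ and every morphism $T \to \text{Bun}_{\mathcal{V}}$, the base change $T \times_{\text{Bun}_{\mathcal{V}}} \text{Bun}_{\mathcal{V}}^{\leq P} \to T$ is an open immersion (this also forces the subcategory to satisfy descent, since open subschemes glue). I would verify this criterion directly. A morphism $T \to \text{Bun}_{\mathcal{V}}$ is the datum of a parabolic vector bundle $\mathcal{W}$ of type $\mathcal{V}$ over $C \times T$, and by Definition \ref{defn: HN strata} the fiber product is the subfunctor whose value on a $T$-scheme $g \colon S \to T$ is nonempty exactly when $HN\big((g^{*}\mathcal{W})|_{C_s}\big) \leq P$ for every $s \in S$. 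Since the fiber of $g^{*}\mathcal{W}$ at $s$ is the fiber of $\mathcal{W}$ at $g(s)$ base changed to $\kappa(s)$, Lemma \ref{lemma: HN filtrations and scalar change} (compatibility of the HN filtration with field extensions) shows that this condition is equivalent to the set-theoretic containment $g(S) \subseteq T^{\leq P}$, where $T^{\leq P} = \{\, t \in T : HN(\mathcal{W}|_{C_t}) \leq P \,\}$.

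Granting that $T^{\leq P}$ is open, the proof concludes formally: a morphism has set-theoretic image inside an open subscheme if and only if it factors (uniquely) through that open subscheme, so the fiber product is represented by the open immersion $T^{\leq P} \hookrightarrow T$. It thus remains to prove that $T^{\leq P} \subseteq T$ is open for an arbitrary base $T$, whereas Proposition \ref{prop: openness of locus of strata in families} only covers $T$ of finite type over $k$. I would remove this restriction by a routine spreading-out argument: openness is local, so we may take $T = \text{Spec}(A)$ affine and write $A = \varinjlim A_{\alpha}$ as the filtered colimit of its finitely generated $k$-subalgebras; since $\mathcal{W}$ is finitely presented over $C \times T$, standard limit arguments show it descends to a parabolic bundle $\mathcal{W}_{\alpha}$ over $C \times \text{Spec}(A_{\alpha})$ for some $\alpha$. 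Using Lemma \ref{lemma: HN filtrations and scalar change} once more, $T^{\leq P}$ is the preimage under $T \to \text{Spec}(A_{\alpha})$ of $(\text{Spec}\, A_{\alpha})^{\leq P}$, which is open by Proposition \ref{prop: openness of locus of strata in families}; preimages of opens are open. Alternatively, one may fix a smooth atlas $U \to \text{Bun}_{\mathcal{V}}$, which is then locally of finite type, cover it by finite-type opens, and apply Proposition \ref{prop: openness of locus of strata in families} on each.

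The genuinely delicate inputs, namely constructibility of the strata via the parabolic Quot schemes of Section \ref{section: parabolic Quot schemes} and stability under generalization via the saturation Lemma \ref{lemma: saturating subsheaves} over a discrete valuation ring, are already packaged into Proposition \ref{prop: openness of locus of strata in families}. Consequently the main obstacle at this stage is merely bookkeeping: translating the pointwise condition defining $\text{Bun}_{\mathcal{V}}^{\leq P}$ into the set-theoretic containment above, where Lemma \ref{lemma: HN filtrations and scalar change} is essential, and handling test schemes that are not of finite type. Neither step requires new ideas beyond the results already established.
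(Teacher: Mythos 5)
Your proposal is correct and follows essentially the same route as the paper's proof: reduce to the representability of the fiber product over a test scheme $T$, use Lemma \ref{lemma: HN filtrations and scalar change} to convert the pointwise HN condition into the set-theoretic containment $f(U) \subseteq T^{\leq P}$, and invoke Proposition \ref{prop: openness of locus of strata in families}. The only difference is cosmetic: the paper dispatches the reduction to finite-type $T$ in one line by citing that $\text{Bun}_{\mathcal{V}}$ is locally of finite type (Corollary \ref{coroll: moduli of parabolic vb is algebraic}), while you spell out the underlying spreading-out argument explicitly.
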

\begin{proof}
The theorem amounts to showing that for any $k$-scheme $T$ and morphism $T \rightarrow \text{Bun}_{\mathcal{V}}$, the base-change $\text{Bun}_{\mathcal{V}}^{\leq P} \times_{\text{Bun}_{\mathcal{V}}} T \; \rightarrow T$ is represented by an open subscheme of $T$. Corollary \ref{coroll: moduli of parabolic vb is algebraic} tells us that $\text{Bun}_{\mathcal{V}}$ is locally of finite type over $k$. So we can assume that $T$ is of finite type over $k$.

A map $T \rightarrow \text{Bun}_{\mathcal{V}}$ is by definition a parabolic vector bundle $\mathcal{W}$ of type $\mathcal{V}$ on $C\times T$. We can describe the base-change $\text{Bun}_{\mathcal{V}}^{\leq P} \times_{\text{Bun}_{\mathcal{V}}} T$ as follows. For any scheme $U$ we have
\[ \text{Bun}_{\mathcal{V}}^{\leq P} \times_{\text{Bun}_{\mathcal{V}}} T \, (U) \; = \; \left\{ \begin{matrix} \text{morphisms} \; f: U \rightarrow T \text{ such that} \\ \text{ for all $u \in U$}, \; HN\left(\mathcal{W} \, |_{C_u}\right)\, \leq P \end{matrix} \right\} \]
For any $u \in U$, Lemma \ref{lemma: HN filtrations and scalar change} implies that $HN\left(\mathcal{W} \, |_{C_u}\right) = HN\left(\mathcal{W} \, |_{C_{f(u)}}\right)$. We conclude that the functor $\text{Bun}_{\mathcal{V}}^{\leq P} \times_{\text{Bun}_{\mathcal{V}}} T$ can be alternatively described by the set theoretic condition
\[ \text{Bun}_{\mathcal{V}}^{\leq P} \times_{\text{Bun}_{\mathcal{V}}} T \, (U) \; = \; \left\{ \begin{matrix} \text{morphisms} \; f: U \rightarrow T \text{ such that} \\ \text{ for all $u \in U$, $f(u) \in T^{\leq P}$} \end{matrix} \right\}  \]
By Proposition \ref{prop: openness of locus of strata in families}, the set $T^{\leq P}$ is open. Hence the functor is represented by $T^{\leq P}$ viewed as an open subscheme of $T$.
\end{proof}

The analogue of Theorem \ref{thm: openess of strata} in the context of vector bundles without parabolic structure when $P=0$ is classical. This amounts to proving that the semistable locus of a family of vector bundles is open. See e.g. \cite{huybrechts.lehn}[Prop. 2.3.1].
\end{subsection}
\end{section}
\begin{section}{Quasicompactness of strata} \label{section: quasicompactness of strata}
\begin{subsection}{Harder-Narasimhan stratification for the moduli of vector bundles} \label{subsection: classical hn stratication}
We recall the Harder-Narasimhan stratification for the classical moduli stack of vector bundles on a curve.
\begin{defn}
The moduli stack of vector bundles of rank $n$ over $C$ is the pseudofunctor $\text{Bun}_{\text{GL}_n}(C)$ from $k$-schemes into groupoids given as follows. For each $k$-scheme $T$, we define $ \text{Bun}_{\text{GL}_n}(C)\, (T) \vcentcolon = \; \left\{ \begin{matrix} \text{groupoid of  rank $n$ vector bundles over $C\times T$} \end{matrix} \right\}$.
\end{defn}
It is well known that $\text{Bun}_{\text{GL}_n}(C)$ is a smooth algebraic stack over $k$. This is just a special case of Proposition \ref{prop: algebraicity torsors over smooth group schemes}.

Recall that the slope of a rank $n$ vector bundle $\mathcal{E}$ over $C$ is defined to be $\mu(\mathcal{E}) = \frac{1}{n} \, \text{deg}\, \mathcal{E}$. A vector bundle is called semistable if all nontrivial subbundles $\mathcal{F} \subset \mathcal{E}$ satisfy $\mu(\mathcal{F}) \leq \mu(\mathcal{E})$. For any given vector bundle $\mathcal{E}$ over $C$, there exists a filtration $0 = \mathcal{E}_0 \,\subset \, \mathcal{E}_1 \, \subset \,\cdots\, \subset \, \mathcal{E}_l = \mathcal{E}$ by subbundles such that $\mathcal{E}_j\, / \,\mathcal{E}_{j-1}$ is semistable for all $j$, and such that for all $1 \leq j \leq l-1$ we have $\mu\left(\mathcal{E}_j\, / \,\mathcal{E}_{j-1} \right) > \mu\left(\mathcal{E}_{j+1}\, / \,\mathcal{E}_{j} \right)$. There is a unique filtration satisfying these conditions; it is called the Harder-Narasimhan filtration.

By a classical HN datum of length $n$ we mean a tuple of $n$ rational numbers $(\mu_1, \mu_2, \cdots, \mu_n) \in \mathbb{Q}^n$ satisfying $\mu_1 \geq \mu_2 \geq \cdots \geq \mu_n$. To any vector bundle $\mathcal{E}$ on $C$ we can associate a HN datum $HN(\mathcal{E})$ given as follows. Let $0 = \mathcal{E}_0 \,\subset \, \mathcal{E}_1 \, \subset \,\cdots\, \subset \, \mathcal{E}_l = \mathcal{E}$
be the Harder-Narasimhan filtration of $\mathcal{E}$. Then, we set
\begin{gather*} HN(\mathcal{E}) = ( \; \underbrace{\mu(\mathcal{E}_1) , \,\cdots, \,\mu(\mathcal{E}_1)}_{ \text{rank} \; \mathcal{E}_1 / 0 \, \text{times}}, \; \underbrace{\mu(\mathcal{E}_2/ \, \mathcal{E}_1) , \,\cdots, \,\mu(\mathcal{E}_2 / \, \mathcal{E}_1)}_{ \text{rank} \; \mathcal{E}_2 / \, \mathcal{E}_1 \, \text{times}}, \cdots, \, \underbrace{\mu(\mathcal{E}_l / \, \mathcal{E}_{l-1}) , \;\cdots, \;\mu(\mathcal{E}_l  / \, \mathcal{E}_{l-1})}_{ \text{rank} \; \mathcal{E}_l / \mathcal{E}_{l-1} \, \text{times}} \; )
\end{gather*}
If $\mathcal{E}$ has rank $n$, then $HN(\mathcal{E}) \in \left(\frac{1}{n!} \mathbb{Z} \right)^n$.

Let $P_1 = (\mu_l)$ and $P_2 = (\nu_l)$ be two classical HN-data of rank $n$. We say that $P_1 \leq P_2$ if the following two conditions are satisfied
\begin{enumerate}[(a)]
    \item $\sum_{l=1}^{n} \mu_l = \sum_{l=1}^{n} \nu_l$.
    \item For all $1 \leq m < n$, we have $\sum_{l=1}^{m} \mu_l \leq \sum_{l=1}^{m} \nu_l$.
\end{enumerate}

\begin{defn}
Let $Q$ be a classical HN datum of rank $n$. We define $\text{Bun}_{\text{GL}_n}^{\leq Q}(C)$ to be the pseudofunctor from $k$-schemes into groupoids given as follows. For any $k$-scheme $T$,
\[  \text{Bun}_{\text{GL}_n}^{\leq Q}(C)\, (T) \vcentcolon = \; \left\{ \begin{matrix} \text{groupoid of  rank $n$ vector bundles $\mathcal{E}$ over $C\times T$ such that}\\
\text{for all $t \in T$, we have $HN(\mathcal{E}|_{C_t}) \leq Q$}\end{matrix} \right\}  \]
\end{defn}

The following is a well known result. See e.g. \cite{behrend-thesis} or \cite{schieder-compactifications} for proofs in the generality of reductive algebraic groups.
\begin{thm} \label{thm: quasicompactness of classical HN strata}
Let $Q$ be a classical HN datum. The subfunctor $\text{Bun}_{\text{GL}_n}^{\leq Q}(C)$ is a quasicompact open substack of $\text{Bun}_{\text{GL}_n}(C)$. \qed
\end{thm}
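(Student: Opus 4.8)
The plan is to prove openness and quasicompactness by separate arguments. Openness is, at bottom, the special case of the parabolic theory already developed in which the set of degeneration points is empty (equivalently, the parabolic structure is trivial). Under this identification the arguments of Proposition \ref{prop: constructibility of strata} and Proposition \ref{prop: strata closed under generalization} apply directly: the locus $\{t : HN(\mathcal{E}|_{C_t}) \le Q\}$ is constructible, by spreading out destabilizing subbundles over a family of Quot schemes and applying Chevalley's theorem, and it is closed under generalization, by the discrete valuation ring argument. Combined with the compatibility of the Harder-Narasimhan filtration with field extensions (the analogue of Lemma \ref{lemma: HN filtrations and scalar change}), this shows that $\text{Bun}_{\text{GL}_n}^{\le Q}(C)$ is an open substack.

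The substance lies in quasicompactness, which I would deduce from a boundedness statement. First I would extract numerical bounds: writing $Q = (\mu_1 \ge \cdots \ge \mu_n)$ and $HN(\mathcal{E}) = (\nu_1 \ge \cdots \ge \nu_n)$, the inequalities defining $HN(\mathcal{E}) \le Q$ give $\nu_1 \le \mu_1$ together with $\sum_l \nu_l = \sum_l \mu_l =: d$. Hence every bundle in the stratum has fixed rank $n$, fixed degree $d$, and maximal slope $\mu_{\max}(\mathcal{E}) = \nu_1 \le \mu_1$; since the $\nu_l$ sum to $d$ and are each at most $\mu_1$, the minimal slope satisfies $\mu_{\min}(\mathcal{E}) = \nu_n \ge d - (n-1)\mu_1$ as well. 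Thus the stratum parametrizes bundles of fixed rank and degree whose slopes are pinned between two constants depending only on $Q$.

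The heart of the proof is to show that the family of all such $\mathcal{E}$ is bounded. The key estimate is the elementary bound $h^0(\mathcal{F}) \le \text{rank}(\mathcal{F}) \cdot \max(0, \mu(\mathcal{F}) + 1)$ for a semistable bundle $\mathcal{F}$, which reduces via a filtration by subbundles of rank one (as in Lemma \ref{lemma: filtration by parabolic line bundles}) to the inequality $h^0(L) \le \text{deg}(L) + 1$ for line bundles. Filtering $\mathcal{E}$ by its Harder-Narasimhan filtration and summing over the semistable graded pieces then yields, for each fixed twist $m$, a bound on $h^0(\mathcal{E}(m))$ depending only on $n$, $d$, $\mu_1$ and the genus of $C$. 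Applying the same estimate to $\mathcal{E}^{\vee} \otimes \omega_C$, whose maximal slope is controlled by $-\mu_{\min}(\mathcal{E}) \le (n-1)\mu_1 - d$, and invoking Serre duality, I would produce a single twist $m_0$, depending only on $Q$, such that for all $m \ge m_0$ and all $\mathcal{E}$ in the stratum one has $H^1(\mathcal{E}(m)) = 0$ and $\mathcal{E}(m)$ globally generated. Consequently every such bundle fits into a surjection $\mathcal{O}_C^{N} \twoheadrightarrow \mathcal{E}(m_0)$ with $N = \chi(\mathcal{E}(m_0))$ a constant determined by Riemann-Roch.

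This realizes the stratum as the image of a locally closed subscheme $\mathcal{R}$ of the projective Quot scheme $\text{Quot}_{\mathcal{O}_C^N / C / k}^{P}$ of Theorem \ref{thm: repr of classical quot scheme}, namely the locus on which the universal quotient is a vector bundle of the prescribed rank and degree inducing an isomorphism on spaces of global sections. Since $\mathcal{R}$ is of finite type over $k$ and the morphism $\mathcal{R} \to \text{Bun}_{\text{GL}_n}^{\le Q}(C)$ forgetting the trivialization is smooth and surjective, it exhibits the stratum as a quasicompact stack. The main obstacle is precisely this boundedness step: the difficulty is not bounding the cohomology of a single bundle but obtaining estimates that are uniform over the entire family, which is exactly what the two-sided control on slopes — an upper bound on $\mu_{\max}$ and the resulting lower bound on $\mu_{\min}$ — is designed to supply, since without it the dual bundles could acquire arbitrarily positive subsheaves and the uniform vanishing of $H^1$ would break down.
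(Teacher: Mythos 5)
The paper itself gives no proof of Theorem \ref{thm: quasicompactness of classical HN strata}: it is quoted as a well-known result, with references to Behrend's thesis and to Schieder, where it is proved for arbitrary reductive groups. So your proposal is necessarily a different route, and its architecture is the standard one and is sound. For openness you specialize the parabolic machinery of Section \ref{big section: openness of strata} (Propositions \ref{prop: constructibility of strata} and \ref{prop: strata closed under generalization}, together with Lemma \ref{lemma: HN filtrations and scalar change}) to the case where the set $I$ of degeneration points is empty; this is legitimate and, importantly, non-circular, since Section \ref{big section: openness of strata} nowhere uses Theorem \ref{thm: quasicompactness of classical HN strata}, whereas the paper's parabolic quasicompactness result (Theorem \ref{thm: quasicompactness of strata}) does use it and so could not be invoked here. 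For quasicompactness you give the classical boundedness argument: two-sided slope bounds extracted from $Q$, uniform vanishing of $H^1$ and global generation after a single twist $m_0(Q)$ via Serre duality, and a presentation of the stratum by a quasicompact locus $\mathcal{R}$ inside the Quot scheme of Theorem \ref{thm: repr of classical quot scheme}. What this buys is a self-contained treatment where the paper defers to the literature; what the paper's citations buy is the statement for general reductive groups, which your $\text{GL}_n$-specific argument does not address. (Two small points of bookkeeping: surjectivity of $\mathcal{R} \to \text{Bun}_{\text{GL}_n}^{\leq Q}(C)$ from a quasicompact source already gives quasicompactness, so smoothness is not needed; and the image of $\mathcal{R}$ is in general strictly larger than the stratum, so one should replace $\mathcal{R}$ by the preimage of the open stratum, which is still quasicompact.)

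One step is incorrect as written, though easily repaired. You justify the key estimate $h^0(\mathcal{F}) \leq \text{rank}(\mathcal{F}) \cdot \max(0, \mu(\mathcal{F})+1)$ for semistable $\mathcal{F}$ by filtering by rank-one subbundles, as in Lemma \ref{lemma: filtration by parabolic line bundles}, and applying $h^0(L) \leq \deg(L)+1$. An arbitrary such filtration does not give this: semistability bounds the degrees of the subbundles in the flag, hence the \emph{partial sums} of the quotient degrees, but not the individual quotient degrees, which can be arbitrarily positive when compensated by very negative ones earlier in the flag. Concretely, a semistable rank-two bundle of slope $0$ admits saturated line subbundles $M$ of arbitrarily negative degree $-m$ (take a generic nowhere-vanishing map $M \to \mathcal{F}$, which exists once $\mathcal{F}\otimes M^{-1}$ is globally generated); the flag $0 \subset M \subset \mathcal{F}$ then yields only $h^0(\mathcal{F}) \leq 0 + (m+1)$, which is useless. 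The estimate itself is standard and has a short correct proof: if $\mu(\mathcal{F}) < 0$ then $h^0(\mathcal{F})=0$, since a nonzero section saturates to a line subbundle of nonnegative degree, contradicting semistability; if $\mu(\mathcal{F}) \geq 0$, choose an effective divisor $D$ of degree $k = \lfloor \mu(\mathcal{F}) \rfloor + 1$ (harmless after an extension of scalars, using Lemma \ref{lemma: HN filtrations and scalar change} with $I = \emptyset$), note that $\mathcal{F}(-D)$ is semistable of negative slope, and conclude from $0 \to \mathcal{F}(-D) \to \mathcal{F} \to \mathcal{F}|_D \to 0$ that $h^0(\mathcal{F}) \leq \text{rank}(\mathcal{F}) \cdot k \leq \text{rank}(\mathcal{F})(\mu(\mathcal{F})+1)$. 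With this substitution the summation over Harder-Narasimhan factors, the dual estimate for $\mathcal{E}^{\vee}\otimes\omega_C$, and the rest of your argument go through, and the proof is complete.
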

\end{subsection}

\begin{subsection}{Quasicompactness of parabolic Harder-Narasimhan strata}
Let us now return to our parabolic setting. We have the following natural map of stacks.
\begin{defn}
We define $Forget : \text{Bun}_{\mathcal{V}} \longrightarrow \text{Bun}_{\text{GL}_n} (C)$ to be the map of stacks given as follows. Suppose $T$ is a $k$-scheme and $\mathcal{W} = \left[ \; \mathcal{F}^{(0)} \,\overset{a_i^{(1)}}{\subset} \, \mathcal{F}_i^{(1)} \overset{a_i^{(2)}}{\subset} \cdots\; \overset{a_{i}^{(N_i)}}{\subset} \, \mathcal{F}_i^{(N_i)}= \mathcal{F}^{(0)}(x_i) \;\right]_{i \in I}$ is a parabolic vector bundle in $\text{Bun}_{\mathcal{V}}(T)$. We set $\phi(T) \,(\mathcal{W}) = \mathcal{F}^{(0)}$.
\end{defn}
\begin{prop} \label{prop: properness of forgetful map}
$Forget: \text{Bun}_{\mathcal{V}} \longrightarrow \text{Bun}_{\text{GL}_{n}}(C)$ is schematic and proper.
\end{prop}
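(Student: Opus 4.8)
The plan is to verify the statement by a relative, fibrewise analysis: for an arbitrary $k$-scheme $T$ and a morphism $T \to \text{Bun}_{\text{GL}_n}(C)$, corresponding to a rank $n$ vector bundle $\mathcal{E}$ over $C \times T$, I would show that the fibre product $\text{Bun}_{\mathcal{V}} \times_{\text{Bun}_{\text{GL}_n}(C)} T$ is represented by a projective $T$-scheme. Since projective morphisms are schematic and proper, and since $\text{Bun}_{\mathcal{V}}$ is locally of finite type so that we may assume $T$ is of finite type, this will immediately give the proposition.

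First I would reinterpret the fibre of $Forget$ as a space of flags. Giving a parabolic structure of type $\mathcal{V}$ on $\mathcal{E}$ amounts to providing, for each $i \in I$, a chain $\mathcal{E} \subset \mathcal{E}_i^{(1)} \subset \cdots \subset \mathcal{E}_i^{(N_i)} = \mathcal{E}(x_i)$. Quotienting the entire chain by $\mathcal{E}$ and using that $\mathcal{E}(x_i) \, / \, \mathcal{E}$ is the pushforward to $C \times T$ of a rank $n$ vector bundle on $x_i \times T \cong T$, such a chain is the same datum as a flag of vector subbundles $0 \subset \mathcal{E}_i^{(1)} / \mathcal{E} \subset \cdots \subset \mathcal{E}_i^{(N_i)} / \mathcal{E} = \mathcal{E}(x_i) / \mathcal{E}$ of type $(a_i^{(m)})_m$. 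The flatness and constant-rank hypotheses in Definition \ref{defn: parabolic vb in family} are exactly what guarantee that each $\mathcal{E}_i^{(m)} / \mathcal{E}$ is a subbundle of the prescribed rank; conversely, each such flag recovers $\mathcal{E}_i^{(m)}$ as the preimage of the corresponding subbundle under the surjection $\mathcal{E}(x_i) \twoheadrightarrow \mathcal{E}(x_i) / \mathcal{E}$.

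Next I would package this as a relative flag bundle. For each $i \in I$, the functor of flags of type $(a_i^{(m)})_m$ in the rank $n$ vector bundle $\mathcal{E}(x_i) / \mathcal{E}$ on $T$ is represented by a partial flag bundle, realized via Grothendieck's construction as a closed subscheme of a product of Grassmannian bundles over $T$, and hence projective over $T$. Since the formation of $\mathcal{E}(x_i) / \mathcal{E}$ and of its restriction to $x_i \times T$ commutes with base change on $T$, the dictionary of the previous paragraph is functorial, and I would conclude that $\text{Bun}_{\mathcal{V}} \times_{\text{Bun}_{\text{GL}_n}(C)} T$ is represented by the fibre product over $T$ of these flag bundles as $i$ ranges over $I$. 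Being a finite fibre product of projective $T$-schemes, this is projective over $T$, so $Forget$ is schematic and proper.

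The genuinely substantive step is the functorial dictionary of the second paragraph: one must check that the flatness and constant-rank conditions defining a parabolic vector bundle in families correspond exactly to the conditions defining $T$-points of the flag bundle, and that this correspondence is compatible with arbitrary base change, so that it yields an isomorphism of functors rather than merely a fibrewise bijection. Once this identification is in place, schematic-ness and properness follow formally from the projectivity of relative flag bundles.
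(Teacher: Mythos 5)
Your proposal is correct and follows essentially the same route as the paper: identify the fiber of $Forget$ over a $T$-point $\mathcal{E}$ with the product over $i \in I$ of relative flag varieties of type $(a_i^{(m)})_m$ in $\mathcal{E}(x_i)/\mathcal{E}\,|_{x_i \times T}$, then invoke properness of flag bundles. The substantive step you flagged—that the preimage chains obtained from a flag are genuinely vector bundles on $C\times S$, functorially in $S$—is exactly what the paper verifies, using the fiberwise flatness criterion together with torsion-freeness of the restrictions to the fibers $C_s$.
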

In order to prove this proposition, we first recall the definition and properties of generalized flag varieties.
\begin{defn} \label{generalized flag variety}
Let $S$ be a scheme and let $\mathcal{G}$ be a locally free sheaf of constant rank on $S$. Let $(a^{(l)})_l$ be a tuple of nonnegative integers $a^{(1)}, a^{(2)}, \cdots, a^{(N)}$. The generalized flag variety $Flag^{(a^{(l)})_l}(\mathcal{G})$ is defined to be the functor from $S$-schemes to sets given as follows. Let $X$ be an $S$-scheme. Then,
\begin{gather*} Flag^{(a^{(l)})_l}(\mathcal{G})(X) = \; \left\{  \begin{matrix} \text{filtrations of $\mathcal{G}|_{X}$ by vector subbundles} \; \left[ \; 0 \, = \mathcal{P}^{(0)} \, \subset \, \mathcal{P}^{(1)} \subset \cdots\; \subset \mathcal{P}^{(N)} = \mathcal{G}|_{X} \;  \right] \\ 
\\  
\text{such that} \; \mathcal{P}^{(l)} \, / \, \mathcal{P}^{(l-1)} \; \text{is locally free of rank $a^{(l)}$ for every $1 \leq l \leq N$} \end{matrix} \right\} \end{gather*}
The map on morphisms is given by pulling back the filtrations.
\end{defn}

\begin{prop} \label{prop: representability of flag varieties}
Let $S$ be a $k$-scheme and $\mathcal{G}$ a vector bundle of constant rank $n$ on $S$. Let $(a^{(l)})_l$ be a tuple of nonnegative integers. $Flag^{(a^{(l))})_l}(\mathcal{G})$ is represented by a proper scheme over $S$.
\end{prop}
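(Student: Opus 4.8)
The plan is to exhibit $Flag^{(a^{(l)})_l}(\mathcal{G})$ as an iterated relative Grassmannian over $S$, and to read off representability and properness from the corresponding properties of the Grassmannians at each stage.

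First I would recall the classical fact (see e.g. \cite{nitsure-quot}) that for a locally free sheaf $\mathcal{H}$ of finite constant rank on a scheme $Y$ and an integer $r$, the functor sending a $Y$-scheme $Y'$ to the set of rank-$r$ locally free quotients of $\mathcal{H}|_{Y'}$ (equivalently, corank-$r$ subbundles) is represented by a projective $Y$-scheme $\mathrm{Gr}_r(\mathcal{H})$. It carries a universal short exact sequence $0 \to \mathcal{S}_{univ} \to \mathcal{H}|_{\mathrm{Gr}_r(\mathcal{H})} \to \mathcal{Q}_{univ} \to 0$ of vector bundles whose formation commutes with arbitrary base change. Then I would build the flag scheme by induction on the number of steps. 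Set $Y_0 = S$, $\mathcal{H}_0 = \mathcal{G}$ and $\mathcal{P}^{(0)} = 0$. Given a projective $S$-scheme $Y_{l-1}$ equipped with a universal partial flag $0 = \mathcal{P}^{(0)} \subset \cdots \subset \mathcal{P}^{(l-1)} \subset \mathcal{G}|_{Y_{l-1}}$ with the prescribed successive quotient ranks, and with $\mathcal{H}_{l-1} \vcentcolon= \mathcal{G}|_{Y_{l-1}} / \mathcal{P}^{(l-1)}$ locally free, I would let $Y_l \to Y_{l-1}$ be the Grassmannian of rank-$a^{(l)}$ subbundles of $\mathcal{H}_{l-1}$. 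Pulling back to $Y_l$ and letting $\mathcal{P}^{(l)} \subset \mathcal{G}|_{Y_l}$ be the preimage of the universal subbundle $\mathcal{S}_{univ} \subset \mathcal{H}_{l-1}|_{Y_l}$, one extends the flag by one step, with $\mathcal{P}^{(l)}/\mathcal{P}^{(l-1)} \cong \mathcal{S}_{univ}$ of rank $a^{(l)}$ and $\mathcal{G}|_{Y_l}/\mathcal{P}^{(l)} \cong \mathcal{H}_{l-1}|_{Y_l}/\mathcal{S}_{univ} =\vcentcolon \mathcal{H}_l$ locally free.

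After $N-1$ stages (the top step $\mathcal{P}^{(N)} = \mathcal{G}$ being forced) the scheme $Y_{N-1}$ carries a universal flag of the required type, giving a natural transformation $\mathrm{Hom}_S(-, Y_{N-1}) \to Flag^{(a^{(l)})_l}(\mathcal{G})$; the universal properties of the Grassmannians at each stage produce the inverse transformation, so $Y_{N-1}$ represents the functor. Properness then follows because each $Y_l \to Y_{l-1}$ is projective, in particular proper, and a finite composite of proper morphisms is proper.

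The step requiring care — and the one I expect to be the main obstacle — is the correspondence used at each stage: that rank-$a^{(l)}$ subbundles $\mathcal{R}$ of $\mathcal{H}_{l-1} = \mathcal{G}/\mathcal{P}^{(l-1)}$ are in natural bijection with subbundles $\mathcal{P}^{(l)}$ of $\mathcal{G}$ containing $\mathcal{P}^{(l-1)}$ with $\mathcal{P}^{(l)}/\mathcal{P}^{(l-1)} = \mathcal{R}$, and that this bijection is compatible with base change. The point is that the preimage $\mathcal{P}^{(l)}$ of $\mathcal{R}$ is automatically a genuine subbundle of $\mathcal{G}$ — that is, $\mathcal{G}/\mathcal{P}^{(l)} \cong \mathcal{H}_{l-1}/\mathcal{R}$ is locally free precisely because $\mathcal{R}$ is a subbundle — and that, since all the quotients in sight are locally free, the relevant short exact sequences remain exact after pullback, so the construction commutes with base change. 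Once this is verified, the rest is routine bookkeeping with universal properties. Alternatively, one could embed $Flag^{(a^{(l)})_l}(\mathcal{G})$ as the closed incidence locus $\mathcal{P}^{(1)} \subset \cdots \subset \mathcal{P}^{(N-1)}$ inside a product $\prod_{l=1}^{N-1} \mathrm{Gr}(\mathcal{G})$ of Grassmannians and check that the incidence conditions are closed, recovering projectivity directly.
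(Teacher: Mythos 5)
Your proof is correct, but it takes a genuinely different route from the paper's. The paper argues locally: it observes that $Flag^{(a^{(l)})_l}(\mathcal{G})$ is a Zariski sheaf, so representability by a proper scheme can be checked after passing to a Zariski cover of $S$; on an open cover trivializing $\mathcal{G}$ the functor becomes the base change of the classical partial flag variety of type $(a^{(l)})_l$ in $k^n$, which is known to be projective over $k$. Your argument is instead global: you construct the representing scheme once and for all as an iterated relative Grassmannian, the key lemma being the correspondence --- compatible with base change because all quotients in sight are locally free, hence flat --- between subbundles of $\mathcal{G}/\mathcal{P}^{(l-1)}$ and intermediate subbundles $\mathcal{P}^{(l-1)} \subset \mathcal{P}^{(l)} \subset \mathcal{G}$; that step is exactly right and is the only place where care is needed. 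What the paper's route buys is brevity, at the cost of invoking descent to glue local representing objects and citing the classical flag variety as a black box. What your route buys is that it avoids gluing altogether, produces the universal flag explicitly, and exhibits the flag scheme as a tower of projective morphisms; since the proposition only claims properness, the fact that a composite of proper morphisms is proper finishes the job cleanly (relative projectivity over a completely arbitrary base would need either a hypothesis on $S$ or your alternative construction as a closed incidence subscheme of a product of Grassmannians). One sentence worth adding in a final write-up: if $\sum_l a^{(l)} \neq n$ the functor is empty and is represented by the empty scheme, which is proper; your induction implicitly assumes $\sum_l a^{(l)} = n$ when you say the top step $\mathcal{P}^{(N)} = \mathcal{G}$ is forced.
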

\begin{proof}
Zariski descent for quasicoherent sheaves implies that $Flag^{(a^{(l))})_l}(\mathcal{G})$ is a sheaf in the Zariski topology. In particular, it suffices to check representability by a proper scheme after passing to a Zariski cover of $S$. We can assume that $\mathcal{G}$ is trivial by passing to an open cover. Then $Flag^{(a^{(l))})_l}(\mathcal{G})$ is the base-change of the classical partial flag variety of flags of type $(a^{(l))})_l$ in the vector space $k^n$. This is known to be projective over $k$.
\end{proof}

\begin{proof}[Proof of Proposition \ref{prop: properness of forgetful map}]
Suppose that $\mathcal{V} = \left[ \; \mathcal{E}^{(0)} \,\overset{a_i^{(1)}}{\subset} \, \mathcal{E}_i^{(1)} \overset{a_i^{(2)}}{\subset} \cdots\; \overset{a_{i}^{(N_i)}}{\subset} \, \mathcal{E}_i^{(N_i)}= \mathcal{E}^{(0)}(x_i) \;\right]_{i \in I}$. Let $T$ be a $k$-scheme and $\mathcal{E}$ a vector bundle of rank $n$ on $C \times T$. In order to ease notation, let us write $L \vcentcolon = T \times_{\text{Bun}_{\text{GL}_n}(C)} \text{Bun}_{\mathcal{V}}$. We want to show that $L$ is represented by a proper $T$-scheme. By definition, $L$ is the functor from $T$-schemes into groupoids given as follows. If $S$ is a $T$-scheme, then
\begin{gather*} L(S) = \; \left\{  \begin{matrix} \text{parabolic vector bundles} \; \mathcal{W} = \left[ \; \mathcal{F}^{(0)} \,\overset{a_i^{(1)}}{\subset} \, \mathcal{F}_i^{(1)} \overset{a_i^{(2)}}{\subset} \cdots\; \overset{a_{i}^{(N_i)}}{\subset} \, \mathcal{F}_i^{(N_i)}= \mathcal{F}^{(0)}(x_i) \;\right]_{i \in I} \\  \text{ over $C \times S$ of type $\mathcal{V}$}, \text{ with an isomorphism} \; \phi: \mathcal{E}|_{C \times S} \xrightarrow{\sim} \mathcal{F}^{(0)} \end{matrix} \right\}   \end{gather*}
The isomorphism $\phi$ is part of the data. Automorphisms in the groupoid are required to be compatible with the isomorphism $\phi$. So they must be the identity on $\mathcal{F}^{(0)}$. This in turn implies that they must be the identity as a morphism of parabolic vector bundles. Therefore, $L$ is naturally equivalent to the sheaf of sets:
\begin{gather*} L(S) = \; \left\{  \begin{matrix} \text{parabolic vector bundles} \; \mathcal{W} = \left[ \; \mathcal{E}|_{C\times S} \,\overset{a_i^{(1)}}{\subset} \, \mathcal{F}_i^{(1)} \overset{a_i^{(2)}}{\subset} \cdots\; \overset{a_{i}^{(N_i)}}{\subset} \, \mathcal{F}_i^{(N_i)}= \mathcal{E}(x_i)|_{C\times S} \;\right]_{i \in I} \\  \text{ over $C \times S$ of type $\mathcal{V}$} \end{matrix} \right\} 
\end{gather*}
We claim that this sheaf is represented by the $T$-scheme $\prod_{i \in I} \text{Flag}^{(a_i^{(l)})_l}\left(\mathcal{E}(x_i) \, / \, \mathcal{E}  \,|_{x_i \times T}\right)$. By Proposition \ref{prop: representability of flag varieties}, this claim concludes the proof.

In order to prove the claim, we present two natural transformations that are inverse to each other. We start with $f: L \rightarrow \prod_{i \in I} \text{Flag}^{(a_i^{(l)})_l}\left(\mathcal{E}(x_i) \, / \, \mathcal{E} \,|_{x_i \times T} \right)$ given as follows. For a $T$-scheme $S$, and a parabolic bundle $\mathcal{W} = \left[ \; \mathcal{E}|_{C\times S} \,\overset{a_i^{(1)}}{\subset} \, \mathcal{F}_i^{(1)} \overset{a_i^{(2)}}{\subset} \cdots\; \overset{a_{i}^{(N_i)}}{\subset} \, \mathcal{F}_i^{(N_i)}= \mathcal{E}(x_i)|_{C\times S} \;\right]_{i \in I}$ in $L(S)$, we define
\begin{gather*}f\left(\mathcal{W} \right) \; \vcentcolon= \; \left( \; 0 \,\overset{a_i^{(1)}}{\subset} \, \left(\mathcal{F}_i^{(1)} \, / \, \mathcal{E}|_{C\times S}\right)|_{x_i \times S} \overset{a_i^{(2)}}{\subset} \cdots\; \overset{a_{i}^{(N_i)}}{\subset} \, \left(\mathcal{F}_i^{(N_i)} \, / \, \mathcal{E}|_{C\times S}\right)|_{x_i \times S}= \left(\mathcal{E}(x_i) \, / \, \mathcal{E} \right)|_{x_i \times S} \; \right)_{i \in I}
\end{gather*}
Let $q_i: x_i \rightarrow C$ denote the inclusion of the closed point $x_i$ into $C$. The inverse $g: \prod_{i \in I} \text{Flag}^{(a_i^{(l)})_l}\left(\mathcal{E}(x_i) \, / \, \mathcal{E} \,|_{x_i \times T} \right) \rightarrow L$ is defined as follows. Let $S$ be a $T$-scheme. Let $(fl_i)_{i \in I}$ be at tuple of flags in $\prod_{i \in I} \text{Flag}^{(a_i^{(l)})_l}\left(\mathcal{E}(x_i) \, / \, \mathcal{E}  \,|_{x_i \times T}\right)(S)$. Suppose that $fl_i$ is given by a chain of vector bundles over $S$
\[ fl_i = \; \left[ \; 0 \,\overset{a_i^{(1)}}{\subset} \, \mathcal{P}_i^{(1)} \overset{a_i^{(2)}}{\subset} \cdots\; \overset{a_{i}^{(N_i)}}{\subset} \mathcal{P}_i^{(N_i)} = \left(\mathcal{E}(x_i) \, / \, \mathcal{E} \right)|_{x_i \times S} \;  \right] \]
We define $g( \,(fl_i)_{i \in I} \, ) = \left[ \; \mathcal{E}|_{C\times S} \,\overset{a_i^{(1)}}{\subset} \, \mathcal{F}_i^{(1)} \overset{a_i^{(2)}}{\subset} \cdots\; \overset{a_{i}^{(N_i)}}{\subset} \, \mathcal{F}_i^{(N_i)}= \mathcal{E}(x_i)|_{C\times S} \;\right]_{i \in I}$, where 
\begin{gather*}\mathcal{F}^{(l)}_i \vcentcolon = \text{Ker} \left( \; \mathcal{E}(x_i)|_{C \times S} \xrightarrow{unit} \, (q_{i}\times id_S)_* (q_i\times id_S)^{*} \mathcal{E}(x_i)|_{C\times S} \, \twoheadrightarrow \, (q_i \times id_S)_* \left(\mathcal{E}(x_i)|_{x_i \times S} \, / \, \mathcal{P}^{(l)}_i\right) \; \right)
\end{gather*}
These functors are inverses of each other by construction. The only nontrivial thing to check is that $g$ well defined (i.e. $g(\,(fil_i)_{i \in I}\,)$ is a parabolic bundle). In order to show this, we need to check that the $\mathcal{F}_i^{(l)}$s are vector bundles on $C \times S$. Each sheaf $\mathcal{F}^{(l)}_i$ is locally finitely presented over $C \times S$ and flat over $S$, because it fits into the short exact sequence $0 \rightarrow \mathcal{E}|_{C \times S} \rightarrow  \mathcal{F}^{(l)}_i \rightarrow (q_i \times id_S)_*\left( \mathcal{P}^{(l)}_i \right)\,   \rightarrow 0$ and the other two sheaves in the sequence are finitely presented over $C \times S$ and flat over $S$. By Lemma \ref{lemma: critere de platitude}, it suffices to show that for every point $s \in S$ we have that $\mathcal{F}_i^{(l)}|_{C_s}$ is flat over $C_s$. Since $C_s$ is a smooth curve, this is equivalent to proving that $\mathcal{F}_i^{(l)}|_{C_s}$ is torsion free. Since $\mathcal{E}(x_i)|_{C \times S} \, / \, \mathcal{F}^{(l)}_i \, \cong \, (q_i \times id_S)_* \left( \mathcal{E}|_{x_i \times S} \, / \, \mathcal{P}_i^{(l)} \right)$ is $S$-flat, the inclusion $\mathcal{F}_i^{(l)} \subset \mathcal{E}(x_i)|_{C \times S}$ remains a monomorphism after passing to the fiber $C_s$. Hence $\mathcal{F}_i^{(l)}|_{C_s}$ is a subsheaf of the locally free sheaf $\mathcal{E}(x_i)|_{C_s}$. We conclude that $\mathcal{F}_i^{(l)}|_{C_s}$ is torsion free, as desired.
\end{proof}

We want to leverage the quasicompactness of the classical Harder-Narasimhan strata for the moduli of vector bundles in order to prove quasicompactness of HN strata for parabolic vector bundles. As a first step, we need to understand how the map $Forget$ interacts with the strata in the source and target. This is achieved by the following lemma.
\begin{lemma} \label{lemma: finiteness of HN under forgetful map}
Let $P$ be a parabolic HN datum. There exists a finite set $F(P) \subset \frac{1}{n!} \mathbb{Z}$ of classical HN data of rank $n$ such that for all fields $K \supset k$ and all parabolic vector bundles $\mathcal{W} \in \text{Bun}_{\mathcal{V}}^{ \leq P} (K)$, the classical HN datum of the vector bundle $Forget(\mathcal{W})$ belongs to $F(P)$.
\end{lemma}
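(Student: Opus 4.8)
The plan is to bound the classical Harder--Narasimhan polygon of $\mathcal{E}^{(0)} := Forget(\mathcal{W})$ purely in terms of $P$ and the type $\mathcal{V}$, so that it can take only finitely many values as $K$ and $\mathcal{W}$ vary. First I would fix the total degree. Writing $\mathcal{W} = \left[ \mathcal{F}^{(0)} \subset \mathcal{F}_i^{(1)} \subset \cdots \right]_{i \in I}$ of type $\mathcal{V}$, the degree formula in Definition \ref{defn: deg of vector bundle} gives $\deg \mathcal{W} = \deg \mathcal{F}^{(0)} + \sum_{i \in I}\left(n - \sum_{m} \lambda_i^{(m)} a_i^{(m)}\right)$, where the $a_i^{(m)}$ and the weights depend only on the type and on $\overline{\lambda}$, not on $K$. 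On the other hand $HN(\mathcal{W}) \leq P$ forces $\deg \mathcal{W} = \sum_{l=1}^n P_l$. Hence $d_0 := \deg \mathcal{F}^{(0)}$ is a single number determined by $P$ and $\mathcal{V}$; if it fails to be an integer the stratum is empty and we take $F(P) = \emptyset$.

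Next I would bound the partial sums of the classical HN datum of $\mathcal{E}^{(0)} = \mathcal{F}^{(0)}$. For a vector bundle on a curve over a field, $\sum_{l=1}^k HN(\mathcal{E}^{(0)})_l$ equals the maximal degree $S_k$ of a rank-$k$ subbundle. Given a vector subbundle $\mathcal{G}^{(0)} \subset \mathcal{E}^{(0)}$ of rank $k$, I would saturate it inside the parabolic structure by setting $\mathcal{G}_i^{(m)} := \mathcal{E}_i^{(m)} \cap \mathcal{G}^{(0)}(x_i)$; as in Lemma \ref{lemma: saturating subsheaves} this produces a genuine parabolic subbundle $\mathcal{W}' \subset \mathcal{W}$ of rank $k$. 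Lemma \ref{lemma: deg of parabolic vs regular vector bundles} gives $\deg \mathcal{G}^{(0)} \leq \deg \mathcal{W}'$, so it suffices to bound the parabolic degree of $\mathcal{W}'$.

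The key input is the parabolic analogue of HN polygon domination: every rank-$k$ parabolic subbundle $\mathcal{W}'$ of $\mathcal{W}$ satisfies $\deg \mathcal{W}' \leq \sum_{l=1}^k HN(\mathcal{W})_l$. I would prove this by intersecting $\mathcal{W}'$ with the Harder--Narasimhan filtration $0 = \mathcal{W}_0 \subset \cdots \subset \mathcal{W}_s = \mathcal{W}$: each graded piece $(\mathcal{W}' \cap \mathcal{W}_j)/(\mathcal{W}' \cap \mathcal{W}_{j-1})$ is a subobject of the semistable $\mathcal{W}_j/\mathcal{W}_{j-1}$, so by Lemma \ref{lemma: semistable bundles and ses}(a) its slope is at most $\mu(\mathcal{W}_j/\mathcal{W}_{j-1})$. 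Summing degrees and using that the slopes of the filtration are strictly decreasing, the maximum of $\sum_j \mu(\mathcal{W}_j/\mathcal{W}_{j-1}) \, r_j$ over rank distributions $r_j$ with $0 \le r_j \le \operatorname{rank}(\mathcal{W}_j/\mathcal{W}_{j-1})$ and $\sum_j r_j = k$ is attained by filling the largest slopes first, which is exactly $\sum_{l=1}^k HN(\mathcal{W})_l$, the value at $k$ of the HN polygon. Since $HN(\mathcal{W}) \leq P$, this is $\leq B_k := \sum_{l=1}^k P_l$. Combining the three inequalities yields $S_k \leq B_k$ for all $k$, with $B_k$ depending only on $P$.

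Finally I would deduce finiteness. The sequence $S_k$ is concave with $S_0 = 0$ and $S_n = d_0$, hence lies above the chord joining its endpoints, giving $S_k \geq \frac{k}{n} d_0$. Thus each $S_k$ lies in the bounded interval $\left[\frac{k}{n} d_0, \, B_k\right]$ and, since $HN(\mathcal{E}^{(0)}) \in \left(\frac{1}{n!}\mathbb{Z}\right)^n$, in the discrete set $\frac{1}{n!}\mathbb{Z}$; so $S_k$ takes only finitely many values. As $HN(\mathcal{E}^{(0)})$ is recovered from $(S_1, \dots, S_{n-1})$ together with the fixed value $S_n = d_0$, only finitely many classical HN data occur, and I would take $F(P)$ to be this finite set. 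Every bound ($d_0$ and the $B_k$) was extracted from $P$, $\overline{\lambda}$ and the type alone, so $F(P)$ is independent of $K$, as required. I expect the main obstacle to be the polygon-domination step, i.e. verifying that the parabolic HN filtration genuinely bounds the degrees of all parabolic subbundles in the exact-category setting; the remaining steps are bookkeeping with the degree formula and a discreteness argument.
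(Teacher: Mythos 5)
Your upper-bound mechanism is sound, and it is essentially the same one the paper uses: saturating a vector subbundle $\mathcal{G}^{(0)} \subset \mathcal{F}^{(0)}$ inside the parabolic structure, invoking Lemma \ref{lemma: deg of parabolic vs regular vector bundles}, and dominating parabolic subobject degrees by the parabolic Harder--Narasimhan polygon (your polygon-domination step, which does follow from Lemma \ref{lemma: semistable bundles and ses}(a) together with degree additivity, exactly as in the proof of Corollary \ref{coroll: boundedness degrees subbundles}). This correctly gives $S_k \leq B_k$, where $S_k$ is the maximal degree of a rank-$k$ subbundle of $\mathcal{F}^{(0)}$. The paper applies this only to one subbundle, the maximal destabilizing subbundle of $\mathcal{F}^{(0)}$, obtaining $\xi_1 \leq \mu_1$, and then bounds $\xi_n$ from below using the fixed total degree.

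However, two of your assertions about $S_k$ are false, and they break the end of your argument. First, $S_k$ does \emph{not} equal the partial sum $\sum_{l=1}^{k}\xi_l$ of the classical HN datum: that partial sum (the HN polygon) is only an upper bound for $S_k$, with equality exactly at the breakpoints of the HN filtration. For a stable bundle of rank $2$ and odd degree $d$ the polygon value at $k=1$ is $d/2 \notin \mathbb{Z}$, while $S_1$ is an integer strictly less than $d/2$. Second, $S_k$ is not concave: concavity together with $S_0 = 0$ and $S_n = d_0$ would force $S_k \geq \frac{k}{n} d_0$, i.e.\ every bundle would admit a proper subbundle of slope at least its own slope, which fails for every stable bundle. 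Consequently both your lower bound on $S_k$ and your final recovery step (reading off $HN(\mathcal{F}^{(0)})$ from the $S_k$) collapse as written; the HN datum is recovered from the polygon, not from the $S_k$. The argument can be repaired by working at breakpoints only: the polygon value $p(k_j)$ at a breakpoint is the degree of an honest subbundle (the $j$-th HN piece), so $p(k_j) \leq B_{k_j}$ by your bound, and $p(k_j) \geq \frac{k_j}{n} d_0$ since the \emph{polygon} is genuinely concave; as $p(k_j) \in \mathbb{Z}$ and there are finitely many possible breakpoint sets, and the polygon (hence the HN datum) is determined by its vertices, finiteness follows. Alternatively, follow the paper: bound $\xi_1 \leq \mu_1$ via the maximal destabilizing subbundle and then $\xi_n \geq \sum_{j}\mu_j - n|I| - (n-1)\mu_1$ via the total degree, which already confines all the $\xi_l$ to a bounded subset of $\frac{1}{n!}\mathbb{Z}$.
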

\begin{proof}
Suppose that $P = (\mu_1, \mu_2, \cdots, \mu_n)$. Let $K \supset k$ be a field extension. Let $\mathcal{W} = \left[ \; \mathcal{F}^{(0)} \,\overset{a_i^{(1)}}{\subset} \, \mathcal{F}_i^{(1)} \overset{a_i^{(2)}}{\subset} \cdots\; \overset{a_{i}^{(N_i)}}{\subset} \, \mathcal{F}_i^{(N_i)}= \mathcal{F}^{(0)}(x_i) \;\right]_{i \in I}$ be a parabolic vector bundle of type $\mathcal{V}$ over $C_K$. Let $HN(\mathcal{W}) = (\nu_1, \nu_2, \cdots , \nu_n)$. Suppose that $HN(\mathcal{W}) \leq P$. This means in particular that $\mu_1 \geq \nu_1$. Let $0 = \, _0\mathcal{F}^{(0)} \subset \, _1\mathcal{F}^{(0)} \, \subset \, _2\mathcal{F}^{(0)} \, \subset \cdots \, \subset \, _l\mathcal{F}^{(0)} = \mathcal{F}^{(0)}$ be the (classical) Harder-Narasimhan filtration for $\mathcal{F}^{(0)}$. Suppose that the classical Harder-Narasimhan datum for $\mathcal{F}^{(0)}$ is given by $HN(\mathcal{F}^{(0)}) = (\xi_1, \xi_2, \cdots, \xi_n)$.

Define $_1\mathcal{F}_i^{(m)} \vcentcolon = \, _1\mathcal{F}^{(0)}(x_i) \, \cap \, \mathcal{F}_i^{(m)}$. The diagram of sheaves $_1\mathcal{W}$ given by
\[_1\mathcal{W} = \left[ \; _1\mathcal{F}^{(0)} \,\overset{a_i^{(1)}}{\subset} \, _1\mathcal{F}_i^{(1)} \overset{a_i^{(2)}}{\subset} \cdots\; _1\overset{a_{i}^{(N_i)}}{\subset} \, _1\mathcal{F}_i^{(N_i)}= _1\mathcal{F}^{(0)}(x_i) \;\right]_{i \in I}\]
is a parabolic subbundle of $\mathcal{W}$, because any subsheaf of a locally free sheaf on $C_K$ is locally free. By the proof of Proposition \ref{prop: existence and uniqueness of HN filtrations}, we know that $\nu_1$ is the maximal slope among subbundles of $\mathcal{W}$. In particular, $\mu(_1\mathcal{W}) \leq \nu_1 \leq \mu_1$. By Lemma \ref{lemma: deg of parabolic vs regular vector bundles}, we know that $\mu(_1\mathcal{F}^{(0)}) \leq \mu(_1\mathcal{W})$. So we have $\mu_1 \geq \xi_1$.

Next we will bound $\xi_n$. By Lemma \ref{lemma: deg of parabolic vs regular vector bundles}, we have $\text{deg} \, \mathcal{F}^{(0)} \geq \text{deg} \, \mathcal{W} - n|I|$. By additivity of degree for vector bundles in short exact sequences, we have $\text{deg}\, \mathcal{F}^{(0)} = \sum_{j=1}^{n} \xi_j$. Similarly, additivity of degree for parabolic bundles gives $\text{deg} \, \mathcal{W} = \sum_{j=1}^n \nu_j$. Replacing these in the inequality above yields $\sum_{j=1}^{n} \xi_j \geq \sum_{j=1}^n \nu_j  -n|I|$. But we know that $\xi_1 \geq \xi_2 \geq \cdots \geq \xi_n$. Therefore, $\sum_{j=1}^{n} \xi_j \leq (n-1) \xi_1 + \xi_n$. Also, we know that $HN(\mathcal{W}) \leq P$. This means in particular that $\sum_{j=1}^n \nu_j = \sum_{j=1}^n \mu_j $. Hence the inequality above becomes $(n-1) \xi_1 + \xi_n \geq \sum_{j=1}^n \mu_j -n|I|$. 

We have seen that $\xi_1 \leq \mu_1$. So we can rearrange the last inequality to obtain $\xi_n \geq \sum_{j=1}^n \mu_j -n|I|-(n-1)\mu_1$. Hence, we get the uniform bounds
\[ \mu_1 \geq \xi_1 \geq \xi_2 \geq \cdots \geq \xi_n \geq \sum_{j=1}^n \mu_j -n|I|-(n-1)\mu_1.\]
Since the $\xi_j$s must lie in the lattice $\frac{1}{n!} \mathbb{Z}$, there are finitely many possibilities for $HN(\mathcal{F}^{(0)}) = (\xi_1, \xi_2, \cdots , \xi_n)$.
\end{proof}

\begin{coroll} \label{coroll: factoring of forgetful map}
Let $P$ a parabolic HN datum. There exists finitely many classical HN data ${Q_j}$ such that the restriction $Forget: \text{Bun}_{\mathcal{V}}^{\leq P} \longrightarrow \text{Bun}_{\txt{GL}_n}(C)$ factors through the open inclusion $\cup_j \text{Bun}_{\text{GL}_n}^{\leq Q_j} (C) \hookrightarrow \text{Bun}_{\text{GL}_n}(C)$.
\end{coroll}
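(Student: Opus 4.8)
The plan is to take the collection $\{Q_j\}$ to be exactly the finite set $F(P)$ of classical HN data produced by Lemma \ref{lemma: finiteness of HN under forgetful map}. Each $Q_j \in F(P)$ is a classical HN datum of rank $n$, so by Theorem \ref{thm: quasicompactness of classical HN strata} the substack $\text{Bun}_{\text{GL}_n}^{\leq Q_j}(C)$ is open in $\text{Bun}_{\text{GL}_n}(C)$. Hence the union $U \vcentcolon= \bigcup_j \text{Bun}_{\text{GL}_n}^{\leq Q_j}(C)$ is an open substack, and what must be shown is that $Forget$ factors through the inclusion $U \hookrightarrow \text{Bun}_{\text{GL}_n}(C)$.

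Since $U$ is open, the fibre product $Forget^{-1}(U) = \text{Bun}_{\mathcal{V}}^{\leq P} \times_{\text{Bun}_{\text{GL}_n}(C)} U$ is an open substack of $\text{Bun}_{\mathcal{V}}^{\leq P}$, and $Forget$ factors through $U$ if and only if this open substack equals all of $\text{Bun}_{\mathcal{V}}^{\leq P}$. I would argue this via points: the inclusion $Forget^{-1}(U) \hookrightarrow \text{Bun}_{\mathcal{V}}^{\leq P}$ is an open immersion, so it is an isomorphism as soon as it is surjective, and an open substack that contains every point of the ambient stack is the whole stack. The task therefore reduces to checking that every field-valued point $\mathcal{W} \in \text{Bun}_{\mathcal{V}}^{\leq P}(K)$, for $K \supset k$ a field, is carried by $Forget$ into $U$.

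This last verification is immediate from the lemma. Writing $Forget(\mathcal{W}) = \mathcal{F}^{(0)}$, Lemma \ref{lemma: finiteness of HN under forgetful map} gives $HN(\mathcal{F}^{(0)}) \in F(P)$, so $HN(\mathcal{F}^{(0)}) = Q_j$ for some index $j$. As $Q_j \leq Q_j$ trivially, the vector bundle $\mathcal{F}^{(0)}$ defines a $K$-point of $\text{Bun}_{\text{GL}_n}^{\leq Q_j}(C)$, that is $Forget(\mathcal{W}) \in U(K)$. Thus every point of $\text{Bun}_{\mathcal{V}}^{\leq P}$ maps into $U$, giving the required surjectivity and hence the factorization.

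The only nonroutine ingredient is the reduction of ``factorization through an open substack'' to a statement about field-valued points, using openness of the classical strata from Theorem \ref{thm: quasicompactness of classical HN strata}; once Lemma \ref{lemma: finiteness of HN under forgetful map} is in hand the corollary is a formal consequence. I expect no genuine obstacle here, since the substantive work—the uniform bounding of the classical HN datum of $\mathcal{F}^{(0)}$—has already been carried out in the proof of that lemma.
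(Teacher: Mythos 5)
Your proposal is correct and follows essentially the same route as the paper: take $\{Q_j\}$ to be the finite set $F(P)$ from Lemma \ref{lemma: finiteness of HN under forgetful map}, observe that factorization through the open substack $\bigcup_j \text{Bun}_{\text{GL}_n}^{\leq Q_j}(C)$ can be checked set-theoretically on points, and conclude by the lemma. The only cosmetic difference is that the paper runs the pointwise check after pulling back along a test scheme $T \to \text{Bun}_{\text{GL}_n}(C)$, and therefore must additionally invoke the invariance of the classical HN datum under field extension (citing Langton) to descend from an extension $K \supset \kappa(t)$ to the residue field, whereas in your formulation this compatibility is already absorbed into the assertion of Theorem \ref{thm: quasicompactness of classical HN strata} that the fiberwise-defined subfunctor $\text{Bun}_{\text{GL}_n}^{\leq Q_j}(C)$ is an open substack.
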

\begin{proof}
We can take the finite set $\{Q_j\}$ to be $F(P)$ as in Lemma \ref{lemma: finiteness of HN under forgetful map} above. It suffices to show that for any $k$-scheme $T$ and a vector bundle $\mathcal{E}$ of rank $n$ on $C \times T$, the fiber product $Forget: \text{Bun}_{\mathcal{V}}^{\leq P} \times_{\text{Bun}_{\text{GL}_n}(C)} \, T \, \rightarrow T$ factors through the open subset $\cup_j \text{Bun}_{\text{GL}_n}^{\leq Q_j} (C) \times_{\text{Bun}_{\text{GL}_n}(C)} \, T$ of $T$.

In order to show that it factors through an open subset, it suffices to show that it does so set theoretically. So we can check it at the level of points. We have to show that if $t \in T$ is such that $\mathcal{E}|_{C_t}$ does not satisfy $HN(\mathcal{E}|_{C_t}) \leq Q_j$ for some $Q_j \in F(P)$, then there is no field extesion $K \supset \kappa(t)$ and parabolic vector bundle $\mathcal{W} \in \text{Bun}_{\mathcal{V}}^{\leq P}(K)$ satisfying $Forget(\mathcal{W}) \cong \mathcal{E}|_{C_K}$. This is just a restatement of Lemma \ref{lemma: finiteness of HN under forgetful map}, because the classical HN type of a vector bundle is preserved under scalar extension (see \cite{langton} for a proof of this last fact).
 \end{proof}
\begin{thm} \label{thm: quasicompactness of strata}
For any parabolic HN datum $P$, the open substack $\text{Bun}_{\mathcal{V}}^{\leq P}$ is quasicompact.
\end{thm}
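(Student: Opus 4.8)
The strategy is to transport the quasicompactness of the classical Harder--Narasimhan strata across the forgetful morphism, using the properness established in Proposition \ref{prop: properness of forgetful map}. The starting point is Corollary \ref{coroll: factoring of forgetful map}: it supplies finitely many classical HN data $Q_1, \dots, Q_r$ so that the restriction of $Forget$ to $\text{Bun}_{\mathcal{V}}^{\leq P}$ lands inside the open substack $U \vcentcolon= \bigcup_{j=1}^{r} \text{Bun}_{\text{GL}_n}^{\leq Q_j}(C)$. By Theorem \ref{thm: quasicompactness of classical HN strata} each $\text{Bun}_{\text{GL}_n}^{\leq Q_j}(C)$ is quasicompact and open, so $U$ is a quasicompact open substack of $\text{Bun}_{\text{GL}_n}(C)$; since the latter is locally of finite type over $k$ (Proposition \ref{prop: algebraicity torsors over smooth group schemes}) and hence locally Noetherian, $U$ is in fact Noetherian.

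The next step is to pull back along the forgetful map. Proposition \ref{prop: properness of forgetful map} tells us that $Forget$ is schematic and proper, so its base change $Forget^{-1}(U) \to U$ is proper, and in particular of finite type. Being of finite type over the Noetherian stack $U$, the stack $Forget^{-1}(U)$ is itself Noetherian; its underlying topological space is therefore a Noetherian space.

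The final step is the one requiring a little care. By Theorem \ref{thm: openess of strata}, $\text{Bun}_{\mathcal{V}}^{\leq P}$ is an open substack of $\text{Bun}_{\mathcal{V}}$, and by the factorization above it is contained in $Forget^{-1}(U)$; hence it is an open substack of the Noetherian stack $Forget^{-1}(U)$. Every open subspace of a Noetherian topological space is quasicompact, so $\text{Bun}_{\mathcal{V}}^{\leq P}$ is quasicompact, which is exactly the claim.

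The genuinely substantive inputs --- finiteness of the classical HN types that can appear under $Forget$ (Lemma \ref{lemma: finiteness of HN under forgetful map} and Corollary \ref{coroll: factoring of forgetful map}) and the properness of $Forget$ --- have already been carried out, so the remaining obstacle is purely formal. The point to watch is that one \emph{cannot} conclude quasicompactness of $\text{Bun}_{\mathcal{V}}^{\leq P}$ merely from quasicompactness of an ambient open substack, since an open subspace of a quasicompact space need not be quasicompact. The role of properness is precisely to upgrade the statement ``$Forget^{-1}(U)$ lies over the quasicompact $U$'' to ``$Forget^{-1}(U)$ is Noetherian,'' after which the open substack $\text{Bun}_{\mathcal{V}}^{\leq P}$ is automatically quasicompact. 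I expect this Noetherianness bookkeeping to be the only delicate point; everything else is a direct assembly of the cited results.
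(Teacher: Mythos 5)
Your proposal is correct and follows essentially the same route as the paper: both rest on Corollary \ref{coroll: factoring of forgetful map}, Proposition \ref{prop: properness of forgetful map}, and Theorem \ref{thm: quasicompactness of classical HN strata}. The only difference is in the formal bookkeeping at the end --- the paper observes that $\text{Bun}_{\mathcal{V}}^{\leq P} \to \text{Bun}_{\text{GL}_n}(C)$ is quasicompact (being an open immersion into a locally Noetherian stack followed by a proper schematic map) and pulls back the quasicompactness of $\cup_j \text{Bun}_{\text{GL}_n}^{\leq Q_j}(C)$, while you make the same Noetherianness explicit on the preimage side before invoking that open substacks of Noetherian stacks are quasicompact.
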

\begin{proof}
By Corollary \ref{coroll: factoring of forgetful map}, there are finitely many classical HN data $Q_j$ such that the map $Forget$ factors as $\text{Bun}_{\mathcal{V}}^{\leq P} \longrightarrow \cup_j \text{Bun}_{\text{GL}_n}^{\leq Q_j} (C)$. By Proposition \ref{prop: properness of forgetful map}, this map is the composition of an open immersion and a proper schematic map. Since everything is locally of finite type over $k$, this map is quasicompact. Now Theorem \ref{thm: quasicompactness of classical HN strata} tells us that $\cup_j \text{Bun}_{\text{GL}_n}^{\leq Q_j} (C)$ is a finite union of quasicompact open substacks of $\text{Bun}_{\text{GL}_n} (C)$. Therefore $\cup_j \text{Bun}_{\text{GL}_n}^{\leq Q_j} (C)$ is itself a quasicompact algebraic stack. We conclude that $\text{Bun}_{\mathcal{V}}^{\leq P}$ is quasicompact.
\end{proof}
\end{subsection}
\end{section}

\begin{section}{Completeness of strata} \label{section: completeness of strata}
In this section we deal with discrete valuation rings $R$ over $k$. We always write $\eta$ (resp. $s$) for the generic (resp. special) point of $\text{Spec}(R)$. We will also work with the base-change $C_R$. We denote by $j: C_{\eta} \hookrightarrow C_R$ the open immersion of the generic fiber, and $\iota: C_s \hookrightarrow C_R$ the closed immersion of the special fiber.
\begin{defn} \label{defn: completeness of stacks}
Let $\mathfrak{X}$ be a stack of finite type over $k$. We say that $\mathcal{X}$ is complete over $k$ if for all complete discrete valuation $k$-algebras $R$, it satisfies the following lifting criterion. For every map $f: \eta \rightarrow \mathfrak{X}$ there exists a morphism $g: \text{Spec} \, R \rightarrow \mathfrak{X}$ making the following diagram commute
\[\xymatrix{
 \text{Spec} \, R \ar[dr]^{g} \\
\eta \ar[u] \ar[r]_{f} & \mathfrak{X} } \] 
\end{defn}
 The goal of this section is to show Theorem \ref{thm: completeness of strata}, which states that all the HN-strata $\text{Bun}_{\mathcal{V}}^{\leq P}$ are complete over $k$. Let us briefly describe some of the work done in this direction. Mehta and Seshadri \cite{mehta-seshadri} prove completeness of the moduli of parabolic vector bundles that are semistable of parabolic degree $0$. On the other hand, Heinloth \cite[Remark 3.20]{heinloth-hilbertmumford} notes that the valuative criterion for universal closedness \cite[\href{https://stacks.math.columbia.edu/tag/0CLK}{Tag 0CLK}]{stacks-project} can be proven for the semistable locus of the moduli stack of torsors for a parahoric Bruhat-Tits group scheme (under some mild tameness conditions on the generic fiber). Recall that the semistable locus consits of the union of the minimal open strata. In our case these are given by $\text{Bun}_{\mathcal{V}}^{\leq P}$, where $P=(r, r, r, \cdots , r)$ for some real number $r$. We note that this definition of completeness is slightly stronger than the valuative criterion for universal closedness, because we do not require passing to an extension of the fraction field. 

 We extend the proof in \cite{mehta-seshadri} so that it applies to unstable strata of arbitrary degree. First we need a series of lemmas.
\begin{lemma} \label{lemma: finiteness of HN datum less than}
Let $Q$ be a parabolic HN datum of rank $n$. Consider the set $B_Q$ of parabolic HN data $P$ satisfying
\begin{enumerate}[(a)]
    \item $P \leq Q$
    \item $P = HN( \mathcal{W})$ for some $\mathcal{W}$ of rank $n$ in $\text{Vect}_{\overline{\lambda}}$
\end{enumerate}
The set $B_Q$ is finite.
\end{lemma}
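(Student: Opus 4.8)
The plan is to reduce the finiteness of $B_Q$ to two independent finiteness statements: finitely many possible rank patterns for the Harder-Narasimhan filtration, and finitely many possible slopes of its quotients. Write $Q = (\nu_1, \nu_2, \cdots, \nu_n)$. Any $P = HN(\mathcal{W}) \in B_Q$ is completely determined by the ranks $r_1, \ldots, r_t$ (with $\sum_j r_j = n$) of the successive quotients $\mathcal{W}_j / \mathcal{W}_{j-1}$ of the Harder-Narasimhan filtration of $\mathcal{W}$, together with their slopes $\mu^{(j)} = \mu(\mathcal{W}_j / \mathcal{W}_{j-1})$, since by Definition \ref{defn: hn datum} the tuple $P$ is obtained by listing each $\mu^{(j)}$ with multiplicity $r_j$. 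There are only finitely many possible rank patterns $(r_1, \ldots, r_t)$, as these are among the compositions of $n$, so it remains to bound the slopes.

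First I would confine all the slopes $\mu^{(j)}$ to a fixed interval using only the relation $P \leq Q$. Because $P$ and $Q$ have the same total (condition (1) of the partial order) and the entries of each are arranged monotonically, the dominance inequalities of condition (2) force the maximal entry of $P$ to be at most the maximal entry of $Q$ and the minimal entry of $P$ to be at least the minimal entry of $Q$ (take the extreme partial sums $k=1$ and $k=n-1$). Writing $\nu_{\min} = \min_l \nu_l$ and $\nu_{\max} = \max_l \nu_l$, every slope $\mu^{(j)}$ therefore lies in the fixed bounded interval $[\nu_{\min}, \nu_{\max}]$, and consequently each parabolic degree $r_j\, \mu^{(j)}$ lies in $[\,r_j \nu_{\min},\, r_j \nu_{\max}\,]$.

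The key remaining point, and the one requiring the particular structure of parabolic degree, is that the set of degrees (hence slopes) of parabolic bundles of a fixed rank $r$ meeting a bounded interval is finite. Here I would use the degree formula of Definition \ref{defn: deg of vector bundle}: for a rank-$r$ parabolic bundle $\mathcal{U}$ with underlying bundle $\mathcal{G}^{(0)}$ one has $\deg \mathcal{U} = \deg \mathcal{G}^{(0)} + \sum_{i \in I}(r - \sum_{m} \lambda_i^{(m)} d_i^{(m)})$, where $\deg \mathcal{G}^{(0)} \in \mathbb{Z}$ and the jump multiplicities $d_i^{(m)}$ are non-negative integers summing to $r$ at each $i$. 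The correction term thus takes only finitely many real values as $\mathcal{U}$ varies, while Lemma \ref{lemma: deg of parabolic vs regular vector bundles} bounds $\deg \mathcal{U} - \deg \mathcal{G}^{(0)}$ between $0$ and $r|I|$. Hence the possible degrees of rank-$r$ parabolic bundles form a set of the shape $\mathbb{Z} + S$ with $S$ finite, which is discrete and meets any bounded interval in finitely many points. Applying this with $r = r_j$ to the interval above shows that each $\mu^{(j)}$ ranges over a finite set.

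Putting these together, for each of the finitely many rank patterns there are finitely many admissible slope tuples $(\mu^{(j)})$, and each pair (rank pattern, slope tuple) determines at most one tuple $P$; therefore $B_Q$ is finite. I expect the only genuinely delicate step to be the discreteness of parabolic slopes: unlike in the classical case the degree is a priori a real number, so one must use that the real contributions come solely from the fixed finite data of weights and bounded jump multiplicities in order to rule out an accumulation of slopes inside $[\nu_{\min}, \nu_{\max}]$.
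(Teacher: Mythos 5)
Your proof is correct and follows essentially the same route as the paper's: both arguments combine a uniform bound on the entries of $P$ extracted from the partial order $P \leq Q$ (the paper uses $\sum_j \xi_j - (n-1)\xi_1 \leq \mu_k \leq \xi_1$, you use the slightly tighter interval $[\nu_{\min}, \nu_{\max}]$ obtained from the extreme partial sums) with the discreteness of parabolic slopes, which in both cases comes from the degree formula of Definition \ref{defn: deg of vector bundle} placing degrees in $\mathbb{Z} + S$ for a finite set $S$ of weight corrections. Your repackaging via rank patterns of the Harder-Narasimhan quotients is a cosmetic reorganization rather than a different idea, and it is sound.
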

\begin{proof}
Set $Q = (\xi_1, \xi_2, \cdots, \xi_n)$. Choose $P = ( \mu_1, \mu_2, \cdots , \mu_n) \in B_Q$. Define the finite set of real numbers
\[ X \vcentcolon = \left\{ -\sum_{i \in I} \sum_{m =1}^{N_i} b_i^{(m)} \lambda_i^{(m)} \; \mid \; 0 \leq b_i^{(m)} \leq n \text{\; for $i \in I$ and $1 \leq m \leq N_i$} \right\} \]
Let $\mathcal{W}$ in $\text{Vect}_{\overline{\lambda}}$ such that $HN(\mathcal{W}) = P$. Fix $1 \leq k \leq n$. By definition, we have that $\mu_k = \mu(\mathcal{P})$ for some parabolic subbundle $\mathcal{P} \subset \mathcal{W}$. If we have
\[\mathcal{P} = \left[ \; \mathcal{F}^{(0)} \,\overset{a_i^{(1)}}{\subset} \, \mathcal{F}_i^{(1)} \overset{a_i^{(2)}}{\subset} \cdots\; \overset{a_{i}^{(N_i)}}{\subset} \, \mathcal{F}_i^{(N_i)}= \mathcal{F}^{(0)}( x_i) \;\right]_{i \in I}\]
then by definition $\mu_k = \frac{1}{\text{rank}\, \mathcal{P}} \left(\text{deg} \; \mathcal{F}^{(0)} + \sum_{i \in I} \left(n - \sum_{j=1}^{N_i} \lambda_i^{(j)} \, b_i^{(j)} \right) \, \right)$. So $\mu_k$ belongs to the discrete subset of real numbers $\frac{1}{n!}\left(\mathbb{Z} + X \right)$. Since $P \leq Q$, we have $\mu_1 \leq \xi_1$. By definition, we must also have $\sum_{j =1}^n \xi_j = \sum_{j = 1}^n \mu_j$. Since $\mu_j \leq \mu_1$ for all $j$, we know that $\sum_{j = 1}^{n-1} \mu_j \leq (n-1)\mu_1 \leq (n-1) \xi_1$. So we get the inequality
\[ \sum_{j =1}^n \xi_j = \sum_{j = 1}^n \mu_j \, \leq  \, (n-1) \xi_1 + \mu_n \]
We conclude that we have the uniform bounds $\sum_{j =1}^n \xi_j - (n-1) \xi_1  \, \leq \,  \mu_k \, \leq \, \xi_1$ for all $k$. Since $\mu_k$ belongs to a discrete set, this shows that there are finitely many possible values for each $\mu_k$.
\end{proof}

In order to prove completeness, we need to be able to extend a given parabolic vector bundle on $C_{\eta}$. This is what the next lemma achieves.
\begin{lemma} \label{lemma: extending parabolic discrete valuation}
Let $R$ be a discrete valuation $k$-algebra. Let $\mathcal{P}$ be a parabolic vector bundle over $C_{\eta}$. There exists a parabolic vector bundle $\mathcal{W}$ over $C_R$ such that $\mathcal{W}|_{C_{\eta}} \cong \mathcal{P}$.
\end{lemma}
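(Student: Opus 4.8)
The plan is to extend the two pieces of data defining $\mathcal{P}$ separately: first the underlying vector bundle, then the parabolic flags at the points of degeneration. Recall from the proof of Proposition \ref{prop: properness of forgetful map} that a parabolic vector bundle on $C_\eta$ of type $\mathcal{V}$ is the same datum as a vector bundle $\mathcal{E}^{(0)}$ on $C_\eta$ together with, for each $i \in I$, a flag of the fiber $(\mathcal{E}^{(0)}(x_i)/\mathcal{E}^{(0)})|_{x_i \times \eta}$ with graded ranks $(a_i^{(m)})_m$; the mutually inverse constructions $f$ and $g$ of that proof realize this equivalence. Thus it suffices to extend $\mathcal{E}^{(0)}$ to a vector bundle on $C_R$ and then to extend the associated flags over $\text{Spec}(R)$.

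First I would extend $\mathcal{E}^{(0)}$. The open immersion $j : C_\eta \hookrightarrow C_R$ is affine, since it is the base change of $\eta \hookrightarrow \text{Spec}(R)$ along $C_R \to \text{Spec}(R)$, so $j_* \mathcal{E}^{(0)}$ is quasicoherent and $R$-torsion free. Because $C_R$ is Noetherian, $j_* \mathcal{E}^{(0)}$ is the filtered union of its coherent subsheaves, and as $\mathcal{E}^{(0)}$ is coherent we may choose a coherent subsheaf $\mathcal{F} \subset j_* \mathcal{E}^{(0)}$ with $\mathcal{F}|_{C_\eta} = \mathcal{E}^{(0)}$. The crucial point is that, since $C$ is smooth over $k$ and $R$ is regular, the scheme $C_R$ is regular of dimension two; over a regular local ring of dimension $\leq 2$ every reflexive module is free, so the double dual $\widetilde{\mathcal{E}}^{(0)} \vcentcolon= \mathcal{F}^{\vee\vee}$ is locally free on $C_R$. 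As dualization commutes with restriction to the open $C_\eta$ and $\mathcal{E}^{(0)}$ is already locally free, we get $\widetilde{\mathcal{E}}^{(0)}|_{C_\eta} = (\mathcal{E}^{(0)})^{\vee\vee} = \mathcal{E}^{(0)}$. Hence $\widetilde{\mathcal{E}}^{(0)}$ is a vector bundle on $C_R$ extending $\mathcal{E}^{(0)}$.

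Next I would extend the parabolic structure. For each $i \in I$, the restriction $(\widetilde{\mathcal{E}}^{(0)}(x_i)/\widetilde{\mathcal{E}}^{(0)})|_{x_i \times \text{Spec}(R)}$ is a vector bundle on $x_i \times \text{Spec}(R) \cong \text{Spec}(R)$, and the flag of $\mathcal{P}$ at $x_i$ determines an $\eta$-point of the flag variety $\text{Flag}^{(a_i^{(m)})_m}\big((\widetilde{\mathcal{E}}^{(0)}(x_i)/\widetilde{\mathcal{E}}^{(0)})|_{x_i \times \text{Spec}(R)}\big)$. By Proposition \ref{prop: representability of flag varieties} this flag variety is proper over $\text{Spec}(R)$, so the valuative criterion of properness yields an $R$-point extending it, that is, a flag $\widetilde{fl}_i$ of subbundles over $\text{Spec}(R)$. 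Feeding the tuple $(\widetilde{fl}_i)_{i \in I}$ into the construction $g$ from the proof of Proposition \ref{prop: properness of forgetful map}, with $S = \text{Spec}(R)$ and $\mathcal{E} = \widetilde{\mathcal{E}}^{(0)}$, produces a diagram of sheaves $\mathcal{W}$ on $C_R$; that same proof shows, via Lemma \ref{lemma: critere de platitude}, that each of its terms is a vector bundle, so $\mathcal{W}$ is a genuine parabolic vector bundle of the same type as $\mathcal{P}$. Since $g$ commutes with the base change to $\eta$ and $\widetilde{fl}_i|_\eta$ is the original flag, restricting recovers $\mathcal{W}|_{C_\eta} \cong \mathcal{P}$.

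The main obstacle is the extension of $\mathcal{E}^{(0)}$: on the two-dimensional base $C_R$ a torsion-free coherent extension need not be locally free, so a naive coherent extension does not suffice. The reflexive-hull argument circumvents this using regularity of $C_R$ together with $\dim C_R \leq 2$; alternatively one could reduce the torsion of $\mathcal{F}|_{C_s}$ by a sequence of elementary (Langton-type) modifications, which is the flavor of the algorithm developed for the completeness theorem. Once $\mathcal{E}^{(0)}$ is extended as a bundle, extending the flags is routine, being governed entirely by properness of the flag variety and the reconstruction already established in the proof of Proposition \ref{prop: properness of forgetful map}.
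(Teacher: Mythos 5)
Your proposal is correct, but it implements both steps by different means than the paper, even though the overall decomposition (first extend the underlying bundle, then extend the parabolic data) is the same. For the bundle extension, the paper simply cites Langton (\cite{langton}, Prop.~6), whereas you reprove it via a coherent extension inside $j_*\mathcal{E}^{(0)}$ followed by the reflexive hull, using regularity of the two-dimensional scheme $C_R$; this makes the lemma self-contained. For the parabolic data, the paper works directly with sheaves on $C_R$: it defines $\mathcal{F}_i^{(m)}$ as the kernel of $\mathcal{F}^{(0)}(x_i) \to j_*\left(\mathcal{P}^{(0)}(x_i)\,/\,\mathcal{P}_i^{(m)}\right)$, where $j: C_\eta \hookrightarrow C_R$ is the generic fiber, and then checks $R$-flatness by hand via torsion-freeness over the discrete valuation ring and Lemma \ref{lemma: flatness criterion for subsheaf}. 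You instead transfer the flags to $\eta$-points of the flag scheme $\text{Flag}^{(a_i^{(m)})_m}$ of the rank-$n$ bundle $\bigl(\widetilde{\mathcal{E}}^{(0)}(x_i)/\widetilde{\mathcal{E}}^{(0)}\bigr)|_{x_i \times \text{Spec}(R)}$ over $\text{Spec}(R)$, extend them by the valuative criterion using Proposition \ref{prop: representability of flag varieties}, and then reconstruct the parabolic bundle with the functor $g$ from the proof of Proposition \ref{prop: properness of forgetful map}, whose well-definedness (via Lemma \ref{lemma: critere de platitude}) was already established there; note your kernels are taken along the divisor $x_i \times \text{Spec}(R)$ rather than along the generic fiber. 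Your route buys modularity: it reuses two results the paper has already proved and avoids redoing flatness arguments. The paper's route buys uniformity of technique: the explicit ``saturate against $j_*$'' construction is exactly the mechanism reused later (in Propositions \ref{prop: strata closed under generalization} and \ref{prop: lifting criterion for strata}), so keeping it explicit here makes those later arguments read as repetitions of a known pattern.
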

\begin{proof}
Suppose that $\mathcal{P} = \left[ \; \mathcal{P}^{(0)} \,\overset{a_i^{(1)}}{\subset} \, \mathcal{P}_i^{(1)} \overset{a_i^{(2)}}{\subset} \cdots\; \overset{a_{i}^{(N_i)}}{\subset} \, \mathcal{P}_i^{(N_i)}= \mathcal{P}^{(0)}(x_i) \;\right]_{i \in I}$. It is well known that exists a vector bundle $\mathcal{F}^{(0)}$ over $C_R$ such that $\mathcal{F}^{(0)}|_{C_{\eta}} \cong \mathcal{P}^{(0)}$ (for example see Proposition 6 in \cite{langton}). Define $\mathcal{F}^{(m)}_i$ to be the kernel
\[ \mathcal{F}^{(m)}_i \vcentcolon = \text{Ker} \, \left( \mathcal{F}^{(0)}(x_i) \xrightarrow{unit} j_* \mathcal{P}^{(0)}( x_i) \twoheadrightarrow j_* \left(\mathcal{P}^{(0)}( x_i) \, / \, \mathcal{P}^{(m)}_i \right) \,  \right) \]
We have that $\mathcal{F}^{(0)}( x_i) \, / \, \mathcal{F}_i^{(m)} \, \subset \, j_* \left(\mathcal{P}^{(0)}(x_i) \, / \, \mathcal{P}^{(m)}_i \right)$ is $R$-torsion free, and hence $R$-flat. By Lemma \ref{lemma: flatness criterion for subsheaf}, $\mathcal{F}_i^{(m)}$ is a vector bundle on $C_R$. Also, we have a short exact sequence
\[ 0 \longrightarrow \,  \mathcal{F}^{(m)}_i \, / \,  \mathcal{F}^{(m-1)}_i \, \longrightarrow \,  \mathcal{F}^{(0)}( x_i) \, / \, \mathcal{F}^{(m-1)}_i \, \longrightarrow \,  \mathcal{F}^{(0)}( x_i) \, / \, \mathcal{F}^{(m)}_i  \, \longrightarrow 0 \]
By the $R$-flatness of $\mathcal{F}^{(0)}(x_i) \, / \, \mathcal{F}^{(m-1)}_i$ and $\mathcal{F}^{(0)}( x_i) \, / \, \mathcal{F}^{(m)}_i$ plus the Tor long exact sequence at each stalk we see that $\mathcal{F}^{(m)}_i \, / \, \mathcal{F}^{(m-1)}_i$ is $R$-flat. We conclude that $\mathcal{W} \vcentcolon = \left[ \; \mathcal{F}^{(0)} \,\overset{a_i^{(1)}}{\subset} \, \mathcal{F}_i^{(1)} \overset{a_i^{(2)}}{\subset} \cdots\; \overset{a_{i}^{(N_i)}}{\subset} \, \mathcal{F}_i^{(N_i)}= \mathcal{F}^{(0)}( x_i) \;\right]_{i \in I}$ is a parabolic vector bundle over $C_R$. By construction $\mathcal{W}|_{C_{\eta}} \cong \mathcal{P}$.
\end{proof}

\begin{lemma} \label{lemma: HN datum decreases ses}
Let $\mathcal{W}$ be a parabolic vector bundle in $\text{Vect}_{\overline{\lambda}}$. Let $\mathcal{U}$ be the maximal destabilizing parabolic subbundle of $\mathcal{W}$. Set $\mathcal{Q} \vcentcolon = \mathcal{W} \, / \, \mathcal{U}$. Suppose that there is a parabolic vector bundle $\mathcal{W}_1$ fitting in a short exact sequence $0 \longrightarrow \mathcal{Q} \longrightarrow \mathcal{W}_1 \longrightarrow \mathcal{U} \longrightarrow 0$. Then, $HN(\mathcal{W}_1) \leq HN(\mathcal{W})$.
\end{lemma}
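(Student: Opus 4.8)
The plan is to reduce the statement to an inequality of Harder--Narasimhan polygons and then to a degree estimate for parabolic subbundles of $\mathcal{W}_1$. Throughout, for a parabolic bundle $\mathcal{X}$ of rank $n$ I write $\Phi_{\mathcal{X}}(k) := \sum_{l=1}^{k}\mu_l$, where $(\mu_1,\ldots,\mu_n)=HN(\mathcal{X})$ with the slopes of the Harder--Narasimhan subquotients listed in decreasing order (so the $\Phi_{\mathcal{X}}(k)$ are exactly the partial sums appearing in the definition of the partial order on HN data). First I record that $\deg\mathcal{W}_1=\deg\mathcal{Q}+\deg\mathcal{U}=\deg\mathcal{W}$ by two applications of Lemma \ref{lemma: deg in ses}; in particular $\Phi_{\mathcal{W}_1}(n)=\Phi_{\mathcal{W}}(n)$, so the total-degree condition in the definition of $\leq$ holds automatically, and it remains to compare partial sums. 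For this I argue by contradiction: if $HN(\mathcal{W}_1)\nleq HN(\mathcal{W})$, then Lemma \ref{lemma: existence destabilizing subbundle} (applied to $\mathcal{W}_1$ with the datum $P=HN(\mathcal{W})$, whose total equals $\deg\mathcal{W}_1$) produces a parabolic subbundle $\mathcal{G}\subseteq\mathcal{W}_1$ of some rank $r$ with $\deg\mathcal{G}>\Phi_{\mathcal{W}}(r)$. The proof then comes down to excluding such a $\mathcal{G}$.

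The key auxiliary tool is a polygon bound, which is not stated separately in the text: for any parabolic bundle $\mathcal{X}$ and any parabolic subobject $\mathcal{F}\hookrightarrow\mathcal{X}$ of rank $r$ one has $\deg\mathcal{F}\leq\Phi_{\mathcal{X}}(r)$. I would prove this exactly as in Corollary \ref{coroll: boundedness degrees subbundles}: intersecting $\mathcal{F}$ with the Harder--Narasimhan filtration $0=\mathcal{X}_0\subset\cdots\subset\mathcal{X}_m=\mathcal{X}$ gives an induced filtration of $\mathcal{F}$ whose subquotients $(\mathcal{F}\cap\mathcal{X}_j)/(\mathcal{F}\cap\mathcal{X}_{j-1})$ embed as subobjects of the semistable pieces $\mathcal{X}_j/\mathcal{X}_{j-1}$; part (a) of Lemma \ref{lemma: semistable bundles and ses} then bounds each such subquotient's degree by $r_j\beta_j$, where $\beta_j$ is the slope of the $j$-th piece and $r_j$ is the rank of the subquotient. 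Summing over $j$ and using $\sum_j r_j=r$ with $0\leq r_j\leq\operatorname{rank}(\mathcal{X}_j/\mathcal{X}_{j-1})$, the quantity $\sum_j r_j\beta_j$ is the sum of a selection of $r$ of the decreasingly ordered HN slopes of $\mathcal{X}$, hence is at most $\Phi_{\mathcal{X}}(r)$, the sum of the top $r$ of them.

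Finally I bound $\deg\mathcal{G}$ for the subbundle $\mathcal{G}\subseteq\mathcal{W}_1$ of rank $r$. Using the given sequence $0\to\mathcal{Q}\to\mathcal{W}_1\to\mathcal{U}\to0$, set $\mathcal{G}_Q:=\mathcal{G}\cap\mathcal{Q}$ and let $\mathcal{G}_U$ be the image of $\mathcal{G}$ in $\mathcal{U}$, yielding $0\to\mathcal{G}_Q\to\mathcal{G}\to\mathcal{G}_U\to0$ with $a:=\operatorname{rank}\mathcal{G}_Q\leq\operatorname{rank}\mathcal{Q}$, $b:=\operatorname{rank}\mathcal{G}_U\leq u:=\operatorname{rank}\mathcal{U}$, and $a+b=r$. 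By Lemma \ref{lemma: deg in ses}, $\deg\mathcal{G}=\deg\mathcal{G}_Q+\deg\mathcal{G}_U$. Since $\mathcal{U}$ is semistable of slope $\alpha_1=\mu(\mathcal{U})$ (the maximal slope), part (a) of Lemma \ref{lemma: semistable bundles and ses} gives $\deg\mathcal{G}_U\leq b\,\alpha_1$, and the polygon bound applied to the subobject $\mathcal{G}_Q\hookrightarrow\mathcal{Q}$ gives $\deg\mathcal{G}_Q\leq\Phi_{\mathcal{Q}}(a)$. Because $\mathcal{U}$ is the first step of the Harder--Narasimhan filtration of $\mathcal{W}$, the filtration of $\mathcal{Q}=\mathcal{W}/\mathcal{U}$ is the tail of that of $\mathcal{W}$ (Proposition \ref{prop: existence and uniqueness of HN filtrations}), so the multiset of HN slopes of $\mathcal{W}$ consists of $\alpha_1$ with multiplicity $u$ together with the HN slopes of $\mathcal{Q}$, all of which are $<\alpha_1$. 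Hence $b\,\alpha_1+\Phi_{\mathcal{Q}}(a)$ is the sum of one admissible choice of $a+b=r$ elements of this multiset (namely $b$ copies of $\alpha_1$, available as $b\leq u$, and the top $a$ slopes of $\mathcal{Q}$, available as $a\leq\operatorname{rank}\mathcal{Q}$), and is therefore at most the sum of the top $r$ elements, which is $\Phi_{\mathcal{W}}(r)$. This gives $\deg\mathcal{G}\leq\Phi_{\mathcal{W}}(r)$, contradicting the choice of $\mathcal{G}$.

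The main obstacle is the bookkeeping in this last step rather than any single hard idea: one must ensure that $\mathcal{G}_Q$ and $\mathcal{G}_U$ are genuine parabolic subobjects so that Lemmas \ref{lemma: deg in ses} and \ref{lemma: semistable bundles and ses} apply, and that the polygon bound is phrased for possibly non-saturated subobjects (which is why I state it for subobjects, and why part (a) of Lemma \ref{lemma: semistable bundles and ses}, resting on Lemma \ref{lemma: saturating subsheaves}, is the right input). The one genuinely new ingredient to be proved is the polygon bound $\deg\mathcal{F}\leq\Phi_{\mathcal{X}}(r)$; everything else is assembly from the results already established.
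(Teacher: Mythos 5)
Your proof is correct and follows essentially the same route as the paper: assume $HN(\mathcal{W}_1)\nleq HN(\mathcal{W})$, produce a destabilizing subbundle via Lemma \ref{lemma: existence destabilizing subbundle}, split it along the sequence $0\to\mathcal{Q}\to\mathcal{W}_1\to\mathcal{U}\to 0$, and bound the two pieces by semistability of $\mathcal{U}$ and by intersecting with the Harder--Narasimhan filtration of $\mathcal{Q}$. Your explicitly stated ``polygon bound'' is precisely the estimate the paper invokes with the phrase ``by a similar argument,'' so the only difference is that you isolate and prove it as a separate step.
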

\begin{proof}
Set $P = (\mu_1, \cdots, \mu_n) \vcentcolon = HN(\mathcal{W})$. Suppose for the sake of contradiction that $HN(\mathcal{W}_1) \nleq P$. The short exact sequence above show that $\text{deg}\, \mathcal{W}_1 = \text{deg} \, \mathcal{W} = \sum_{j = 1}^n \mu_j$. So we can use Lemma \ref{lemma: existence destabilizing subbundle} to conclude that there is a $P$-destabilizing parabolic subbundle $\mathcal{M}$ of $\mathcal{W}_1$. Let us denote by $\mathcal{L}$ the image of the composition $\mathcal{M} \rightarrow \mathcal{W}_1 \twoheadrightarrow \mathcal{U}$. We have a short exact sequence  $0 \longrightarrow \mathcal{Q} \, \cap \, \mathcal{M} \longrightarrow \mathcal{M} \longrightarrow \mathcal{L} \longrightarrow 0$ of parabolic vector bundles. Since $\mathcal{U}$ is semistable and $\mathcal{L} \subset \mathcal{U}$ is a parabolic subbundle, we have $\mu(\mathcal{L}) \leq \mu(\mathcal{U})$. We conclude that $\deg(\mathcal{L}) \leq \text{rank} \, \mathcal{L} \cdot \mu_1 = \sum_{j =1}^{\text{rank} \, \mathcal{L}} \mu_j$.

We can also look at the decompostion of $\mathcal{Q} \, \cap \, \mathcal{M}$ obtained by intersecting with the Harder-Narasimahn filtration of $\mathcal{Q}$. By a similar argument we conclude that $\text{deg} \, (\mathcal{Q} \, \cap \, \mathcal{M}) \leq \sum_{j = \text{rank}\, \mathcal{U} +1}^{\text{rank} \, (\mathcal{Q} \, \cap \, \mathcal{M}) + \text{rank}\, \mathcal{U}} \mu_j$. We can put these together to obtain
\[ \text{deg}\, \mathcal{M} \leq  \sum_{j =1}^{\text{rank} \, \mathcal{L}} \mu_j \, + \, \sum_{j = \text{rank}\, \mathcal{U} +1}^{\text{rank} \, (\mathcal{Q} \, \cap \, \mathcal{M}) + \text{rank}\, \mathcal{U}} \mu_j \]
Since the $\mu_j$ are decreasing, the right hand side is less than or equal to $\sum_{j =1}^{\text{rank} \, \mathcal{M}} \mu_j$. By definition, this contradicts the fact that $\mathcal{M}$ is $P$-destabilizing.
\end{proof}





\begin{prop} \label{prop: lifting criterion for strata}
Let $R$ be a complete discrete valuation ring over $k$. Let $P$ be a HN datum. Let $\mathcal{P}$ be a parabolic vector bundle of rank $n$ over $C_{\eta}$. Suppose that $HN (\mathcal{P}) = P$. Then there exists a parabolic bundle $\mathcal{W}$ over $C_R$ such that $\mathcal{W}|_{C_{\eta}} \cong \mathcal{P}$ and $HN \left(\mathcal{W}|_{C_s} \right) = P$.
\end{prop}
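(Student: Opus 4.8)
The plan is to adapt the parabolic version of Langton's algorithm from Mehta--Seshadri, but to keep track of the full HN datum rather than merely of semistability.

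First I would use Lemma~\ref{lemma: extending parabolic discrete valuation} to choose an arbitrary parabolic extension $\mathcal{W}^{(0)}$ of $\mathcal{P}$ over $C_R$. Given $\mathcal{W}^{(i)}$ with $\mathcal{W}^{(i)}|_{C_\eta}\cong\mathcal{P}$, I stop if $HN(\mathcal{W}^{(i)}|_{C_s})=P$; otherwise I let $\mathcal{U}_i\subset\mathcal{W}^{(i)}|_{C_s}$ be the maximal destabilizing parabolic subbundle (Lemma~\ref{lemma: existence maximal destibilizing subbundle}), set $\mathcal{Q}_i\vcentcolon=(\mathcal{W}^{(i)}|_{C_s})/\mathcal{U}_i$, and define the elementary modification $\mathcal{W}^{(i+1)}\vcentcolon=\ker\!\big(\mathcal{W}^{(i)}\twoheadrightarrow \iota_*\mathcal{Q}_i\big)$, performed compatibly with the whole parabolic structure. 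The Tor sequence for the special-fibre quotient $\iota_*\mathcal{Q}_i$, together with Lemma~\ref{lemma: flatness criterion for subsheaf}, shows that $\mathcal{W}^{(i+1)}$ is again a parabolic bundle over $C_R$ with $\mathcal{W}^{(i+1)}|_{C_\eta}\cong\mathcal{P}$, that $t\,\mathcal{W}^{(i)}\subseteq\mathcal{W}^{(i+1)}\subseteq\mathcal{W}^{(i)}$, and that on the special fibre one has a short exact sequence $0\to\mathcal{Q}_i\to\mathcal{W}^{(i+1)}|_{C_s}\to\mathcal{U}_i\to 0$. This is exactly the hypothesis of Lemma~\ref{lemma: HN datum decreases ses}, so $HN(\mathcal{W}^{(i+1)}|_{C_s})\le HN(\mathcal{W}^{(i)}|_{C_s})$.

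Next I would control the bookkeeping. The data $HN(\mathcal{W}^{(i)}|_{C_s})$ form a weakly decreasing sequence in the partial order. On the other hand, each $\mathcal{W}^{(i)}$ restricts to $\mathcal{P}$ over $C_\eta$, and HN data specialize upward: spreading out a rank-$m$ subbundle of maximal degree of $\mathcal{P}$ to $C_R$, restricting to $C_s$, and saturating (the technique of Proposition~\ref{prop: strata closed under generalization} together with Lemma~\ref{lemma: saturating subsheaves}) shows that the maximal degree of a rank-$m$ subbundle of $\mathcal{W}^{(i)}|_{C_s}$ is at least the $m$-th partial sum of $P$; hence $HN(\mathcal{W}^{(i)}|_{C_s})\ge P$ for every $i$. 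By Lemma~\ref{lemma: finiteness of HN datum less than}, applied with $Q=HN(\mathcal{W}^{(0)}|_{C_s})$, only finitely many HN data lie between $P$ and $HN(\mathcal{W}^{(0)}|_{C_s})$, so the sequence stabilizes at some $P'\ge P$. It then remains to prove $P'=P$, which simultaneously forces termination.

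The crux is showing that stabilization forces $P'=P$. Assume the HN datum is constantly $P'$ from some index $i_0$ on, let $\mu'$ be the top slope of $P'$ with multiplicity $r'$, so each $\mathcal{U}_i$ is semistable of slope $\mu'$ and rank $r'$. The composite $\psi_i\colon\mathcal{U}_{i+1}\hookrightarrow\mathcal{W}^{(i+1)}|_{C_s}\twoheadrightarrow\mathcal{U}_i$ has kernel contained in $\mathcal{Q}_i$, whose slopes are all $<\mu'$; since $\mathcal{U}_{i+1}$ is semistable of slope $\mu'$, a slope comparison forces $\psi_i$ injective, and an equality of ranks and degrees then makes it an isomorphism. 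Using completeness of $R$, these isomorphisms let the $\mathcal{U}_i$ assemble into a parabolic subbundle $\mathcal{B}\subset\mathcal{W}^{(i_0)}$ over $C_R$ with $\mathcal{B}|_{C_s}=\mathcal{U}_{i_0}$; by local constancy of degree (Lemma~\ref{lemma: local constancy of deg}), $\mathcal{B}|_{C_\eta}\subset\mathcal{P}$ is a parabolic subbundle of rank $r'$ and slope $\mu'$. Iterating on the quotient $\mathcal{W}^{(i_0)}/\mathcal{B}$ reconstructs the entire destabilizing flag of $P'$ over $C_\eta$, so that $HN(\mathcal{P})\ge P'$; since $HN(\mathcal{P})=P\le P'$, this yields $P'=P$. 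The genuinely hard step is this final one: upgrading the eventually-constant special-fibre data $\mathcal{U}_i$ to an honest parabolic subbundle over all of $C_R$ (where completeness of $R$ is essential), and recovering enough of the destabilizing flag to contradict $HN(\mathcal{P})=P$ rather than merely bounding the top slope. Everything preceding it is formal once the elementary modification is in place.
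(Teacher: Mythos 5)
Your proposal tracks the paper's own proof through most of its length: the initial extension via Lemma~\ref{lemma: extending parabolic discrete valuation}, the elementary modification $\mathcal{W}^{(i+1)}=\text{Ker}(\mathcal{W}^{(i)}\twoheadrightarrow\iota_*\mathcal{Q}_i)$, the monotonicity from Lemma~\ref{lemma: HN datum decreases ses}, stabilization via Lemma~\ref{lemma: finiteness of HN datum less than} together with the lower bound $P\leq HN(\mathcal{W}^{(i)}|_{C_s})$ from Proposition~\ref{prop: strata closed under generalization}, the isomorphisms $\psi_i$, and the assembly of the limit subbundle $\mathcal{B}\subset\mathcal{W}^{(i_0)}$ using completeness of $R$. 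All of that is sound. The genuine gap is your crux: the claim that stabilization forces $P'=P$, supported by the claim $HN(\mathcal{P})\geq P'$, is false. The structural reason: once the datum has stabilized, $\mathcal{W}^{(i+1)}$ is the preimage of $\mathcal{U}_i$, hence contains $\mathcal{B}$, and one computes $\mathcal{W}^{(i+1)}/\mathcal{B}=\text{Ker}\bigl(\mathcal{W}^{(i)}/\mathcal{B}\to\iota_*\bigl((\mathcal{W}^{(i)}/\mathcal{B})|_{C_s}\bigr)\bigr)=t\cdot\bigl(\mathcal{W}^{(i)}/\mathcal{B}\bigr)\cong\mathcal{W}^{(i)}/\mathcal{B}$, where $t$ is a uniformizer of $R$. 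So every later modification leaves the quotient family untouched, and the part of the special-fibre datum below the top block is frozen forever; an algorithm that only ever modifies by the maximal destabilizing quotient can never improve it. Concretely, take $\mathcal{W}^{(0)}=\mathcal{B}_0\oplus\mathcal{M}$, where $\mathcal{B}_0$ extends $\mathcal{P}_1$ with semistable special fibre of slope $\nu_1$ and $\mathcal{M}$ extends $\mathcal{P}/\mathcal{P}_1$ with an unstable special fibre all of whose HN slopes lie below $\nu_1$. Then the maximal destabilizing subbundle of $\mathcal{W}^{(0)}|_{C_s}$ is exactly $\mathcal{B}_0|_{C_s}$, so $\mathcal{Q}_0=\mathcal{M}|_{C_s}$ and $\mathcal{W}^{(1)}=\mathcal{B}_0\oplus t\mathcal{M}\cong\mathcal{W}^{(0)}$: the algorithm loops with $P'\neq P$, and $HN(\mathcal{P})=P$ is certainly not $\geq P'$. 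Your flag-reconstruction argument breaks at the same point: ``iterating on the quotient'' means running Langton afresh on $\mathcal{W}^{(i_0)}/\mathcal{B}$, whose own stabilized datum $P''$ can be strictly smaller than the truncation of $P'$ (in the example it is), so the flag you actually produce inside $\mathcal{P}$ has polygon $P$, not $P'$, and yields no contradiction.

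The repair is precisely what the paper does: do not aim to prove $P'=P$ (it is genuinely false for the stabilized bundle). After stabilization, show instead that the \emph{top block} of $P'$ agrees with that of $P$ and that $\mathcal{B}|_{C_\eta}$ equals the maximal destabilizing subbundle $\mathcal{P}_1$ of $\mathcal{P}$: one has $\mu(\mathcal{B}|_{C_\eta})\leq\nu_1$ by maximality (Lemma~\ref{lemma: existence maximal destibilizing subbundle}) and $\nu_1\leq\mu'$ from $P\leq P'$, hence equality of slopes; then $\text{rank}\,\mathcal{B}|_{C_\eta}\leq\text{rank}\,\mathcal{P}_1$ by maximality, while a partial-sum comparison using $P\leq P'$ forces the reverse inequality, so $\mathcal{B}|_{C_\eta}=\mathcal{P}_1$. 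Now recurse on the quotient family $\mathcal{W}^{(i_0)}/\mathcal{B}$, whose generic fibre is $\mathcal{P}/\mathcal{P}_1$; the modifications performed there correspond to modifications of the total family by quotients by the \emph{second} step of the special-fibre HN filtration, which your algorithm never performs. Lifting the resulting subbundles back produces a deeper modification of $\mathcal{W}^{(i_0)}$ (not $\mathcal{W}^{(i_0)}$ itself) carrying a filtration that restricts to the Harder--Narasimhan filtration at both fibres; its special-fibre graded pieces are semistable with slopes equal, by local constancy of degree (Lemma~\ref{lemma: local constancy of deg}), to the strictly decreasing slopes of $P$, so its special-fibre datum is $P$, as required.
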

\begin{proof}
Suppose $\mathcal{P}$ has Harder-Narasimhan filtration $0 \subset \,  \mathcal{P}_1 \,  \subset \, \cdots \, \subset \, \mathcal{P}_l = \mathcal{P}$. By Lemma \ref{lemma: extending parabolic discrete valuation}, there exists a parabolic vector bundle $\mathcal{W}$ such that $\mathcal{W}|_{C_{\eta}} \cong \mathcal{P}$. Let $\overline{\mathcal{U}}$ be the maximal destabilizing parabolic subbundle of $\mathcal{W}|_{C_s}$. Write $\overline{\mathcal{Q}} \vcentcolon = \mathcal{W}|_{C_s} \, / \, \overline{\mathcal{U}}$. We have a map $g$ of diagrams of sheaves given by the composition $g: \, \mathcal{W} \dhxrightarrow{unit} \iota_* \mathcal{W}|_{C_s} \twoheadrightarrow  \iota_* \overline{\mathcal{Q}}$.

Define $\mathcal{W}_1 \vcentcolon = \text{Ker}(g)$. We have a short exact sequence of diagrams of sheaves $0 \longrightarrow \mathcal{W}_1 \longrightarrow \mathcal{W} \longrightarrow \iota_* \overline{\mathcal{Q}} \longrightarrow 0$.
The pullback $\mathcal{W}_1|_{C_{\eta}} \hookrightarrow \mathcal{W}|_{C_{\eta}}$ becomes an isomorphism. In order to pass to the special fiber we use the Tor long exact sequence for each short exact sequence of sheaves in the diagrams. We get an exact sequence of diagrams of sheaves over the special fiber $0 \longrightarrow \overline{\mathcal{Q}} \longrightarrow \mathcal{W}_1|_{C_s} \longrightarrow \mathcal{W}|_{C_s} \longrightarrow \overline{\mathcal{Q}} \longrightarrow 0$. To see this recall that $\text{Tor}^1_{\mathcal{O}_s} ( \mathcal{O}_s, \, \mathcal{M})$ is given by the $R$-torsion submodule of $\mathcal{M}$ for any $R$-module $M$. This can be applied to the stalks of the short exact sequence of sheaves above.

We therefore get short exact sequences
\[ 0 \longrightarrow \overline{\mathcal{Q}} \longrightarrow \mathcal{W}_1|_{C_s} \longrightarrow \overline{\mathcal{U}} \longrightarrow 0  \]
\[ 0 \longrightarrow \overline{\mathcal{U}} \longrightarrow \mathcal{W}|_{C_s} \longrightarrow \overline{\mathcal{Q}} \longrightarrow 0  \]
Using this we see that $\mathcal{W}_1|_{C_s}$ is a vector bundle over $C_s$. All the sheaves in the diagram $\mathcal{W}_1 \hookrightarrow \mathcal{W}$ are $R$-torsion free. Therefore they are $R$-flat. We can then use Lemma \ref{lemma: critere de platitude} and an argument similar to the one in Lemma \ref{lemma: extending parabolic discrete valuation} to conclude that $\mathcal{W}_1$ is a parabolic vector bundle. Lemma \ref{lemma: HN datum decreases ses} implies that  $HN(\mathcal{W}_1|_{C_s}) \leq HN(\mathcal{W}|_{C_s})$.

Now we can replace $\mathcal{W}$ with $\mathcal{W}_1$ and iterate the same construction. By Lemma \ref{lemma: finiteness of HN datum less than}, the Harder-Narasimhan datum $HN(\mathcal{W}|_{C_S})$ can't decresase indefinitely. So it must eventually stabilize. Therefore, we can assume without loss of generality that $HN(\mathcal{W}_1|_{C_s}) = HN(\mathcal{W}|_{C_s})$ in the construction above. Let $\overline{\mathcal{U}}_1$ be the maximal destabilizing parabolic subbundle of $\mathcal{W}_1|_{C_s}$. Since $HN(\mathcal{W}_1|_{C_s}) = HN(\mathcal{W}|_{C_s})$, we must have that $\overline{\mathcal{U}}$ and $\overline{\mathcal{U}}_1$ have the same rank and slope. By Lemma \ref{lemma: existence maximal destibilizing subbundle}, we see that $\overline{\mathcal{U}}_1$ maps isomorphically to $\overline{\mathcal{U}}$ under the surjection $\mathcal{W}_1 \twoheadrightarrow \overline{\mathcal{U}}$. So the short exact sequence $0 \longrightarrow \overline{\mathcal{Q}} \longrightarrow \mathcal{W}_1|_{C_s} \longrightarrow \overline{\mathcal{U}} \longrightarrow 0$ splits. Hence we see that $\mathcal{W}_1|_{C_s} \cong \overline{\mathcal{U}} \oplus \overline{\mathcal{Q}}$. 

Repeating this construction for $\mathcal{W}_1$, we can obtain a parabolic suboject $\mathcal{W}_2 \hookrightarrow \mathcal{W}_1$ such that $\mathcal{W}_2|_{C_s} \cong \overline{\mathcal{U}} \oplus \overline{\mathcal{Q}}$. We can iterate this construction to get an infinite chain of parabolic subojects
\[ \cdots \, \mathcal{W}_n \, \xhookrightarrow{f_n} \, \mathcal{W}_{n-1} \, \xhookrightarrow{f_{n-1}} \, \cdots \, \mathcal{W}_1 \, \xhookrightarrow{f_1} \mathcal{W}  \]
such that $\mathcal{W}_n|_{C_s} \cong \overline{\mathcal{U}} \oplus \overline{\mathcal{Q}}$. The pullback $\iota^{*} f_n: \mathcal{W}_n|_{C_s} \rightarrow \mathcal{W}_{n-1}|_{C_s}$ induces an isomorphims between the corresponding maximal destabilizing parabolic subbundles $\overline{\mathcal{U}}$. The proof of Lemma 3.5 in \cite{mehta-seshadri} now implies that there exists a parabolic subbundle $\mathcal{U}$ of $\mathcal{W}$ over $C_R$ such that $\mathcal{U}|_{C_s}$ is the maximal destabilizing parabolic subbundle $\overline{\mathcal{U}}$ of $\mathcal{W}|_{C_s}$. By local constancy of slope we know that $\mu(\mathcal{U}|_{C_{\eta}}) = \mu(\overline{\mathcal{U}})$. But $\mathcal{U}|_{C_{\eta}}$ is a parabolic subbundle of $\mathcal{P}$. Hence the definition of the maximal destabilizing parabolic subbundle $\mathcal{P}_1$ of $\mathcal{P}$ implies that $\mu(\mathcal{U}|_{C_{\eta}}) \leq \mu(\mathcal{P}_1)$. On the other hand, we have that $HN(\mathcal{P}) \leq HN(\mathcal{W}|_{C_s})$ by Proposition \ref{prop: strata closed under generalization}. So we must actually have $\mu(\overline{U}) = \mu(\mathcal{P}_1)$. The definition of the maximal destabilizing parabolic subbundle $\mathcal{P}_1$ of $\mathcal{P}$ now implies that $\text{rank} \, \mathcal{U}|_{C_{\eta}} \leq \text{rank} \, \mathcal{P}_1$. We can use again the fact that $HN(\mathcal{P}) \leq HN(\mathcal{W}|_{C_s})$ to conclude that we must actually have $\text{rank} \, \mathcal{U}|_{C_{\eta}} = \text{rank} \, \overline{\mathcal{U}} =  \text{rank} \, \mathcal{P}_1$. Hence $\mathcal{U}|_{C_{\eta}} = \mathcal{P}_1$, and so the subbundle $\mathcal{U}$ restricts to the maximal destabilizing parabolic subbundle at both the special and the generic fiber.

Consider now the parabolic quotient $\mathcal{M} \vcentcolon = \mathcal{W} \, / \, \mathcal{U}$. We can repeat the same construction as above to obtain a parabolic subobject $\mathcal{K} \hookrightarrow \mathcal{M}$ such that
\begin{enumerate}
    \item $\mathcal{K}|_{C_{\eta}} \cong \mathcal{P} \, / \, \mathcal{P}_1$.
    \item There exists a parabolic subbundle $\mathcal{L}$ of $\mathcal{K}$ over $C_R$ that restricts to the maximal destabilizing parabolic subbundle at both the special and the generic fiber.
\end{enumerate}
We can consider the corresponding parabolic suboject $\mathcal{W}^1 \hookrightarrow \mathcal{W}$ containing $\mathcal{U}$ such that $\mathcal{W}^1 \, / \, \mathcal{U} = \mathcal{M}$. Similarly we can lift $\mathcal{L}$ to a subbundle $\mathcal{U}^1$ of $\mathcal{W}^1$ containing $\mathcal{U}$. By construction the two step filtration $\mathcal{U} \subset \mathcal{U}^1 \subset \mathcal{W}^1$ restricts to the first two elements of the Harder-Narasimhan filtration at both the special and the generic fiber. Now we can repeat the construction with $\left(\mathcal{W}^1, \, \mathcal{U}^1\right)$ in place of $\left( \mathcal{W}, \, \mathcal{U} \right)$.

Since the length of the Harder-Narasimhan filtration of $\mathcal{P}$ is finite, this process must eventually stop. We end up with a parabolic vector bundle $\mathcal{W}$ over $C_R$ such that $\mathcal{W}|_{C_{\eta}} \cong \mathcal{P}$, and $\mathcal{W}$ admits a filtration by parabolic subbundles $0 \subset \mathcal{U} \subset \mathcal{U}_1 \subset \cdots \subset \mathcal{W}^l$ that restricts to the Harder-Narasimhan filtration at both the special and the generic fiber. By local constancy of slope, this implies that $HN(\mathcal{W}^l|_{C_s}) = HN(\mathcal{P}) = P$.
\end{proof}

\begin{thm} \label{thm: completeness of strata}
Let $P$ be a parabolic HN datum. The moduli stack $\text{Bun}_{\mathcal{V}}^{\leq P}$ is complete over $k$.
\end{thm}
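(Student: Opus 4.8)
By Definition~\ref{defn: completeness of stacks}, I must verify the lifting criterion: given a complete discrete valuation $k$-algebra $R$ with generic point $\eta$ and a morphism $f \colon \eta \to \text{Bun}_{\mathcal{V}}^{\leq P}$, I must produce a lift $g \colon \text{Spec}\,R \to \text{Bun}_{\mathcal{V}}^{\leq P}$. Let me think about what this amounts to concretely.

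A morphism $f \colon \eta \to \text{Bun}_{\mathcal{V}}^{\leq P}$ is a parabolic vector bundle $\mathcal{P}$ over $C_\eta$ with $HN(\mathcal{P}) \leq P$. A lift $g$ is a parabolic bundle $\mathcal{W}$ over $C_R$ restricting to $\mathcal{P}$ on the generic fiber, such that the special fiber also lies in the stratum, i.e. $HN(\mathcal{W}|_{C_s}) \leq P$.

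So the task is: extend $\mathcal{P}$ across $\text{Spec}\,R$ with controlled HN datum on the special fiber.

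The whole apparatus has been assembled. The key observation is that $HN(\mathcal{P})$ is itself *some* HN datum $P'$, and $P' \leq P$ by hypothesis. So it suffices to produce an extension whose special fiber has HN datum *exactly* $P' = HN(\mathcal{P})$, because then $HN(\mathcal{W}|_{C_s}) = P' \leq P$, which is precisely what membership in the stratum requires. But this is exactly the content of Proposition~\ref{prop: lifting criterion for strata}.

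Here is the point. The real analytic work — the parabolic Langton algorithm, the iterated modification of $\mathcal{W}$ along the maximal destabilizing subbundle of the special fiber (using Lemma~\ref{lemma: HN datum decreases ses} to force the HN datum to decrease, and Lemma~\ref{lemma: finiteness of HN datum less than} to guarantee it stabilizes after finitely many steps), and the gluing of the stabilized destabilizing subbundles across the formal neighborhood via the argument of Mehta–Seshadri — has all been carried out inside the proof of Proposition~\ref{prop: lifting criterion for strata}. That proposition already delivers an extension $\mathcal{W}$ over $C_R$ with $\mathcal{W}|_{C_\eta} \cong \mathcal{P}$ and $HN(\mathcal{W}|_{C_s}) = HN(\mathcal{P})$.

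**Proof.**

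\begin{proof}
Let $R$ be a complete discrete valuation $k$-algebra, with generic point $\eta$ and special point $s$. By Definition~\ref{defn: completeness of stacks}, it suffices to show that every morphism $f \colon \eta \to \text{Bun}_{\mathcal{V}}^{\leq P}$ admits a lift $g \colon \text{Spec}\,R \to \text{Bun}_{\mathcal{V}}^{\leq P}$.

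Unwinding the definitions, a morphism $f \colon \eta \to \text{Bun}_{\mathcal{V}}^{\leq P}$ is the datum of a parabolic vector bundle $\mathcal{P}$ of type $\mathcal{V}$ over $C_\eta$ satisfying $HN(\mathcal{P}) \leq P$. Set $P' \vcentcolon = HN(\mathcal{P})$, so that $P' \leq P$.

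We apply Proposition~\ref{prop: lifting criterion for strata} to the parabolic bundle $\mathcal{P}$ and the HN datum $P'$, noting that $HN(\mathcal{P}) = P'$ by construction. This produces a parabolic vector bundle $\mathcal{W}$ over $C_R$ such that $\mathcal{W}|_{C_\eta} \cong \mathcal{P}$ and $HN(\mathcal{W}|_{C_s}) = P'$.

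Now $\mathcal{W}$ is a parabolic vector bundle over $C_R$ of type $\mathcal{V}$, and for every point $t \in \text{Spec}\,R$ we have $HN(\mathcal{W}|_{C_t}) \leq P$: indeed, on the generic fiber $HN(\mathcal{W}|_{C_\eta}) = HN(\mathcal{P}) = P' \leq P$, and on the special fiber $HN(\mathcal{W}|_{C_s}) = P' \leq P$. Therefore $\mathcal{W}$ defines a morphism $g \colon \text{Spec}\,R \to \text{Bun}_{\mathcal{V}}^{\leq P}$. Since the restriction of $\mathcal{W}$ to $C_\eta$ recovers $\mathcal{P}$, the morphism $g$ restricts to $f$ on $\eta$, which is exactly the required lift.
\end{proof}
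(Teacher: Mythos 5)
Your proof is correct and follows exactly the same route as the paper: reduce to Proposition \ref{prop: lifting criterion for strata} applied with the datum $P' = HN(\mathcal{P})$, and observe that the resulting extension $\mathcal{W}$ has both fibers satisfying $HN \leq P$, hence defines the required lift. Your version is marginally more explicit about introducing $P'$ so that the hypothesis $HN(\mathcal{P}) = P'$ of the proposition is literally satisfied, but the argument is identical in substance.
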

\begin{proof}
Let $R$ be a complete discrete valuation ring over $k$. Let $\mathcal{P}: \eta \rightarrow \text{Bun}_{\mathcal{V}}^{\leq P}$ be a parabolic vector bundle over $C_{\eta}$ such that $HN(\mathcal{P}) \leq P$. By Proposition \ref{prop: lifting criterion for strata}, there exists a parabolic vector bundle $\mathcal{W}$ of type $\mathcal{V}$ over $C_R$ such that $HN(\mathcal{W}|_{C_s}) = HN(\mathcal{P}) \leq P$ and $\mathcal{W}|_{C_{\eta}} \cong \mathcal{P}$. This means that $\mathcal{W}$ represents a morphism $\mathcal{W}: \text{Spec} \, R \rightarrow \text{Bun}_{\mathcal{V}}^{\leq P}$ lifting $\mathcal{P}: \eta \rightarrow \text{Bun}_{\mathcal{V}}^{\leq P}$.
\end{proof}
\end{section}

\begin{section}{Harder-Narasimhan filtrations in families} \label{section: harder-narasimhan filtrations in families}
In this section we use the filtration schemes $Fil^{\alpha}_{\mathcal{V}}$ defined in Section \ref{section: parabolic Quot schemes} in order to give the structure of an algebraic stack to the locus $\text{Bun}^{=P}_{\mathcal{V}}$ for each HN datum $P$. Since we have proved that conjecture holds for parabolic vector bundles in arbitrary characteristic (Proposition \ref{prop: rigidity of HN filtration}), we have established the result in its strongest form that $\text{Bun}_{\mathcal{V}}^{=P} \hookrightarrow \text{Bun}_{\mathcal{V}}^{\leq P}$ is a closed immersion for every field $k$. See Theorem \ref{thm: representability of moduli of vb with fixed P} below. This idea of schematic Harder-Narasimhan stratifications first appeared in \cite{nitsurehnsheaves} for the moduli of sheaves. Gurjar and Nitsure proved an analogous result for the moduli stack of principal $G$-bundles on a curve over a field of characteristic $0$ \cite{gurjar-nitsure}, and later for higher dimensional varieties \cite{nitsuregurjar2, gurjar2020hardernarasimhan}.

We start with a technical definition.
\begin{defn} \label{defn: filtration datum of type}
Let $T$ be a $k$-scheme. Let $\mathcal{W}$ be a parabolic vector bundle over $C \times T$. Let $\alpha$ be a filtration datum, and let $P$ be a HN datum. We write $HN_{\mathcal{W}}(\alpha) = P$ if for all points $x \in Fil^{\alpha}_{\mathcal{W}}$ we have
\begin{enumerate}[(a)]
    \item $HN(\mathcal{W}|_{C_x}) = P$
    \item $x$ represents the Harder-Narasimhan filtration of $\mathcal{W}|_{C_x}$.
\end{enumerate}
\end{defn}
\begin{lemma} \label{lemma: filtration datum of type}
Let $T$ be a connected $k$-scheme. Let $\mathcal{W}$ be a parabolic vector bundle over $C \times T$. Let $P$ be a HN datum. Fix a point $t \in T$. Suppose that $HN(\mathcal{W}|_{C_t}) = P$. Let $0 = \mathcal{W}_0 \subset \mathcal{W}_1 \subset \cdots \, \subset \mathcal{W}_{l-1} \subset \mathcal{W}_l = \mathcal{W}|_{C_t}$
denote the Harder-Narasimhan filtration of $\mathcal{W}|_{C_t}$. Let $\alpha = \psi(\mathcal{W}_j)$ denote the associated filtration datum, as in Definition \ref{defn: filtration datum associated to filtration}. Then, we have $HN_{\mathcal{W}}(\alpha) = P$.
\end{lemma}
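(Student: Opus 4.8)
The plan is to check the two conditions in Definition \ref{defn: filtration datum of type} at an arbitrary scheme-theoretic point $x \in Fil^{\alpha}_{\mathcal{W}}$, writing $y$ for its image under the structure morphism $\pi \colon Fil^{\alpha}_{\mathcal{W}} \to T$. Because the Harder-Narasimhan filtration and its datum are unchanged by field extension (Lemma \ref{lemma: HN filtrations and scalar change}), I may base-change to $\kappa(x)$ and regard $x$ as a filtration $0 = {}_0\mathcal{V} \subset {}_1\mathcal{V} \subset \cdots \subset {}_l\mathcal{V} = \mathcal{W}|_{C_x}$ by parabolic subbundles with invariants prescribed by $\alpha$. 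First I would record the numerical profile of this filtration. By Remark \ref{remark: parabolic Quot invariants} each graded slope $\mu({}_j\mathcal{V}/{}_{j-1}\mathcal{V})$ is determined by $\alpha$ alone, hence coincides with the Harder-Narasimhan slope $\mu(\mathcal{W}_j/\mathcal{W}_{j-1})$ at $t$; in particular these slopes are strictly decreasing in $j$, and the HN-type datum attached to the filtration equals $P$ on the nose. Connectivity of $T$ together with local constancy of the degree (Lemma \ref{lemma: local constancy of deg}) ensures in addition that $\deg(\mathcal{W}|_{C_y}) = \deg(\mathcal{W}|_{C_t}) = \sum_l (P)_l$, so all the totals agree.

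The substantive point is the maximality of the Harder-Narasimhan polygon, and here I must be careful about the direction of the inequality. A filtration by parabolic subbundles with strictly decreasing graded slopes has its polygon \emph{dominated} by the HN polygon: intersecting each ${}_j\mathcal{V}$ with the Harder-Narasimhan filtration of $\mathcal{W}|_{C_x}$ and bounding the graded degrees via Lemma \ref{lemma: semistable bundles and ses} yields
\[
P \;\le\; HN(\mathcal{W}|_{C_x}),
\]
with equality exactly when every graded piece ${}_j\mathcal{V}/{}_{j-1}\mathcal{V}$ is semistable, that is, when the filtration carried by $x$ is itself the Harder-Narasimhan filtration of $\mathcal{W}|_{C_x}$ (Proposition \ref{prop: existence and uniqueness of HN filtrations}). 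Since the HN datum is the \emph{largest} datum, this step by itself only produces $HN(\mathcal{W}|_{C_x}) \ge P$, not the equality.

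It remains to establish the matching bound $HN(\mathcal{W}|_{C_x}) \le P$, equivalently $y \in T^{\le P}$, and this is the main obstacle, since the mere existence of a filtration with polygon $P$ does not by itself prevent $\mathcal{W}|_{C_x}$ from being strictly more unstable. Here I would use the hypotheses on the base: the locus $T^{\le P}$ is open (Proposition \ref{prop: openness of locus of strata in families}) and contains $t$, and I would argue that the image of $\pi$ lands in $T^{\le P}$, reducing the bound at $y$ to the known value $HN(\mathcal{W}|_{C_t}) = P$ by means of connectivity of $T$ and the generization behaviour of HN data supplied by Proposition \ref{prop: strata closed under generalization}. Granting $y \in T^{\le P}$, the two inequalities force $HN(\mathcal{W}|_{C_x}) = P$, which is condition (a); and equality in the polygon comparison of the preceding paragraph forces the filtration at $x$ to be the Harder-Narasimhan filtration, which is condition (b). Combining the two conditions gives $HN_{\mathcal{W}}(\alpha) = P$, as desired.
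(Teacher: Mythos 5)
Your decomposition into the two polygon bounds is a legitimate sharpening, and the first half is fine: the existence of a filtration with datum $\alpha$ does give $P \leq HN(\mathcal{W}|_{C_x})$ by intersecting with the Harder-Narasimhan filtration, and equality of polygons does force the filtration to be the HN filtration (by the uniqueness mechanism of Proposition \ref{prop: existence and uniqueness of HN filtrations}). The genuine gap is your final step, where you claim the image of $\pi \colon Fil^{\alpha}_{\mathcal{W}} \to T$ lands in $T^{\leq P}$ using openness of $T^{\leq P}$ (Proposition \ref{prop: openness of locus of strata in families}), connectedness of $T$, and Proposition \ref{prop: strata closed under generalization}. This cannot work: an open subset containing $t$ of a connected scheme need not be all of $T$, and Proposition \ref{prop: strata closed under generalization} says only that $T^{\leq P}$ is closed under \emph{generization}; HN data genuinely jump up under specialization, which is exactly the scenario you must exclude, and nothing in the stated hypotheses excludes it. Concretely, take $C = \mathbb{P}^1$ with one marked point and full flag jump (so parabolic slope is classical slope plus a constant), $T = \mathbb{A}^1$, and let $E$ be the rank-two family of extensions of $\mathcal{O}(3)$ by $\mathcal{O}(-1)$ with class $s \cdot e$ for a general $e \in \text{Ext}^1(\mathcal{O}(3), \mathcal{O}(-1))$, so that $E_0 \cong \mathcal{O}(3) \oplus \mathcal{O}(-1)$ while $E_s \cong \mathcal{O}(1)^{\oplus 2}$ for $s \neq 0$; set $\mathcal{W} = E \oplus \mathcal{O}(-1)^{\oplus 2}$. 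At any $t \neq 0$ the HN datum is $P$ with slope profile $(1,1,-1,-1)$ (up to the constant shift) and $\alpha$ records a rank-$2$, degree-$2$ first step; but $\mathcal{O}(3) \oplus \mathcal{O}(-1) \subset \mathcal{W}|_{C_0}$ is a parabolic subbundle with exactly those invariants, so $Fil^{\alpha}_{\mathcal{W}}$ has a point over $0$ at which $HN(\mathcal{W}|_{C_0}) = (3,-1,-1,-1) \neq P$ and the filtration is not the HN filtration, violating both conditions of Definition \ref{defn: filtration datum of type}. So the bound $HN(\mathcal{W}|_{C_x}) \leq P$ is not deducible from connectedness plus $HN(\mathcal{W}|_{C_t}) = P$ alone.

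It is worth noting how this relates to the paper's own proof, which takes a different and shorter route: it identifies $\theta(\mathcal{P}_j) = \theta(\mathcal{W}_j)$ for every point $x$ (using connectedness and Remark \ref{remark: parabolic Quot invariants}) and then asserts outright that a filtration with the same ranks and slopes as an HN filtration is itself an HN filtration — an inference that, as your analysis and the example above show, requires precisely the missing upper bound. The bound is in fact available everywhere the lemma is invoked: in Theorem \ref{thm: representability of moduli of vb with fixed P} the family comes with $\mathcal{W} \colon T \to \text{Bun}_{\mathcal{V}}^{\leq P}$, and in the proof of Proposition \ref{prop: schematic HN filtrations} the inequality $HN(\mathcal{W}|_{C_{\kappa(x)}}) \leq P$ is quoted from exactly that hypothesis via Lemma \ref{lemma: HN filtrations and scalar change} before concluding that the filtration at $x$ is the HN filtration. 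If you add the hypothesis that every fiber of $\mathcal{W}$ has HN datum $\leq P$ (or restrict the conclusion to points of $Fil^{\alpha}_{\mathcal{W}}$ lying over $T^{\leq P}$), your two-inequality argument closes correctly and is, if anything, more careful than the paper's. As written, however, the step producing $y \in T^{\leq P}$ fails, and the proposal is incomplete.
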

\begin{proof}
Suppose that $\alpha = \left( \left( _j b_i^{(m)} \right)_{\substack{1 \leq j \leq l-1 \\ i \in I \\ 1 \leq m \leq N_i}}, \; \left(_j P \right)_{1 \leq j \leq l-1} \right)$. Choose $x \in Fil^{\alpha}_{\mathcal{W}}$. Let $0 = \mathcal{P}_0 \subset \mathcal{P}_1 \subset \cdots \, \subset \mathcal{P}_{l-1} \subset \mathcal{P}_l = \mathcal{W}|_{C_x}$ be the filtration of $\mathcal{W}|_{C_x}$ corresponding to $x$. We know that $\theta(\mathcal{P}_j) = \theta(\mathcal{W}_j) = (_jb_i^{(m)}, \, _jP)$ for all $1 \leq j \leq l-1$. By connectedness of $T$, we know that $\theta(\mathcal{P}_{l}) = \theta(\mathcal{W}|_{C_x}) = \theta(\mathcal{W}|_{C_t}) = \theta(\mathcal{W}_l)$ (see Remark \ref{remark: parabolic Quot invariants}). Since the rank and slope of a parabolic vector bundle can be determined from its associated parabolic Quot datum, we conclude that $\mu(\mathcal{P}_j) = \mu(\mathcal{W}_j)$ and $\text{rank} \, \mathcal{P}_j = \text{rank} \, \mathcal{W}_j$ for all $1 \leq j \leq l$. Since $\{\mathcal{W}_j\}$ is a Harder-Narasimhan filtration, we must have that $\{\mathcal{P}_j\}$ is a Harder-Narasimhan filtration. We have $HN(\mathcal{W}|_{C_x}) = HN(\mathcal{W}|_{C_t}) = P$, because both filtrations have the same ranks and slopes.
\end{proof}

\begin{prop} \label{prop: schematic HN filtrations}
Let $\mathcal{V}$ be a parabolic vector bundle of rank $n$ over $C$. Let $P=( \mu_1, \mu_2, \cdots , \mu_n)$ be a parabolic HN datum. Let $\alpha$ be filtration datum. Fix a scheme $S$ of finite type over $k$. Let $\mathcal{W} : \,  S \longrightarrow \text{Bun}_{\mathcal{V}}^{\leq P}$ be a parabolic vector bundle. Suppose that $HN_{\mathcal{W}}(\alpha) = P$. Then, the structure map $\pi: \, Fil^{\alpha}_{\mathcal{W}} \longrightarrow S$ is a closed immersion.
\end{prop}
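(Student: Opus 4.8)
The plan is to prove that $\pi$ is simultaneously \emph{proper}, \emph{radicial} (universally injective) and \emph{unramified}. A morphism locally of finite type that is radicial and unramified is a monomorphism, and a proper monomorphism is a closed immersion (see e.g. \cite[\href{https://stacks.math.columbia.edu/tag/04XV}{Tag 04XV}]{stacks-project}), so these three properties suffice. Proposition \ref{prop: representability of filtration moduli} already supplies that $\pi$ is separated and of finite type, so only the three displayed properties remain to be checked.

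I would dispatch radiciality and unramifiedness first, since both are immediate consequences of the hypothesis $HN_{\mathcal{W}}(\alpha) = P$. For radiciality it is enough to see that for every field $K$ the map $Fil^{\alpha}_{\mathcal{W}}(K) \to S(K)$ is injective on fibres: a $K$-point of $Fil^{\alpha}_{\mathcal{W}}$ over a given $s \in S(K)$ is a filtration of $\mathcal{W}|_{C_K}$ with filtration datum $\alpha$, which by $HN_{\mathcal{W}}(\alpha) = P$ must be the Harder--Narasimhan filtration of $\mathcal{W}|_{C_K}$, and this is unique by Proposition \ref{prop: existence and uniqueness of HN filtrations}. For unramifiedness I would check that the relative tangent spaces vanish: a relative tangent vector at a $K$-point $x$ is a filtration of $p^{*}(\mathcal{W}|_{C_K})$ over $C_K \times \text{Spec}\, K[\epsilon]$ by parabolic subbundles restricting to $x$ over the closed point, where $x$ is the Harder--Narasimhan filtration of $\mathcal{W}|_{C_K}$. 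Proposition \ref{prop: rigidity of HN filtration}, applied over the base field $K$ with $\mathcal{V} = \mathcal{W}|_{C_K}$, then forces this filtration to be the constant one $p^{*}\sigma^{*}(-)$; hence the relative tangent space is zero at every point, and since $\pi$ is of finite type this gives $\Omega_{\pi} = 0$, i.e. $\pi$ is unramified.

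The substance of the argument is properness, which I would obtain from the valuative criterion, uniqueness of the lift being automatic from separatedness. Given a discrete valuation $k$-algebra $R$, a map $\text{Spec}(R) \to S$ and a lift of the generic point to $Fil^{\alpha}_{\mathcal{W}}$, pull $\mathcal{W}$ back to $\mathcal{W}_R$ over $C_R$; the generic lift is a filtration $0 = \mathcal{P}_0 \subset \cdots \subset \mathcal{P}_l = \mathcal{W}_R|_{C_\eta}$ which, by $HN_{\mathcal{W}}(\alpha) = P$, is the Harder--Narasimhan filtration of $\mathcal{W}_R|_{C_\eta}$, so $HN(\mathcal{W}_R|_{C_\eta}) = P$. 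For each $j$ I would extend $\mathcal{P}_j$ across $C_R$ by the saturation construction used in the proof of Proposition \ref{prop: strata closed under generalization} (taking kernels of the unit maps $\mathcal{E}_i^{(m)} \to j_{*}$ of the generic quotients), producing nested parabolic bundles $\mathcal{W}_j$ over $C_R$ that are subobjects of $\mathcal{W}_R$ and restrict to $\mathcal{P}_j$ over $C_\eta$. Each $\mathcal{W}_j$ is $R$-flat, so local constancy of parabolic degree (Lemma \ref{lemma: local constancy of deg}) gives $\mu(\mathcal{W}_j|_{C_s}) = \mu(\mathcal{P}_j)$, and the restricted filtration $\{\mathcal{W}_j|_{C_s}\}$ of $\mathcal{W}_R|_{C_s}$ has precisely the ranks and strictly decreasing slopes recording the polygon $P$.

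The decisive point — and the step I expect to be the main obstacle — is upgrading the subobjects $\mathcal{W}_j$ to genuine parabolic \emph{subbundles} over $C_R$, which is what places the extended filtration in $Fil^{\alpha}_{\mathcal{W}}(R)$. Since $\mathcal{W}$ factors through $\text{Bun}_{\mathcal{V}}^{\leq P}$ we have $HN(\mathcal{W}_R|_{C_s}) \leq P$, while the subbundle points coming from $\{\mathcal{W}_j|_{C_s}\}$ all lie on or below the Harder--Narasimhan polygon of $\mathcal{W}_R|_{C_s}$ and reconstruct the polygon $P$; hence $HN(\mathcal{W}_R|_{C_s}) = P$ and each $\mu(\mathcal{W}_j|_{C_s})$ is maximal for its rank. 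By part (b) of Lemma \ref{lemma: saturating subsheaves} each $\mathcal{W}_j|_{C_s}$ is then already saturated in $\mathcal{W}_R|_{C_s}$, so by Remark \ref{remark: equality degree properness proof} the relevant quotients are $C_R$-flat and the $\mathcal{W}_j$ are parabolic subbundles over $C_R$. Flatness together with connectedness of $\text{Spec}(R)$ forces the filtration datum to be constant, hence equal to $\alpha$, so $\{\mathcal{W}_j\}$ is the required $R$-point of $Fil^{\alpha}_{\mathcal{W}}$ lifting $\text{Spec}(R) \to S$. This establishes properness, and combined with the previous paragraph completes the proof that $\pi$ is a closed immersion.
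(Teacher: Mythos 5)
Your proposal is correct and follows essentially the same route as the paper's own proof: both verify that $\pi$ is universally injective (uniqueness of the Harder--Narasimhan filtration under field extension), unramified (vanishing of first-order deformations via Proposition \ref{prop: rigidity of HN filtration}), and proper (valuative criterion, extending the generic filtration by subobjects as in Proposition \ref{prop: strata closed under generalization} and then using the slope cap from $HN(\mathcal{W}|_{C_s}) \leq P$ together with Lemma \ref{lemma: saturating subsheaves}(b) and Remark \ref{remark: equality degree properness proof} to upgrade the subobjects to parabolic subbundles over $C_R$), concluding with \cite[\href{https://stacks.math.columbia.edu/tag/04XV}{Tag 04XV}]{stacks-project}. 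The only differences are cosmetic matters of presentation, not of substance.
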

\begin{proof}
We will use \cite[\href{https://stacks.math.columbia.edu/tag/04XV}{Tag 04XV}]{stacks-project}, which says that a morphism that is universally injective, unramified and proper is a closed immersion. By Proposition \ref{prop: representability of filtration moduli}, we know that  $\pi: \, Fil^{\alpha}_{\mathcal{W}} \longrightarrow S$ is of finite type. Let $t \in S$ such that the fiber $\pi^{*} (t) = \text{Fil}^{\alpha}_{\mathcal{W}} \times_{S} t$ is nonempty. By definition, the $\kappa(t)$-scheme $\pi^{*}(t) = \text{Fil}^{\alpha}_{\mathcal{W}|_{C_t}}$ classifies filtrations of $\mathcal{W}|_{C_t}$ with filtration datum $\alpha$. A point $x \in \pi^{*}(t)$ would give us a filtration of $\mathcal{W}|_{C_{\kappa(x)}}$ with associated HN datum $P$. By Lemma \ref{lemma: HN filtrations and scalar change}, we know that $HN(\mathcal{W}|_{C_{\kappa(x)}}) = HN(\mathcal{W}_{C_t}) \leq P$. Hence, we must have $HN(\mathcal{W}|_{C_{\kappa(x)}}) = P$. So the filtration must be the Harder-Narasimhan filtration of $\mathcal{W}|_{C_{\kappa(x)}}$. By the uniqueness of the Harder-Narasimhan filtration for any field extension (Lemma \ref{lemma: HN filtrations and scalar change} again),  $\pi^{*}(t)$ has a single point and this point is defined over $\kappa(t)$. So $\pi^{*}(t)$  must be of the form $\text{Spec}(A)$ for some local artinian $\kappa(t)$-algebra $A$ with residue field $\kappa(t)$. This shows that $\pi$ radicial. Hence $\pi$ is universally injective.

We claim that $A = \kappa(t)$. In order to see this, it suffices to show that $\Omega_{A \, / \,\kappa(t)}^1 = 0$. We have that $\Omega_{A \, / \,\kappa(t)}^1$ is dual to the tangent space $\text{Hom}_{\kappa(t)} \left( \text{Spec}(\kappa(t)[\epsilon]),  \, \pi^{*}(t) \right)$. By Proposition \ref{prop: rigidity of HN filtration}, the Harder-Narasimhan filtration has no notrivial first order deformations. This means that $\text{Hom}_{\kappa(t)} \left( \text{Spec}(\kappa(t)[\epsilon]),  \, \pi^{*}(t) \right)= 0$. Therefore $A = \kappa(t)$. We conclude that $\pi$ is unramified.

 We are left to show that $\pi$ is proper. We will use the valuative criterion for properness. The argument is similar to the proof of \ref{prop: strata closed under generalization}. We use the same notation as in that proof (look there for any piece of unexplained notation). Let $R$ be a discrete valuation $k$-algebra with generic point $\eta$ and special point $s$. Let $f: \text{Spec}\; R \rightarrow S$. The fiber product $\text{Fil}^{\alpha}_{\mathcal{W}} \times_S \text{Spec}\; R$ is by definition the $R$-scheme $\pi_R : \text{Fil}^{\alpha}_{\mathcal{W}|_{C_R}} \rightarrow \text{Spec} \; R$. To simplify notation, let us set $\mathcal{U} \vcentcolon = \mathcal{W}|_{C_R}$. Suppose that we have a section $p: \eta \rightarrow \text{Fil}^{\alpha}_{\mathcal{U}}$ of $\pi_R$ defined over $\eta$. We want to show that $p$ extends to a unique section over $\text{Spec} \; R$. Uniqueness follows from the fact that $\text{Fil}^{\alpha}_{\mathcal{U}}$ is a separated $R$-scheme (Proposition \ref{prop: representability of filtration moduli}). We are left to show existence.

By hypothesis, we know that $HN(^{\eta}\mathcal{U}) , HN(^s\mathcal{U}) \leq P$. The point $p$ is a filtration of $^{\eta}\mathcal{U}$ by parabolic subbundles $0 = \, ^{\eta}\mathcal{W}_0 \, \subset \, ^{\eta}\mathcal{W}_1 \, \subset \, \cdots \, \subset \, ^{\eta}\mathcal{W}_l = \, ^{\eta}\mathcal{U}$
with filtration datum $\alpha$. Set $k_j \vcentcolon = 
\text{rank} ^{\eta}\mathcal{W}_j$. Arguing as in the proof of Proposition \ref{prop: strata closed under generalization}, we get set of inclusions $0 = \, \mathcal{W}_0 \, \hookrightarrow \, \mathcal{W}_1 \, \hookrightarrow \, \cdots \, \hookrightarrow \, \mathcal{W}_l = \, \mathcal{U}$ of parabolic subobjects over $C_R$.
These are not parabolic subbundles a priori. As in the proof of Proposition \ref{prop: strata closed under generalization}, passing to the special fiber preserves the subobject inclusions $0 = \, ^s\mathcal{W}_0 \, \hookrightarrow \, ^s\mathcal{W}_1 \, \hookrightarrow \, \cdots \, \hookrightarrow \, ^s\mathcal{W}_l = \, ^s\mathcal{U}$.

Using Lemma \ref{lemma: saturating subsheaves}, we get a filtration of $^s\mathcal{U}$ by parabolic subbundles
\[0 = \, ^s\mathcal{W}^{sat}_0 \, \subset \, ^s\mathcal{W}_1^{sat} \, \subset \, \cdots \, \subset \, \mathcal{W}_l^{sat} = \, ^s\mathcal{U}\]
with $\mu( ^s\mathcal{W}_j) \leq \mu( ^s\mathcal{W}^{sat}_j)$ for all $1 \leq j \leq l$. By local constancy of degree we have $\mu(^s\mathcal{W}_j) = \mu(^{\eta}\mathcal{W}_j) $. Since $HN(\alpha) = P$, we have $\mu(^s\mathcal{W}_j) = \mu_{k_j}$. So $\mu_{k_j} \leq \mu( ^s\mathcal{W}^{sat}_j)$. We know that $HN(^s\mathcal{U}) \leq P$, hence we must have $\mu_{k_j} = \mu( ^s\mathcal{W}^{sat}_j)$. By Remark \ref{remark: equality degree properness proof}, this implies that $\mathcal{W}_j$ is actually a parabolic subbundle of $\mathcal{U}$ over $C_R$. The filtration $0 = \, \mathcal{W}_0 \, \subset \, \mathcal{W}_1 \, \subset \, \cdots \, \subset \, \mathcal{W}_l = \, \mathcal{U}$ gives us a section of $\pi_R$ over $\text{Spec}\; R$, as desired.
\end{proof}

\begin{defn} \label{defn: relative HN filtration}
Let $S$ be a $k$-scheme and $\mathcal{V}$ be a parabolic vector bundle on $C\times S$. A relative Harder-Narasimhan filtration (over $S$) is a filtration $0 = \mathcal{W}_0 \subset \mathcal{W}_1 \subset \cdots \, \subset \mathcal{W}_{l-1} \subset \mathcal{W}_l = \mathcal{V}$
by parabolic subbundles $\mathcal{W}_j$ over $C \times S$ such that for all $s \in S$, the restriction $0 = \mathcal{W}_0|_{C_s} \subset \mathcal{W}_1|_{C_s} \subset \cdots \, \subset \mathcal{W}_{l-1}|_{C_s} \subset \mathcal{W}_l|_{C_s} = \mathcal{V}|_{C_s}$ to the fiber $C_s$ is the Harder-Narasimhan filtration of $\mathcal{V}|_{C_s}$.
\end{defn}

\begin{defn} \label{defn: moduli of parabolic vb with fixed P}
Let $\mathcal{V}$ be a parabolic vector bundle over $C$. Let $P$ be a HN datum. $\text{Bun}_{\mathcal{V}}^{=P}$ denotes the functor from $k$-schemes into groupoids given as follows. For a $k$-scheme $S$, $\text{Bun}_{\mathcal{V}}^{=P}(S)$ is the groupoid of parabolic bundles over $C\times S$ of type $\mathcal{V}$ along with a relative Harder-Narasimhan filtration (over $C \times S$) such that for all $s \in S$, we have $HN(\mathcal{V}|_{C_s}) = P$.
\end{defn}

We will denote by $\iota: \text{Bun}_{\mathcal{V}}^{=P} \rightarrow \text{Bun}_{\mathcal{V}}$ the morphism given by forgetting the relative Harder-Narasimhan filtration.
\begin{thm} \label{thm: representability of moduli of vb with fixed P}
Let $P = (\mu_1, \mu_2, \cdots, \mu_n)$ be a parabolic HN datum. The morphism $\iota: \text{Bun}_{\mathcal{V}}^{=P} \rightarrow \text{Bun}_{\mathcal{V}}$ exhibits $\text{Bun}_{\mathcal{V}}^{=P}$ as a locally closed substack of $\text{Bun}_{\mathcal{V}}$.
\end{thm}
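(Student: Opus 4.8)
The plan is to factor $\iota$ through the open substack $\text{Bun}_{\mathcal{V}}^{\leq P} \hookrightarrow \text{Bun}_{\mathcal{V}}$ provided by Theorem \ref{thm: openess of strata} and then to prove that the resulting morphism $\iota' \colon \text{Bun}_{\mathcal{V}}^{=P} \to \text{Bun}_{\mathcal{V}}^{\leq P}$ is a closed immersion; since an open immersion composed with a closed immersion is a locally closed immersion (and a monomorphism), this yields the theorem. That $\iota$ factors through $\text{Bun}_{\mathcal{V}}^{\leq P}$ is immediate, as a relative Harder--Narasimhan filtration with $HN(\mathcal{V}|_{C_s}) = P$ on every fiber in particular has $HN \leq P$ on every fiber. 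Because being a closed immersion is smooth-local on the target, I would verify the claim after base change along an arbitrary morphism $S \to \text{Bun}_{\mathcal{V}}^{\leq P}$ from a scheme $S$ of finite type over $k$, classified by a parabolic bundle $\mathcal{W}$ of type $\mathcal{V}$ over $C\times S$ with $HN(\mathcal{W}|_{C_s})\le P$ for all $s$.

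Next I would identify the fiber product $Z \vcentcolon= \text{Bun}_{\mathcal{V}}^{=P}\times_{\text{Bun}_{\mathcal{V}}^{\leq P}} S$ with a finite disjoint union of the parabolic filtration schemes of Section \ref{section: parabolic Quot schemes}. Working Zariski-locally on $S$ I may assume $S$ connected, so that $\deg(\mathcal{W}|_{C_s})$ is constant by Lemma \ref{lemma: local constancy of deg}. A $T$-point of $Z$ is a relative Harder--Narasimhan filtration of $\mathcal{W}|_{C\times T}$ with fiberwise invariant $P$; over each connected component of $T$ the associated filtration datum $\alpha$ (Definition \ref{defn: filtration datum associated to filtration}) is locally constant by Remark \ref{remark: parabolic Quot invariants}, so such a point is exactly a $T$-point of $Fil^{\alpha}_{\mathcal{W}}$. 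Let $A$ be the set of filtration data $\alpha$ arising as $\psi$ of the Harder--Narasimhan filtration of $\mathcal{W}|_{C_t}$ for some $t\in S$ with $HN(\mathcal{W}|_{C_t}) = P$. For each such $\alpha$, Lemma \ref{lemma: filtration datum of type} guarantees $HN_{\mathcal{W}}(\alpha) = P$ in the sense of Definition \ref{defn: filtration datum of type}. Moreover $A$ is finite: the flag dimensions ${}_jb_i^{(m)}$ range over a finite set bounded by $n$, and once they and $P$ are fixed the degree formula of Definition \ref{defn: deg of vector bundle} determines each $\deg\, {}_j\mathcal{F}^{(0)}$, hence each Hilbert polynomial ${}_jP$. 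This produces a natural isomorphism $Z \cong \bigsqcup_{\alpha\in A} Fil^{\alpha}_{\mathcal{W}}$.

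With this identification the proof is concluded by the already-established Proposition \ref{prop: schematic HN filtrations}: for each $\alpha\in A$, the structure map $Fil^{\alpha}_{\mathcal{W}} \to S$ is a closed immersion, this being precisely where properness (via the representability of $Fil^{\alpha}_{\mathcal{W}}$, Proposition \ref{prop: representability of filtration moduli}), universal injectivity (via uniqueness of the filtration under field extensions, Lemma \ref{lemma: HN filtrations and scalar change}), and unramifiedness (via the rigidity of the Harder--Narasimhan filtration, Proposition \ref{prop: rigidity of HN filtration}) are combined. The images of these closed immersions are pairwise disjoint, since the Harder--Narasimhan filtration of a given fiber is unique and therefore carries a single filtration datum; as $A$ is finite, the complement of the remaining images is an open neighborhood of each image on which only one $Fil^{\alpha}_{\mathcal{W}}$ contributes, so the finite disjoint union $Z \to S$ is itself a closed immersion. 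As $S$ was arbitrary, $\iota'$ is a closed immersion and the theorem follows.

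I expect the principal obstacle to be the bookkeeping of the middle paragraph: matching the functor $\text{Bun}_{\mathcal{V}}^{=P}$ of relative filtrations with fiberwise invariant $P$ against the schemes $Fil^{\alpha}_{\mathcal{W}}$, checking the finiteness of $A$, and --- most delicately --- confirming through Lemma \ref{lemma: filtration datum of type} that each relevant $\alpha$ satisfies $HN_{\mathcal{W}}(\alpha) = P$ on the \emph{entire} scheme $Fil^{\alpha}_{\mathcal{W}}$, which is exactly the hypothesis powering Proposition \ref{prop: schematic HN filtrations}. Once that reduction is secured, the three geometric properties have each been isolated in earlier results and the final assembly is formal.
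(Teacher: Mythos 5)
Your proposal is correct and follows essentially the same route as the paper's proof: factor $\iota$ through $\text{Bun}_{\mathcal{V}}^{\leq P}$, identify the base change to a connected finite-type scheme with a finite disjoint union of schemes $\text{Fil}^{\alpha}_{\mathcal{W}}$ indexed by the filtration data of fiberwise Harder--Narasimhan filtrations, invoke Lemma \ref{lemma: filtration datum of type} and Proposition \ref{prop: schematic HN filtrations} to see each piece is a closed immersion, and conclude by pairwise disjointness from uniqueness of the Harder--Narasimhan filtration. The only (harmless) deviation is your finiteness argument for the set of filtration data, where you note the degree formula pins down each $\deg\, {}_j\mathcal{F}^{(0)}$ exactly, whereas the paper merely bounds it in an interval via Lemma \ref{lemma: deg of parabolic vs regular vector bundles}; both suffice.
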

\begin{proof}
By definition $\iota: \text{Bun}_{\mathcal{V}}^{=P} \rightarrow \text{Bun}_{\mathcal{V}}$ factors through the open substack $\text{Bun}_{\mathcal{V}}^{\leq P}$. We shall show that $\iota: \text{Bun}_{\mathcal{V}}^{=P} \rightarrow \text{Bun}_{\mathcal{V}}^{\leq P}$ is a closed immersion.

Let $T$ be a connected scheme of finite type over $k$. Let $\mathcal{W}: T \rightarrow \text{Bun}_{\mathcal{V}}^{\leq P}$ be a parabolic vector bundle over $C \times T$. Define the set $T^{=P} \vcentcolon = \left\{ \; t \in T \; \mid \; HN(\mathcal{W}|_{C_t}) = P \right\}$. Recall that for any $t \in T$ and any filtration $\{\mathcal{W}_{j}^{t}\}$ of $\mathcal{W}|_{C_t}$, we have an associated filtration datum $\psi(\mathcal{W}^{t}_{j})$ as in Definition \ref{defn: filtration datum associated to filtration}. In particular, we can do this with the Harder-Narasimhan filtration $HN\text{Fil} \,\mathcal{W}|_{C_t}$ of $\mathcal{W}|_{C_t}$. 

Let's define a set $\mathfrak{S} = \left\{ \; \psi\left( \,HN\text{Fil} \, \mathcal{W}|_{C_t}\, \right) \; \mid \; t \in T^{=P} \right\}$. We claim that $\mathfrak{S}$ is a finite set. Let $t \in T^{=P}$ and let $0 = \mathcal{W}_0^t \subset \mathcal{W}_1^t \subset \cdots \, \subset \mathcal{W}_{l-1}^t \subset \mathcal{W}_l^t = \mathcal{W}|_{C_t}$ be the Harder-Narasimhan filtration of $\mathcal{W}|_{C_t}$. Suppose that $_j\mathcal{W}^t = \left[ \; _j\mathcal{F}^{(0)} \,\overset{_jb_i^{(1)}}{\subset} \, _j\mathcal{F}_i^{(1)} \overset{_jb_i^{(2)}}{\subset} \cdots\; \overset{_jb_{i}^{(N_i)}}{\subset} \, _j\mathcal{F}_i^{(N_i)}= \, _j\mathcal{F}^{(0)}(x_i) \;\right]_{i \in I}$. We know that $HN(\mathcal{W}|_{C_t}) =P$. Therefore the length $l$ of the filtration and the rank $k_j$ of each $\mathcal{W}_j^t$ do not depend on $t$. Since there is a finite number of possibilities for the numbers $_jb_i^{(m)}$, it suffices to show that for each $j$ there is a finite number of possibilities for $\text{deg} \, _j\mathcal{F}^{(0)}$. We know that $\text{deg} \, _j\mathcal{W} = k_j \, \mu_{k_j}$ is fixed. By Lemma \ref{lemma: deg of parabolic vs regular vector bundles}, we have 
\[k_j \, \mu_{k_j} - \, k_j \, |I| \leq \text{deg} \, _j\mathcal{F}^{(0)} \leq \, k_j \, \mu_{k_j} \]
Hence there are finitely many possible choices for the integer $\text{deg} \, _j\mathcal{F}^{(0)}$. We conclude that the set $\mathfrak{S}$ is finite.

Consider the $T$-scheme $X \vcentcolon = \sqcup_{\alpha \in \mathfrak{S}} \, \text{Fil}_{\mathcal{V}}^{\alpha}$. By construction, $X$ represents the fiber product $\text{Bun}_{\mathcal{V}}^{= P} \times_{\text{Bun}_{\mathcal{V}}^{\leq P}} T$. It therefore suffices to show that $X \rightarrow T$ is a closed immersion. For every $\alpha \in \mathfrak{S}$, we have $HN_{\mathcal{W}}(\alpha) = P$ by Lemma \ref{lemma: filtration datum of type}. By Proposition \ref{prop: schematic HN filtrations}, $X$ is a finite disjoint union of closed subschemes of $T$. It therefore suffices to show that if $\alpha \neq \beta$ are two filtration data in $\mathfrak{S}$, then the closed subschemes $\text{Fil}_{\mathcal{V}}^{\alpha}$ and $\text{Fil}_{\mathcal{V}}^{\beta}$ are set theoretically disjoint inside $T$. But we can't have a point $t \in T$ in the intersection, since that would violate the uniqueness of the Harder-Narasimhan filtration for $\mathcal{W}|_{C_t}$.
\end{proof}

The result that the relative Harder-Narasimhan stacks are locally closed substacks does not hold for $\text{Bun}_{\mathcal{G}}$ for exceptional parahoric Bruhat-Tits group schemes $\mathcal{G}$ over $C$ when the characteristic of $k$ is small. This is because of the failure of Behrend's conjecture, even in the case when the group scheme is reductive (see \cite{heinloth-behrends-conjecture}).

We end this section by stating an explicit corollary of Theorem \ref{thm: representability of moduli of vb with fixed P}. A weaker version of this corollary in the context of coherent sheaves without parabolic structure can be found in \cite{huybrechts.lehn}[Thm. 2.3.2].
\begin{coroll}
Fix a parabolic vector bundle $\mathcal{V}$ over $C$. Let $S$ be a $k$-scheme and let $\mathcal{W}$ be a parabolic vector bundle of type $\mathcal{V}$ over $C\times S$. There is a surjective morphism $\bigsqcup_{P} S^P \rightarrow S$ where
\begin{enumerate}[(i)]
    \item The $P$ run over the set of all HN data of rank $n$.
    \item Each $S^P$ is a locally closed subscheme of $S$.
    \item $\mathcal{W}|_{S^P}$ admits a relative Harder-Narasimhan filtration.
    \item For all $s \in S^P$, we have $HN(\mathcal{W}|_{C_{s}})=P$.
\end{enumerate}
\end{coroll}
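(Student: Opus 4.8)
The plan is to deduce this corollary directly from Theorem \ref{thm: representability of moduli of vb with fixed P}, by realizing the stratification of $S$ as the pullback of the stratification of the stack $\text{Bun}_{\mathcal{V}}$. Giving a parabolic vector bundle $\mathcal{W}$ of type $\mathcal{V}$ over $C \times S$ is the same as giving a morphism $\mathcal{W}: S \rightarrow \text{Bun}_{\mathcal{V}}$. For each parabolic HN datum $P$ of rank $n$, I would define $S^P \vcentcolon = S \times_{\text{Bun}_{\mathcal{V}}} \text{Bun}_{\mathcal{V}}^{=P}$, the fiber product taken along the morphism $\iota: \text{Bun}_{\mathcal{V}}^{=P} \rightarrow \text{Bun}_{\mathcal{V}}$ of Theorem \ref{thm: representability of moduli of vb with fixed P}.

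With this definition, properties (ii), (iii) and (iv) are formal. By Theorem \ref{thm: representability of moduli of vb with fixed P}, $\iota$ is a locally closed immersion; since locally closed immersions are stable under base change, the projection $S^P \rightarrow S$ is a locally closed immersion, so $S^P$ is a locally closed subscheme of $S$, giving (ii). The universal relative Harder-Narasimhan filtration carried by $\text{Bun}_{\mathcal{V}}^{=P}$ (Definition \ref{defn: moduli of parabolic vb with fixed P}) pulls back along $S^P \rightarrow \text{Bun}_{\mathcal{V}}^{=P}$ to a relative Harder-Narasimhan filtration of $\mathcal{W}|_{S^P}$, which is precisely (iii). Property (iv) is immediate, because any point of $S^P$ factors through $\text{Bun}_{\mathcal{V}}^{=P}$, whose $S$-points have constant HN datum $P$ on fibers by construction.

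The remaining content is the surjectivity of $\bigsqcup_P S^P \rightarrow S$. Let $s \in S$ have residue field $\kappa(s)$. By the existence and uniqueness of the Harder-Narasimhan filtration (Proposition \ref{prop: existence and uniqueness of HN filtrations}), the fiber $\mathcal{W}|_{C_s}$ has a well-defined datum $P_0 \vcentcolon = HN(\mathcal{W}|_{C_s})$ and a unique Harder-Narasimhan filtration. By uniqueness together with its compatibility with scalar extension (Lemma \ref{lemma: HN filtrations and scalar change}), this filtration is already defined over $\kappa(s)$, so it determines a $\kappa(s)$-point of $\text{Bun}_{\mathcal{V}}^{=P_0}$ lying over $\mathcal{W}|_{C_s}$. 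This places $s$ in the image of $S^{P_0} \rightarrow S$, establishing surjectivity. Moreover, since the datum $HN(\mathcal{W}|_{C_s})$ is uniquely attached to $s$, the underlying subsets $|S^P| \subseteq |S|$ are pairwise disjoint, so $\bigsqcup_P S^P \rightarrow S$ is a bijection on points and yields the asserted stratification.

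I expect no substantial obstacle: the corollary is essentially a repackaging of Theorem \ref{thm: representability of moduli of vb with fixed P}. The only points that warrant care are the identification in (iii) of the filtration as the pullback of the tautological relative Harder-Narasimhan filtration on $\text{Bun}_{\mathcal{V}}^{=P}$, and the verification in the surjectivity step that the Harder-Narasimhan filtration of $\mathcal{W}|_{C_s}$ genuinely descends to $\kappa(s)$ rather than only to some extension, which is guaranteed by Lemma \ref{lemma: HN filtrations and scalar change}.
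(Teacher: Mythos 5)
Your proposal is correct and follows essentially the same route as the paper: both define $S^P \vcentcolon = S \times_{\text{Bun}_{\mathcal{V}}} \text{Bun}_{\mathcal{V}}^{=P}$, deduce (ii)--(iv) formally from Theorem \ref{thm: representability of moduli of vb with fixed P} and the definition of $\text{Bun}_{\mathcal{V}}^{=P}$, and obtain surjectivity from the existence of Harder-Narasimhan filtrations. The only cosmetic difference is that the paper phrases surjectivity as surjectivity of the stack map $\bigsqcup_{P} \text{Bun}_{\mathcal{V}}^{=P} \rightarrow \text{Bun}_{\mathcal{V}}$ being preserved under base change, whereas you verify the same fact pointwise on $S$; the underlying argument is identical.
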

\begin{proof}
The parabolic vector bundle $\mathcal{W}$ is the same a as morphism $S \rightarrow \text{Bun}_{\mathcal{V}}$. Theorem \ref{thm: representability of moduli of vb with fixed P} implies that there is a surjective map of stacks $\bigsqcup_{P} \text{Bun}_{\mathcal{V}}^{=P} \, \rightarrow \text{Bun}_{\mathcal{V}}$, where the $P$ run over the set of HN data of rank $\text{rank} \, \mathcal{V}$. Moreover, each $\text{Bun}_{\mathcal{V}}^{=P}$ is a locally closed substack of $\text{Bun}_{\mathcal{V}}$.

Define $S^P$ to be the locally closed subscheme of $S$ given by $S^P \vcentcolon = S \times_{\text{Bun}_{\mathcal{V}}} \text{Bun}_{\mathcal{V}}^{=P}$. We have a fiber product diagram
\[\xymatrix{
 \bigsqcup_{P} S^P  \ar[r] \ar[d] & \bigsqcup_{P} \text{Bun}_{\mathcal{V}}^{=P} \ar[d] \\
 S \ar[r] & \text{Bun}_{\mathcal{V}} } \] 
Since the map $\bigsqcup_{P} \text{Bun}_{\mathcal{V}}^{=P} \, \rightarrow \text{Bun}_{\mathcal{V}}$ is surjective, we conclude that the base-change $\bigsqcup_{P} S^P \rightarrow S$ is surjective as well. Properties $(iii)$ and $(iv)$ follow directly from the definition of $\text{Bun}_{\mathcal{V}}^{=P}$ above.
\end{proof}
\end{section}

\appendix
\begin{section}{Descent for finite covers} \label{appendix: descent}
For the convenience of the reader, we give an explicit description of descent data for certain finite flat covers as done in \cite{grothendieck-descente}[B.3]. All sheaves considered are quasicoherent.

Let $\pi: Y \rightarrow X$ be a faithfully-flat finite morphims of schemes such that $\pi_*(\mathcal{O}_Y)$ is a locally free $\mathcal{O}_X$-module. We want to explicitly describe descent data for quasicoherent sheaves on $Y$ relative to $X$. Since $\pi$ is affine, the functor $\pi_*$ sets up an equivalence between $\text{QCoh}(Y)$ and the category of quasicoherent sheaves on $X$ with a $\pi_*(\mathcal{O}_Y)$-module structure. Consider the fiber product:
\[\xymatrix{
 Y \times_X Y \ar[r]^{\; \; \; \; \; \;pr_1} \ar[d]^{pr_2} & Y \ar[d]^{\pi}\\
Y \ar[r]^{\pi} & X } \] 
By the same reasoning, a quasicoherent sheaf on $Y \times_X Y$ is the same as a quasicoherent $\pi_*(\mathcal{O}_Y) \underset{\mathcal{O}_X}{\otimes} \pi_*(\mathcal{O}_Y)$ -module.

Given a $\pi_*(\mathcal{O}_Y)$-module $\mathcal{F}$, we can obtain two different $\pi_*(\mathcal{O}_Y) \underset{\mathcal{O}_X}{\otimes} \pi_*(\mathcal{O}_Y)$ -modules via pullback:
\[ pr_1^{*}(\mathcal{F}) = \mathcal{F} \underset{\mathcal{O}_X}{\otimes} \pi_*(\mathcal{O}_Y) \]
\[pr^{*}_2(\mathcal{F}) = \pi_*(\mathcal{O}_Y) \underset{\mathcal{O}_X}{\otimes} \mathcal{F}\]

A descent datum for $\mathcal{F}$ would correspond to an homorphism of $\pi_*(\mathcal{O}_Y) \underset{\mathcal{O}_X}{\otimes} \pi_*(\mathcal{O}_Y)$ -modules $\phi: pr_2^{*}(\mathcal{F}) \rightarrow pr_1^{*}(\mathcal{F})$ satisfying a cocycle condition.

By assumption $\pi_*(\mathcal{O}_Y)$ is locally free as a $\mathcal{O}_X$-module, so it is $\mathcal{O}_X$-dualizable. Using this fact plus the tensor-Hom adjunction, we get a chain of isomorphisms:
\begin{equation*}
\begin{split}
    \text{Hom}_{\mathcal{O}_X} \left(pr_2^{*}(\mathcal{F}), \, pr_1^{*}(\mathcal{F})\right)  & = \, \text{Hom}_{\mathcal{O}_X} \left(\pi_*(\mathcal{O}_Y) \underset{\mathcal{O}_X}{\otimes} \mathcal{F}, \; \; \mathcal{F} \underset{\mathcal{O}_X}{\otimes} \pi_*(\mathcal{O}_Y)\right) \\
    & \cong \,\text{Hom}_{\mathcal{O}_X} \left(\pi_*(\mathcal{O}_Y) \underset{\mathcal{O}_X}{\otimes} \pi_*(\mathcal{O}_Y)^{\vee} \underset{\mathcal{O}_X}{\otimes} \mathcal{F}, \; \; \; \mathcal{F} \underset{\mathcal{O}_X}{\otimes} \pi_*(\mathcal{O}_Y)\right)\\
    & \cong \, \text{Hom}_{\mathcal{O}_X} \left(\,\text{End}_{\mathcal{O}_X} \,\pi_*(\mathcal{O}_Y),  \, \; \text{End}_{\mathcal{O}_X} \, \mathcal{F} \,\right)
    \end{split}
\end{equation*}
What is the image of the set of $\pi_*(\mathcal{O}_Y) \underset{\mathcal{O}_X}{\otimes} \pi_*(\mathcal{O}_Y)$- homomorphisms? Tracing back the chain isomorphisms above, one can check that this is $\text{Hom}_{\pi_*(\mathcal{O}_Y) \underset{\mathcal{O}_X}{\otimes} \pi_*(\mathcal{O}_Y)} \left(\,\text{End}_{\mathcal{O}_X} \,\pi_*(\mathcal{O}_Y),  \, \; \text{End}_{\mathcal{O}_X} \, \mathcal{F} \,\right)$. Here $\pi_*(\mathcal{O}_Y) \underset{\mathcal{O}_X}{\otimes} \pi_*(\mathcal{O}_Y)$ acts on each $\text{End}$ group by $(s \otimes t) \cdot f(x) \vcentcolon = tf(sx)$ for any sections $s, t$ and $x$ of the corresponding sheaves over a given open set.

We are left to describe the cocycle condition for a given $\pi_*(\mathcal{O}_Y) \underset{\mathcal{O}_X}{\otimes} \pi_*(\mathcal{O}_Y)\,$-homomorphims $\phi: pr_2^{*}(\mathcal{F}) \rightarrow pr_1^{*}(\mathcal{F})$. Recall that the cocycle condition amounts to the commutativity of the following diagram:
\[ \xymatrix{
\pi_*(\mathcal{O}_Y) \underset{\mathcal{O}_X}{\otimes} \pi_*(\mathcal{O}_Y) \underset{\mathcal{O}_X}{\otimes} \mathcal{F} \ar[r]^{id \otimes \phi} \ar[d]^{id \otimes \text{swap}} & \pi_*(\mathcal{O}_Y) \underset{\mathcal{O}_X}{\otimes} \mathcal{F} \underset{\mathcal{O}_X}{\otimes} \pi_*(\mathcal{O}_Y) \ar[d]^{\phi \otimes id}\\
\pi_*(\mathcal{O}_Y) \underset{\mathcal{O}_X}{\otimes} \mathcal{F} \underset{\mathcal{O}_X}{\otimes} \pi_*(\mathcal{O}_Y) \ar[r]^{\phi \otimes id} & \mathcal{F} \underset{\mathcal{O}_X}{\otimes} \pi_*(\mathcal{O}_Y) \underset{\mathcal{O}_X}{\otimes} \pi_*(\mathcal{O}_Y) }  \]
Tracing back the construction of the isomorphim 
\[\psi: \text{Hom}_{\pi_*(\mathcal{O}_Y) \underset{\mathcal{O}_X}{\otimes} \pi_*(\mathcal{O}_Y)} \left(pr_2^{*}(\mathcal{F}), \, pr_1^{*}(\mathcal{F})\right) \, \xrightarrow{\sim} \, \text{Hom}_{\pi_*(\mathcal{O}_Y) \underset{\mathcal{O}_X}{\otimes} \pi_*(\mathcal{O}_Y)} \left(\,\text{End}_{\mathcal{O}_X} \,\pi_*(\mathcal{O}_Y),  \, \; \text{End}_{\mathcal{O}_X} \, \mathcal{F} \,\right)\]
we see that the cocycle condition corresponds to $\psi(\phi)$ being an homomorphism of unital algebras.

Let us give two examples when there is a nice explicit presentation of the unital algebra $\text{End}_{\mathcal{O}_X} \,\pi_*(\mathcal{O}_Y)$.
\begin{example}[Galois descent]
Suppose that $\pi: Y \rightarrow X$ is a Galois cover with Galois group $G$. By definition, $G$ is a group of $\mathcal{O}_X$-linear automorphisms of $\pi_*(\mathcal{O}_Y)$. Then it can be checked that we have
\[  \text{End}\left(\,\pi_*(\mathcal{O}_Y)\, \right) = \bigoplus_{g \in G} \left(1 \otimes \pi_*(\mathcal{O}_Y) \right) \,g \]
Since everything is $\mathcal{O}_X$ coherent, it suffices to check this equality on fibers, This reduces the claim to Galois theory for separable field extensions.

By definition of the $\pi_*(\mathcal{O}_Y) \underset{\mathcal{O}_X}{\otimes} \pi_*(\mathcal{O}_Y)$-module structure on $\text{End}_{\mathcal{O}_X} \, \pi_*(\mathcal{O}_Y)$, we have $(s \otimes 1) \cdot g = (1 \otimes g(s)) \cdot g$ for any $g \in G$ and any section $s$ of $\pi_*(\mathcal{O}_Y)$ on an open. So in this case descent data correspond to homomorphisms of groups $\psi: G \rightarrow \text{Aut}_{\mathcal{O}_X}\, \mathcal{F}$ such that $\psi(g)$ is $g$-semilinear for all $g \in G$.
\end{example}

\begin{example}[Inseparable descent/ Frobenius descent]
Let us suppose that $X$ is a scheme over a finite field $\kappa$ of characteristic $p$ (in other words, $p \, \mathcal{O}_X = 0$). Let $\pi: Y \rightarrow X$ as above. Suppose that $\pi_*(\mathcal{O}_Y)^p \subset \mathcal{O}_X$, so the cover is of "exponent 1". Furthermore, suppose that $\pi_*(\mathcal{O}_Y)$ locally admits a $p$-basis over $\mathcal{O}_X$ (for Noetherian schemes this is known to be equivalent to $\Omega^1_{Y\, / \,X}$ being locally free, see \cite{tyc-p-basis}).

Let $\mathcal{D}$ be the $\pi_*(\mathcal{O}_Y)$-module of $\mathcal{O}_X$-linear derivations of $\pi_*(\mathcal{O}_Y)$. Here the $\pi_*(\mathcal{O}_Y)$-module structure comes from the action of the second coordinate  in $\pi_*(\mathcal{O}_Y) \underset{\mathcal{O}_X}{\otimes} \pi_*(\mathcal{O}_Y)$ when we view $\mathcal{D}$ as a subset of $\text{End}_{\mathcal{O}_X}\, \pi_*(\mathcal{O}_Y)$. By assumption, we know that $\mathcal{D} = (\Omega^1_{Y \, / \, X})^{\vee}$ is locally free. Observe that $\mathcal{D}$ has the structure of a restricted p-Lie algebra over $\mathcal{O}_X$. We will denote by $[ -, - ]$ and $(-)^{(p)}$ the p-Lie algebra operations in $\mathcal{D}$.

We claim that $\text{End}_{\mathcal{O}_X}\, \pi_*(\mathcal{O}_Y)$ is generated as a $\pi_*(\mathcal{O}_Y) \underset{\mathcal{O}_X}{\otimes} \pi_*(\mathcal{O}_Y)$-algebra by $\mathcal{D}$ subject to the relations
\[ (1 \otimes s) \cdot \delta \, = \, (1 \otimes \delta(s)) \cdot \, id  \, + \, (s \otimes 1) \cdot \delta \]
\[ \delta \gamma - \gamma \delta \, = \, [\delta, \, \gamma] \]
\[ \delta^p \, = \, \delta^{(p)}\]
Here $\delta$ and $\gamma$ are sections of $\mathcal{D}$ and $s$ is a section of $\pi_*(\mathcal{O}_Y)$. The claim is a local statement. Since everything is $\mathcal{O}_X$ coherent, we can check it on fibers. We are reduced to the case of a finite algebra of exponent 1 over a field. This can be done by hand by using the result for field extensions (\cite{jacobson-algebraII} Theorem 8.45) and a computation for the ring of dual numbers.

Let $\mathcal{F}$ be a quasicoherent sheaf on $Y$. The discussion above shows that a descent datum $\phi: \text{End}_{\mathcal{O}_X} \, \pi_*(\mathcal{O}_Y) \, \rightarrow \, \text{End}_{\mathcal{O}_X} \, \mathcal{F}$ is equivalent to a map $\nabla : \, \mathcal{D} \, \rightarrow \, \text{End}_{\mathcal{O}_X}\, \mathcal{F}$ of restricted p-Lie algebras over $\mathcal{O}_X$ satisfying the Leibniz condition
\[ (s \otimes 1) \cdot \nabla(\delta) = (1 \otimes \delta(s)) \cdot id \, + \, (1 \otimes s) \cdot \nabla(\delta)  \]
for all sections $\delta$ of $\mathcal{D}$ and sections $s$ of $\pi_*(\mathcal{O}_Y)$. This is by definition a $X$-relative connection on $\mathcal{F}$ with vanishing $p$-curvature.

We will use this version of descent in the special case of inseparable field extensions of exponent 1. This goes back to the work of Jacobson (see \cite{jacobson-algebraII} Theorem 8.45). Another example is the case when $\pi$ is a Frobenius morphism. This is worked out in \cite{katz-nilpotent} Theorem 5.1.
\end{example}
Finally, let us describe descent for morphims of quasicoherent sheaves in this language. Let quasicoherent $\pi_*(\mathcal{O}_Y)$-modules $\mathcal{F}_1$ and $\mathcal{F}_2$, along with descent data 
\[\phi_i \in \text{Hom}_{\pi_*(\mathcal{O}_Y) \underset{\mathcal{O}_X}{\otimes} \pi_*(\mathcal{O}_Y)} \left(\,\text{End}_{\mathcal{O}_X} \,\pi_*(\mathcal{O}_Y),  \, \; \text{End}_{\mathcal{O}_X} \, \mathcal{F}_i \,\right)\]
for $i=1,2$. A homomorphims of $\pi_*(\mathcal{O}_Y)$-modules $f: \mathcal{F}_2 \rightarrow \mathcal{F}_2$ descends if and only if we have $f^{*}(\phi_2) = f_*(\phi_1)$, where
\begin{gather*}
     f^{*}: \, \text{Hom}_{\pi_*(\mathcal{O}_Y) \underset{\mathcal{O}_X}{\otimes} \pi_*(\mathcal{O}_Y)} \left(\,\text{End}_{\mathcal{O}_X} \,\pi_*(\mathcal{O}_Y),  \, \; \text{End}_{\mathcal{O}_X} \, \mathcal{F}_2 \,\right) \, \longrightarrow \, \text{Hom}_{\pi_*(\mathcal{O}_Y) \underset{\mathcal{O}_X}{\otimes} \pi_*(\mathcal{O}_Y)} \left(\,\text{End}_{\mathcal{O}_X} \,\pi_*(\mathcal{O}_Y),  \, \; \text{Hom}_{\mathcal{O}_X}( \mathcal{F}_1, \, \mathcal{F}_2) \,\right)
\end{gather*}
\begin{gather*} f_*: \, \text{Hom}_{\pi_*(\mathcal{O}_Y) \underset{\mathcal{O}_X}{\otimes} \pi_*(\mathcal{O}_Y)} \left(\,\text{End}_{\mathcal{O}_X} \,\pi_*(\mathcal{O}_Y),  \, \; \text{End}_{\mathcal{O}_X} \, \mathcal{F}_1 \,\right)  \, \longrightarrow \, \text{Hom}_{\pi_*(\mathcal{O}_Y) \underset{\mathcal{O}_X}{\otimes} \pi_*(\mathcal{O}_Y)} \left(\,\text{End}_{\mathcal{O}_X} \,\pi_*(\mathcal{O}_Y),  \, \; \text{Hom}_{\mathcal{O}_X}( \mathcal{F_1}, \, \mathcal{F}_2) \,\right)
\end{gather*}
 are given by precomposing (resp. postcomposing) with $f$. Let us give a few examples/consequences of this.
 \begin{example}[Subsheaves]
 Let $\mathcal{E}$ be a $\pi_*(\mathcal{O}_Y)$-module along with a descent datum \[\phi \in \text{Hom}_{\pi_*(\mathcal{O}_Y) \underset{\mathcal{O}_X}{\otimes} \pi_*(\mathcal{O}_Y)} \left(\,\text{End}_{\mathcal{O}_X} \,\pi_*(\mathcal{O}_Y),  \, \; \text{End}_{\mathcal{O}_X} \, \mathcal{E} \,\right)\]
 A subsheaf $\mathcal{F} \subset \mathcal{E}$ descends if and only if for all sections $s \in \text{End}_{\mathcal{O}_X} \,\pi_*(\mathcal{O}_Y)(U)$ over an open $U \subset X$, we have that $\mathcal{F}|_{U}$ is stable under the corresponding homomorphism $\phi(s)$.
 
 If $\text{End}_{\mathcal{O}_X}$ is generated by global sections, it suffices to check global sections. This is the case for descent along field extensions (or base-changes of field extensions).
 \end{example}
 
 \begin{example}[Diagrams of sheaves]
 Let $I$ be a small category. A diagram of sheaves on $Y$ of shape $I$ is a covariant functor $F: I \rightarrow \text{QCoh}(Y)$. A morphism of diagrams of sheaves of shape $I$ is a natural transformation. Descent data for a diagram of sheaves $F$ correspond to homomorphisms of unital algebras
 \[ \phi \in \text{Hom}_{\pi_*(\mathcal{O}_Y) \underset{\mathcal{O}_X}{\otimes} \pi_*(\mathcal{O}_Y)} \left(\,\text{End}_{\mathcal{O}_X} \,\pi_*(\mathcal{O}_Y),  \, \; \text{End}_{\mathcal{O}_X} F \,\right) \]
 Here $\text{End}_{\mathcal{O}_X} F$ is the algebra of endomorphisms of $F$ viewed as a diagram in $\text{QCoh}(X)$ via $\pi_*$.
 \end{example}
 
 \begin{example}[Parabolic vector bundles]
 Parabolic vector bundles are a particular case of diagrams of sheaves. Given a parabolic vector bundle $\mathcal{W}$ over $C\times Y$, descent data correspond to homomorphisms of unital algebras from $\text{End}_{\mathcal{O}_{C \times X}} \,(id \times\pi)_*(\mathcal{O}_{C\times Y})$ into the algebra of $\mathcal{O}_{C \times X}$-linear endomorphisms of $\mathcal{W}$.
 \end{example}
\end{section}
\begin{section}{Comparison with previous work} \label{appendix: comparison}
In this appendix we compare the stratification described in this article with the $\Theta$-stratifications defined in \cite{heinloth-hilbertmumford} and \cite{alper-existence}[\S 8]. For this appendix we will freely use the notation from these papers.

For simplicity of notation, we will stick with the case when there is a single point of degeneration $x \in C(k)$. Our arguments can be modified easily to deal with the case when there are several points of degeneration. Let's denote by $q : x \hookrightarrow C$ the closed immerion of $x$ into $C$. Let $\mathcal{V}$ be a parabolic vector bundle on $C$ with parabolic structure at $x$. Suppose that $\mathcal{V} = \left[ \; \mathcal{E}^{(0)} \,\overset{a^{(1)}}{\subset} \, \mathcal{E}^{(1)} \overset{a^{(2)}}{\subset} \cdots\; \overset{a^{(N)}}{\subset} \, \mathcal{E}^{(N)}= \mathcal{E}^{(0)}(x) \;\right]$. Fix a collection of parabolic weights $0 < \lambda^{(1)} < \lambda^{(2)} < \cdots < \lambda^{(N)}< 1$. This collection defines a Harder-Narasimhan stratification on $\text{Bun}_{\mathcal{V}}$, as in Definition \ref{defn: HN strata}. Our goal in this appendix is to explain how, in the case when $\overline{\lambda}$ is ``admissible", one can view this stratification as the $\Theta$-stratification coming from a $\mathbb{R}$-line bundle on the stack $\text{Bun}_{\mathcal{V}}$, as in \cite{alper-existence}[\S 8].

Set $n = \text{rank} \, \mathcal{V}$. By Definition \ref{defn: stack of parabolic vector bundles}, $\text{Bun}_{\mathcal{V}}$ only depends on the type of the parabolic bundle $\mathcal{V}$. Hence we can, without loss of generality, assume that $\mathcal{E}^{(0)} = \mathcal{O}_C^{\oplus n}$ and
\[ \mathcal{E}^{(i)} = \mathcal{O}_C(x)^{\oplus \sum_{j=1}^i a^{(j)}} \, \oplus \, \mathcal{O}_C^{\oplus\left(n -  \sum_{j=1}^i a^{(j)}\right)} \]
By Proposition \ref{prop: parabolic vb as parahoric torsors}, we have $\text{Bun}_{\mathcal{V}} \simeq \text{Bun}_{\text{GL}(\mathcal{V})}$. Set $\mathcal{G} \vcentcolon = \text{GL}(\mathcal{V})$. Let $\text{Lie}(\mathcal{G})$ denote the Lie algebra of $\mathcal{G}$. Since $\mathcal{G}$ is smooth over $C$, this Lie algebra is a vector bundle on $C$. Following \cite{heinloth-hilbertmumford}, we define $\mathcal{L}_{det}$ to be the line bundle on the stack $\text{Bun}_{\mathcal{G}}$ defined as follows. For any $k$-scheme $T$ and every map $f: T \rightarrow \text{Bun}_{\mathcal{G}}$ represented by a $\mathcal{G}_T$-torsor $\mathcal{P}$ on $C \times T$, we let
\[ f^*(\mathcal{L}_{det}) = \text{det} \left( R(\pi_{T})_{*} \, \mathcal{P} \times^{\mathcal{G}} \text{Lie}(\mathcal{G}) \right)^{\vee} \]
Here $\text{det}$ denotes the determinant in the $K$-theoretic sense. This makes sense because the derived pushforward is a perfect complex \cite[\href{https://stacks.math.columbia.edu/tag/0A1H}{Tag 0A1H}]{stacks-project}.

As in \cite{halpernleistner2014structure}, we let $\Theta = \left[ \, \mathbb{A}^1_{k} / \, \mathbb{G}_m \, \right]$. We write $0$ for the origin in $\mathbb{A}^1_{k}$. We can view $0$ as a point of $\Theta$.  Let $\varphi: \Theta \rightarrow \text{Bun}_{\mathcal{G}}$ be a filtration of a parabolic vector bundle $\mathcal{W} \in \text{Bun}_{\mathcal{V}}$. We would like to compute the weight of the $0$-fiber of $\mathcal{L}_{det}$. Say
\[\mathcal{W} = \left[ \;\mathcal{F}^{(0)} \,\overset{a^{(1)}}{\subset} \, \mathcal{F}^{(1)} \overset{a^{(2)}}{\subset} \cdots\; \overset{a^{(N)}}{\subset} \,\mathcal{F}^{(N)}= \, \mathcal{F}^{(0)}(x) \; \right] \]
Via the Rees construction, a filtration of $\mathcal{W}$ corresponds to a decreasing $\mathbb{Z}$-filtration of $\mathcal{F}^{(0)}$ by subbundles $\left(\mathcal{F}^{(0)}_m\right)_{m \in \mathbb{Z}}$. More precisely, for all $m \in \mathbb{Z}$ we have that $\mathcal{F}^{(0)}_m$ is a subbundle of $\mathcal{F}^{(0)}$ and $\mathcal{F}_m^{(0)} \supset \mathcal{F}^{(0)}_{m+1}$. Moreover we require that $\mathcal{F}^{(0)}_m = 0$ for $m >>0$ and $\mathcal{F}^{(0)}_m = \mathcal{F}^{(0)}$ for all $m <<0$. Alternatively, we can view this as a filtration of $\mathcal{W}$ by parabolic subbundles $(\mathcal{W}_m)_{m \in \mathbb{Z}}$ by setting
$\mathcal{F}^{(i)}_m \vcentcolon = \mathcal{F}_m^{(0)} \cap \mathcal{F}^{(i)}$ and defining
\[\mathcal{W}_m \vcentcolon = \left[ \;\mathcal{F}_m^{(0)} \,\overset{a_m^{(1)}}{\subset} \, \mathcal{F}_m^{(1)} \overset{a_m^{(2)}}{\subset} \cdots\; \overset{a_m^{(N)}}{\subset} \,\mathcal{F}_m^{(N)}= \, \mathcal{F}_m^{(0)}(x) \; \right] \]
In order to compute $wt\left(\varphi^*(\mathcal{L}_{det})|_{0} \right)$, we will set up a short exact sequence for $\text{Lie}(\mathcal{G})$. There is a monomorphism of group schemes $\mathcal{G} \hookrightarrow \text{Aut}(\mathcal{E}^{(0)}(x))$, given by forgetting the automorphisms of $\mathcal{E}^{(i)}$ for $i <N$. By \cite{mayeux2020neron}[Lemma 3.7], we have a short exact sequence of pointed \`etale sheaves
\[  1 \, \rightarrow \,\mathcal{G} \, \rightarrow \, \text{Aut}(\mathcal{E}^{(0)}(x) \, \rightarrow \text{Aut}(\mathcal{E}^{(0)}(x)|_{x}) \, / \, P \, \rightarrow \, 1   \]
where $P$ is the parabolic subgroup of $\text{Aut}(\mathcal{E}^{(0)}(x)|_{x})$ that preserves the flag
\[0 \subset \mathcal{E}^{(1)} / \, \mathcal{E}^{(0)} \, \subset \, \mathcal{E}^{(2)}/ \, \mathcal{E}^{(0)} \, \subset \cdots \, \subset \mathcal{E}^{(N)}/ \, \mathcal{E}^{(0)} \cong \mathcal{E}^{(0)}(x)|_{x} \]
We remark that in this case the short exact sequence can be  proven by hand. 

Under a choice of isomorphism $\mathcal{E}^{(0)}(x)|_{x}  \cong \mathcal{O}^{\oplus n}_C(x)|_{x} \cong k^n$, the group $P$ is the standard parabolic subgroup of $\text{GL}_n$ corresponding to the partition $(a^{(1)}, a^{(2)}, \cdots , a^{(N)})$. In other words, $P$ consists of block upper-triangular matrices in $\text{GL}_n$ with blocks of sizes given by the tuple $(a^{(1)}, a^{(2)}, \cdots , a^{(N)})$. 

It is known that $\text{GL}_n /P$ admits sections Zariski locally. Let $\mathfrak{gl}_n$ (resp. $\mathfrak{p}$) denote the Lie algebra of $\text{GL}_n$ (resp. $P$). By looking at points over the dual numbers, one can see that the short exact sequence of \'etale sheaves above induces a short exact sequence of coherent sheaves on $C$
\[0 \, \rightarrow \, \text{Lie}(\mathcal{G}) \, \rightarrow \, \text{Lie}(\text{Aut}(\mathcal{E}^{(0)}(x)) \, \rightarrow \, q_*(\mathfrak{gl}_n / \mathfrak{p}) \, \rightarrow \, 0 \]
Note that $\text{Lie}(\text{Aut}(\mathcal{E}^{(0)}(x)) = \mathcal{E}\mathit{nd}(\mathcal{E}^{(0)}(x))$. The group scheme $\mathcal{G}$ acts on both $\mathcal{E}\mathit{nd}(\mathcal{E}^{(0)}(x))$ and $q_*(\mathfrak{gl}_n / \mathfrak{p})$ via the adjoint representation. The action on $q_*(\mathfrak{gl}_n / \mathfrak{p})$ factors through the composition $\mathcal{G} \rightarrow q_*(\mathcal{G}|_{x}) \rightarrow q_*(P)$.

Let's compute the weight of $\varphi^*(\mathcal{L}_{det})|_{0}$. The composition $0 \rightarrow \Theta \xrightarrow{\varphi} \text{Bun}_{\mathcal{G}}$ is represented by the $\mathbb{Z}$-graded parabolic vector bundle $\bigoplus_{m \in \mathbb{Z}} \mathcal{W}_m / \mathcal{W}_{m+1}$. Here $\mathbb{G}_m$ acts with weight $m$ on $\mathcal{W}_m / \mathcal{W}_{m+1}$. Let $\mathcal{P}_{univ}$ denote the universal $\mathcal{G}$-torsor on $C \times \text{Bun}_{\mathcal{G}}$. We can use flat base-change to write
\[ wt\left(\varphi^*(\mathcal{L}_{det})|_{0}\right) = - wt\left( \text{det} \, R\pi_* \, \varphi_{C}^{*}(\mathcal{P}_{univ})|_{0\times C} \times^{\mathcal{G}} \text{Lie}(\mathcal{G}) \right) \]
After using the short exact sequence of vector bundles above and the additivity of the (K-theoretic) determinant, we get
\begin{gather*}
     wt\left(\varphi^*(\mathcal{L}_{det})|_{0}\right) = - wt\left( \text{det} \, R\pi_* \, \varphi_{C}^{*}(\mathcal{P}_{univ})|_{0 \times C} \times^{\mathcal{G}} \mathcal{E}\mathit{nd}(\mathcal{E}^{(0)}(x)) \right) + wt\left( \text{det} \, R\pi_* \, \varphi_{C}^{*}(\mathcal{P}_{univ})|_{0 \times C} \times^{\mathcal{G}} q_*(\mathfrak{gl}_n/\mathfrak{p}) \right)
\end{gather*}
We are left to compute each term separately. For the first term, one can unravel the definitions to obtain
\[\varphi_{C}^{*}(\mathcal{P}_{univ})|_{0 \times C} \times^{\mathcal{G}} \mathcal{E}\mathit{nd}(\mathcal{E}^{(0)}(x)) = \mathcal{E}\mathit{nd}\left( \bigoplus_{m \in \mathbb{Z}} \mathcal{F}^{(0)}_m(x) / \, \mathcal{F}^{(0)}_{m+1}(x) \right) \]
The computation in \cite{heinloth-hilbertmumford}[1.E.c] shows that
\begin{gather*}
   - wt\left(\text{det}\, R\pi_* \left( \bigoplus_{m \in \mathbb{Z}} \mathcal{F}^{(0)}_m(x) / \, \mathcal{F}^{(0)}_{m+1}(x) \right) \right) = 2 \sum_{h \in \mathbb{Z}}\left( \text{deg}(\mathcal{F}^{(0)}_h(x)) \cdot \text{rank}(\mathcal{F}^{(0)}) - \text{deg}(\mathcal{F}^{(0)}(x)) \cdot \text{rank}(\mathcal{F}_h^{(0)})\right)
\end{gather*}
This takes care of the first term above. On the other hand, we can write
\begin{gather*}
\text{det} \, R\pi_* \, \varphi_{C}^{*}(\mathcal{P}_{univ})|_{0 \times C} \times^{\mathcal{G}} q_*(\mathfrak{gl}_n/\mathfrak{p}) \, \cong \,  \text{det} \, R\pi_* q_* \left( \, \varphi_{C}^{*}(\mathcal{P}_{univ})|_{0\times x} \times^{\mathcal{G}|_{x}} P  \times^{P} \mathfrak{gl}_n/\mathfrak{p}) \right)
\end{gather*}
where we use the natural map $\mathcal{G}|_{x} \rightarrow P$ to form the associated $P$-torsor $ \varphi_{C}^{*}(\mathcal{P}_{univ})|_{0\times x} \times^{\mathcal{G}|_{x}} P$. We can further rewrite this as
\[   \text{det} \, R(\pi q)_* \left( \, \varphi_{C}^{*}(\mathcal{P}_{univ})|_{0\times x} \times^{\mathcal{G}|_{x}} P  \times^{P} \mathfrak{gl}_n/\mathfrak{p}) \right) \, \cong \, \varphi_{C}^{*}(\mathcal{P}_{univ})|_{0\times x} \times^{\mathcal{G}|_{x}} P  \times^{P} \text{det}(\mathfrak{gl}_n/\mathfrak{p})\]
The character $\text{det}(\mathfrak{gl}_n/\mathfrak{p})$ of $P$ factors through the Levi quotient $M = P/ \, U = \prod_{i=1}^N \text{GL}_{a^{(i)}}$. Therefore we can rewrite the expression above as
\[ \varphi_{C}^{*}(\mathcal{P}_{univ})|_{0\times x} \times^{\mathcal{G}|_{x}} M  \times^{M} \text{det}(\mathfrak{gl}_n/\mathfrak{p}) \]
As a $M$-representation, we have
\[ \text{det}(\mathfrak{gl}_n/\mathfrak{p}) \, \cong \, \text{det} \left( \bigoplus_{i < j} \left(St_{\text{GL}_{a^{(i)}}}\right)^{\vee} \otimes \left(St_{\text{GL}_{a^{(j)}}}\right) \right)\]
Here $St_{\text{GL}_{a^{(i)}}}$ denotes the standard $a^{(i)}$-dimensional representation of the $\text{GL}_{a^{(i)}}$-component of $M$. Using this, we get
\begin{gather*} \varphi_{C}^{*}(\mathcal{P}_{univ})|_{0\times x} \times^{\mathcal{G}|_{x}} M  \times^{M} \text{det}(\mathfrak{gl}_n/\mathfrak{p}) \, \cong \, \bigotimes_{i < j} \, \text{det} \left(\left( \bigoplus_{m \in \mathbb{Z}} \frac{\mathcal{F}^{(i)}_m / \mathcal{F}^{(i)}_{m+1}}{\mathcal{F}^{(i-1)}_m / \mathcal{F}^{(i-1)}_{m+1}} \right)^{\vee} \otimes \left( \bigoplus_{l \in \mathbb{Z}} \frac{\mathcal{F}^{(j)}_l / \mathcal{F}^{(j)}_{l+1}}{\mathcal{F}^{(j-1)}_l / \mathcal{F}^{(j-1)}_{l+1}} \right) \right) 
\end{gather*}
Here $\mathbb{G}_m$ acts on $\left( \frac{\mathcal{F}^{(i)}_m / \mathcal{F}^{(i)}_{m+1}}{\mathcal{F}^{(i-1)}_m / \mathcal{F}^{(i-1)}_{m+1}} \right)^{\vee} \otimes \left( \frac{\mathcal{F}^{(j)}_l / \mathcal{F}^{(j)}_{l+1}}{\mathcal{F}^{(j-1)}_l / \mathcal{F}^{(j-1)}_{l+1}} \right)$ with weight $l-m$. We are reduced to computing for each $i,j$ the weight
\[ wt\left( \text{det} \left(\left( \bigoplus_{m \in \mathbb{Z}} \frac{\mathcal{F}^{(i)}_m / \mathcal{F}^{(i)}_{m+1}}{\mathcal{F}^{(i-1)}_m / \mathcal{F}^{(i-1)}_{m+1}} \right)^{\vee} \otimes \left( \bigoplus_{l \in \mathbb{Z}} \frac{\mathcal{F}^{(j)}_l / \mathcal{F}^{(j)}_{l+1}}{\mathcal{F}^{(j-1)}_l / \mathcal{F}^{(j-1)}_{l+1}} \right) \right)  \right)\]
This is the weight of the $0$-fiber of a line bundle $\mathcal{L}_{\chi_{i,j}}$ in $\text{Bun}_{\mathcal{G}}$ corresponding to a character $\chi_{i,j}$ of $\mathcal{G}|_{x}$, as in \cite{heinloth-hilbertmumford}[3.B]. Indeed we can take $\chi_{i,j} = \text{det} \left( \,\left(St_{\text{GL}_{a^{(i)}}}\right)^{\vee} \otimes  \left(St_{\text{GL}_{a^{(j)}}}\right)\, \right)$, which we can view as a representation of $\mathcal{G}|_{x}$ via the natural map $\mathcal{G}|_{x} \rightarrow M$. Let's compute the corresponding weight. We have
\begin{gather*}
wt\left( \text{det} \left(\left( \bigoplus_{m \in \mathbb{Z}} \frac{\mathcal{F}^{(i)}_m / \mathcal{F}^{(i)}_{m+1}}{\mathcal{F}^{(i-1)}_m / \mathcal{F}^{(i-1)}_{m+1}} \right)^{\vee} \otimes \left( \bigoplus_{l \in \mathbb{Z}} \frac{\mathcal{F}^{(j)}_l / \mathcal{F}^{(j)}_{l+1}}{\mathcal{F}^{(j-1)}_l / \mathcal{F}^{(j-1)}_{l+1}} \right) \right)  \right) \, = \, \sum_{m, l \in \mathbb{Z}} (l-m) \cdot \text{rank} \left( \frac{\mathcal{F}^{(i)}_m / \mathcal{F}^{(i)}_{m+1}}{\mathcal{F}^{(i-1)}_m / \mathcal{F}^{(i-1)}_{m+1}} \right) \cdot \text{rank}\left( \frac{\mathcal{F}^{(j)}_l / \mathcal{F}^{(j)}_{l+1}}{\mathcal{F}^{(j-1)}_l / \mathcal{F}^{(j-1)}_{l+1}} \right)
\end{gather*}
Let's rewrite this as
\begin{gather*}
 \sum_{m < l \in \mathbb{Z}} (l-m) \cdot \text{rank} \left( \frac{\mathcal{F}^{(i)}_m / \mathcal{F}^{(i)}_{m+1}}{\mathcal{F}^{(i-1)}_m / \mathcal{F}^{(i-1)}_{m+1}} \right) \cdot \text{rank}\left( \frac{\mathcal{F}^{(j)}_l / \mathcal{F}^{(j)}_{l+1}}{\mathcal{F}^{(j-1)}_l / \mathcal{F}^{(j-1)}_{l+1}} \right) \, - \,  \sum_{m < l \in \mathbb{Z}} (l-m) \cdot \text{rank} \left( \frac{\mathcal{F}^{(i)}_l / \mathcal{F}^{(i)}_{l+1}}{\mathcal{F}^{(i-1)}_l / \mathcal{F}^{(i-1)}_{l+1}} \right) \cdot \text{rank}\left( \frac{\mathcal{F}^{(j)}_m / \mathcal{F}^{(j)}_{m+1}}{\mathcal{F}^{(j-1)}_m / \mathcal{F}^{(j-1)}_{m+1}} \right)
\end{gather*}
We can use the same trick as in \cite{heinloth-hilbertmumford}[1.E.c] to express this as a sum
\begin{gather*}
 \sum_{h \in \mathbb{Z}} \left( \sum_{m \leq h} \text{rank} \left( \frac{\mathcal{F}^{(i)}_m / \mathcal{F}^{(i)}_{m+1}}{\mathcal{F}^{(i-1)}_m / \mathcal{F}^{(i-1)}_{m+1}} \right) \right) \cdot \left(\sum_{l>h} \text{rank}\left( \frac{\mathcal{F}^{(j)}_l / \mathcal{F}^{(j)}_{l+1}}{\mathcal{F}^{(j-1)}_l / \mathcal{F}^{(j-1)}_{l+1}} \right) \right) \, - \,  \sum_{h \in \mathbb{Z}} \left(\sum_{l>h} \text{rank} \left( \frac{\mathcal{F}^{(i)}_l / \mathcal{F}^{(i)}_{l+1}}{\mathcal{F}^{(i-1)}_l / \mathcal{F}^{(i-1)}_{l+1}} \right) \right) \cdot \left(\sum_{ m \leq h} \text{rank}\left( \frac{\mathcal{F}^{(j)}_m / \mathcal{F}^{(j)}_{m+1}}{\mathcal{F}^{(j-1)}_m / \mathcal{F}^{(j-1)}_{m+1}} \right) \right)
\end{gather*}
This can be rewritten as
\begin{gather*}
 \sum_{h \in \mathbb{Z}} a^{(i)}_h \cdot \left(a^{(j)} - a^{(j)}_h \right) \, - \, \sum_{h \in \mathbb{Z}} a^{(j)}_h \cdot \left(a^{(i)} - a^{(i)}_h\right)
\end{gather*}
After some cancellation, we end up with $wt\left(\varphi^*(\mathcal{L}_{\chi_{i,j}})|_{0}\right) =  \sum_{h \in \mathbb{Z}} \left( a^{(i)}_h \cdot a^{(j)} - a^{(j)}_h \cdot a^{(i)} \right)$. So after all of our computations, we conclude that
\begin{gather*}
wt\left(\varphi^*(\mathcal{L}_{det})|_{0}\right) \, = \, 2 \sum_{h \in \mathbb{Z}}\left( \text{deg}(\mathcal{F}^{(0)}_h(x)) \cdot \text{rank}(\mathcal{F}^{(0)}) - \text{deg}(\mathcal{F}^{(0)}(x)) \cdot \text{rank}(\mathcal{F}_h^{(0)})\right) \, + \, \sum_{i<j} wt\left(\varphi^*(\mathcal{L}_{\chi_{i,j}})|_{0}\right)
\end{gather*}
Set $\chi$ to be the element in $\text{Hom}\left(\mathcal{G}|_{x}, \mathbb{G}_m\right) \otimes_{\mathbb{Z}} \mathbb{R}$ given by $\chi =  \text{det}(\mathfrak{gl}_n/\mathfrak{p})^{\vee} \, \otimes \, \bigotimes_{i,j} \chi_{i,j}^{\otimes -2 \, \lambda^{(i)}}$.
By linearizing over $\mathbb{R}$ the definition in \cite{heinloth-hilbertmumford}[3.B], we can define an element $\mathcal{L}_{\chi}$ in the $\mathbb{R}$-Picard group of the stack $\text{Bun}_{\mathcal{G}}$. Using our computations from above, we can get
\begin{gather*}
    wt\left( \varphi^*(\mathcal{L}_{det} \otimes \mathcal{L}_{\chi})|_{0}\right) \, = \, 2 \sum_{h \in \mathbb{Z}}\left( \text{deg}(\mathcal{F}^{(0)}_h(x)) \cdot \text{rank}(\mathcal{F}^{(0)}) - \text{deg}(\mathcal{F}^{(0)}(x)) \cdot \text{rank}(\mathcal{F}_h^{(0)})\right) \, - \, 2 \sum_{i,j} \lambda^{(i)} \cdot wt\left(\varphi^*(\mathcal{L}_{\chi_{i,j}})|_{0}\right) 
\end{gather*}
We can further plug in our formula for $wt\left(\varphi^*(\mathcal{L}_{\chi_{i,j}})|_{0}\right)$ to get
\begin{gather*}
    wt\left( \varphi^*(\mathcal{L}_{det} \otimes \mathcal{L}_{\chi})|_{0}\right) \, = \, 2 \sum_{h \in \mathbb{Z}}\left( \text{deg}(\mathcal{F}^{(0)}_h(x)) \cdot \text{rank}(\mathcal{F}^{(0)}) - \text{deg}(\mathcal{F}^{(0)}(x)) \cdot \text{rank}(\mathcal{F}_h^{(0)})\right) \, - \, 2 \sum_{i,j} \lambda^{(i)} \sum_{h \in \mathbb{Z}} \left( a^{(i)}_h \cdot a^{(j)} - a^{(j)}_h \cdot a^{(i)} \right)
\end{gather*}
The second term can be rewritten as
\begin{gather*}
    2 \sum_{i,j} \lambda^{(i)} \sum_{h \in \mathbb{Z}} \left( a^{(i)}_h \cdot a^{(j)} - a^{(j)}_h \cdot a^{(i)} \right) \, = \, 2 \sum_{h \in \mathbb{Z}}\left( \left(\sum_{i} a^{(i)}_h \lambda^{(i)} \right) \cdot \text{rank}(\mathcal{F}^{(0)}) - \left(\sum_{i} a^{(i)} \lambda^{(i)} \right) \cdot \text{rank}(\mathcal{F}_h^{(0)}) \right)
\end{gather*}
Plugging this back in, we end up with
\begin{gather*}
    wt\left( \varphi^*(\mathcal{L}_{det} \otimes \mathcal{L}_{\chi})|_{0}\right) \, = \, 2 \sum_{h \in \mathbb{Z}}\left( \left(\text{deg}(\mathcal{F}^{(0)}_h(x)) - \sum_{i} a^{(i)}_h \lambda^{(i)}\right) \cdot \text{rank}(\mathcal{F}^{(0)}) - \left( \text{deg}(\mathcal{F}^{(0)}(x)) - \sum_{i} a^{(i)} \lambda^{(i)} \right) \cdot \text{rank}(\mathcal{F}_h^{(0)})\right)
\end{gather*}
By our definition of parabolic degree, this is the same as
\begin{gather*}
    wt\left( \varphi^*(\mathcal{L}_{det} \otimes \mathcal{L}_{\chi})|_{0}\right) \, = \, 2 \sum_{h \in \mathbb{Z}}\left( \, \text{deg}(\mathcal{W}_h) \cdot \text{rank}(\mathcal{W}) -  \text{deg}(\mathcal{W}) \cdot \text{rank}(\mathcal{W}_h) \, \right)
\end{gather*}
Using this formula, one can see that a parabolic vector bundle is $\mathcal{L}_{det} \otimes \mathcal{L}_{\chi}$-(semi)stable in the sense of \cite{heinloth-hilbertmumford} if and only if it is slope (semi)stable as in our definition. In order to see that the HN filtrations match up, one can use the deletion lemma in \cite{halpernleistner2014structure}[Lemma 5.32], which can be verified to work the same way in our context of parabolic vector bundles.

It should be noted that the results in \cite{heinloth-hilbertmumford} \cite{alper-existence}[\S 8] are valid whenever the character $\chi$ used to define $\mathcal{L}_{\chi}$ is ``admissible" (see \cite{heinloth-hilbertmumford}[3.F] for a definition). Let's study whether the character we defined above is admissible. The diagonal torus $T$ of $\text{GL}_n$ over $k(C)$ can be viewed as a maximal split torus of the generic fiber of $\mathcal{G}$ under the identification provided by the proof of Proposition \ref{prop: groups schemes of automorphisms is smooth}. This torus extends to a split torus $\mathcal{T}$ of $\mathcal{G}$ defined over all of $C$ (take the scheme theoretic closure). The fiber $\mathcal{T}|_{x} \subset \mathcal{G}|_{x}$ maps isomorphically onto the diagonal torus $T_x$ of the Levi quotient $M = \prod_{i=1}^N \text{GL}_{a^{(i)}}$. We will use the standard basis of the character lattice of $T_x$, namely $X^{*}(T_x) = \bigoplus_{j=1}^n \mathbb{Z} e_j$, where $e_j$ is the character that projects to the $j$th component of $T_x$ (under the canonical ordered basis of $k^n$). We can view each $\chi_{i,j}$ as a $T_x$-representation via restriction. Then, we can write
\[ \chi_{i,j} = \text{det}\left(\left(St_{\text{GL}_{a^{(i)}}}\right)^{\vee} \otimes \left(St_{\text{GL}_{a^{(j)}}}\right) \right) = - \sum_{l= (\sum_{m = 1}^{i-1} a^{(m)}) +1}^{\sum_{m=1}^{i} a^{(m)}} a^{(j)} e_l \; \; + \sum_{l= (\sum_{m = 1}^{j-1} a^{(m)}) +1}^{\sum_{m=1}^{j} a^{(m)}} a^{(i)}e_l \]

We conclude that
\[ \bigotimes_{i,j} \chi_{i,j}^{\otimes -2 \, \lambda^{(i)}} = -2 \sum_{i=1}^N \left(\sum_{l= (\sum_{m = 1}^{i-1} a^{(m)}) +1}^{\sum_{m=1}^{i} a^{(m)}} e_l \right) \cdot \left( \left(\sum_{m=1}^{N} a^{(m)}\lambda^{(m)}\right) - n \lambda^{(i)}\right)\]
and
\[\text{det}(\mathfrak{gl}_n/\mathfrak{p}) \, = \, \bigotimes_{i<j} \chi_{i,j} \, = \, \sum_{i = 1}^{N} \left(\sum_{l= (\sum_{m = 1}^{i-1} a^{(m)}) +1}^{\sum_{m=1}^{i} a^{(m)}} e_l \right) \cdot \left(2\sum_{m=1}^{i-1} a^{(m)} + a^{(i)} - n\right) \]
Therefore
\begin{gather*}
    \chi = \text{det}(\mathfrak{gl}_n/\mathfrak{p})^{\vee} \, \otimes \, \bigotimes_{i,j} \chi_{i,j}^{\otimes -2 \, \lambda^{(i)}} =  \sum_{i=1}^N \left(\sum_{l= (\sum_{m = 1}^{i-1} a^{(m)}) +1}^{\sum_{m=1}^{i} a^{(m)}} e_l \right) \cdot \left(n - a^{(i)} - 2\sum_{m=1}^{i-1} a^{(m)} - 2(\sum_{m=1}^{N} a^{(n)}\lambda^{(m)}) + 2 n \lambda^{(i)}\right)
\end{gather*}
To determine admissibility, now we need to compute the inner product with each coroot of $\text{GL}_n$. We know that the coroots are of the form $\alpha_{i j} = e_i -e_j$ for some $i \neq j$ with $1 \leq i, j \leq n$. In this case the admissibility condition means that $\langle \chi, \alpha_{i,j} \rangle \leq 1$ for all $i \neq j$. Note that if the indices $i,j$ lie in the same ``block" determined by the partition $(a^{(1)}, a^{(2)}, \cdots , a^{(N)})$, then $\langle \chi, \alpha_{i,j} \rangle = 0$. Otherwise, if $i$ is in the $l$-th block and $j$ is in the $k$-th block we have
\[ \langle \chi, \alpha_{i,j} \rangle = 2n \lambda^{(l)} - 2\sum_{m=1}^{l-1} a^{(m)} - a^{(l)} - 2n \lambda^{(m)} + 2\sum_{m=1}^{k-1} a^{(m)} + a^{(k)}\]
Hence the admissibility condition reads
\[2n \lambda^{(l)} - 2\sum_{m=1}^{l-1} a^{(m)} - a^{(l)} - 2n \lambda^{(m)} + 2\sum_{m=1}^{k-1} a^{(m)} + a^{(k)} \leq 1\]
This condition can be expressed as follows. For all $1 \leq k \leq l \leq N$, we must have the pair of inequalities
\[ -\frac{1}{2n} + \frac{1}{n} \sum_{m=k}^{l-1} a^{(m)} + \frac{a^{(l)} -a^{(k)}}{2n} \, \leq \, \lambda^{(l)} - \lambda^{(m)} \, \leq \, \frac{1}{2n} + \frac{1}{n} \sum_{m=k}^{l-1} a^{(m)} + \frac{a^{(l)} -a^{(k)}}{2n} \]
In particular this implies that for all $l$ we must have
 \[\frac{a^{(l)} + a^{(l-1)}}{2n} -  \frac{1}{2n} \, \leq \, \lambda^{(l)} - \lambda^{(l-1)} \, \leq \, \frac{a^{(l)} + a^{(l-1)}}{2n} +  \frac{1}{2n} \]
From these inequalities we conclude that $ \lambda^{(l)}> \lambda^{(l-1)}$ and that the maximum difference $\lambda^{(N)} - \lambda^{(1)}$ is  strictly smaller than $1$. The notion of $\mathcal{L}_{det} \otimes \mathcal{L}_{\chi}$-(semi)stability and the admissibility of $\chi$ do not change if we translate all stability parameters $\lambda^{(l)}$ by the same constant $\lambda^{(l)} \, \mapsto \, \lambda^{(l)} +c$. In order to normalize, we can translate so that $0 < \lambda^{(1)} < \lambda^{(2)} < \cdots < \lambda^{(N)} <1$. We therefore see that, up to translation, all of the admissible tuples of parameters $\lambda^{(l)}$ in the sense of \cite{heinloth-hilbertmumford} are parabolic weights in our sense. However the admissibility conditions described above are not satisfied by all parabolic weights. In other words, the admissibility of the stability parameters from \cite{heinloth-hilbertmumford} is more restrictive that the conditions in the definition of parabolic weights (Definition \ref{defn: parabolic weights}).
\end{section}
\bibliography{parahoric_harder_narasimhan}
\bibliographystyle{alpha}

\footnotesize

  \textsc{Department of Mathematics, Cornell University,
    310 Malott Hall, Ithaca, New York 14853, USA}\par\nopagebreak
  \textit{E-mail address}, \texttt{ajf254@cornell.edu}

\end{document}